\numberwithin{equation}{section}
\newcommand{\R}{\mathbb{R}}
\newcommand{\Z}{\mathbb{Z}}
\newcommand{\N}{\mathbb{N}}
\newcommand{\T}{\mathbb{T}}
\newcommand{\A}{\mathcal{A}}
\newcommand{\E}{\mathcal{E}}
\newcommand{\D}{\mathcal{D}}
\newcommand{\sR}{\mathcal{R}}
\newcommand{\bars}{\overline s}
\newcommand{\bu}{\bm{u}}
\newcommand{\bw}{\bm{w}}
\newcommand{\bx}{\bm{x}}
\newcommand{\X}{\bm{X}}
\newcommand{\be}{\bm{e}}
\newcommand{\bv}{\bm{v}}
\newcommand{\p}{\partial}
\newcommand{\ts}{\thinspace}
\newcommand{\dive}{{\rm{div}}}
\newcommand{\SB}{{\rm SB}}
\newcommand{\abs}[1]{\left\lvert #1 \right\rvert}
\newcommand{\norm}[1]{\left\lVert #1 \right\rVert}
\newcommand{\wh}[1]{\widehat{#1}}
\newcommand{\mc}[1]{\mathcal{#1}}
\newtheorem{theorem}{Theorem}[section]
\newtheorem{lemma}[theorem]{Lemma}
\newtheorem{proposition}[theorem]{Proposition}
\newtheorem{definition}[theorem]{Definition}
\theoremstyle{definition}
\newtheorem{remark}[theorem]{Remark}
\begin{document}
\title{Theoretical justification and error analysis for slender body theory}
\author{Yoichiro Mori, Laurel Ohm, Daniel Spirn
\footnote{This research was supported in part by NSF grant DMS-1620316 and DMS-1516978, awarded to Y.M., by NSF GRF grant 00039202 and a Torske Kubben Fellowship, awarded to L.O., and by NSF grant DMS-1516565, awarded to D.S. The authors thank the IMA where most of this work was performed. The authors also thank the anonymous referees whose detailed comments greatly improved the paper.}\\ \textit{\small School of Mathematics, University of Minnesota, Minneapolis, MN 55455}}
\date{\today}

\maketitle

\begin{abstract}
Slender body theory facilitates computational simulations of thin fibers immersed in a viscous fluid by approximating each fiber using only the geometry of the fiber centerline curve and the line force density along it. However, it has been unclear how well slender body theory actually approximates Stokes flow about a thin but truly three-dimensional fiber, in part due to the fact that simply prescribing data along a one-dimensional curve does not result in a well-posed boundary value problem for the Stokes equations in $\R^3$. Here, we introduce a PDE problem to which slender body theory (SBT) provides an approximation, thereby placing SBT on firm theoretical footing. The slender body PDE is a new type of boundary value problem for Stokes flow where partial Dirichlet and partial Neumann conditions are specified everywhere along the fiber surface. Given only a 1D force density along a closed fiber, we show that the flow field exterior to the thin fiber is uniquely determined by imposing a {\em fiber integrity condition}: the surface velocity field on the fiber must be constant along cross sections orthogonal to the fiber centerline. Furthermore, a careful estimation of the residual, together with stability estimates provided by the PDE well-posedness framework, allow us to establish error estimates between the slender body approximation and the exact solution to the above problem. The error is bounded by an expression proportional to the fiber radius (up to logarithmic corrections) under mild regularity assumptions on the 1D force density and fiber centerline geometry.
\end{abstract}

\tableofcontents

\section{Introduction}
Describing the motion of thin filaments immersed in a viscous fluid presents an important modeling problem in mathematical biology, engineering, and physics. Numerical simulations of slender fibers have been used to help explain the role of cilia in embryonic development \cite{smith2011mathematical} and mucous transport \cite{smith2007discrete}, simulate microtubules forming the mitotic spindle during cell division \cite{shelley2016dynamics}, understand the rheology of fiber suspensions used in creating composite materials \cite{fan1998direct, hamalainen2011papermaking, petrie1999rheology}, and explore the dynamics of swimming microorganisms \cite{gueron1997cilia, lauga2009hydrodynamics, nguyen2011action, rodenborn2013propulsion, saintillan2011emergence, spagnolie2011comparative}. Models describing the interaction between thin structures and a viscous fluid may also aid in the design and optimization of microfluidic devices \cite{avron2008geometric, becker2003self, buchmann2015flow, dreyfus2005microscopic}. \\

To handle the simulation of the large numbers of thin fibers arising in these models, many existing numerical methods rely on a classical approximation known as \emph{slender body theory}. In essence, slender body theory reduces computational costs by exploiting the thin geometry of the objects being modeled. \\

To begin, we assume that the slender fibers are immersed in low Reynolds number flow, typified by any of the following: high viscosity, very slow (creeping) flow, or flow over very small length scales. Such flows are governed by the Stokes equations \eqref{stokes}, where $\bu$ represents the fluid velocity, $p$ is the pressure, and $\mu$ is the viscosity: 
\begin{equation}
\left.
\begin{aligned}
-\mu \Delta \bu +\nabla p &= 0 \\
\dive \ts \bu &=0
\end{aligned}
\right\rbrace
\label{stokes}
 \end{equation}
 accompanied by appropriate boundary conditions. Stokes flow around solid objects in unbounded or semi-bounded domains can be represented succinctly via \emph{boundary integral equations} over the surface of the object \cite{pozrikidis1992boundary}. However, despite this explicit boundary integral representation of a solution to the Stokes system, solving integral equations over moving surfaces remains a computationally intensive task, especially when simulating tens or hundreds of individual objects. Furthermore, from a modeling perspective, specifying the surface traction at each point along the entire surface of a fiber with complicated geometry can quickly become cumbersome.  \\

Instead of treating a filament as a three-dimensional object and solving equations for its surface velocity, slender body theory approximates a thin filament with a one-dimensional force density $\bm{f}(s)$ defined along the filament centerline. The idea of modeling a thin fiber with a line distribution of fundamental singularities originated with Hancock \cite{hancock1953self}, Cox \cite{cox1970motion}, Batchelor \cite{batchelor1970slender}, Lighthill \cite{lighthill1975mathematical}, and Keller and Rubinow \cite{keller1976slender}. Later, Johnson \cite{johnson1980improved} introduced doublet corrections along the fiber centerline to come up with the integral expression \eqref{SBT2} that we regard as classical slender body theory. Since then, slender body theory has formed the basis for many numerical methods developed to model thin fibers in Stokes flow \cite{bouzarth2011modeling, bringley2008validation, cortez2005method, cortez2012slender, gotz2000interactions, shelley2000stokesian, tornberg2006numerical, tornberg2004simulating}. \\

Despite the many numerical results relying on this theory, there is a lack of rigorous error analysis for slender body theory itself. The theory is built on the assumption that, given only a force density curve $\bm{f}(s)$ along the centerline of a thin but inherently three-dimensional object, we can (approximately) solve for the fiber velocity. However, it is not possible to solve for Stokes flow in three dimensions using only data specified along a one-dimensional curve. In particular, it is not immediately obvious how to rigorously compare the slender body approximation to the actual PDE solution for Stokes flow about a 3D fiber, as it remains unclear what this ``true'' solution should be. Ideally, we should be able to define a unique notion of true solution to the slender body problem given only the force density $\bm{f}(s)$ and the fiber geometry, as this is the only information needed to build a slender body approximation.  \\

Many of the foundational papers in slender body theory compute some notion of asymptotic accuracy of the slender body approximation \cite{gotz2000interactions, johnson1980improved, keller1976slender, sellier1999stokes}. Previous studies \cite{bouzarth2011modeling} have also numerically verified the convergence of the slender body approximation as the slender body radius tends to zero, but to what exactly the approximation is converging remains unclear. Recently, Koens and Lauga \cite{koens2018boundary} derived the slender body expression as an asymptotic limit of the full boundary integral equations. However, this formulation of the slender body problem requires specifying the full two-dimensional surface traction at each point on the slender body surface in order to obtain a ``true'' solution. This notion of true solution, then, is not well-defined without specifying additional force data beyond the force-per-unit-length $\bm{f}(s)$. The question remains: is there a well-posed PDE for which slender body theory is an approximation that requires only the line force density $\bm{f}(s)$ and the fiber geometry as data? \\

In this paper, we address this question by giving meaning to a solution to the Stokes equations about a slender fiber in $\R^3$, given only one-dimensional force data $\bm{f}(s)$ and a ``fiber integrity condition" (see Section \ref{SBT_def}) common to most slender body theories. We prove well-posedness of the slender body PDE problem using only this data. Furthermore, we obtain a rigorous error estimate between the true solution and the slender body approximation both within the bulk fluid and along the fiber centerline. Note that, although many of the applications listed above deal with the dynamic problem of a fiber moving with the local fluid velocity, we consider only the static problem here. Making sense of such a solution in the static case is an important first step toward truly understanding slender body theory in the dynamic case. \\

Beyond serving as a theoretical justification for the use of slender body theory in modeling and simulation of thin fibers, our PDE framework can be applied more widely to construct slender body theories for different types of fluids. In particular, our formulation makes sense for the full Navier-Stokes equations and may serve as a first step toward a rigorous justification for models such as \cite{lim2008dynamics}. Our framework can also be used to study the case of near-intersection for multiple fibers, a regime where existing slender body theories break down due to nearby fibers introducing strong angular dependence into the velocity field near the opposing fiber centerline. 

\subsection{Slender body geometry}\label{geometric_constraints}
Before we can introduce the slender body approximation, we must precisely describe the slender geometries under consideration. \\

\begin{figure}[!h]
\centering
\includegraphics[scale=0.7]{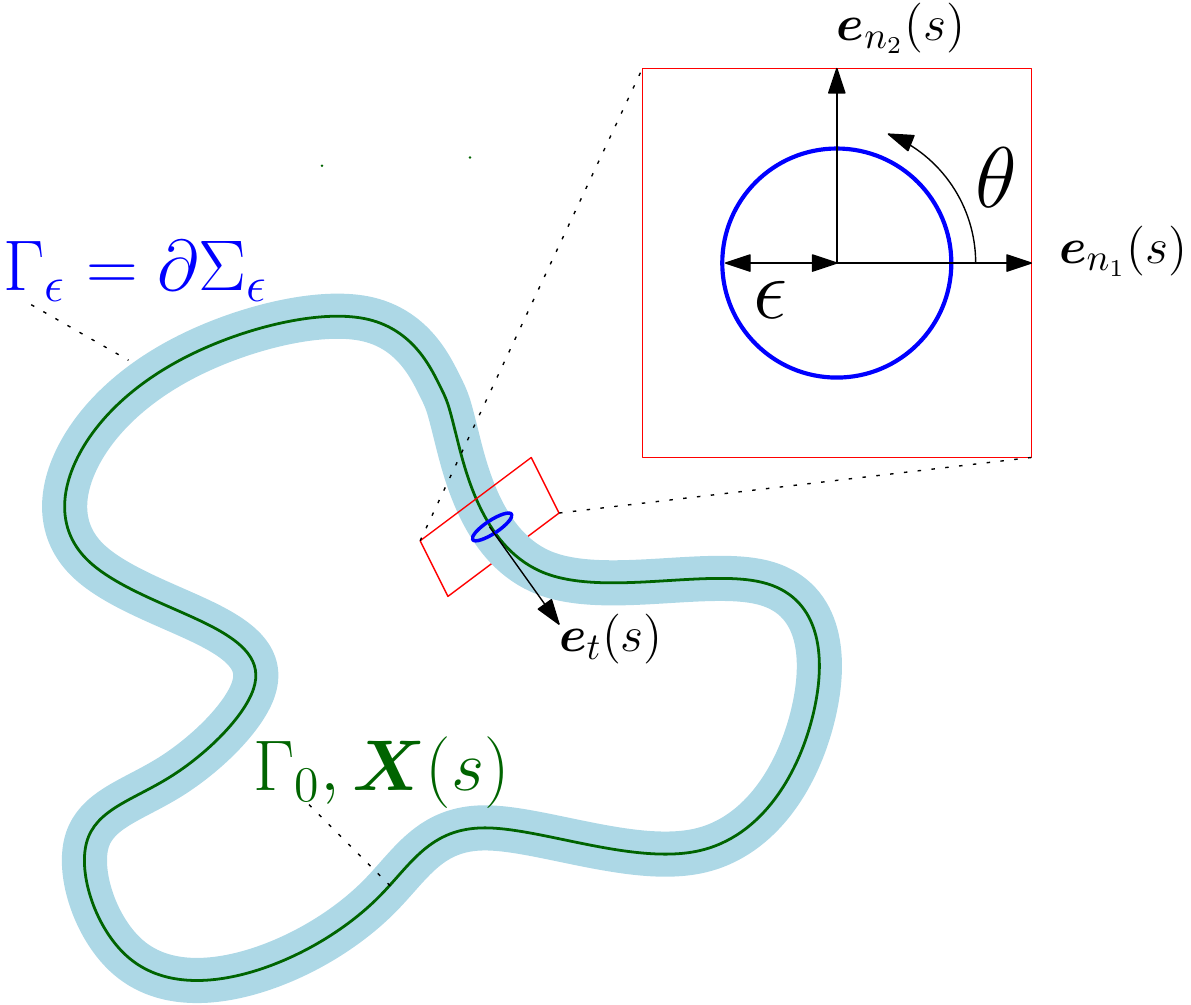}\\
\caption{The geometry of the fiber is specified via a $C^1$ orthonormal frame $\be_t(s)$, $\be_{n_1}(s)$, $\be_{n_2}(s)$. Local coordinates $\rho,\theta,s$ uniquely specify the location of a point $\bx$ in a neighborhood $\mathcal{O}$ of the slender body.}
\label{fig:coord_sys}
\end{figure}

Let $\X: \T \equiv \R/\Z \to \R^3$ denote the coordinates of a closed curve $\Gamma_0\in \R^3$, parameterized by arclength $s$ with the length of $\X$ normalized to 1. Let $C^k(\T)$, $k\in \N$, denote the space of $k$-continuously differentiable functions defined on $\T$ (we will use the same notation, without confusion, for scalar or $\R^3$-valued functions). We assume that $\X(s)\in C^2(\T)$ so that its curvature $\kappa(s) = \big| \frac{d^2\X}{d s^2} \big|$ is well-defined. \\

We assume that $\Gamma_0$ is non-self-intersecting; in particular, 
\begin{equation}\label{non_intersecting}
\inf_{s\neq t}\frac{|\X(s)-\X(t)|}{|s-t|} \ge c_{\Gamma}
\end{equation}
for some constant $c_{\Gamma}>0$. \\

For computational purposes, it will be convenient to consider a $C^1$ orthonormal frame along the slender body centerline $\Gamma_0$, periodic with respect to the arclength variable $s$. Such frames are commonly used in describing Kirchhoff rod dynamics (see \cite{antman2005nonlinear, goriely1997nonlinear} for a longer exposition). We begin by defining the tangent vector
\[\be_t(s)=\frac{d \X}{ds}. \]

We then choose a pair $\{\be_{n_1}(s),\be_{n_2}(s)\}$ of orthonormal vectors spanning the plane normal to $\be_t(s)$ at each $s\in \T$. By orthonormality, the vectors $\{\be_t,\be_{n_1},\be_{n_2}\}$ satisfy the ODE 
\begin{equation}\label{moving_ODE}
\frac{d}{ds}\begin{pmatrix}
\be_t(s) \\
\be_{n_1}(s) \\
\be_{n_2}(s) 
\end{pmatrix} = \begin{pmatrix}
0 & \kappa_1(s) & \kappa_2(s) \\
-\kappa_1(s) & 0 & \kappa_3(s) \\
-\kappa_2(s) & -\kappa_3(s) & 0 
\end{pmatrix} \begin{pmatrix}
\be_t(s) \\
\be_{n_1}(s) \\
\be_{n_2}(s) 
\end{pmatrix},
\end{equation}
where $\kappa_j$, $j=1,2,3$ are continuous functions of $s$. Note that if $\X$ is $C^3$ and the curvature $\kappa(s)$ is non-vanishing everywhere on $\T$, we can then use the simpler Frenet frame, where $\be_{n_1}(s) = \be_t'(s)/\kappa(s)$, $\kappa_1(s)=\kappa(s)$, $\kappa_2\equiv 0$, and $\kappa_3=\tau(s)$, the torsion of the curve $\X(s)$. This is useful because the ODE satisfied by the basis vectors is simpler and the coefficients have a clear geometric meaning. However, to allow for more general $C^2$ curves with possibly vanishing curvature at some points, we must refer to a frame that is well-defined when $\kappa(s)=0$.  \\

Although the geometric meaning of the general orthonormal frame coefficients $\kappa_j$ is less clear than for the Frenet frame, we note that the curvature $\kappa(s)$ of the fiber centerline always satisfies
\begin{equation}\label{kappa12}
\kappa(s)=\sqrt{\kappa_1^2(s)+\kappa_2^2(s)}.
\end{equation}

Furthermore, we may choose this orthonormal frame to satisfy the following lemma.  
 \begin{lemma}\label{lemmaorthonormal}
The coefficient $\kappa_3$ in \eqref{moving_ODE} may be made to satisfy
\begin{equation}\label{kappa3}
\kappa_3 \text{ does not depend on } s \text{ and } |\kappa_3| \le \pi.
\end{equation}
\end{lemma}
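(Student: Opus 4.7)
The plan is to exploit the rotational freedom in the choice of normal frame: given any pair $\{\be_{n_1},\be_{n_2}\}$, another orthonormal pair spanning the normal plane to $\be_t$ is obtained by rotation around $\be_t$. I would define a candidate frame via
\begin{align*}
\wt\be_{n_1}(s) &= \cos\phi(s)\,\be_{n_1}(s) + \sin\phi(s)\,\be_{n_2}(s),\\
\wt\be_{n_2}(s) &= -\sin\phi(s)\,\be_{n_1}(s) + \cos\phi(s)\,\be_{n_2}(s),
\end{align*}
for a $C^1$ function $\phi:\R\to\R$, and then choose $\phi$ so that the rotated frame satisfies \eqref{moving_ODE} with constant third coefficient.

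First I would differentiate $\wt\be_{n_1}$, substitute the ODE \eqref{moving_ODE} for $\be_{n_1}',\be_{n_2}'$, and re-express everything in the new basis $\{\be_t,\wt\be_{n_1},\wt\be_{n_2}\}$. A direct trigonometric computation collapses the $\wt\be_{n_1}$-coefficient to zero and yields
\[
\frac{d}{ds}\wt\be_{n_1} = -\wt\kappa_1(s)\be_t + \bigl(\kappa_3(s)+\phi'(s)\bigr)\wt\be_{n_2},
\]
with $\wt\kappa_1 = \cos\phi\,\kappa_1 + \sin\phi\,\kappa_2$, and an analogous formula for $\tfrac{d}{ds}\wt\be_{n_2}$. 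Thus the rotation leaves the off-diagonal structure of \eqref{moving_ODE} intact and shifts the last coefficient to $\wt\kappa_3 = \kappa_3 + \phi'$.

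To force $\wt\kappa_3 \equiv c$ for some constant $c$, I would take $\phi(s) = cs - \int_0^s \kappa_3(\sigma)\,d\sigma$. The main (really the only) subtlety is matching two requirements at once: $\phi$ must make the rotation well defined on $\T = \R/\Z$, and the resulting constant $c$ must satisfy $|c|\le\pi$. Well-definedness on $\T$ requires $\cos\phi,\sin\phi$ to be $1$-periodic, i.e. $\phi(1)-\phi(0)\in 2\pi\Z$; this forces
\[
c \;\in\; \Bigl\{\,2\pi k + \int_0^1 \kappa_3(\sigma)\,d\sigma \;:\; k\in\Z\,\Bigr\}.
\]
Since any coset of $2\pi\Z$ in $\R$ intersects $[-\pi,\pi]$, I can pick the integer $k$ making $|c|\le\pi$. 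With this choice, $\phi\in C^1(\R)$, the new frame descends to a $C^1$ orthonormal frame on $\T$ satisfying \eqref{moving_ODE} with constant $\kappa_3 = c$ and $|\kappa_3|\le\pi$, which proves the lemma. No real obstacle arises beyond the careful bookkeeping of the periodicity constraint versus the size bound.
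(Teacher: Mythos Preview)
Your proposal is correct and is essentially the same argument as the paper's: both rotate the normal pair by an angle $\phi(s)$, observe that this shifts $\kappa_3$ to $\kappa_3+\phi'$, set $\phi' = c-\kappa_3$ for a constant $c$, and then pick the integer in the periodicity constraint $\phi(1)-\phi(0)\in 2\pi\Z$ so that $|c|\le\pi$. The only differences are the sign convention of the rotation and the order in which the constant $c$ and the periodicity are introduced.
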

The proof of this statement is contained in Appendix \ref{moving_frame_pf}. In this construction, the orthonormal frame is almost the same as the Bishop frame \cite{bishop1975there} for open curves, except that $\kappa_3$ cannot necessarily be made to vanish for a closed curve. \\

We define
\begin{equation}\label{kappamax}
\kappa_{\max}=\max_{s\in\T} \abs{\kappa(s)}
\end{equation}
and note that, since $\X$ is a $C^2$ closed loop of length 1, we have $2\pi\le \kappa_{\max}<\infty$. \\

We also define the following cylindrical unit vectors with respect to the moving frame: 
\begin{align*}
\be_{\rho}(s,\theta) &:= \cos\theta \be_{n_1}(s) + \sin\theta\be_{n_2}(s) \\
\be_{\theta}(s,\theta) &:= -\sin\theta \be_{n_1}(s) + \cos\theta\be_{n_2}(s).
\end{align*}

Since the slender body is non-self-intersecting with $C^2$ centerline, there exists
\begin{equation}\label{rmax}
r_{\max} = r_{\max}(\kappa_{\max},c_\Gamma)
\end{equation}
such that points $\bx$ with ${\rm dist}(\bx,\X)< r_{\max}$ may be uniquely parameterized as a tube about the fiber centerline (see Figure \ref{fig:coord_sys}):
\begin{equation}\label{coordinates}
 \bx = \X(s)+\rho\be_{\rho}(s,\theta). 
 \end{equation}
 In fact, we claim that $r_{\max} \sim c_\Gamma/\kappa_{\max}$ should suffice, but we do not prove this here. \\

For $\epsilon < r_{\max}/4$, we then define a slender body $\Sigma_\epsilon$ with uniform radius $\epsilon$ by
\begin{equation}\label{slender_body}
\Sigma_{\epsilon} = \big\{\bx \in \R^3 \ts : \ts \bx= \X(s) + \rho \be_{\rho}(s,\theta), \quad \rho < \epsilon \big\}
\end{equation}
 
 We parameterize the surface of the slender body, $\Gamma_{\epsilon}=\p \Sigma_{\epsilon}$, as 
\begin{equation}\label{gamma_epsilon}
 \Gamma_{\epsilon}(s,\theta)= \X(s) + \epsilon\be_{\rho}(s,\theta). 
 \end{equation}

The surface element on $\Gamma_{\epsilon}$ is then given by
\begin{equation}\label{surface_element}
dS = \mathcal{J}_{\epsilon}(s,\theta) \ts d\theta ds, 
 \end{equation}

where we define
\begin{equation}\label{Jeps_def}
\mathcal{J}_{\epsilon}(s,\theta) := \epsilon\big(1-\epsilon(\kappa_1(s)\cos\theta+\kappa_2(s)\sin\theta) \big).
\end{equation}

We also define the neighborhood
\begin{equation}\label{region_O}
\mathcal{O} = \bigg\{\bx \in \Omega_{\epsilon} \ts : \ts \bx= \X(s) + \rho \be_{\rho}(s,\theta), \quad \epsilon < \rho<r_{\max} \bigg\}
\end{equation}
of the slender body to refer to fluid points $\bx$ near to the slender body.


\subsection{Classical slender body theory}\label{SBT_def}
With the geometric constraints specified above, we now define the corresponding slender body approximation to Stokes flow about the thin fiber. \\
 
The essential building block of slender body theory is the Stokeslet, the free-space Green's function for the Stokes equations \eqref{stokes}. The Stokeslet represents the Stokes flow in $\R^3$ resulting from a point source at $\bx_0$ of strength ${\bm g}$:
\begin{equation}
\begin{aligned}
-\mu \Delta \bu +\nabla p &= {\bm g}\delta(\bx-\bx_0) \\
\dive \ts \bu &=0 \\
|\bu| &\to 0 \quad \text{ as } |\bx| \to \infty,
\end{aligned}
\label{Stokes_Green}
\end{equation}
where $\delta(\bx)$ denotes the Dirac delta. We define the Stokeslet and its associated pressure tensor as 
\begin{align*}
\mathcal{S}(\widehat\bx) &= \frac{{\bf I}}{|\widehat\bx|} + \frac{{\widehat\bx}{\widehat\bx}^{\rm T}}{|\widehat\bx|^3}, \quad p^{S}({\widehat\bx}) = \nabla \left(\frac{1}{|\widehat\bx|}\right) = \frac{\widehat\bx}{|\widehat\bx|^3},
\end{align*}
where ${\bf I}$ is the identity matrix and $\widehat \bx = \bx-\bx_0$ (see \cite{pozrikidis1992boundary,childress1981mechanics} for a derivation). The solution to \eqref{Stokes_Green} is then given by 
\[ \bu = \frac{1}{8\pi\mu}\mc{S}(\wh \bx) \bm{g}, \quad p = \frac{1}{4\pi} p^S(\wh\bx)\cdot\bm{g}.\]

Since the singularly forced Stokes system \eqref{Stokes_Green} is linear, additional solutions may constructed by differentiating the Stokeslet and taking linear combinations of the Stokeslet and its higher-order derivatives -- dipoles, quadrupoles, octupoles, etc. Inclusion of these higher-order multipole terms in the expression of solutions to \eqref{Stokes_Green} can be useful especially in solving exterior problems, and is sometimes referred to as the method of singularities \cite{pozrikidis1992boundary}. \\

The higher-order term that plays the most important role in slender body theory, known as the doublet, is given by 
\[ \mathcal{D}( \widehat \bx) = \frac{1}{2}\Delta\mathcal{S}({\widehat \bx}) = \frac{{\bf I}}{|\widehat\bx|^3}-3\frac{{\widehat\bx}{\widehat\bx}^{\rm T}}{|\widehat\bx|^5}.\]

The idea of slender body theory is to approximate the velocity field around a thin filament in Stokes flow by integrating a superposition of Stokeslets, doublets, and possibly higher-order multipole terms along the centerline of the fiber. The slender body ansatz is given by the integral expression
\begin{equation}\label{SB_ansatz}
\bu^{\SB}(\bx) = \bu_{\infty}(\bx) + \frac{1}{8\pi\mu} \int_{\T} \bigg(\mathcal{S}(\bx-\X(t)) {\bm g}_1(t) +\mathcal{D}(\bx-\X(t)) {\bm g}_2(t)+\cdots \bigg)\ts dt,
\end{equation}
where $\bu_{\infty}$ is the undisturbed background fluid velocity, and the dots indicate the possibility of including higher-order multipole terms. The coefficients ${\bm g}_i$ of the higher-order terms are chosen to best preserve the structural integrity of the fiber (see below). \\

The simplest prescription for $\bm{g}_i$, $i=1,2,\dots$ would be to set $\bm{g}_1(t)=\bm{f}(t)$, $\bm{g}_i=0 \text{ for } i\ge 1$, where $\bm{f}(t)$ is the prescribed force density along the fiber centerline. The problem with this choice is that the surface velocity $\bu^{\SB}\big|_{\Gamma_\epsilon}(s,\theta)$ has a strong $\theta$-dependence on each constant-$s$ cross section (see left image of Figure \ref{fig:fiber_integ}). If the no-slip condition is satisfied on the fiber interface, this will lead to an instantaneous deformation of the fiber cross sectional geometry, destroying the structural integrity of the fiber. Setting $\bm{g}_2(t)=\frac{\epsilon^2}{2}\bm{g}_1(t)$ eliminates this $\theta$-dependence to leading order, so that the surface velocity is almost constant along cross sections (see right image of Figure \ref{fig:fiber_integ}). We term this $\theta$-independence constraint the {\em fiber integrity condition}. Note that the fiber integrity condition is a key feature of most slender body theories -- see, for example, \cite{tornberg2004simulating} and \cite{cox1970motion}. \\

We note that the fiber integrity constraint ignores torque and does not allow the fiber to simply rotate about its centerline. The additional consideration of torque along the fiber (explored in \cite{keller1976slender}; see also \cite{lim2004simulations}) is an extension to the classical slender body approximation \eqref{SBT2} that will be addressed in future work. \\

\begin{figure}[!h]
\centering
\includegraphics[scale=0.5]{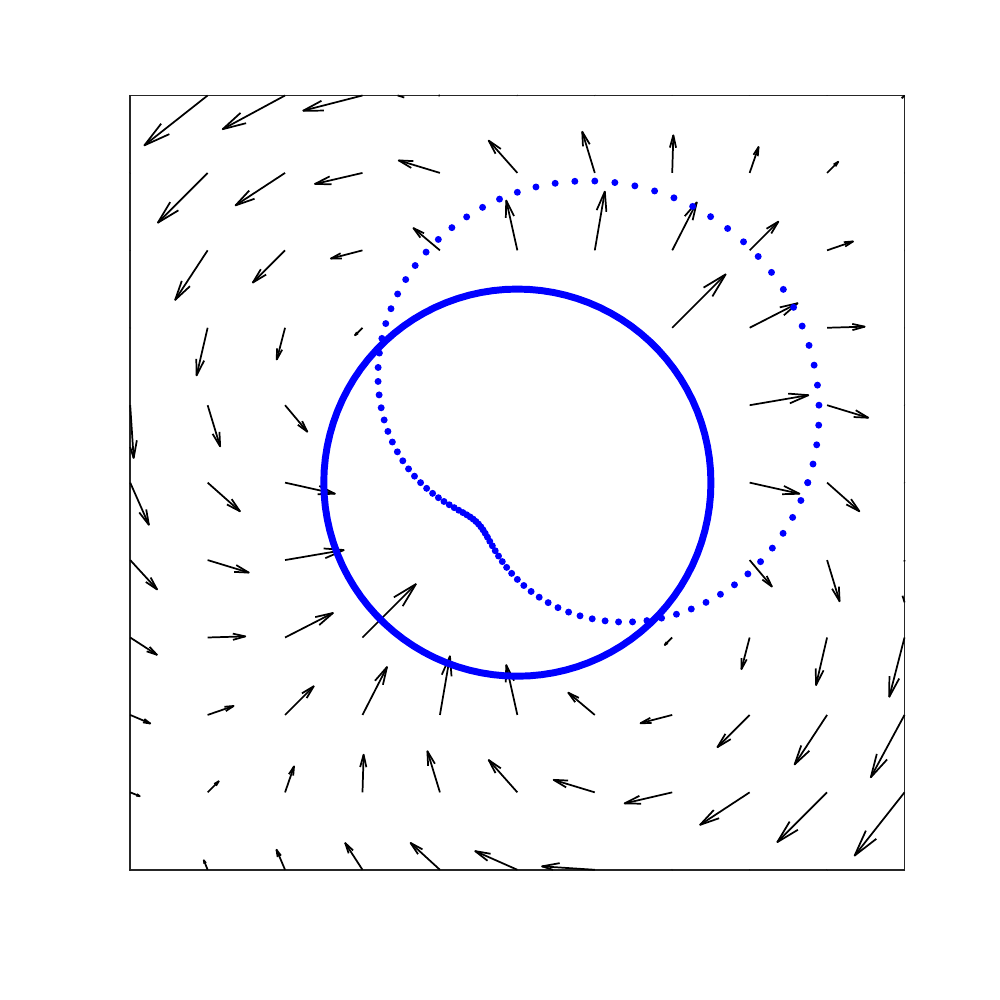}
\includegraphics[scale=0.5]{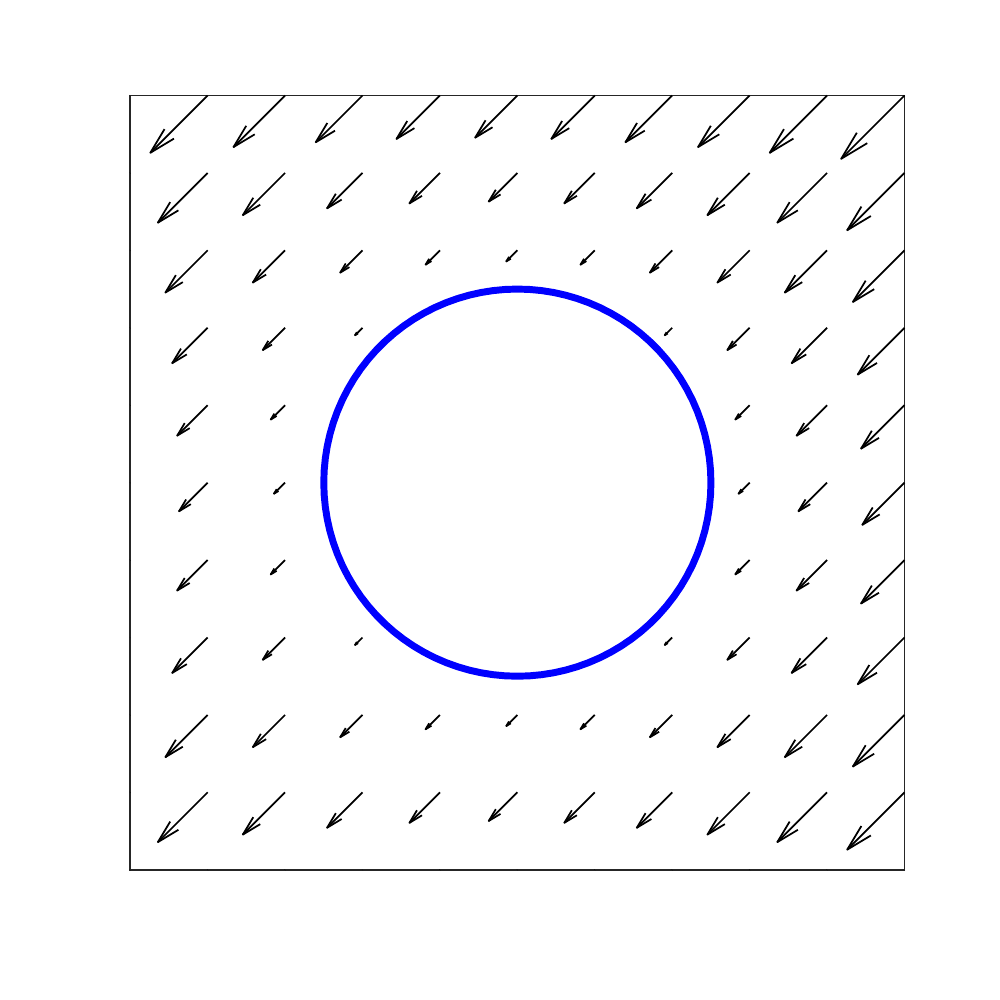}
\caption{A sketch of the reasoning behind the fiber integrity condition. If the fiber surface velocity $\bu^{\SB}\big|_{\Gamma_{\epsilon}}$ depends strongly on the angle $\theta$, the cross sectional shape of the fiber will deform in the next time instant (left image). When $\theta$-independence is imposed on the surface of each cross section (right image), we ensure the structural integrity of the fiber over time.}
\label{fig:fiber_integ}
\end{figure}

The classical (non-local) slender body approximation to the fluid velocity at a point $\bx$ away from the centerline is thus given by 
\begin{equation}\label{SBT2}
\begin{aligned}
8\pi\mu\bu^{\SB}(\bx) &= \int_{\T}\bigg( \mathcal{S}(\bm{R}) +\frac{\epsilon^2}{2}\mathcal{D}(\bm{R}) \bigg){\bm f}(t) \ts dt; \; \bm{R} = \bx-\X(t), \\
\mc{S}(\bm{R}) &= \frac{{\bf I}}{\abs{\bm{R}}} + \frac{\bm{R}\bm{R}^{\rm T}}{|\bm{R}|^3}, \quad \mc{D}(\bm{R}) = \frac{{\bf I}}{|\bm{R}|^3} - 3\frac{\bm{R}\bm{R}^{\rm T} }{|\bm{R}|^5}.
\end{aligned}
\end{equation}

The corresponding slender body approximation to the pressure in the fluid is given by 
\begin{equation}\label{SB_press0}
p^{\SB}(\bx) = \frac{1}{4\pi}\int_{\T} \frac{\bm{R}\cdot {\bm f}(t)}{|\bm{R}|^3} \ts dt. 
\end{equation}

To approximate the velocity of the slender body itself, a centerline expression $\bu^{\SB}_C(s)$ is often formulated following the matched asymptotics approach of Keller-Rubinow \cite{keller1976slender}. The expression \eqref{SBT2} is evaluated at $\rho=\epsilon$ and the resulting integral kernel $\mc{S}(s,\theta,t;\epsilon) + \frac{\epsilon^2}{2}\mc{D}(s,\theta,t;\epsilon)$ is expanded asymptotically about $\epsilon=0$ to obtain an integral equation on $\X(s)$ approximating $\bm{f}(s)$ given $\bu(s)$. For a periodic filament, the Keller-Rubinow formula (see \cite{shelley2000stokesian, cortez2012slender} for periodization of the original formula) is given by 

\begin{equation}\label{SBT_asymp}
\begin{aligned}
8\pi \mu \ts \bu^{\SB}_C(s) &= \big[({\bf I}- 3\be_t\be_t^{\rm T})-2({\bf I}+\be_t\be_t^{\rm T}) \log(\pi\epsilon/4) \big]{\bm f}(s) \\
&\qquad + \int_{\T} \left[ \left(\frac{{\bf I}}{|\bm{R}_0|}+ \frac{\bm{R}_0\bm{R}_0^{\rm T}}{|\bm{R}_0|^3}\right){\bm f}(t) - \frac{{\bf I}+\be_t(s)\be_t(s)^{\rm T} }{|\sin (\pi(s-t))/\pi|} {\bm f}(s)\right] \ts dt.
\end{aligned}
\end{equation}

Here $\bm{R}_0(s,t) := \X(s) -\X(t)$. The centerline expression \eqref{SBT_asymp} is typically used in numerical simulations to update the position of the fiber centerline. \\

Our aim is to establish a rigorous error estimate for the slender body approximation \eqref{SBT2} as well as the centerline approximation \eqref{SBT_asymp}.

\subsection{Slender body PDE formulation}
We must first determine a well-posed PDE for reconstructing a Stokes flow in $\R^3$ given only one-dimensional force data $\bm{f}(s)$. Since this total force alone is not sufficient information to uniquely solve a Stokes boundary value problem, we also impose a fiber integrity condition: the surface velocity of the fiber at each $s$ cross section must be independent of the angle $\theta$. 

We formulate the slender body problem as a boundary value problem for the Stokes system over the fluid domain $\Omega_{\epsilon}=\R^3\backslash \overline{\Sigma_{\epsilon}}$. Note that by rescaling, we can take the viscosity $\mu\equiv 1$. Let $\bm{\sigma}= \nabla \bu+(\nabla\bu)^{\rm T} -p{\bf I}$ denote the stress tensor and ${\bm n}=-\cos\theta\be_{n_1}(s)-\sin\theta\be_{n_2}(s)=-\be_{\rho}(s,\theta)$ denote the unit normal vector pointing into the slender body at each point $(s,\theta)\in \Gamma_{\epsilon}$. We define the slender body PDE as follows: 
\begin{equation}\label{exterior_stokes}
\begin{aligned}
-\Delta \bu +\nabla p &= 0, \; \dive \ts \bu = 0 \quad \text{in } \Omega_{\epsilon} = \R^3 \backslash \Sigma_{\epsilon}, \\
\int_0^{2\pi} (\bm{\sigma} {\bm n}) \ts \mathcal{J}_{\epsilon}(s,\theta) \ts d\theta &= {\bm f}(s) \hspace{1.5cm} \text{ on } \Gamma_{\epsilon}, \\
\bu|_{\Gamma_{\epsilon}} &= \bu(s) \quad \text{(unknown but independent of }\theta), \\
|\bu| \to 0 & \text{ as } |\bx|\to \infty.
\end{aligned}
\end{equation}
Here we use the expression for the Jacobian factor $\mathcal{J}_{\epsilon}(s,\theta)$ given by \eqref{Jeps_def}. In this formulation, the boundary data is specified as partial Neumann and partial Dirichlet information everywhere along the boundary $\Gamma_\epsilon$. Fiber movements are constrained by the partial Dirichlet condition $\bu\big|_{\Gamma_\epsilon}=\bu(s)$, so the fiber may bend along its centerline, but cross sections maintain their circular shape and radius $\epsilon$ over time. Since the expression for $\bu\big|_{\Gamma_\epsilon}$ is not specified beyond the $\theta$-independence, an infinite family of flows $\bu$ satisfy this constraint. The only given data in the above system is $\bm{f}:\T\to\R^3$, the one-dimensional force density along the fiber centerline. We define $\bm{f}$ to be the total surface force $(\bm{\sigma}\bm{n})\big|_{\Gamma_\epsilon}$ acting on the body over each cross section, weighted by the surface area of the fiber via $\mc{J}_\epsilon(s,\theta)$: greater surface area contributes more to the total force along the centerline; smaller surface area contributes less. To close the system, we require that the velocity $\bu$ decays to 0 as $\abs{\bx}\to\infty$.\\

Note that the boundary integral formulation in \cite{koens2018boundary} may be a more familiar representation of Stokes flow about a three-dimensional object, but assumes knowledge of the surface traction at each point over the slender body surface. In our formulation, the only data specified is the line force density $\bm{f}(s)$. Notice that the fiber integrity condition, common to all  slender body theories, then plays an essential role, allowing us to obtain a unique velocity field given only this one-dimensional force data. \\

\begin{figure}[!h]
\centering
\includegraphics[scale=0.67]{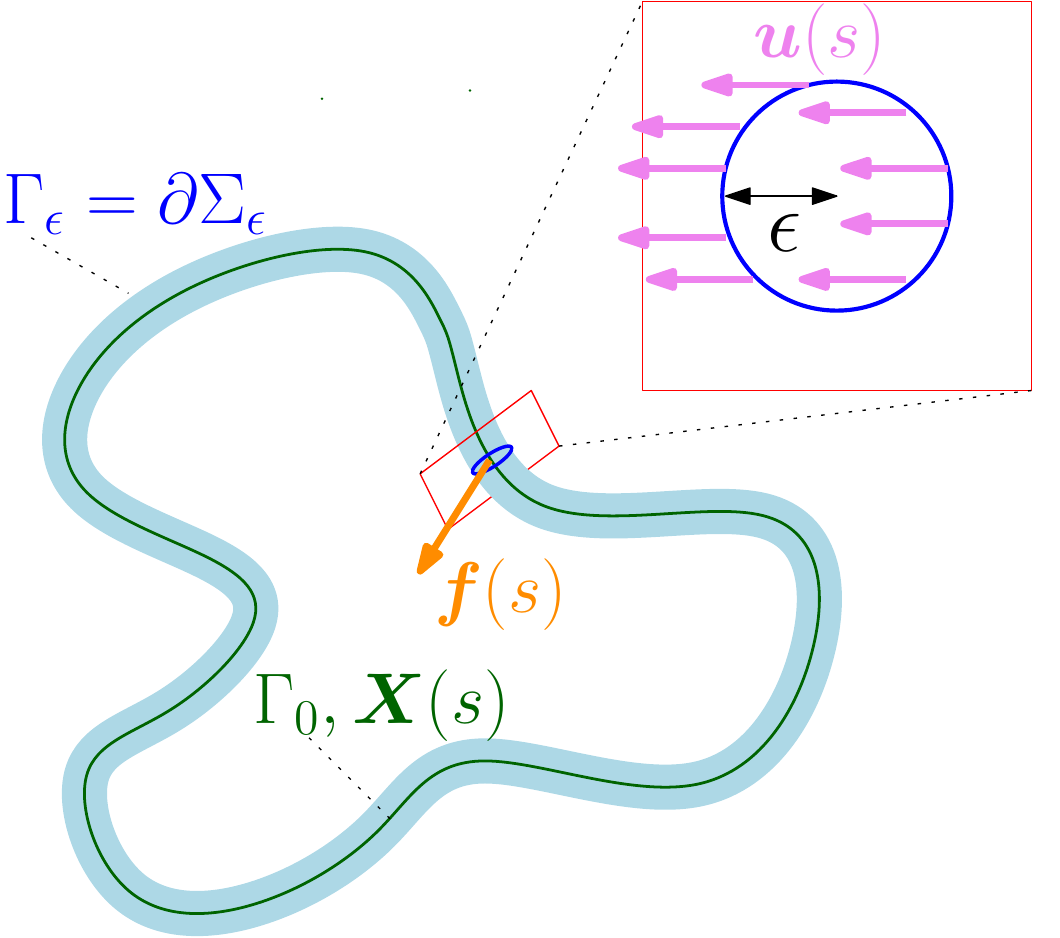}
\caption{In the slender body problem, we specify a line force density $\bm{f}(s)$ everywhere along $\Gamma_\epsilon$ and also require that the (unknown) fiber surface velocity $\bu\big|_{\Gamma_\epsilon}$ is independent of the angle $\theta$.}
\label{fig:fandu}
\end{figure}

As far as we know, this type of elliptic boundary value problem has not been explored in the literature. However, this formulation appears to be the natural PDE interpretation of the slender body problem, as any smooth enough solution to \eqref{exterior_stokes} satisfies the identity
\begin{align*}
\int_{\Omega_{\epsilon}} 2 \ts |\E(\bu)|^2\ts d\bx &= \int_{\Gamma_{\epsilon}} \bu(s)\cdot(\bm{\sigma}{\bm n}) \ts \mathcal{J}_{\epsilon} \ts d\theta ds \\
&= \int_{\T} \bu(s)\cdot {\bm f}(s)\ts ds, \hspace{1cm} \E(\bu) = \frac{\nabla\bu+(\nabla\bu)^{\rm T}}{2},
 \end{align*}
 where $\E(\bu)$ is the strain rate tensor, or symmetric gradient. This expression has a natural physical interpretation: the dissipation per unit time due to viscosity (left hand side) balances the power exerted by the slender body (right hand side). As we will see in Section \ref{PDE_stokes}, this identity is also the basis for our well-posedness theory.  \\
 

We show that the PDE \eqref{exterior_stokes} is well-posed in the homogeneous Sobolev space $D^{1,2}(\Omega_{\epsilon})$ (see \eqref{D12_definition} for a definition). Using the definition of weak solution given by Definition \ref{weak_sol_def} and \eqref{weak_exterior_p}, we show the following theorem: 
\begin{theorem}\emph{(Well-posedness of slender body PDE)}\label{stokes_theorem} 
Let $\Omega_{\epsilon}= \R^3\backslash \overline{\Sigma_{\epsilon}}$ for $\Sigma_{\epsilon}$ with $C^2$ centerline $\X(s)$ satisfying the geometric constraints in Section \ref{geometric_constraints}. Given ${\bm f}\in L^2(\T)$, there exists a unique weak solution $(\bu,p)\in D^{1,2}(\Omega_{\epsilon})\times L^2(\Omega_{\epsilon})$ to \eqref{exterior_stokes} satisfying 
\begin{equation}\label{stokes_est}
\|\bu\|_{D^{1,2}(\Omega_{\epsilon})} + \|p\|_{L^2(\Omega_{\epsilon})} \le |\log\epsilon|^{1/2}c_{\kappa} \|{\bm f}\|_{L^2(\T)},
\end{equation}
where the constant $c_{\kappa}$ depends only on the constants $c_{\Gamma}$ and $\kappa_{\max}$ characterizing the shape of the fiber centerline. 

\end{theorem}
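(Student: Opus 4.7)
My plan is to establish Theorem~\ref{stokes_theorem} as a direct application of the Lax--Milgram lemma on a kinematically constrained subspace of $D^{1,2}(\Omega_\epsilon)$, with the $|\log\epsilon|^{1/2}$ factor emerging entirely from a cross-sectional trace inequality. I introduce
\[
V := \big\{\bv \in D^{1,2}(\Omega_\epsilon) \ts : \ts \dive\bv = 0 \text{ in } \Omega_\epsilon, \; \bv|_{\Gamma_\epsilon} \text{ is independent of } \theta \big\},
\]
together with the bilinear form $a(\bu,\bv) = 2\int_{\Omega_\epsilon}\E(\bu):\E(\bv)\,d\bx$ and the load $L(\bv) = \int_\T \bm{f}(s)\cdot \bv(s)\,ds$, where $\bv(s)$ denotes the common value of the trace on each cross section. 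Testing \eqref{exterior_stokes} against $\bv \in V$ and integrating by parts, the pressure term drops out because $\dive\bv=0$, while the $\theta$-independence of $\bv|_{\Gamma_\epsilon}$ converts the boundary integral over $\Gamma_\epsilon$ into $L(\bv)$ via the surface element \eqref{surface_element}--\eqref{Jeps_def}. The weak formulation is therefore: find $\bu \in V$ with $a(\bu,\bv) = L(\bv)$ for all $\bv \in V$.

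Coercivity of $a$ on $V$ is the Korn inequality for exterior domains in the homogeneous Sobolev setting. Since every $\bv \in D^{1,2}(\Omega_\epsilon)$ lies in $L^6(\R^3)$ by the $\R^3$ Sobolev inequality, the only rigid motion compatible with the $D^{1,2}$ norm is the zero motion, and the standard exterior-domain Korn estimate $\|\nabla\bv\|_{L^2(\Omega_\epsilon)} \le C \|\E(\bv)\|_{L^2(\Omega_\epsilon)}$ holds with a constant that depends on $(c_\Gamma,\kappa_{\max})$ but, crucially, not on $\epsilon$, since $\Omega_\epsilon$ contains a fixed exterior region. Continuity of $a$ is immediate on the Hilbert seminorm $\|\nabla\cdot\|_{L^2}$.

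The main technical obstacle is continuity of $L$ on $V$ with the sharp constant, i.e., the trace inequality $\|\bv\|_{L^2(\T)} \le c_\kappa|\log\epsilon|^{1/2}\|\nabla\bv\|_{L^2(\Omega_\epsilon)}$ for $\bv \in V$. I work in the tubular coordinates $(\rho,\theta,s)$ on the neighborhood $\mc{O}$ of \eqref{region_O} and fix an $\epsilon$-independent radius $r := r_{\max}/2$. Using the $\theta$-constancy of the trace at $\rho=\epsilon$, I write
\[
\bv(s) = \frac{1}{2\pi}\int_0^{2\pi}\bv(r,\theta,s)\,d\theta - \frac{1}{2\pi}\int_0^{2\pi}\!\!\int_\epsilon^r \p_\rho \bv(\rho,\theta,s)\,d\rho\,d\theta.
\]
Applying Cauchy--Schwarz to the radial integral with the weight $\rho^{-1}$ produces the bound
\[
\left|\int_\epsilon^r\!\!\int_0^{2\pi}\p_\rho\bv\,d\rho\,d\theta\right|^2 \le \big(2\pi\log(r/\epsilon)\big)\int_\epsilon^r\!\!\int_0^{2\pi}|\p_\rho\bv|^2\,\rho\,d\rho\,d\theta,
\]
which, once squared, integrated in $s$, and combined with $\mc{J}_\epsilon \sim \rho$ to pass to Euclidean measure, yields $|\log\epsilon|\,\|\nabla\bv\|_{L^2(\mc{O})}^2$ with constants depending only on $\kappa_{\max}$ and $c_\Gamma$ through the curvature corrections in \eqref{Jeps_def} and the geometry controlled by \eqref{rmax}. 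The remaining bulk term at $\rho=r$ is handled by an $\epsilon$-independent trace inequality on the fixed tube $\{\rho = r\}$ followed by Sobolev/Poincar\'e on the annular shell $\{r/2 < \rho < r\}$, and is absorbed into $\|\nabla\bv\|_{L^2(\Omega_\epsilon)}$.

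With continuity and coercivity in hand, the Lax--Milgram lemma yields a unique $\bu \in V$ satisfying $a(\bu,\bv)=L(\bv)$ for all $\bv \in V$, together with the estimate $\|\bu\|_{D^{1,2}(\Omega_\epsilon)} \le c_\kappa|\log\epsilon|^{1/2}\|\bm{f}\|_{L^2(\T)}$. To recover the pressure I follow the classical De~Rham/Bogovskii route for exterior Stokes problems: the residual functional $\bv \mapsto a(\bu,\bv) - L(\bv)$ vanishes on all divergence-free test fields in $V$, and the solvability of $\dive\bw = g$ on bounded subdomains of $\Omega_\epsilon$ (for mean-zero $g\in L^2$) produces a pressure $p \in L^2(\Omega_\epsilon)$ satisfying the same $|\log\epsilon|^{1/2}$ estimate. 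The hard part throughout is the sharp trace inequality in the third paragraph: the precise $|\log\epsilon|^{1/2}$ rate requires careful control of the geometry of the twisted tubular frame \eqref{moving_ODE} and the Jacobian $\mc{J}_\epsilon$, and it is also the only place where the logarithm enters the final estimate \eqref{stokes_est}.
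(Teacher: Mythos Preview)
Your proposal is correct and follows essentially the same approach as the paper: Lax--Milgram on the constrained divergence-free space $V=\A_\epsilon^{\dive}$, with coercivity from an $\epsilon$-independent Korn inequality and load continuity from the $|\log\epsilon|^{1/2}$ trace inequality (proved via the same radial fundamental-theorem-of-calculus / weighted Cauchy--Schwarz idea), followed by de~Rham/Bogovskii for the pressure. The paper differs only cosmetically, straightening the centerline by local diffeomorphisms before the trace computation rather than working directly in curved tubular coordinates, and it devotes separate lemmas (via an extension-operator construction) to the $\epsilon$-independence of the Korn and Bogovskii constants that you take as given.
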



The explicit $\epsilon$-dependence of the constant $c_{\kappa}|\log\epsilon|^{1/2}$ is determined by the various inequalities used in the well-posedness theory for \eqref{exterior_stokes}, which will be summarized in Section \ref{constants0}. We are ultimately interested in using the solution theory framework established for Theorem \ref{stokes_theorem} to estimate the error between the true solution and the slender body approximation in terms of the slender body radius $\epsilon$. For this, it is important to be able to characterize and control the $\epsilon$-dependence in any constants arising in the solution theory. From a numerical analysis perspective, determining the $\epsilon$-dependence in the well-posedness theory for the slender body PDE is analogous to establishing the stability of a numerical algorithm. We thus verify the $\epsilon$-dependence of the Korn inequality, trace inequality, and pressure estimate. These are each classical inequalities, but their dependence on the size of the radius in the exterior of a thin, flexible fiber may not have been well known previously. In particular, our trace inequality (Lemma \ref{Trace_inequality}) is genuinely new, as we rely on the fiber integrity constraint in an essential way. The Korn and pressure inequalities shown here (Lemmas \ref{korn_eps} and \ref{divv_p_lem}) apply to more general boundary value problems in the exterior of thin domains, but their dependence on the radius of the thin domain appears to not be well documented. \\

We now state our main result comparing this true solution $\bu$ of \eqref{exterior_stokes} to the slender body approximation $\bu^{\SB}$, defined by \eqref{SBT2}. From this we may also compare the actual slender body velocity $\bu\big|_{\Gamma_\epsilon}(s)$ to the centerline approximation $\bu^{\SB}_C(s)$ \eqref{SBT_asymp}.

\begin{theorem}\emph{(Slender body theory error estimate)}\label{stokes_err_theorem} 
Let $\Omega_{\epsilon}= \R^3\backslash \overline{\Sigma_{\epsilon}}$ for $\Sigma_{\epsilon}$ with $C^2$ centerline $\X(s)$ satisfying the geometric constraints in Section \ref{geometric_constraints}. Given a force ${\bm f}(s)\in C^1(\T)$, let $\bu$ be the true solution to the slender body PDE \eqref{exterior_stokes} and let $\bu^{\SB}$ be the corresponding slender body approximation \eqref{SBT2}. Then the difference $\bu^{\SB}- \bu$, $p^{\SB}-p$ satisfies 
\begin{equation}\label{err_stokes_thm}
\|\bu^{\SB}-\bu\|_{D^{1,2}(\Omega_{\epsilon})}+ \|p^{\SB}-p\|_{L^2(\Omega_{\epsilon})} \le \epsilon|\log\epsilon| \ts c_{\kappa} \ts \|{\bm f}\|_{C^1(\T)}.
\end{equation}
Furthermore, the difference between the true velocity ${\rm Tr}(\bu)(s)$ of the slender body itself and the centerline approximation $\bu^{\SB}_C(s)$, given by \eqref{SBT_asymp}, satisfies 
\begin{equation}\label{center_err_thm}
\norm{{\rm Tr}(\bu) - \bu^{\SB}_C}_{L^2(\T)} \le \epsilon|\log\epsilon|^{3/2} \ts c_{\kappa} \ts \|{\bm f}\|_{C^1(\T)} .
\end{equation}
Here the constants $c_{\kappa}$ depend only on $c_{\Gamma}$ and $\kappa_{\max}$. 
\end{theorem}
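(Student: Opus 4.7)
The plan is a residual-and-stability argument: view $(\bu^{\SB}, p^{\SB})$ as an approximate solution of the slender body PDE \eqref{exterior_stokes}, measure how much it violates the boundary conditions on $\Gamma_\epsilon$, and then feed the residuals into the well-posedness framework of Theorem \ref{stokes_theorem}. Since both the Stokeslet $\mc{S}$ and the doublet $\mc{D}$ are divergence-free Stokes solutions in $\Omega_\epsilon$ that decay at infinity, $(\bu^{\SB}, p^{\SB})$ automatically satisfies $-\Delta\bu^{\SB} + \nabla p^{\SB}=0$, $\dive\ts \bu^{\SB}=0$, and the far-field decay condition; all error is concentrated in the two boundary conditions on $\Gamma_\epsilon$, which I package as
\begin{align*}
\bu^{\SB}_{\rm res}(s,\theta) &:= \bu^{\SB}\big|_{\Gamma_\epsilon}(s,\theta) - \frac{1}{2\pi}\int_0^{2\pi}\bu^{\SB}\big|_{\Gamma_\epsilon}(s,\theta')\ts d\theta', \\
\bm{F}_{\rm res}(s) &:= \int_0^{2\pi} \big(\bm{\sigma}(\bu^{\SB}){\bm n}\big)\ts\mc{J}_\epsilon(s,\theta)\ts d\theta - \bm{f}(s).
\end{align*}

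Next I would estimate these residuals by direct asymptotic analysis of \eqref{SBT2} at $\rho=\epsilon$. Using the coordinates $\bx=\X(s)+\epsilon\be_\rho(s,\theta)$ on $\Gamma_\epsilon$, I split $\int_\T$ into a local window $|s-t|<\delta$ and a far-field piece; in the local window I Taylor expand $\X(t)$ about $\X(s)$ using the moving frame ODE \eqref{moving_ODE}, and expand $\bm{f}(t)$ about $\bm{f}(s)$ using the $C^1$ hypothesis. The crucial point justifying the coefficient $\epsilon^2/2$ in front of $\mc{D}$ is that the leading angular dependence of $\mc{S}(\bm{R}) + (\tfrac{\epsilon^2}{2})\mc{D}(\bm{R})$ at $\rho=\epsilon$ cancels exactly; this is the classical Keller--Rubinow computation with the periodic treatment from \cite{shelley2000stokesian}. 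Residual angular variation then stems from curvature effects encoded in $\kappa_1,\kappa_2$ and in the Jacobian \eqref{Jeps_def}, and from the variation of $\bm{f}$, each of which is $O(\epsilon)$. I expect estimates of the form $\|\bu^{\SB}_{\rm res}\|_{H^{1/2}(\Gamma_\epsilon)} \lesssim \epsilon^{3/2}|\log\epsilon|^{1/2}\|\bm{f}\|_{C^1(\T)}$ and $\|\bm{F}_{\rm res}\|_{L^2(\T)} \lesssim \epsilon|\log\epsilon|^{1/2}\|\bm{f}\|_{C^1(\T)}$, with constants depending only on $c_\Gamma$ and $\kappa_{\max}$; the logarithmic factors arise because the local variable $s-t$ has range of order one while its natural scale in the kernel is $\epsilon$.

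With the residuals controlled, the difference $(\bw,q) := (\bu^{\SB}-\bu, p^{\SB}-p)$ satisfies the homogeneous Stokes system in $\Omega_\epsilon$ with far-field decay, together with $\int_0^{2\pi}(\bm\sigma(\bw){\bm n})\mc{J}_\epsilon\ts d\theta = \bm{F}_{\rm res}(s)$ and $\bw|_{\Gamma_\epsilon}(s,\theta) = \bar\bw(s) + \bu^{\SB}_{\rm res}(s,\theta)$ for an unknown $\theta$-independent $\bar\bw$. To apply Theorem \ref{stokes_theorem}, which requires strict $\theta$-independence of the Dirichlet trace, I would construct a divergence-free lift $\bu_{\rm lift}$ supported in the tubular neighborhood $\mc{O}$ of \eqref{region_O} with $\bu_{\rm lift}|_{\Gamma_\epsilon} = \bu^{\SB}_{\rm res}$ and $\|\bu_{\rm lift}\|_{D^{1,2}(\Omega_\epsilon)}$ controlled by an appropriate $\epsilon$-weighted trace norm of $\bu^{\SB}_{\rm res}$. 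Then $\bw - \bu_{\rm lift}$ solves a genuine slender body PDE \eqref{exterior_stokes} with right-hand side $\bm{F}_{\rm res} - \int_0^{2\pi}(\bm\sigma(\bu_{\rm lift}){\bm n})\mc{J}_\epsilon\ts d\theta \in L^2(\T)$, bounded by $\epsilon|\log\epsilon|^{1/2}\|\bm{f}\|_{C^1(\T)}$, so the stability estimate \eqref{stokes_est} delivers \eqref{err_stokes_thm} after combining the two powers of $|\log\epsilon|^{1/2}$. The centerline bound \eqref{center_err_thm} then follows by applying the trace inequality (Lemma \ref{Trace_inequality}), which contributes an additional $|\log\epsilon|^{1/2}$, to ${\rm Tr}(\bw)$, together with a direct matched-asymptotic estimate bounding $\|{\rm Tr}(\bu^{\SB}) - \bu^{\SB}_C\|_{L^2(\T)}$ using the expansion that produces \eqref{SBT_asymp}.

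The main obstacle is the sharp residual estimate. Classical derivations of \eqref{SBT2} yield only pointwise or uniform $O(\epsilon)$ control on the angular dependence, whereas here I need quantitative $L^2$ and $H^{1/2}$ bounds on $\Gamma_\epsilon$ that are uniform in $(s,\theta)$, with explicit dependence on $c_\Gamma$, $\kappa_{\max}$, and $\|\bm{f}\|_{C^1(\T)}$. The most delicate point is controlling the near-singular behavior as $t \to s$ after exploiting the built-in $\mc{S}+(\tfrac{\epsilon^2}{2})\mc{D}$ cancellation, together with the interaction of curvature terms from \eqref{moving_ODE} and the Jacobian expansion $\mc{J}_\epsilon$ against the $\theta$-dependent integrand; any $O(1)$ mismatch in the leading cancellation would degrade the final estimate from $\epsilon|\log\epsilon|$ to a rate that does not vanish with $\epsilon$.
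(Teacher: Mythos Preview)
Your residual--and--stability plan is exactly the paper's strategy, and the two residuals you isolate are precisely the paper's $\bu^{\rm r}$ (equation~\eqref{ur}) and $\bm{f}^{\SB}-\bm{f}$. Two technical choices, however, do not work as you state them.

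First, once you subtract any lift $\bu_{\rm lift}$, the corrected field $\bw-\bu_{\rm lift}$ no longer satisfies the homogeneous Stokes equations: a bulk term $\Delta\bu_{\rm lift}$ (equivalently $2\int\E(\bu_{\rm lift}):\E(\bv)$ in the weak form) appears, supported in the tubular shell. Theorem~\ref{stokes_theorem} has no body force, so \eqref{stokes_est} cannot be invoked as a black box. The paper instead goes back to the variational identity and uses $\widetilde\bu^{\text{e}}:=\bu^{\text{e}}-\widetilde\bv$ as the \emph{test function} in \eqref{variational_err_ee}; the Korn, trace, and pressure inequalities from Section~\ref{constants0} are then reassembled by hand to close the estimate.

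Second, the paper neither makes the lift divergence-free nor works in $H^{1/2}(\Gamma_\epsilon)$. It takes the simplest possible extension $\widetilde\bv=\phi_\epsilon(\rho)\,\bu^{\rm r}(s,\theta)$, a radial cutoff on $\epsilon\le\rho\le 4\epsilon$ times the surface residual. The nonzero divergence creates a term $\int p^{\text{e}}\,\dive\widetilde\bv$, handled by the pressure bound $\|p^{\text{e}}\|_{L^2}\le c_P\|\E(\bu^{\text{e}})\|_{L^2}$ (equation~\eqref{press_err_est}). To control $\|\nabla\widetilde\bv\|_{L^2(\Omega_\epsilon)}$ one then needs \emph{pointwise} bounds not only on $\bu^{\rm r}$ but on $\epsilon^{-1}\partial_\theta\bu^{\rm r}$ and $\partial_s\bu^{\rm r}$; these are established in Proposition~\ref{ur_and_derivs} and, integrated over the $O(\epsilon^2)$ support of $\widetilde\bv$, yield $\|\nabla\widetilde\bv\|_{L^2}\le c_\kappa\,\epsilon|\log\epsilon|\,\|\bm{f}\|_{C^1}$. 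Your $H^{1/2}$ route would instead require tracking the $\epsilon$-dependence of a divergence-free trace extension on the thin annulus, which is nowhere available in the paper and is genuinely delicate.

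Your predicted residual rates are also slightly off. The force residual satisfies $|\bm{f}^{\SB}-\bm{f}|\le c_\kappa\epsilon\|\bm{f}\|_{C^1}$ with \emph{no} logarithm (Proposition~\ref{fSB_est}); the would-be $|\log\epsilon|$ is removed by $\theta$-integration (Lemma~\ref{theta_int}), since the $\theta$-averaged stress sees only the difference $|\bm{R}|^{-n}-(|\bm{R}_0|^2+\epsilon^2)^{-n/2}$. Combined with the $|\log\epsilon|^{1/2}$ trace constant this term contributes $\epsilon|\log\epsilon|^{1/2}$; the full $\epsilon|\log\epsilon|$ in \eqref{err_stokes_thm} comes entirely from the non-conforming residual through $\|\nabla\widetilde\bv\|_{L^2}$. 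The centerline estimate then follows just as you say: one more trace inequality on $\widetilde\bu^{\text{e}}$ plus the direct bound $|\bu^{\SB}|_{\Gamma_\epsilon}-\bu^{\SB}_C|\le c_\kappa\epsilon|\log\epsilon|\|\bm{f}\|_{C^1}$ of Proposition~\ref{centerline_prop}.
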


In particular, asymptotic calculations by Johnson \cite{johnson1980improved} show that the doublet correction in \eqref{SBT2} for a curved centerline $\X(s)\in C^2(\T)$ allows the surface velocity $\bu^{\SB}\big|_{\Gamma_{\epsilon}}$ to satisfy the $\theta$-independence condition up to $O(\epsilon\abs{\log\epsilon})$, where ``$O$'' is the usual order symbol. We are able to rigorously verify this claim in Proposition \ref{ur_and_derivs}. \\

Although the slender body PDE is well-posed for rough ${\bm f}$, in order to obtain an error estimate, the force must be more regular. It is not clear that ${\bm f}\in C^1(\T)$ is optimal; however, some additional regularity on ${\bm f}$ is required in order for slender body theory to actually be an approximation to the slender body PDE. We will see that this is due to the fact that the error depends crucially on the change in the total force distribution along the fiber centerline. The other sources of error stem from the nonzero curvature of the fiber centerline as well as the finite length of the fiber. These error sources are identified in Section \ref{residual_calc} by calculating the residual between the slender body approximation and the true force and velocity along $\Gamma_{\epsilon}$. Although slender body theory is a continuous approximation to a continuous problem, this step can be considered from a numerical analysis point of view as establishing the consistency of the slender body approximation. The exact form of the error estimates in Theorem \ref{stokes_err_theorem} is derived in Section \ref{error_est_section} by combining the estimates of the residuals from Section \ref{residual_calc} with the stability estimates of Section \ref{PDE_stokes}.


\section{Well-posedness of slender body PDE}\label{PDE_stokes}
In this section we prove Theorem \ref{stokes_theorem}. We begin by defining our notion of a weak solution to the slender body PDE \eqref{exterior_stokes} and, in Section \ref{constants0}, state the important inequalities arising in the well-posedness theory, as well as their dependence on $\epsilon$. Then, in Section \ref{EandU_stokes}, we show existence and uniqueness results for the weak solution to \eqref{exterior_stokes}, as well as the estimate \eqref{stokes_est}. \\

We must first define the function space $D^{1,2}(\Omega_{\epsilon})$ for which the well-posedness result is stated. We seek a solution $\bu$ to \eqref{exterior_stokes} defined over the exterior domain $\Omega_{\epsilon}=\R^3\backslash{\overline{\Sigma_{\epsilon}}}$ such that $\bu$ decays to 0 as $|\bx|\to \infty$. However, we do not expect this decay to be especially fast. In particular, we expect that $\bu$ solving \eqref{exterior_stokes} around a thin filament behaves like the Stokeslet far away from the slender body. Thus we expect $|\bu|$ to decay like $\frac{1}{|\bx|}$ as $|\bx| \to \infty$; as such, we do not expect $\bu$ to be in $L^2(\Omega_{\epsilon})$. Nevertheless, we do expect $\nabla \bu\in L^2(\Omega_{\epsilon})$, so we will consider functions in the homogeneous Sobolev space on $\Omega_{\epsilon} = \R^3\backslash \overline{\Sigma_{\epsilon}}$:
\begin{equation}\label{D12_definition}
 D^{1,2}(\Omega_{\epsilon}) = \{ \bu\in L^6(\Omega_{\epsilon}) \ts : \ts \nabla \bu\in L^2(\Omega_{\epsilon}) \}, 
 \end{equation}
 explored in detail in \cite{galdi2011introduction}, Chapter II.6 - II.10. By the Sobolev inequality 
\begin{equation}\label{sobolev_ineq}
\|\bu\|_{L^6(\Omega_{\epsilon})} \le c_S\|\nabla \bu\|_{L^2(\Omega_{\epsilon})}, \qquad c_S>0,
\end{equation}
valid in the exterior domain $\Omega_{\epsilon}\subset \R^3$, we have that
\begin{equation}\label{D12_norm}
\|\bu\|_{D^{1,2}(\Omega_{\epsilon})} \equiv \|\nabla \bu \|_{L^2(\Omega_{\epsilon})}
\end{equation}
is a norm on $D^{1,2}(\Omega_{\epsilon})$, and hence $D^{1,2}(\Omega_{\epsilon})$ is a Hilbert space arising naturally in the exterior domain $\Omega_{\epsilon}$. Letting $C_0^{\infty}(\Omega_{\epsilon})$ denote the space of smooth, compactly supported test functions in $\Omega_\epsilon$, we also define $D^{1,2}_0(\Omega_{\epsilon})$ as the closure of $C_0^{\infty}(\Omega_{\epsilon})$ in $D^{1,2}(\Omega_{\epsilon})$. \\
%

With this definition of the space $D^{1,2}(\Omega_\epsilon)$, we may define the notion of a weak solution to the slender body Stokes PDE. We begin by considering the variational formulation of \eqref{exterior_stokes}. We define the space
\[ \A_{\epsilon}^{\dive}= \{\bv\in D^{1,2}(\Omega_{\epsilon}) \ts : \ts \dive \ts\bv = 0, \bv|_{\Gamma_{\epsilon}}=\bv(s) \}, \]
where the value of the function $\bv(s)$ on the boundary $\Gamma_{\epsilon}$ is unspecified but independent of the surface angle $\theta$; $\bv\in\A_{\epsilon}^{\dive}$ is such that for any $\varphi\in C_0^\infty(\Gamma_\epsilon)$, we have 
\begin{equation}\label{theta_indep} 
 \int_{\Gamma_\epsilon} \bv \frac{\p\varphi}{\p\theta} \ts dS =0.
 \end{equation} 
Note, then, that the trace operator on $\A_{\epsilon}^{\dive}$ is a function defined on both $\Gamma_\epsilon$ and $\T$, as any $\bv\in \A_{\epsilon}^{\dive}$ satisfies 
\begin{align*}
 \|{\rm Tr}(\bv)\|_{L^2(\Gamma_{\epsilon})}^2 &= \int_{\T}\int_0^{2\pi} |\big({\rm Tr}(\bv)\big)(s)|^2 \ts \mathcal{J}_{\epsilon}(s,\theta) \ts d\theta ds \\
 &= 2\pi\epsilon \int_{\T}|\big({\rm Tr}(\bv)\big)(s)|^2 \ts ds = 2\pi\epsilon \|{\rm Tr}(\bv)\|_{L^2(\T)}^2.
 \end{align*}
Here we used that $\mathcal{J}_{\epsilon}(s,\theta)= \epsilon \big(1-\epsilon(\kappa_1(s)\cos\theta+\kappa_2(s)\sin\theta) \big)$. We will make a slight abuse of notation: the trace operator $\rm{Tr}$, when applied to $\A_{\epsilon}^{\dive}$ functions, will be considered as both a function on $\Gamma_\epsilon$ and on $\T$. We then have the following trace inequality for functions $\bv\in \A_{\epsilon}^{\dive}$:
\begin{equation}\label{trace_Adiv}
\frac{1}{\sqrt{2\pi\epsilon}}\|{\rm Tr}(\bv)\|_{L^2(\Gamma_{\epsilon})}=\|{\rm Tr}(\bv)\|_{L^2(\T)} \le c_T \|\nabla\bv\|_{L^2(\Omega_{\epsilon})},
\end{equation}
where the $\epsilon$-dependence of the constant $c_T$ will be specified in Section \ref{constants0}. The set $\A_{\epsilon}^{\dive}$ is nontrivial, as can be seen, for example, by taking any constant function on the surface $\Gamma_{\epsilon}$ and solving the corresponding Stokes boundary value problem in $\Omega_{\epsilon}$ with this boundary data (see \cite{galdi2011introduction}, Chapter V.2 for treatment of the Stokes Dirichlet boundary value problem). Furthermore, taking a sequence $\bv_k\in\A_{\epsilon}^{\dive}$ such that $\bv_k\to\bv$ in $L^2$, we immediately see that $\bv$ satisfies the $\theta$-independence condition \eqref{theta_indep} as well; hence $\A_{\epsilon}^{\dive}$ is a closed subspace of $D^{1,2}(\Omega_{\epsilon})$. \\

We can then define a weak solution to \eqref{exterior_stokes} as follows:
\begin{definition}{(Weak solution to slender body Stokes PDE)}\label{weak_sol_def} 
A weak solution $\bu\in \A_{\epsilon}^{\dive}$ to \eqref{exterior_stokes} satisfies
\begin{equation}\label{weak_exterior}
\int_{\Omega_{\epsilon}} 2\ts \mathcal{E}(\bu):\mathcal{E}(\bv)\ts d\bx - \int_{\T} \bv(s)\cdot{\bm f}(s)\ts ds =0
\end{equation}
for any $\bv \in \A_{\epsilon}^{\dive}$. 
\end{definition} 

\begin{remark}
To use the language of finite element analysis, we note that the partial Dirichlet data, given by the fiber integrity condition $\bu\big|_{\Gamma_{\epsilon}}=\bu(s)$, is enforced as part of the function space $\A_{\epsilon}^{\dive}$ (an \emph{essential} boundary condition), whereas the partial Neumann data -- the total force per fiber cross section equals ${\bm f}(s)$ -- arises out of the variational formulation \eqref{weak_exterior} itself (a \emph{natural} boundary condition). 
\end{remark}

To formally verify that weak solutions of the slender body PDE \eqref{exterior_stokes} satisfy \eqref{weak_exterior}, we first note that away from $\Gamma_{\epsilon}$, the Stokes equations can be rewritten in terms of the stress tensor $\bm{\sigma} = 2 \ts\E(\bu) - p{\bf I}$ as $\dive \ts \bm{\sigma} = 0$ in $\Omega_{\epsilon}$. Assume $\bu \in \A_{\epsilon}^{\dive}\cap C_0^{\infty}(\overline\Omega_{\epsilon})$ satisfies the slender body PDE \eqref{exterior_stokes}, where $C^{\infty}_0(\overline \Omega_{\epsilon})$ denotes smooth functions uniformly continuous up to $\Gamma_\epsilon$ that vanish outside of some ball containing $\Sigma_\epsilon$. Note that this differs from the function space $C^{\infty}_0(\Omega_{\epsilon})$, which includes only functions that vanish on $\Gamma_\epsilon$. The stress tensor corresponding to $\bu$ then satisfies $\dive\ts\bm{\sigma} = 0$ in $\Omega_{\epsilon}$. Multiplying this equation by any $\bv \in \A_{\epsilon}^{\dive}\cap C^\infty_0(\overline \Omega_\epsilon)$ and integrating by parts, we have 
\begin{align*}
0 &= -\int_{\Omega_{\epsilon}} \dive \ts \bm{\sigma} \cdot \bv \ts d\bx = \int_{\Omega_{\epsilon}} \bm{\sigma} : \nabla \bv \ts d\bx - \int_{\Gamma_{\epsilon}} \bv \cdot (\bm{\sigma}{\bm n}) \ts dS \\
&= \int_{\Omega_{\epsilon}} \big(2\ts \E(\bu): \nabla \bv - p\ts \dive\ts \bv\big) \ts d\bx - \int_{\T}\int_0^{2\pi}\bv(s) \cdot(\bm{\sigma}{\bm n}) \ts \mathcal{J}_{\epsilon}(s,\theta) \ts d\theta ds \\
&= \int_{\Omega_{\epsilon}} \big(\nabla \bu: \nabla \bv+\nabla\bu^{\rm T}:\nabla \bv\big) \ts d\bx - \int_{\T}\bv(s)\cdot \int_0^{2\pi} (\bm{\sigma}{\bm n}) \ts \mathcal{J}_{\epsilon}(s,\theta) \ts d\theta ds \\
&= \int_{\Omega_{\epsilon}} 2 \ts\E(\bu): \E(\bv) \ts d\bx - \int_{\T} \bv(s) \cdot {\bm f}(s) \ts ds. 
\end{align*}
By density, this computation then holds for any $\bv \in \A_{\epsilon}^{\dive}$. Note that in the second line, we have rewritten the integral over $\Gamma_\epsilon$ in terms of the moving frame coordinates $(s,\theta)$, so the surface element becomes $dS= \mathcal{J}_{\epsilon}(s,\theta) \ts d\theta ds$. In the third line, we use that $\bv\in \A_{\epsilon}^{\dive}$ to pull the boundary term $\bv(s)$ out of the $\theta$-integral. The remaining integral in $\theta$ is exactly the force density $\bm{f}(s)$ that we defined in \eqref{exterior_stokes}. \\

Using this definition of a weak solution, we verify the existence and uniqueness claim of Theorem \ref{stokes_theorem}. Additionally, we show that the following is an equivalent definition of weak solution to \eqref{exterior_stokes} that includes a corresponding weak pressure $p\in L^2(\Omega_{\epsilon})$: 
\begin{definition}{\emph{(Weak solution with pressure)}}\label{pressure_exist}
Given a weak solution $\bu$ satisfying \eqref{weak_exterior}, there exists a unique corresponding pressure $p\in L^2(\Omega_{\epsilon})$ satisfying
\begin{equation}\label{weak_exterior_p}
 \int_{\Omega_{\epsilon}}\big(2 \ts\E(\bu):\E(\bv) - p\ts\dive\ts \bv\big) \ts d\bx - \int_{\T} \bv(s)\cdot{\bm f}(s) \ts ds = 0
\end{equation}
for any $\bv\in \A_{\epsilon}= \{\bv\in D^{1,2}(\Omega_{\epsilon}) \ts : \ts \bv|_{\Gamma_{\epsilon}}=\bv(s) \}$, where we have removed the divergence-free restriction on $\bv$. 
\end{definition}
We show that Definitions \ref{weak_sol_def} and \ref{pressure_exist} are equivalent in Section \ref{EandU_stokes}. Note that if $(\bu,p)\in (\A_{\epsilon}^{\dive}\cap C_0^{\infty}(\overline\Omega_{\epsilon}))\times C_0^{\infty}(\overline\Omega_{\epsilon})$ satisfies \eqref{weak_exterior_p}, then, integrating by parts, 
\begin{align*}
 0 &= -\int_{\Omega_{\epsilon}}\left(2\ts\dive(\E(\bu))\cdot \bv - \nabla p\cdot \bv\right) \ts d\bx + \int_{\Gamma_{\epsilon}}\left(2\ts \E(\bu){\bm n} - p\ts{\bm n}\right)\cdot \bv \ts dS  - \int_{\T} \bv(s)\cdot{\bm f}(s) \ts ds \\
 &= -\int_{\Omega_{\epsilon}} (\Delta\bu-\nabla p) \cdot\bv \ts d\bx + \int_{\T}\int_0^{2\pi}(\bm{\sigma}{\bm n})\cdot \bv(s) \ts \mathcal{J}_{\epsilon}(s,\theta)\ts d\theta ds  - \int_{\T} \bv(s)\cdot{\bm f}(s) \ts ds \\
 &= \int_{\Omega_{\epsilon}} (-\Delta\bu+\nabla p) \cdot\bv \ts d\bx + \int_{\T}\bv(s)\cdot\bigg(\int_0^{2\pi}(\bm{\sigma}{\bm n}) \ts \mathcal{J}_{\epsilon}(s,\theta)\ts d\theta - {\bm f}(s)\bigg) ds.
\end{align*}

Since this holds for any $\bv\in \A_{\epsilon}\cap C^\infty_0(\overline \Omega_\epsilon)$, and thus, by density, for any $\bv\in  \A_{\epsilon}$, the pair $(\bu,p)$ in fact satisfies equation \eqref{exterior_stokes} pointwise almost everywhere. Therefore, any smooth enough solution pair $(\bu,p)$ satisfying the weak formulation \ref{weak_exterior_p} is a classical solution of \eqref{exterior_stokes}. \\

We begin by stating the $\epsilon$-dependence of the inequalities arising in the well-posedness theory for \eqref{exterior_stokes}, the proofs of which are given in Appendix \ref{constants}. Using these inequalities, we show the existence and uniqueness of weak solutions to \eqref{weak_exterior} and hence to \eqref{weak_exterior_p}, as well as the estimate \eqref{stokes_est} from Theorem \ref{stokes_theorem}. 

\subsection{Dependence of key inequalities on $\epsilon$}\label{constants0}
In this section we collect the key inequalities used in the well-posedness theory for \eqref{weak_exterior} and note their explicit dependence on the slender body radius $\epsilon$. This will allow us to prove the $\epsilon$-dependence in the constant arising in the estimate \eqref{stokes_est} of Theorem \ref{stokes_theorem}. As noted in the introduction, it will be important to characterize how constants in the well-posedness framework depend on $\epsilon$, as we are ultimately interested in proving the error estimate in Theorem \ref{stokes_err_theorem}. In addition, the explicit $\epsilon$-dependence in some of these inequalities is either completely new, as in the case of the trace inequality (Lemma \ref{Trace_inequality}), or not well-documented, as in the case of the Korn inequality (Lemma \ref{korn_eps}). The proofs of each inequality appear in Appendix \ref{constants}. \\

First, since we are working in the function space $D^{1,2}(\Omega_\epsilon)$ \eqref{D12_definition}, it will be useful to verify the $\epsilon$-independence of the Sobolev inequality \eqref{sobolev_ineq} on $\Omega_\epsilon$. 
 \begin{lemma}\emph{(Sobolev inequality)}\label{sobo_ineq}
Let $\Omega_{\epsilon}=\R^3\backslash\overline{\Sigma_{\epsilon}}$, the exterior of a slender body of radius $\epsilon$. For any $\bu\in D^{1,2}(\Omega_{\epsilon})$, we have
\begin{equation}\label{sobolev_const}
\| \bu\|_{L^6(\Omega_{\epsilon})} \le c_S\|\nabla\bu\|_{L^2(\Omega_{\epsilon})}
\end{equation}
with a constant $c_S$ that is bounded independent of $\epsilon$ as $\epsilon\to 0$. 
\end{lemma}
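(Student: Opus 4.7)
The plan is to reduce to the standard Sobolev inequality $\|v\|_{L^6(\R^3)}\le c_0\|\nabla v\|_{L^2(\R^3)}$ on all of $\R^3$ by constructing an extension operator $E_\epsilon:D^{1,2}(\Omega_\epsilon)\to D^{1,2}(\R^3)$ whose operator norm is bounded uniformly in $\epsilon$. Once such an extension is in hand,
\[
\|\bu\|_{L^6(\Omega_\epsilon)} \le \|E_\epsilon\bu\|_{L^6(\R^3)} \le c_0\|\nabla E_\epsilon\bu\|_{L^2(\R^3)} \le c_0 C_\kappa\|\nabla\bu\|_{L^2(\Omega_\epsilon)},
\]
so the Sobolev constant $c_S = c_0 C_\kappa$ depends only on the geometric constants $c_\Gamma$ and $\kappa_{\max}$.

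For the extension, I would use radial reflection across $\Gamma_\epsilon$ in tube coordinates, combined with a smooth radial cutoff to handle the coordinate singularity at the centerline. Let $\chi\in C^\infty([0,1])$ satisfy $\chi(1)=1$ and $\chi(r)=0$ for $r\le 1/2$, and let $\Phi\big(\X(s)+\rho\be_\rho(s,\theta)\big) = \X(s)+(2\epsilon-\rho)\be_\rho(s,\theta)$ be the radial reflection across $\Gamma_\epsilon$, which maps $\Sigma_\epsilon$ into the collar $\{\epsilon<\rho<2\epsilon\}\subset\mathcal{O}$ (using $\epsilon<r_{\max}/4$). Define
\[
E_\epsilon\bu(\bx) = \begin{cases}\bu(\bx), & \bx\in\Omega_\epsilon,\\ \chi(\rho/\epsilon)\,\bu(\Phi(\bx)), & \bx = \X(s)+\rho\be_\rho(s,\theta)\in\Sigma_\epsilon.\end{cases}
\]
Since $\chi(1)=1$ and $\Phi$ fixes $\Gamma_\epsilon$ pointwise, the two definitions agree as traces on the interface, so $E_\epsilon\bu$ has a well-defined weak gradient equal to its piecewise classical gradient.

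The main obstacle is the angular coordinate degeneracy of the tube frame: the differential $d\Phi$ has operator norm $(2\epsilon-\rho)/\rho$ in the $\be_\theta$ direction, which blows up as $\rho\to 0$; a direct computation in the orthonormal tube frame shows that even after the change of variables, the $L^2$ gradient bound for $\bu\circ\Phi$ carries a weight $\rho'/(2\epsilon-\rho')$ that is nonintegrable near $\rho'=2\epsilon$ (the image of the centerline). The cutoff restricts the support of $E_\epsilon\bu$ inside $\Sigma_\epsilon$ to $\rho\in[\epsilon/2,\epsilon]$, which $\Phi$ maps to $\rho'\in[\epsilon,3\epsilon/2]$; on this range the offending factor is uniformly bounded by $3$, and a change of variables in the orthonormal frame yields $\|\chi\,\nabla(\bu\circ\Phi)\|_{L^2(\Sigma_\epsilon)} \le C_\kappa\|\nabla\bu\|_{L^2(\mathcal{O})}$ with $C_\kappa$ depending only on $\kappa_{\max}$.

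The delicate step is controlling the commutator $(|\chi'(\rho/\epsilon)|/\epsilon)\,|\bu(\Phi(\bx))|$, which after change of variables requires bounding $\int_{\{\epsilon<\rho'<3\epsilon/2\}}|\bu|^2/\epsilon^2\,dV$ by $\|\nabla\bu\|_{L^2(\Omega_\epsilon)}^2$. A direct H\"older estimate here produces an unacceptable $\epsilon^{-2/3}\|\bu\|_{L^6}^2$, so I would split $\bu = (\bu - \bu|_{\Gamma_\epsilon}) + \bu|_{\Gamma_\epsilon}$ and handle each piece separately. The fluctuation $\bu-\bu|_{\Gamma_\epsilon}$ is controlled by a radial Poincar\'e inequality: writing $\bu(s,\rho',\theta)-\bu(s,\epsilon,\theta)=\int_\epsilon^{\rho'}\partial_\rho\bu(s,\tilde\rho,\theta)\,d\tilde\rho$ and applying Cauchy-Schwarz on an annulus of $O(\epsilon)$ radial thickness absorbs the $1/\epsilon^2$. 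The trace part $\bu|_{\Gamma_\epsilon}$ is handled via $\bu(s,\epsilon,\theta)=-\int_\epsilon^\infty\partial_\rho\bu(s,\tilde\rho,\theta)\,d\tilde\rho$, using the decay of $\bu$ at infinity together with a weighted Hardy inequality in $\rho$ to obtain $\|\bu|_{\Gamma_\epsilon}\|_{L^2(\Gamma_\epsilon)}^2 \le C\epsilon\|\nabla\bu\|_{L^2(\Omega_\epsilon)}^2$. Assembling these estimates gives $\|\nabla E_\epsilon\bu\|_{L^2(\R^3)} \le C_\kappa\|\nabla\bu\|_{L^2(\Omega_\epsilon)}$ with $C_\kappa$ independent of $\epsilon$, which closes the argument.
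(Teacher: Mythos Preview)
Your high-level strategy---extend to $\R^3$ with an $\epsilon$-uniform gradient bound, then invoke Sobolev on $\R^3$---is exactly what the paper does. The gap is in your extension construction, specifically the commutator step. Your claimed trace bound
\[
\|\bu|_{\Gamma_\epsilon}\|_{L^2(\Gamma_\epsilon)}^2 \le C\epsilon\,\|\nabla\bu\|_{L^2(\Omega_\epsilon)}^2
\]
with $C$ independent of $\epsilon$ is false. Take the scalar model $\bu(\rho)=\phi(\rho)\log(1/\rho)$ with $\phi$ a fixed cutoff supported in $\{\rho<1\}$. Then $\|\bu|_{\Gamma_\epsilon}\|_{L^2(\Gamma_\epsilon)}^2\sim\epsilon|\log\epsilon|^2$ while $\|\nabla\bu\|_{L^2(\Omega_\epsilon)}^2\sim|\log\epsilon|$, so the ratio is $\epsilon|\log\epsilon|$, not $\epsilon$. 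The ``weighted Hardy inequality in $\rho$'' you invoke cannot hold: after writing $\bu(s,\epsilon,\theta)=-\int_\epsilon^\infty\partial_\rho\bu\,d\rho$ and Cauchy--Schwarz with weight $\rho^{-1/2}$, you are left with $\int_\epsilon^\infty\rho^{-1}\,d\rho$, which diverges. (This is the same logarithm that appears in the paper's trace inequality, Lemma~\ref{Trace_inequality}.) Consequently your $E_\epsilon$ has operator norm at least $|\log\epsilon|^{1/2}$ on this example.

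The paper's extension (Lemma~\ref{extension_eps}) avoids this by never asking the cutoff derivative to hit the full value of $\bu$. After straightening, it partitions the axial direction into cells of length $\sim\epsilon$, and on each cell subtracts the $L^2$-projection $P_{\sR}^{(j)}\bu$ onto rigid motions before reflecting. The reflection of the remainder $\bu-P_{\sR}^{(j)}\bu$ does incur a bad $\epsilon^{-1}\|\bu-P_{\sR}^{(j)}\bu\|_{L^2}$ (your commutator term), but Korn--Poincar\'e on the $\epsilon$-scaled cell returns exactly the missing factor: $\|\bu-P_{\sR}^{(j)}\bu\|_{L^2}\le c\epsilon\|\E(\bu)\|_{L^2}$. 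The rigid part extends trivially across the hole. For the Sobolev inequality alone you could subtract local means and use ordinary Poincar\'e instead; either way, the subtraction is the missing ingredient in your construction.
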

The proof of Lemma \ref{sobo_ineq} appears in Section \ref{Sob_ineq}. \\

We will also need to establish the $\epsilon$-dependence in the $\A_\epsilon$ trace inequality, which is the same as the $\A_\epsilon^\dive$ trace inequality \eqref{trace_Adiv}. Even though the slender body surface $\Gamma_{\epsilon}$ is codimension 1 and, for $\bu\in D^{1,2}(\Omega_{\epsilon})$, satisfies an $H^{1/2}(\Gamma_{\epsilon})$ trace inequality, the trace estimate needed for our existence theory and error bound is essentially a codimension 2 trace inequality, which appears to introduce an additional $1/\sqrt{\epsilon}$ that we must bound. However, we can show that the constant in the $L^2$ trace inequality grows only like $|\log\epsilon|^{1/2}$ as $\epsilon\to 0$.
\begin{lemma}\emph{(Trace inequality)}\label{Trace_inequality}
Let $\Omega_\epsilon = \R^3\backslash \overline{\Sigma_\epsilon}$ be as in Section \ref{geometric_constraints}. For $\bu\in \A_{\epsilon}$, the $\theta$-independent trace of $\bu$ on $\Gamma_{\epsilon}$ satisfies 
\begin{equation}\label{Trace_ineq} 
\|{\rm Tr}(\bu)\|_{L^2(\T)} \le c_{\kappa} |\log\epsilon|^{1/2} \| \nabla \bu\|_{L^2(\Omega_{\epsilon})}, 
\end{equation}
where ${\rm Tr} : D^{1,2}(\Omega_{\epsilon}) \to L^2(\T)$ is the trace operator and the constant $c_{\kappa}$ depends on the constants $\kappa_{\max}$ and $c_{\Gamma}$ but is independent of the fiber radius $\epsilon$. 
\end{lemma}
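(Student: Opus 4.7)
\textbf{Proof proposal for Lemma \ref{Trace_inequality}.}
The plan is to exploit the fiber integrity condition in an essential way by working in the tubular coordinates $(s,\rho,\theta)$ on the neighborhood $\mathcal{O}$ of $\Gamma_\epsilon$ and performing an angular average. For each $s\in\T$, set
\[
\bar\bu(\rho,s) \;=\; \frac{1}{2\pi}\int_0^{2\pi}\bu(s,\rho,\theta)\ts d\theta,
\]
and observe that, because $\bu\in\A_\epsilon$ is $\theta$-independent on $\Gamma_\epsilon$, we have $\bar\bu(\epsilon,s)=({\rm Tr}\,\bu)(s)$. Pick a fixed inner radius $R=r_{\max}/2$ (which depends only on $c_\Gamma,\kappa_{\max}$). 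By the fundamental theorem of calculus,
\[
({\rm Tr}\,\bu)(s) \;=\; \bar\bu(R,s) \;-\; \int_\epsilon^R \p_\rho\bar\bu(\rho,s)\ts d\rho.
\]
Applying Cauchy--Schwarz with the weight $\rho$ splits the integral factor as $\int_\epsilon^R\rho^{-1}d\rho=\log(R/\epsilon)$, yielding the pointwise bound
\[
|({\rm Tr}\,\bu)(s)|^2 \;\le\; 2|\bar\bu(R,s)|^2 \;+\; 2\log(R/\epsilon)\int_\epsilon^R \rho\ts|\p_\rho\bar\bu(\rho,s)|^2\ts d\rho.
\]

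Integrating in $s$ and using Cauchy--Schwarz in $\theta$ on $\p_\rho\bar\bu$, the second term is controlled by
\[
2\log(R/\epsilon)\int_\T\!\int_\epsilon^R\!\!\int_0^{2\pi} |\p_\rho\bu|^2\ts\rho\ts d\theta d\rho ds \;\le\; C|\log\epsilon|\ts\|\nabla\bu\|_{L^2(\mathcal{O})}^2,
\]
since the volume element $\rho\ts d\theta d\rho ds$ differs from $d\bx$ by a factor $1-\rho(\kappa_1\cos\theta+\kappa_2\sin\theta)$ that is bounded above and below by constants depending only on $\kappa_{\max}$ and $r_{\max}=r_{\max}(c_\Gamma,\kappa_{\max})$. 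This produces the desired $|\log\epsilon|$ factor.

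It remains to bound $\int_\T|\bar\bu(R,s)|^2 ds$ by $\|\nabla\bu\|_{L^2(\Omega_\epsilon)}^2$ independently of $\epsilon$; this is where I expect the main technical care (rather than an obstacle per se). Since $R$ is a fixed, $\epsilon$-independent distance from the centerline, the inner cylindrical surface $\Gamma_R=\{\rho=R\}$ is a smooth codimension-$1$ surface in $\Omega_\epsilon$, and by Cauchy--Schwarz in $\theta$ one has $\int_\T|\bar\bu(R,s)|^2 ds \le C_{\kappa}\|\bu\|_{L^2(\Gamma_R)}^2$ with $C_\kappa$ depending only on $\kappa_{\max},c_\Gamma$. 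A standard codimension-$1$ trace inequality on the fixed smooth surface $\Gamma_R$ inside a fixed annular shell $\mathcal{U}=\{R<\rho<2R\}\subset\Omega_\epsilon$ gives
\[
\|\bu\|_{L^2(\Gamma_R)}^2 \;\le\; C\bigl(\|\bu\|_{L^2(\mathcal{U})}\|\nabla\bu\|_{L^2(\mathcal{U})}+\|\bu\|_{L^2(\mathcal{U})}^2\bigr),
\]
with $C$ independent of $\epsilon$ since $\mathcal{U}$ does not depend on $\epsilon$. Finally, H\"older's inequality on the fixed-volume set $\mathcal{U}$ combined with the Sobolev inequality of Lemma~\ref{sobo_ineq} yields $\|\bu\|_{L^2(\mathcal{U})}\le|\mathcal{U}|^{1/3}\|\bu\|_{L^6(\Omega_\epsilon)}\le c_\kappa\|\nabla\bu\|_{L^2(\Omega_\epsilon)}$. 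Combining these estimates absorbs the $\bar\bu(R,s)$ term into a constant multiple of $\|\nabla\bu\|_{L^2(\Omega_\epsilon)}^2$ and completes the proof. The only $\epsilon$-dependence enters through the factor $\log(R/\epsilon)$ produced by the radial weight, giving the claimed $|\log\epsilon|^{1/2}$ growth.
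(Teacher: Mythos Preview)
Your argument is correct and shares the same core mechanism as the paper's proof: apply the fundamental theorem of calculus in the radial direction and use Cauchy--Schwarz with the weight $\rho^{1/2}$ to produce the factor $\int_\epsilon^R \rho^{-1}\,d\rho=\log(R/\epsilon)$, with the $\theta$-independence on $\Gamma_\epsilon$ ensuring that the resulting bound is for $\|{\rm Tr}(\bu)\|_{L^2(\T)}$ rather than the (codimension-1) $L^2(\Gamma_\epsilon)$ norm.

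The packaging differs somewhat. The paper first localizes via a partition of unity and straightens the centerline by $\epsilon$-independent diffeomorphisms, reducing to a straight cylinder; the cutoffs make the function vanish at the outer radius, so there is no boundary term in the radial integration, and the $\nabla\phi_j\,\bu$ contributions are absorbed via the $\epsilon$-independent Sobolev inequality (Lemma~\ref{sobo_ineq}). You instead work directly in the curved tubular coordinates, keep the boundary term $\bar\bu(R,s)$, and dispose of it by a standard trace estimate on the fixed surface $\Gamma_R$ followed by H\"older and the same Sobolev inequality on a fixed-volume shell. Both routes ultimately lean on Lemma~\ref{sobo_ineq} to close the estimate; yours avoids the straightening diffeomorphisms and partition of unity entirely, which is a modest simplification. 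One small adjustment: with $R=r_{\max}/2$ your shell $\{R<\rho<2R\}$ reaches $\rho=r_{\max}$, the edge of validity of the tubular coordinates; take $R=r_{\max}/4$ (or place the shell on the inner side, $\{R/2<\rho<R\}$) so that the fixed auxiliary domain sits strictly inside $\mathcal{O}$.
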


This $\epsilon$-dependence in the trace inequality is not surprising, as we expect that in the limit as $\epsilon\to 0$ the true solution will look something like the Stokeslet, which has unbounded velocity along the fiber centerline. In fact, this $\epsilon$ dependence should be optimal for the $L^2(\T)$ trace. The proof of Lemma \ref{Trace_inequality} is shown in Section \ref{trace_sec}. \\


Next, in order to show estimate \eqref{stokes_est}, we will need a Korn inequality bounding $\nabla \bu$ by $\E(\bu)$, the symmetric part of the gradient. We show in Section \ref{korn_proof} that the constant in the Korn inequality is bounded independently of $\epsilon$. 
\begin{lemma}\emph{(Korn inequality)}\label{korn_eps}
Let $\Omega_{\epsilon}=\R^3 \backslash \overline{\Sigma_{\epsilon}}$ be as in Section \ref{geometric_constraints}. Then any $\bu\in D^{1,2}(\Omega_{\epsilon})$ satisfies 
\begin{equation}\label{korn_ineq}
 \|\nabla \bu\|_{L^2(\Omega_{\epsilon})} \le c_K\|\E(\bu)\|_{L^2(\Omega_{\epsilon})}, 
 \end{equation}
 where the constant $c_K$ depends only on $\kappa_{\max}$ and $c_{\Gamma}$.
\end{lemma}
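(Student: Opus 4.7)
My plan is to reduce the bound \eqref{korn_ineq} to the clean Korn identity on $\R^3$ via integration by parts, and then control the resulting surface contribution on $\Gamma_\epsilon$ by exploiting the tube geometry. First, after approximating $\bu\in D^{1,2}(\Omega_\epsilon)$ by smooth functions with compact support up to $\Gamma_\epsilon$, I would integrate the pointwise identity
\[
2|\E(\bu)|^2 = |\nabla\bu|^2 + \partial_i u_j\,\partial_j u_i
\]
over $\Omega_\epsilon$ and integrate the cross term by parts twice. The boundary contributions at infinity vanish because $\bu\in L^6$ and $\nabla\bu\in L^2$ on $\Omega_\epsilon$, while the boundary terms on $\Gamma_\epsilon$ combine into
\[
\mathcal{B}(\bu) = \int_{\Gamma_\epsilon}\bigl[\bn\cdot((\bu\cdot\nabla)\bu) - (\bu\cdot\bn)\,\dive\bu\bigr]\,dS.
\]
This yields the master identity
\[
2\|\E(\bu)\|_{L^2(\Omega_\epsilon)}^2 = \|\nabla\bu\|_{L^2(\Omega_\epsilon)}^2 + \|\dive\bu\|_{L^2(\Omega_\epsilon)}^2 + \mathcal{B}(\bu),
\]
so the Korn inequality reduces to showing $|\mathcal{B}(\bu)| \le \tfrac12\|\nabla\bu\|_{L^2(\Omega_\epsilon)}^2 + c_\kappa\|\E(\bu)\|_{L^2(\Omega_\epsilon)}^2$ with $c_\kappa$ depending only on $\kappa_{\max}$ and $c_\Gamma$, after which the $\|\nabla\bu\|^2$ term is absorbed into the left-hand side.

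To estimate $\mathcal{B}(\bu)$, I would rewrite the integrand using surface calculus on $\Gamma_\epsilon$. The algebraic identity
\[
\bn\cdot((\bu\cdot\nabla)\bu) - (\bu\cdot\bn)\,\dive\bu = \dive_\Gamma\bigl(\bu\,(\bu\cdot\bn)\bigr) - (\bu\otimes\bu):\nabla_\Gamma \bn
\]
holds on the surface, where $\dive_\Gamma$ and $\nabla_\Gamma$ denote the surface divergence and gradient. Since $\Gamma_\epsilon$ is closed, the surface-divergence piece integrates to zero by Stokes's theorem, and only the Weingarten contraction $(\bu\otimes\bu):\nabla_\Gamma\bn$ remains. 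In the moving-frame tangent basis $\{\be_t,\be_\theta\}$ to $\Gamma_\epsilon$, the principal eigenvalues of $\nabla_\Gamma\bn$ are of size $1/\epsilon$ in the azimuthal direction and bounded by $\kappa_{\max}$ in the axial direction. Integration against the surface element $\mathcal{J}_\epsilon(s,\theta)\sim\epsilon$ from \eqref{Jeps_def} converts the $1/\epsilon$ factor into an $O(1)$ contribution, producing a bound of schematic form
\[
|\mathcal{B}(\bu)| \le c_\kappa \int_{\T}\!\int_0^{2\pi}|\bu(\X(s)+\epsilon\be_\rho(s,\theta))|^2\,d\theta\,ds.
\]

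The surface $L^2$ norm above is then controlled by $\delta\|\nabla\bu\|_{L^2(\Omega_\epsilon)}^2 + C(\delta,\kappa)\|\bu\|_{L^2(\mathcal{O})}^2$ via a standard trace estimate in the $O(1)$-wide tubular neighborhood $\mathcal{O}$ defined in \eqref{region_O}, whose thickness $r_{\max}-\epsilon$ depends only on $c_\Gamma$ and $\kappa_{\max}$. The interior $L^2$ norm $\|\bu\|_{L^2(\mathcal{O})}$ is controlled by Hölder and the Sobolev inequality of Lemma \ref{sobo_ineq} by $c_\kappa\|\nabla\bu\|_{L^2(\Omega_\epsilon)}$. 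Choosing $\delta$ small absorbs the $\|\nabla\bu\|^2$ term into the left-hand side, and yields \eqref{korn_ineq} with $c_K$ depending only on $\kappa_{\max}$ and $c_\Gamma$.

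The main obstacle is the curvature bookkeeping: one must confirm rigorously that the apparent $1/\epsilon$ singularity in the Weingarten map of the thin tube cancels exactly against the $\epsilon$-scaling of the surface measure, leaving an $O(1)$ coefficient. This requires a careful expansion of $\nabla_\Gamma\bn$ in tube coordinates, using the ODE \eqref{moving_ODE} for the moving frame to bound all derivatives of $\be_t,\be_\rho,\be_\theta$ in terms of the $\kappa_j$. One must also verify that no terms appear in which the $1/\epsilon$ azimuthal curvature is paired with a component of $\bu$ not controlled by the absorption scheme above. Once these structural checks are complete, the remainder of the argument is a standard Young-plus-trace absorption.
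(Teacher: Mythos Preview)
Your approach is genuinely different from the paper's (which builds an $\epsilon$-uniform extension operator $T_\epsilon:D^{1,2}(\Omega_\epsilon)\to D^{1,2}(\R^3)$ with $\|\E(T_\epsilon\bu)\|_{L^2(\R^3)}\le c_E\|\E(\bu)\|_{L^2(\Omega_\epsilon)}$ and then invokes Korn on $\R^3$), but as written it does not close. The failure is in the absorption step at the end.

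After your Weingarten bound you arrive at
\[
|\mathcal{B}(\bu)|\le c_\kappa\int_\T\int_0^{2\pi}|\bu(\X(s)+\epsilon\be_\rho)|^2\,d\theta\,ds,
\]
and you then claim this is controlled by $\delta\|\nabla\bu\|_{L^2(\Omega_\epsilon)}^2+C(\delta,\kappa)\|\bu\|_{L^2(\mathcal{O})}^2$ via a ``standard trace estimate''. But the integral on the left is \emph{not} the standard $L^2(\Gamma_\epsilon)$ trace: it is missing the surface-measure factor $\mathcal{J}_\epsilon\sim\epsilon$, so it equals roughly $\epsilon^{-1}\|\bu\|_{L^2(\Gamma_\epsilon)}^2$. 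No $\epsilon$-uniform trace inequality of the form you need holds for general $\bu$. A concrete obstruction: on a straight cylinder take $\bu=f(\rho)\,\be$ for a fixed unit vector $\be$, with $f(\rho)=\log(r_{\max}/\rho)/\log(r_{\max}/\epsilon)$, smoothly cut off near $\rho=r_{\max}$. Then $\int_\T\int_0^{2\pi}|\bu|^2\,d\theta\,ds=2\pi$, while $\|\nabla\bu\|_{L^2(\mathcal{O})}^2\sim|\log\epsilon|^{-1}$ and $\|\bu\|_{L^2(\mathcal{O})}^2\sim|\log\epsilon|^{-2}$, so the right-hand side of your proposed trace bound tends to $0$ as $\epsilon\to0$ while the left-hand side stays equal to $2\pi$. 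This is exactly why Lemma~\ref{Trace_inequality} of the paper requires $\theta$-independence and even then only achieves $|\log\epsilon|^{1/2}$.

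Even setting aside the trace issue, your absorption is circular. After bounding $\|\bu\|_{L^2(\mathcal{O})}$ by Sobolev you obtain $|\mathcal{B}(\bu)|\le(\delta+C(\delta)c_\kappa^2)\|\nabla\bu\|_{L^2(\Omega_\epsilon)}^2$, and the master identity then gives
\[
\|\nabla\bu\|^2\le 2\|\E(\bu)\|^2+(\delta+C(\delta)c_\kappa^2)\|\nabla\bu\|^2.
\]
Since $C(\delta)\to\infty$ as $\delta\to0$, there is no choice of $\delta$ making the bracket less than $1$; the right-hand side contains no genuine $\|\E(\bu)\|^2$ term to balance against. A route that might be salvageable is to observe that the leading $1/\epsilon$ piece of $-\mathcal{B}(\bu)$ has a \emph{sign} (for the straight cylinder it is $-\int(\bu\cdot\be_\theta)^2\,d\theta\,ds\le0$) and can be dropped rather than bounded in absolute value, leaving only a curvature-sized remainder $\sim\kappa_{\max}\|\bu\|_{L^2(\Gamma_\epsilon)}^2$ carrying the actual surface measure; but you would still need a non-circular way to control that remainder by $\|\E(\bu)\|^2$, and you have not supplied one. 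The paper sidesteps all of this by extending $\bu$ across $\Sigma_\epsilon$ in a way that controls the symmetric gradient uniformly.
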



Finally, the $\epsilon$-dependence in the estimate \eqref{stokes_est} of Theorem \ref{stokes_theorem} relies on the $\epsilon$-independence of the following inequality, which is intimately tied to the pressure estimate \eqref{press_est} that will be used to show \eqref{stokes_est}. 

\begin{lemma}{\emph{(Solution to $\dive\ts \bv=p$)}}\label{divv_p_lem} 
Let $\Omega_{\epsilon}=\R^3\backslash \overline{\Sigma_{\epsilon}}$ be as in Section \ref{geometric_constraints}. There exists a function $\bv\in D^{1,2}_0(\Omega_{\epsilon})$ satisfying 
\begin{align*}
\dive\ts \bv &= p \quad \text{ in }\Omega_{\epsilon}; \\
\|\bv\|_{D^{1,2}(\Omega_{\epsilon})} &\le c_P\| p\|_{L^2(\Omega_{\epsilon})}, 
\end{align*}
where the constant $c_P$ depends on $\kappa_{\max}$ and $c_{\Gamma}$ but not on $\epsilon$. 
\end{lemma}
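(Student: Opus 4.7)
The plan is a two-stage construction. First, I produce a global solution $\bv_0$ of $\dive \bv_0 = p$ on all of $\R^3$ via the Newtonian potential: extend $p$ by zero inside $\Sigma_\epsilon$ to obtain $\bar p \in L^2(\R^3)$ with $\|\bar p\|_{L^2(\R^3)} = \|p\|_{L^2(\Omega_\epsilon)}$, and set $\bv_0 := \nabla N \ast \bar p$ where $N(\bx) = -(4\pi|\bx|)^{-1}$. Then $\dive \bv_0 = \bar p$ on $\R^3$, so in particular $\dive \bv_0 = p$ on $\Omega_\epsilon$. Hardy--Littlewood--Sobolev gives $\|\bv_0\|_{L^6(\R^3)} \le C\|\bar p\|_{L^2}$, and Calderon--Zygmund gives $\|\nabla \bv_0\|_{L^2(\R^3)} \le C\|\bar p\|_{L^2}$, both with universal $C$. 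Hence $\bv_0 \in D^{1,2}(\R^3)$. Writing ${\bm g} := \bv_0|_{\Gamma_\epsilon}$, the divergence theorem on $\Sigma_\epsilon$ together with $\bar p \equiv 0$ there yields the compatibility
$$\int_{\Gamma_\epsilon} {\bm g} \cdot {\bm n} \, dS = -\int_{\Sigma_\epsilon} \bar p \, d\bx = 0,$$
with ${\bm n}$ pointing into $\Sigma_\epsilon$ per the paper's convention.

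Next, I correct $\bv_0$ so that it vanishes on $\Gamma_\epsilon$. Fix an $\epsilon$-independent cutoff $\chi(\rho)$ supported in $\mc{O}$, with $\chi \equiv 1$ for $\rho \le r_{\max}/4$ and $\chi \equiv 0$ for $\rho \ge r_{\max}/2$, so that $\|\nabla \chi\|_\infty$ is controlled by $r_{\max}$ alone. Set $\wt{\bw} := \chi\, \bv_0$, which is supported in the fixed shell $\mc{T} := \{\epsilon < \rho < r_{\max}/2\}$ and matches $\wt{\bw}|_{\Gamma_\epsilon} = {\bm g}$. Then $h := \dive \wt{\bw} = \chi p + (\nabla \chi) \cdot \bv_0$ satisfies $\|h\|_{L^2(\Omega_\epsilon)} \le C\|p\|_{L^2(\Omega_\epsilon)}$: the first term by $|\chi| \le 1$, and the second by controlling $\|\bv_0\|_{L^2}$ on the fixed-volume support of $\nabla \chi$ using Holder's inequality and the global bound $\|\bv_0\|_{L^6(\R^3)} \le C\|p\|_{L^2}$. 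Moreover $\int_{\Omega_\epsilon} h \, d\bx = \int_{\Gamma_\epsilon} {\bm g} \cdot {\bm n} \, dS = 0$. It then suffices to construct ${\bm z} \in D^{1,2}(\Omega_\epsilon)$, supported in $\mc{T}$, with $\dive {\bm z} = h$, ${\bm z}|_{\partial \mc{T}} = 0$, and $\|\nabla {\bm z}\|_{L^2(\Omega_\epsilon)} \le C\|h\|_{L^2(\Omega_\epsilon)}$ independent of $\epsilon$; then $\bv := \bv_0 - \wt{\bw} + {\bm z}$ solves the problem.

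The main obstacle is the Bogovskii-type construction of ${\bm z}$ on $\mc{T}$, a hollow tube whose outer boundary is fixed at $\rho = r_{\max}/2$ but whose inner boundary $\Gamma_\epsilon$ collapses onto the 1D centerline $\X(\T)$ as $\epsilon \to 0$. My plan is to cover $\mc{T}$ by finitely many Lipschitz subdomains, each star-shaped with respect to a ball of fixed radius $\sim r_{\max}$ placed in the outer part of the shell (at distance on the order of $r_{\max}/3$ from $\Gamma_\epsilon$), so that the local Bogovskii constant depends only on $c_\Gamma$ and $\kappa_{\max}$. The essential point is that by exploiting the longitudinal $s$-direction one can build cylindrical-wedge subdomains whose star-center balls have $\epsilon$-uniform size---something impossible in a purely two-dimensional annular cross-section, where the Bogovskii constant on an annulus of inner radius $\epsilon$ degenerates as $\epsilon \to 0$. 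Global compatibility $\int h = 0$ together with the standard flux-balancing across overlapping subdomains (Galdi, Chapter III) then patches the local Bogovskii operators into a single ${\bm z}$ with the required bound. Carrying out this covering and compatibility analysis carefully in the tube coordinates $(s, \rho, \theta)$ is the technical heart of the lemma.
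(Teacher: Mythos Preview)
Your approach is essentially identical to the paper's: both take the gradient of the Newtonian potential of the zero-extension of $p$ as a global solution, then correct the trace on $\Gamma_\epsilon$ by a cutoff plus a Bogovskii solution on a tubular shell of fixed outer radius $\sim r_{\max}$, using the $L^6$ bound on the global solution to control $L^2$ on the fixed-volume support of $\nabla\chi$, and controlling the Bogovskii constant via a star-shaped covering with balls of $\epsilon$-independent radius placed away from $\Gamma_\epsilon$. The only cosmetic differences are that the paper first approximates $p$ by $C_0^\infty$ functions and cites Maz'ya's covering lemma rather than spelling out the cylindrical-wedge subdomains.
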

For fixed $\epsilon$, the existence of such a $\bv$ is guaranteed by \cite{galdi2011introduction}, Theorem III.3.6, which follows the original construction by Bogovskii \cite{bogovskii1980solutions}. In Section \ref{pressure_const}, we reiterate the proof of this theorem to determine the dependence of the constant $c_P$ on the slender body radius $\epsilon$.


\subsection{Existence and uniqueness}\label{EandU_stokes}
We now use the inequalities outlined in the previous section to prove Theorem \ref{stokes_theorem}. We begin by verifying the existence and uniqueness of a weak solution $\bu$ to \eqref{weak_exterior}. 

\begin{proof}[Proof of existence and uniqueness assertion in Theorem \ref{stokes_theorem}:] 
To show existence of a weak solution $\bu \in \A_{\epsilon}^{\dive}$ to \eqref{weak_exterior}, we first show that the bilinear form 
\[ \mathcal{B}[\bu,\bv]:=\int_{\Omega_{\epsilon}} 2 \ts \E(\bu):\E(\bv) \ts d\bx \]
is coercive on $\A_{\epsilon}^{\dive}$. Using the Korn inequality \eqref{korn_ineq}, for any $\bu\in \A_{\epsilon}^{\dive}$ we have
\begin{align*}
\mathcal{B}[\bu,\bu] &= \int_{\Omega_{\epsilon}} 2 \ts |\E(\bu)|^2 \ts d\bx \ge \int_{\Omega_{\epsilon}} \frac{2}{c_K^2} \ts |\nabla\bu|^2 \ts d\bx = \frac{2}{c_K^2}\|\nabla\bu\|_{L^2(\Omega_{\epsilon})}^2 ,
\end{align*}
so $\mathcal{B}[\cdot,\cdot]$ is coercive on $\A_{\epsilon}^{\dive}$. Also, $\mathcal{B}[\cdot,\cdot]$ is bounded, since
\begin{align*}
\abs{\mathcal{B}[\bu,\bv]} &\le \int_{\Omega_{\epsilon}} 2 |\E(\bu)| |\E(\bv)| \ts d\bx \le 2 \|\E(\bu)\|_{L^2(\Omega_{\epsilon})}\|\E(\bv)\|_{L^2(\Omega_{\epsilon})} \le 2 \|\nabla \bu\|_{L^2(\Omega_{\epsilon})}\|\nabla\bv\|_{L^2(\Omega_{\epsilon})}.
\end{align*}

Furthermore, for ${\bm f}\in L^2(\T)$ and $\bv\in \A_{\epsilon}^{\dive}$, the linear functional
\[ \ell({\bm f}) := \int_{\T} {\bm f}(s)\cdot \bv(s) \ts ds \]
is bounded: using Cauchy-Schwarz and the trace inequality (Lemma \ref{Trace_inequality}) in $\A_{\epsilon}^{\dive}$,
\begin{align*}
\int_{\T} \bv(s)\cdot{\bm f}(s) \ts ds &\le \|\bv\|_{L^2(\T)}\|{\bm f}\|_{L^2(\T)} \le c_T\|\nabla\bv\|_{L^2(\Omega_{\epsilon})}\|{\bm f}\|_{L^2(\T)} .
\end{align*}

Since the form $\mathcal{B}[\cdot,\cdot]$ is bounded and coercive on $\A_{\epsilon}^{\dive}$ and the functional $\ell(\cdot)$ is bounded on $\A_{\epsilon}^{\dive}$, by the Lax-Milgram theorem there exists a unique solution $\bu\in \A_{\epsilon}^{\dive}$ to \eqref{weak_exterior}. Furthermore, taking $\bv=\bu$ in \eqref{weak_exterior} and using the Korn inequality \eqref{korn_ineq}, we have that this solution $\bu$ satisfies
\begin{align*}
\|\nabla \bu\|_{L^2(\Omega_{\epsilon})}^2 &\le c_{K}^2\|\E(\bu)\|_{L^2(\Omega_{\epsilon})}^2 \le \frac{c_K^2}{2}\|{\bm f}\|_{L^2(\T)}\|\bu\|_{L^2(\T)} \\
&\le \frac{c_K^2}{2}\left(\frac{1}{4\delta}\|{\bm f}\|_{L^2(\T)}^2+\delta\|\bu\|_{L^2(\T)}^2\right) \le \frac{c_K^2}{2}\left(\frac{1}{4\delta}\|{\bm f}\|_{L^2(\T)}^2+\delta c_T^2\|\nabla\bu\|_{L^2(\Omega_{\epsilon})}^2\right).
\end{align*}

Taking $\delta=\frac{1}{c_T^2c_K^2}$, we obtain
\begin{equation}\label{u_est}
\|\nabla \bu\|_{L^2(\Omega_{\epsilon})} \le \frac{1}{2}c_K^2c_T\|{\bm f}\|_{L^2(\T)}.
\end{equation} 
\end{proof}

The existence of a unique velocity $\bu\in D^{1,2}(\Omega_{\epsilon})$ satisfying \eqref{weak_exterior} can be used to show the equivalence of Definitions \ref{weak_sol_def} and \ref{pressure_exist}, the characterization of a weak solution to \eqref{exterior_stokes} without and with the unique corresponding pressure $p\in L^2(\Omega_{\epsilon})$. The existence of the pressure relies on the following lemma, the proof of which can be found in \cite{galdi2011introduction}, Corollary III.5.1:
\begin{lemma}\emph{(de Rham Theorem)}\label{de_rham}
Let $\Omega_{\epsilon}=\R^3 \backslash\overline{\Sigma_{\epsilon}}$. Any bounded linear functional $\ell$ on $D^{1,2}_0(\Omega_{\epsilon})$ identically vanishing on the divergence-free subspace $D^{1,2}_{0,\dive}(\Omega_{\epsilon})$ is of the form 
\[ \ell(\bw)=\int_{\Omega_{\epsilon}} p \ts \dive\ts \bw \ts d\bx \qquad  \bw \in D_0^{1,2}(\Omega_{\epsilon})\]
for some uniquely determined $p\in L^2(\Omega_{\epsilon})$. 
\end{lemma}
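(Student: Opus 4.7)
The plan is to reduce the statement to Lemma \ref{divv_p_lem} plus the Riesz representation theorem on $L^2(\Omega_\epsilon)$. The key observation is that Lemma \ref{divv_p_lem} provides a bounded right inverse for the divergence operator, which lets us factor any functional vanishing on $\ker(\dive)$ through the $L^2$-valued map $\dive: D^{1,2}_0(\Omega_\epsilon)\to L^2(\Omega_\epsilon)$.

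First, I would introduce the bounded linear operator $B: D^{1,2}_0(\Omega_\epsilon)\to L^2(\Omega_\epsilon)$, $B\bw := \dive\ts \bw$, whose boundedness follows from $\|\dive\ts\bw\|_{L^2}\le \sqrt{3}\ts\|\nabla\bw\|_{L^2}=\sqrt{3}\ts\|\bw\|_{D^{1,2}}$, and whose kernel is, by definition, $D^{1,2}_{0,\dive}(\Omega_\epsilon)$. Lemma \ref{divv_p_lem} says precisely that $B$ is surjective and admits a bounded (not necessarily linear) right inverse $\mathcal{R}: L^2(\Omega_\epsilon)\to D^{1,2}_0(\Omega_\epsilon)$ with
\[
B(\mathcal{R}q) = q, \qquad \|\mathcal{R}q\|_{D^{1,2}(\Omega_\epsilon)} \le c_P\|q\|_{L^2(\Omega_\epsilon)}.
\]

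Next, given $\ell$ bounded on $D^{1,2}_0(\Omega_\epsilon)$ and vanishing on $\ker(B)$, I would define $\wt\ell: L^2(\Omega_\epsilon)\to \R$ by $\wt\ell(q) := \ell(\mathcal{R}q)$. Well-definedness requires checking that $\wt\ell$ does not depend on the choice of preimage: if $B\bw_1 = B\bw_2 = q$, then $\bw_1 - \bw_2 \in \ker(B)$, so $\ell(\bw_1)=\ell(\bw_2)$ by hypothesis. Linearity of $\wt\ell$ then follows because any two choices of preimage are interchangeable modulo $\ker(B)$. Boundedness is immediate:
\[
|\wt\ell(q)| \le \|\ell\|_{*}\|\mathcal{R}q\|_{D^{1,2}(\Omega_\epsilon)} \le c_P\|\ell\|_{*}\|q\|_{L^2(\Omega_\epsilon)}.
\]
The Riesz representation theorem on the Hilbert space $L^2(\Omega_\epsilon)$ then yields a unique $p\in L^2(\Omega_\epsilon)$ with $\wt\ell(q) = \int_{\Omega_\epsilon} p\ts q\ts d\bx$. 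Specializing to $q = B\bw = \dive\ts \bw$ for arbitrary $\bw\in D^{1,2}_0(\Omega_\epsilon)$, and using that $\ell(\bw) = \ell(\mathcal{R}(B\bw)) = \wt\ell(B\bw)$ because $\bw - \mathcal{R}(B\bw)\in\ker(B)$, gives the desired identity
\[
\ell(\bw) = \int_{\Omega_\epsilon} p\ts \dive\ts \bw \ts d\bx.
\]

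For uniqueness of $p$, suppose $p_1,p_2\in L^2(\Omega_\epsilon)$ both represent $\ell$. Then $\int_{\Omega_\epsilon}(p_1-p_2)\dive\ts\bw\ts d\bx = 0$ for every $\bw\in D^{1,2}_0(\Omega_\epsilon)$. Applying Lemma \ref{divv_p_lem} once more to choose $\bw$ with $\dive\ts\bw = p_1-p_2$ forces $\|p_1-p_2\|_{L^2(\Omega_\epsilon)}^2=0$. The main obstacle in the whole argument is really Lemma \ref{divv_p_lem} itself---the Bogovskii-type construction of a bounded right inverse for the divergence on the unbounded exterior domain $\Omega_\epsilon$, with $\epsilon$-uniform constant---and that work has been localized to Section \ref{pressure_const}. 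Given that lemma, the de Rham statement is purely functional-analytic bookkeeping.
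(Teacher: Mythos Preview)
The paper does not give its own proof of this lemma; it simply cites \cite{galdi2011introduction}, Corollary III.5.1. Your argument is correct and in fact more self-contained within the paper's framework: you derive the de Rham statement directly from Lemma \ref{divv_p_lem}, which the paper does prove in detail (Appendix \ref{pressure_const}). The route---surjectivity of $\dive$ with a bounded right inverse, then Riesz representation on $L^2$---is standard and is essentially how Galdi's proof works as well, so there is no real methodological divergence, only a difference in what is outsourced. Your handling of the possible nonlinearity of the Bogovskii map $\mathcal{R}$ is clean: linearity and well-definedness of $\wt\ell$ follow because $\ell$ kills $\ker(\dive)$, so the particular preimage chosen is irrelevant. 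The uniqueness argument via Lemma \ref{divv_p_lem} with $\dive\ts\bw=p_1-p_2$ is also correct. What your approach buys is that the paper becomes internally closed on this point; what the citation buys is brevity.
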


 \begin{proof}[Proof of equivalence of Definitions \ref{weak_sol_def} and \ref{pressure_exist}:]
We begin by considering \eqref{weak_exterior} away from $\Gamma_{\epsilon}$. Recall the definition of $D^{1,2}_{0,\dive}(\Omega_{\epsilon})$ in Lemma \ref{pressure_exist}. Since $\bu$ is a weak solution to \eqref{weak_exterior}, we have
\[ \int_{\Omega_{\epsilon}} 2\ts\E(\bu):\E(\bw) \ts d\bx = 0 \qquad \text{ for all } \bw \in D^{1,2}_{0,\dive}(\Omega_\epsilon). \]

Using Lemma \ref{de_rham}, we then have
\begin{equation}\label{pressure_eqn}
\int_{\Omega_{\epsilon}} 2\ts\E(\bu):\E(\bw) \ts d\bx = \int_{\Omega_{\epsilon}} p \ts\dive\ts \bw \ts d\bx \qquad \text{ for all } \bw \in D_0^{1,2}(\Omega_{\epsilon}).
\end{equation}

Thus, removing the divergence-free restriction on $\bw\in D_0^{1,2}(\Omega_{\epsilon})$, we recover $p$ in $\Omega_{\epsilon}$ away from the slender body surface $\Gamma_{\epsilon}$. We now must show that this $p$ satisfies the correct boundary conditions for the total surface force over $\Gamma_{\epsilon}$ when integrated against arbitrary $\bv \in \A_{\epsilon}$. \\

Consider a solution $\bu \in \A_{\epsilon}^{\dive}$ satisfying \eqref{weak_exterior}. For any $\bv\in \A_{\epsilon}$ we write $\bv$ as 
\[ \bv = \bw+{\bm \psi} \]
where ${\bm \psi}$ is the unique (weak) solution to the classical exterior Stokes boundary value problem
\begin{equation}\label{classical_stokes}
\begin{aligned}
-\Delta {\bm \psi} + \nabla \pi &= 0, \quad \dive \ts {\bm \psi} = 0 \quad \text{ in }\Omega_\epsilon \\
{\bm \psi} \big|_{\Gamma_{\epsilon}} &= \bv(s) \\
{\bm \psi} &\to 0 \quad \text{as }|\bx| \to \infty
\end{aligned}
\end{equation}
in the space $D^{1,2}_{\dive}(\Omega_{\epsilon})$. Again the subscript ``div'' denotes the divergence-free subspace of $D^{1,2}(\Omega_{\epsilon})$. We refer to \cite{galdi2011introduction}, Chapter V.2 for details on the existence and uniqueness results for \eqref{classical_stokes}. \\

Thus ${\bm \psi}$ is in $\A_\epsilon^{\dive}$, so by Definition \ref{weak_sol_def} we have
\begin{equation}\label{div_free_part}
\int_{\Omega_{\epsilon}} 2 \ts \E(\bu):\E({\bm \psi}) \ts d\bx = \int_{\T} {\bm f}(s)\cdot\bv(s) \ts ds.
\end{equation}

Furthermore, we have that $\bw\in D_0^{1,2}(\Omega_{\epsilon})$ satisfies
\begin{equation}\label{trace_free_part}
\int_{\Omega_{\epsilon}} 2 \ts \E(\bu):\E(\bw) \ts d\bx = \int_{\Omega_{\epsilon}} p\ts \dive\ts\bw \ts d\bx,
\end{equation}
by equation \eqref{pressure_eqn}. \\

Adding \eqref{div_free_part} and \eqref{trace_free_part} we therefore have  
\begin{align*}
\int_{\Omega_{\epsilon}} 2 \ts \E(\bu):\E(\bv) \ts d\bx &= \int_{\Omega_{\epsilon}} 2 \ts \E(\bu):\E(\bw) \ts d\bx+\int_{\Omega_{\epsilon}} 2 \ts \E(\bu):\E({\bm \psi}) \ts d\bx \\
&= \int_{\Omega_{\epsilon}} p\ts \dive\ts\bw \ts d\bx + \int_{\T} {\bm f}(s)\cdot\bv(s) \ts ds.
\end{align*}

Hence the pressure $p$ from Lemma \ref{de_rham} satisfies the desired boundary condition on $\Gamma_{\epsilon}$, and therefore $(\bu,p)\in \A_{\epsilon}^{\dive}\times L^2(\Omega_{\epsilon})$ satisfies
\begin{equation}\label{weakstokes_pressure} 
\int_{\Omega_{\epsilon}} \bigg(2 \ts\mathcal{E}(\bu):\mathcal{E}(\bv) - p\ts\dive\ts \bv \bigg)\ts d\bx - \int_{\T} \bv(s)\cdot{\bm f}(s) \ts ds = 0
\end{equation}
for all $\bv \in \A_{\epsilon}$. We have thus removed the divergence-free constraint on $\bv$ to show the existence of a unique corresponding pressure $p\in L^2(\Omega_{\epsilon})$.
\end{proof} 

Finally, from \eqref{weakstokes_pressure}, we derive the energy estimate \eqref{stokes_est} in Theorem \ref{stokes_theorem}. For this we will need to use the $\epsilon$-independence of the constant $c_P$ established in Lemma \ref{divv_p_lem}. 

\begin{proof}[Proof of estimate \eqref{stokes_est}:]
Following \cite{galdi2011introduction}, we first show that for $(\bu,p)$ satisfying \eqref{weakstokes_pressure}, we have
\begin{equation}\label{press_est}
\|p\|_{L^2(\Omega_{\epsilon})} \le \tilde c_P\|\E(\bu)\|_{L^2(\Omega_{\epsilon})},
\end{equation}
for some constant $\tilde c_P>0$. To show \eqref{press_est}, we consider $\bv\in D^{1,2}_0(\Omega_{\epsilon})$ satisfying
\begin{equation}\label{divv_p}
\begin{aligned}
\dive\ts \bv &= p \qquad \text{ in }\Omega_{\epsilon}; \\
\|\bv\|_{D^{1,2}(\Omega_{\epsilon})} &\le c_P\| p\|_{L^2(\Omega_{\epsilon})}. 
\end{aligned}
\end{equation}
By Lemma \ref{divv_p_lem}, such a $\bv$ exists and the constant $c_P$ depends only on $c_\Gamma$ and $\kappa_{\max}$. \\

 Now, substituting $\bv$ satisfying \eqref{divv_p} into \eqref{weakstokes_pressure}, we have 
\begin{align*}
\int_{\Omega_{\epsilon}} p^2 \ts d\bx &= \int_{\Omega_{\epsilon}} 2 \ts\E(\bu):\E(\bv) \ts d\bx \le 2\|\E(\bu)\|_{L^2(\Omega_{\epsilon})}\|\E(\bv)\|_{L^2(\Omega_{\epsilon})} \le 2\|\E(\bu)\|_{L^2(\Omega_{\epsilon})}\|\nabla\bv\|_{L^2(\Omega_{\epsilon})} \\
& \le \frac{1}{\eta}\|\E(\bu)\|_{L^2(\Omega_{\epsilon})}^2+ \eta\|\nabla\bv\|_{L^2(\Omega_{\epsilon})}^2 \le \frac{1}{\eta}\|\E(\bu)\|_{L^2(\Omega_{\epsilon})}^2+ \eta c_P^2\|p\|_{L^2(\Omega_{\epsilon})}^2, \quad \eta\in\R_+. 
\end{align*}
Taking $\eta=\frac{1}{2c_P^2}$, we obtain \eqref{press_est}, with $\tilde c_P= 2c_P$. \\

Combining the pressure estimate \eqref{press_est} with the velocity estimate \eqref{u_est} and noting the $\epsilon$-dependence of the constants $c_T$, $c_K$, and $c_P$ established in Section \ref{constants0}, we obtain 
\begin{align*}
\|\bu\|_{D^{1,2}(\Omega_{\epsilon})} + \|p\|_{L^2(\Omega_{\epsilon})} &\le \frac{1}{2}c_K^2c_T(1+2c_P) \|{\bm f}\|_{L^2(\T)} \le c_{\kappa}|\log\epsilon|^{1/2}\|{\bm f}\|_{L^2(\T)}.
\end{align*}

\end{proof}

\section{Slender body residual calculations}\label{residual_calc}
Now that we have proved Theorem \ref{stokes_theorem}, we may proceed to the main aim of the paper: to compare the slender body approximation to the true solution and derive an error estimate in terms of the slender body radius $\epsilon$. In this section, we calculate the residual for the slender body force and velocity approximations, which will then be used in the next section to prove the error bounds in Theorem \ref{stokes_err_theorem}. 

\subsection{Slender body calculations: setup}
The proof of Theorem \ref{stokes_err_theorem} requires knowledge of two expressions: the total surface force ${\bm f}^{\SB}(s)$ exerted by the slender body approximation at each cross section $s$ along the true surface $\Gamma_{\epsilon}$, and the $\theta$-dependence in the slender body velocity $\bu^{\SB}\big|_{\Gamma_\epsilon}(s,\theta)$. Although the true surface velocity $\bu\big|_{\Gamma_{\epsilon}}(s)$ is unknown, we can measure the degree to which $\bu^{\SB}$ fails to satisfy the $\theta$-independence condition along $\Gamma_\epsilon$. In analogy with finite element analysis, the $\theta$-dependence in $\bu^{\SB}\big|_{\Gamma_{\epsilon}}(s,\theta)$ can be considered as the {\it non-conforming} residual, as the slender body approximation $\bu^{\SB}$ therefore does not belong to the function space $\A_{\epsilon}^{\dive}$ required by the well-posedness theory. The force residual ${\bm f}^{\SB}(s)-{\bm f}(s)$, on the other hand, can be considered as the {\it conforming} residual, as the slender body force approximation ${\bm f}^{\SB}$ belongs to the same function space as the prescribed force ${\bm f}$. To show the centerline estimate \eqref{center_err_thm} of Theorem \ref{stokes_err_theorem}, we will also need to consider the centerline residual $|\bu^{\SB}\big|_{\Gamma_\epsilon}(s,\theta)-\bu^{\SB}_C(s)|$ between the slender body approximation on the fiber surface and the centerline slender body approximation \eqref{SBT_asymp}. \\

In this section we will state and prove a few useful lemmas regarding integral estimates along $\T$. The estimates needed to bound both the conforming and non-conforming residuals can be summarized into Lemmas \ref{Rintest0}, \ref{Rintest1}, and \ref{Rintest2}. In addition, we show Lemma \ref{center_est_lem}, which will be used to bound the centerline residual $|\bu^{\SB}\big|_{\Gamma_\epsilon}(s,\theta)-\bu^{\SB}_C(s)|$. These bounds will then be used in Section \ref{SB_vel} to prove a series of propositions leading to Proposition \ref{ur_and_derivs}, which states a bound for the $\theta$-dependence in $\bu^{\SB}\big|_{\Gamma_\epsilon}$ and its derivatives. We will also use Lemma \ref{center_est_lem} to show the centerline residual bound in Proposition \ref{centerline_prop}. In Section \ref{SBforce_res}, we use Lemmas \ref{Rintest0} - \ref{Rintest2} as well as an additional Lemma \ref{theta_int} to estimate the slender body approximation $\bm{f}^{\SB}(s)$ to the force. Ultimately we show Proposition \ref{fSB_est} bounding the residual $\bm{f}^{\SB}(s) - \bm{f}(s)$. Throughout these sections, we will use $c_\kappa$ to denote any constant depending only on the fiber centerline shape through $c_\Gamma$ and $\kappa_{\max}$. \\

We assume that the slender body satisfies the geometric constraints in Section \ref{geometric_constraints}. Although a solution to the slender body PDE \eqref{exterior_stokes} is guaranteed so long as ${\bm f}$ is in $L^2(\T)$, some additional smoothness on $\bm{f}$ is required for the slender body approximation to actually approximate the slender body PDE. Here we will require ${\bm f}$ to be in $C^1(\T)$. We recall that the slender body approximation is given by 
\begin{align}\label{stokes_SB}
\bu^{\SB}(\bx) &=\frac{1}{8\pi}\int_{\T} \bigg( \mc{S}(\bm{R})+\frac{\epsilon^2}{2}\mc{D}(\bm{R}) \bigg)\bm{f}(t) \ts dt; \; 
\bm{R}=\bm{x}-\bm{X}(t),\\
\label{SD}
\mc{S}(\bm{R})&=\frac{{\bf I}}{\abs{\bm{R}}}+\frac{\bm{R}\bm{R}^{\rm T}}{\abs{\bm{R}}^3}, \; 
\mc{D}(\bm{R})=\frac{{\bf I}}{\abs{\bm{R}}^3}-\frac{3\bm{R}\bm{R}^{\rm T}}{\abs{\bm{R}}^5},
\end{align}
with the corresponding slender body pressure given by
\begin{equation}\label{SB_pressure}
p^{\SB}(\bx) = \frac{1}{4\pi}\int_{\T} \frac{\bm{R}\cdot {\bm f}(t)}{|\bm{R}|^3} \ts dt. 
\end{equation}

Recall that within the neighborhood $\mathcal{O}$ \eqref{region_O}, any point $\bx$ can be written
\[\bx(\rho,\theta,s) = \X(s)+\rho \be_{\rho}(s,\theta).\]
Then, for $\bx\in \mathcal{O}$, $\bm{R}$ has the form 
\begin{align*}
\bm{R}(\rho,\theta,s;t) &= \X(s)- \X(t) + \rho \be_{\rho}(s,\theta).
\end{align*}

Before we begin calculations to estimate $\bu^{\SB}$ and $\bm{f}^{\SB}$, we note some useful facts. Using the moving frame ODE \eqref{moving_ODE}, we have 
\begin{align}
\frac{\p \bm{R}}{\p \rho}&=\be_\rho(s,\theta), \label{rhoderiv}\\
\frac{1}{\rho}\frac{\p \bm{R}}{\p \theta}&=\be_\theta(s,\theta),\label{thetaderiv}\\
\frac{1}{1-\rho\widehat{\kappa}}\bigg(\frac{\p \bm{R}}{\p s}-\kappa_3 \frac{\p \bm{R}}{\p \theta}\bigg) &=\be_t(s), \label{sderiv}
\end{align}
where 
\begin{equation}\label{kappahat}
\widehat{\kappa}(s,\theta)=\kappa_1(s)\cos\theta+\kappa_2(s)\sin\theta.
\end{equation}

Next we note that, since $\X$ is a $C^2$ function, for $s,t\in \T$ we have
\begin{equation}\label{CQ}
\X(s)-\X(t)=(s-t)\be_t(s)+(s-t)^2\bm{Q}(s,t), \quad \abs{\bm{Q}(s,t)}\le \frac{\kappa_{\max}}{2}.
\end{equation}

Then, on the slender body surface $\Gamma_{\epsilon}$, we have 
\begin{equation}\label{Reps}
\bm{R}=-\bars \be_t(s)+\epsilon \be_\rho(s,\theta)+ \bars^2\bm{Q}, \quad \abs{\bm{Q}}\le \frac{\kappa_{\max}}{2}, \quad \bars = -(s-t),
\end{equation}
where we have set $\rho=\epsilon$. It will often be convenient to view $\bm{R}$ as a function of $\bars$ and $s$ rather than $t$ and $s$. We may use this expression for $\bm{R}$ to obtain the following two simple estimates.
\begin{lemma}\label{absRests}
Let $\bm{R}$ be as in \eqref{Reps}. Then, for sufficiently small $\epsilon$, we have:
\begin{align}
\label{RQ}
\abs{\abs{\bm{R}}-\sqrt{\bars^2+\epsilon^2}}&\le \frac{\kappa_{\max}}{2} \bars^2,\\
\label{Rlb}
\abs{\bm{R}}&\ge c_\kappa\sqrt{\bars^2+\epsilon^2},
\end{align}
where $\abs{\bars}\leq 1/2$ and $c_\kappa$ depends only on $c_\Gamma$ and $\kappa_{\max}$.
\end{lemma}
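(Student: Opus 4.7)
My plan is to handle the two bounds in turn. The first inequality \eqref{RQ} is a direct application of the reverse triangle inequality to the decomposition \eqref{Reps}. The point is that because $\be_t(s)$ and $\be_\rho(s,\theta)$ are orthonormal by construction, the principal part $-\bars\be_t(s)+\epsilon\be_\rho(s,\theta)$ has Euclidean norm exactly $\sqrt{\bars^2+\epsilon^2}$. Thus $\bigl|\abs{\bm{R}}-\sqrt{\bars^2+\epsilon^2}\bigr|\le \abs{\bars^2\bm{Q}}\le (\kappa_{\max}/2)\bars^2$, which is \eqref{RQ}.

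For \eqref{Rlb} I would split the range $\abs{\bars}\le 1/2$ into a near-diagonal and a far regime, with threshold $\abs{\bars}=1/\kappa_{\max}$. In the near-diagonal regime $\abs{\bars}\le 1/\kappa_{\max}$, I use \eqref{RQ} directly: since $\bars^2\le\abs{\bars}\sqrt{\bars^2+\epsilon^2}$ and $\kappa_{\max}\abs{\bars}/2\le 1/2$, one gets
\[
\abs{\bm{R}}\ge \sqrt{\bars^2+\epsilon^2}-\tfrac{\kappa_{\max}}{2}\bars^2\ge \tfrac{1}{2}\sqrt{\bars^2+\epsilon^2}.
\]
In the far regime $1/\kappa_{\max}\le\abs{\bars}\le 1/2$, the estimate \eqref{RQ} is too weak and I instead invoke the non-self-intersection assumption \eqref{non_intersecting}: $\abs{\X(s)-\X(t)}\ge c_\Gamma\abs{\bars}$. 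Writing $\bm{R}=(\X(s)-\X(t))+\epsilon\be_\rho$ and applying the reverse triangle inequality gives $\abs{\bm{R}}\ge c_\Gamma\abs{\bars}-\epsilon$. Imposing $\epsilon\lesssim c_\Gamma/\kappa_{\max}$ (consistent with the standing assumption $\epsilon<r_{\max}/4$, where $r_{\max}\sim c_\Gamma/\kappa_{\max}$ as noted after \eqref{rmax}) allows $\epsilon$ to be absorbed as $\epsilon\le c_\Gamma\abs{\bars}/2$, so $\abs{\bm{R}}\ge c_\Gamma\abs{\bars}/2$. Combined with $\sqrt{\bars^2+\epsilon^2}\le\sqrt{2}\,\abs{\bars}$ (valid since $\epsilon\le\abs{\bars}$ in this regime), this yields $\abs{\bm{R}}\ge (c_\Gamma/(2\sqrt{2}))\sqrt{\bars^2+\epsilon^2}$.

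Taking $c_\kappa=\min\{1/2,\, c_\Gamma/(2\sqrt{2})\}$ merges the two cases and completes \eqref{Rlb}. There is no serious obstacle here; the bookkeeping is just in tracking the smallness condition on $\epsilon$ and verifying that the $\kappa_{\max}$-dependent threshold $1/\kappa_{\max}$ is a valid split point (which is fine since $\kappa_{\max}\ge 2\pi$ by the remark after \eqref{kappamax}).
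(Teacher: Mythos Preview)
Your proof is correct and follows essentially the same approach as the paper: the reverse triangle inequality for \eqref{RQ}, and the same near/far split at $|\bars|=1/\kappa_{\max}$ for \eqref{Rlb}, using \eqref{RQ} in the near regime and the non-self-intersection bound \eqref{non_intersecting} in the far regime. Your far-regime bookkeeping is slightly cleaner (you compare directly to $\sqrt{\bars^2+\epsilon^2}$ via $\sqrt{\bars^2+\epsilon^2}\le\sqrt{2}\,|\bars|$ rather than passing through a constant lower bound as the paper does), and you correctly observe that the split point $1/\kappa_{\max}<1/2$ is always valid since $\kappa_{\max}\ge 2\pi$.
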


\begin{proof}
Note that
\begin{equation}
\abs{\bars \be_t +\epsilon\be_\rho}=\sqrt{\bars^2+\epsilon^2}.
\end{equation}
Inequality \eqref{RQ} then follows from the triangle inequality applied to \eqref{Reps}. To obtain \eqref{Rlb}, note from \eqref{RQ} that, if $\abs{\bars}\le 1/\kappa_{\max}$,
\[\abs{\bm{R}}\ge \sqrt{\bars^2+\epsilon^2}- \frac{\kappa_{\max}}{2}\bars^2\ge \frac{1}{2}\sqrt{\bars^2+\epsilon^2}
+\frac{\abs{\bars}}{2}- \frac{\kappa_{\max}}{2}\bars^2\ge \frac{1}{2}\sqrt{\bars^2+\epsilon^2}.\]
If $\kappa_{\max}\le 2$ we are done. Otherwise, suppose $1/\kappa_{\max}<\abs{\bars}\le 1/2$. Then we have
\begin{equation}
\abs{\bm{R}}\ge \abs{\bm{X}(s)-\bm{X}(t)}-\epsilon \ge c_\Gamma |\bars| -\epsilon\ge  
\frac{c_\Gamma}{\kappa_{\max}}-\epsilon\ge \frac{c_\Gamma}{2\kappa_{\max}},
\end{equation}
where we have used \eqref{non_intersecting} in the second inequality and have taken $\epsilon\le c_\Gamma/(2\kappa_{\max})$ in the last inequality. The above two estimates together imply \eqref{Rlb}.
\end{proof}

We will now make note of some integral estimates that will be used throughout the following section to bound integrals arising from the slender body expression \eqref{stokes_SB} in terms of the prescribed force $\bm{f}\in C^1(\T)$. We first note the following simple but useful calculus result, whose proof we omit. 
\begin{lemma}\label{defints}
Let $m,n$ be integers such that $m\geq 0$ and $n>0$. Then, for $\epsilon$ sufficiently small, we have
\begin{equation}
\int_{-1/2}^{1/2}\frac{\abs{\bars}^{m}}{(\bars^2+\epsilon^2)^{n/2}}d\bars \le 
\begin{cases}
3\abs{\log \epsilon}&\text{ if } n=m+1\\
\pi \epsilon^{m+1-n} &\text{ if } n\geq m+2
\end{cases}
\end{equation}
\end{lemma}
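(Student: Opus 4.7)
The plan is to first reduce to the case $m=0$ by the crude bound $|\bar{s}|^m \le (\bar{s}^2+\epsilon^2)^{m/2}$, which gives
\[
\int_{-1/2}^{1/2}\frac{|\bar{s}|^m}{(\bar{s}^2+\epsilon^2)^{n/2}}\,d\bar{s}
\le \int_{-1/2}^{1/2}\frac{d\bar{s}}{(\bar{s}^2+\epsilon^2)^{(n-m)/2}}.
\]
Setting $k=n-m$, the hypothesis $n\ge m+1$ becomes $k\ge 1$, and the two cases of the lemma correspond to $k=1$ and $k\ge 2$ respectively.

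For the case $k=1$ (so $n=m+1$), I will use the explicit antiderivative
\[
\int \frac{d\bar{s}}{\sqrt{\bar{s}^2+\epsilon^2}} = \log\bigl(\bar{s}+\sqrt{\bar{s}^2+\epsilon^2}\bigr).
\]
Evaluating between $-1/2$ and $1/2$ gives $2\log\bigl(\tfrac12+\sqrt{\tfrac14+\epsilon^2}\bigr)-2\log\epsilon$. Since the first term is bounded (say by $2\log 2$) while $-2\log\epsilon = 2|\log\epsilon|$, for $\epsilon$ sufficiently small the sum is dominated by $3|\log\epsilon|$.

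For the case $k\ge 2$ (so $n\ge m+2$), the strategy is to enlarge the domain of integration to all of $\R$ and rescale. Substituting $\bar{s}=\epsilon u$ yields
\[
\int_{-1/2}^{1/2}\frac{d\bar{s}}{(\bar{s}^2+\epsilon^2)^{k/2}}
\le \epsilon^{1-k}\int_{-\infty}^{\infty}\frac{du}{(u^2+1)^{k/2}}.
\]
The remaining integral equals $\pi$ when $k=2$ (arctangent) and is monotonically decreasing in $k$, so it is bounded by $\pi$ for every $k\ge 2$. This gives the bound $\pi\,\epsilon^{1-k}=\pi\,\epsilon^{m+1-n}$ as claimed.

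The proof is essentially mechanical; there is no real obstacle. The one subtlety is that the constant $3$ in the logarithmic case and the smallness requirement on $\epsilon$ are linked: one needs $\epsilon$ small enough that the bounded boundary contribution $2\log(\tfrac12+\sqrt{\tfrac14+\epsilon^2})$ is absorbed into the remaining $|\log\epsilon|$. Any fixed $\epsilon\le e^{-2}$ will do, which is consistent with the standing assumption $\epsilon<r_{\max}/4$ made earlier in the paper.
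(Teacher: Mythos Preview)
Your proof is correct. The reduction via $|\bar s|^m\le(\bar s^2+\epsilon^2)^{m/2}$ cleanly collapses both cases to the $m=0$ situation, and your treatments of $k=1$ (explicit antiderivative) and $k\ge2$ (extend to $\R$, rescale, bound the Beta-type integral by $\pi$ via monotonicity in $k$) are both valid and give exactly the stated constants. The paper itself omits the proof of this lemma entirely, so there is nothing to compare against; your argument would serve perfectly well as the missing verification.
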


The following integral estimate then follows immediately from Lemmas \ref{absRests} and \ref{defints}. 
\begin{lemma}\label{Rintest0}
Let $\bm{R}$ be as in \eqref{Reps}. Suppose $m,n$ are integers such that $m\geq 0$ and $n>0$. For $\epsilon$ sufficiently small, we have
\begin{equation}
\int_{-1/2}^{1/2} \frac{\abs{\bars}^{m}}{\abs{\bm{R}}^n}d\bars \le 
\begin{cases}
c_\kappa\abs{\log \epsilon} &\text{ if } n=m+1,\\
c_\kappa \epsilon^{m+1-n} &\text{ if } n\ge m+2,
\end{cases}
\end{equation}
where the constants $c_\kappa$ depend only on $n$, $c_\Gamma$ and $\kappa_{\max}$.
\end{lemma}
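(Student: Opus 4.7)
The plan is to treat this lemma as an essentially immediate corollary of the two preceding results, Lemma \ref{absRests} and Lemma \ref{defints}. The strategy is simply to replace the geometric quantity $\abs{\bm{R}}$ by the symmetric model quantity $\sqrt{\bars^2+\epsilon^2}$, which is the quantity that Lemma \ref{defints} is tuned to handle, and to verify that this replacement only costs a harmless curvature-dependent multiplicative constant.

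First I would invoke the lower bound \eqref{Rlb} from Lemma \ref{absRests}, which gives $\abs{\bm{R}}\ge c_\kappa\sqrt{\bars^2+\epsilon^2}$ for $\abs{\bars}\le 1/2$ and $\epsilon$ small, with $c_\kappa$ depending only on $c_\Gamma$ and $\kappa_{\max}$. Raising both sides to the $n$-th power and inverting yields
\begin{equation*}
\frac{1}{\abs{\bm{R}}^n}\le \frac{c_\kappa^{-n}}{(\bars^2+\epsilon^2)^{n/2}}.
\end{equation*}
Multiplying by $\abs{\bars}^m$ and integrating over $[-1/2,1/2]$ reduces the claim to controlling the integral $\int_{-1/2}^{1/2}\abs{\bars}^m/(\bars^2+\epsilon^2)^{n/2}\,d\bars$.

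Then I would apply Lemma \ref{defints} directly. In the critical case $n=m+1$ this bounds the reduced integral by $3\abs{\log\epsilon}$, and in the case $n\ge m+2$ it bounds it by $\pi\epsilon^{m+1-n}$. Absorbing the constants $3$, $\pi$, and $c_\kappa^{-n}$ into a single constant (still denoted $c_\kappa$, depending only on $n$, $c_\Gamma$, and $\kappa_{\max}$) yields the two cases of the stated estimate.

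There is no real obstacle here; the entire content was packaged into Lemmas \ref{absRests} and \ref{defints}. The only thing to be mindful of is bookkeeping in how the constant $c_\kappa$ absorbs the power $c_\kappa^{-n}$ and the numerical constants from Lemma \ref{defints}, and to note explicitly that the resulting $c_\kappa$ is allowed to depend on $n$ (which is consistent with the statement of the lemma).
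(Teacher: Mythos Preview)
Your proposal is correct and matches the paper's approach exactly: the paper simply states that the lemma ``follows immediately from Lemmas \ref{absRests} and \ref{defints},'' and you have correctly unpacked what this means by using the lower bound \eqref{Rlb} to reduce to the model integral and then invoking Lemma \ref{defints}.
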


For the next lemma, we will use the notation 
\begin{equation}
\norm{\bm{g}}_{C^1(\T)}=\norm{\bm{g}}_{C(\T)}+\norm{\bm{g}'}_{C(\T)}, \; 
\norm{\bm{g}}_{C(\T)}=\max_{s\in \T}\abs{\bm{g}(s)}.
\end{equation}

We show the following estimate.
\begin{lemma}\label{Rintest1}
Let $\bm{R}$ be as in \eqref{Reps}. Suppose $m>0$ is an odd integer and $n\ge m+2$, and 
let $\bm{g}\in C^1(\T)$. Then, for sufficiently small $\epsilon$, we have
\begin{equation}\label{Intest_ineq}
\abs{\int^{1/2}_{-1/2}\frac{\bars^m}{\abs{\bm{R}}^n}\bm{g}(\bars)d\bars} \le \begin{cases}
c_\kappa \norm{\bm{g}}_{C^1(\T)}\abs{\log \epsilon} &\text{ if } n=m+2,\\
c_\kappa \norm{\bm{g}}_{C^1(\T)}\epsilon^{m+2-n} &\text{ if } n\ge m+3,
\end{cases}
\end{equation}
where the constants $c_\kappa$ depend only on $n$, $c_\Gamma$ and $\kappa_{\max}$.
\end{lemma}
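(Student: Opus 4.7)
The plan is to exploit the oddness of $\bars^m$ under $\bars\mapsto -\bars$, combined with the $C^1$ regularity of $\bm{g}$, to recover an extra power of $\epsilon$ beyond what Lemma~\ref{Rintest0} would give when applied naively (which would only yield $\epsilon^{m+1-n}$ without the log, one power short of the target). The obstruction to pure odd-symmetry cancellation is that $|\bm{R}|$ is not exactly an even function of $\bars$: by \eqref{Reps}, $\bm{R}=-\bars\be_t(s)+\epsilon\be_\rho(s,\theta)+\bars^2\bm{Q}$, so the quadratic remainder $\bars^2\bm{Q}$ breaks the symmetry. This asymmetry will be controlled by comparing $\bm{R}$ with its symmetric model $\bm{R}_0:=-\bars\be_t(s)+\epsilon\be_\rho(s,\theta)$, whose modulus is the even function $\sqrt{\bars^2+\epsilon^2}$.

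First, split $\bm{g}(\bars)=\bm{g}(0)+(\bm{g}(\bars)-\bm{g}(0))$, where $\bm{g}(0)$ denotes the value of $\bm{g}$ at the base point $t=s$ (interpreted via the periodic lift of $\bm{g}$, which is harmless since $\T$ has length $1$). For the remainder piece, the $C^1$ bound $|\bm{g}(\bars)-\bm{g}(0)|\le\norm{\bm{g}'}_{C(\T)}|\bars|$ gives
\[
\abs{\int_{-1/2}^{1/2}\frac{\bars^m(\bm{g}(\bars)-\bm{g}(0))}{|\bm{R}|^n}\,d\bars}\le \norm{\bm{g}'}_{C(\T)}\int_{-1/2}^{1/2}\frac{|\bars|^{m+1}}{|\bm{R}|^n}\,d\bars,
\]
and since $n\ge m+2=(m+1)+1$, Lemma~\ref{Rintest0} applied with numerator exponent $m+1$ yields exactly $c_\kappa\norm{\bm{g}'}_{C(\T)}|\log\epsilon|$ when $n=m+2$ and $c_\kappa\norm{\bm{g}'}_{C(\T)}\epsilon^{m+2-n}$ when $n\ge m+3$.

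It remains to treat the constant piece $\bm{g}(0)\int_{-1/2}^{1/2}\bars^m|\bm{R}|^{-n}d\bars$. Since $|\bm{R}_0|$ is even in $\bars$ and the interval $[-1/2,1/2]$ is symmetric, oddness gives $\int_{-1/2}^{1/2}\bars^m|\bm{R}_0|^{-n}d\bars=0$, so
\[
\int_{-1/2}^{1/2}\frac{\bars^m}{|\bm{R}|^n}\,d\bars=\int_{-1/2}^{1/2}\bars^m\bigg(\frac{1}{|\bm{R}|^n}-\frac{1}{|\bm{R}_0|^n}\bigg)d\bars.
\]
Using $\bigl||\bm{R}|-|\bm{R}_0|\bigr|\le|\bm{R}-\bm{R}_0|\le\tfrac{\kappa_{\max}}{2}\bars^2$, the lower bound $|\bm{R}|\ge c_\kappa\sqrt{\bars^2+\epsilon^2}$ from Lemma~\ref{absRests} (which applies equally to $|\bm{R}_0|$), and the mean value theorem applied to $x\mapsto x^{-n}$ on the interval between $|\bm{R}|$ and $|\bm{R}_0|$, one obtains the pointwise estimate
\[
\abs{\frac{1}{|\bm{R}|^n}-\frac{1}{|\bm{R}_0|^n}}\le \frac{c_\kappa\,\bars^2}{(\bars^2+\epsilon^2)^{(n+1)/2}}.
\]
Plugging this in, the integrand is dominated by $|\bars|^{m+2}(\bars^2+\epsilon^2)^{-(n+1)/2}$, and Lemma~\ref{defints} applied with the effective parameters $(m+2,n+1)$ in place of $(m,n)$ produces the $|\log\epsilon|$ bound when $n=m+2$ (so that $n+1=(m+2)+1$) and the $\epsilon^{m+2-n}$ bound when $n\ge m+3$. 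The main obstacle is this pointwise comparison of $|\bm{R}|^{-n}$ and $|\bm{R}_0|^{-n}$; everything else is clean bookkeeping on top of Lemmas~\ref{absRests}, \ref{defints}, and \ref{Rintest0}, and both sub-estimates combine into the template $c_\kappa\norm{\bm{g}}_{C^1(\T)}\cdot(\text{target bound})$ claimed in \eqref{Intest_ineq}.
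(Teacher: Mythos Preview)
Your proof is correct and follows essentially the same route as the paper: split off $\bm{g}(0)$, handle the $C^1$ remainder via Lemma~\ref{Rintest0}, and for the constant piece subtract the odd integral $\int\bars^m(\bars^2+\epsilon^2)^{-n/2}d\bars=0$, then bound $|\bm{R}|^{-n}-(\bars^2+\epsilon^2)^{-n/2}$ by $c_\kappa\bars^2(\bars^2+\epsilon^2)^{-(n+1)/2}$ and invoke Lemma~\ref{defints}. One caution: your symbol $\bm{R}_0$ clashes with the paper's definition $\bm{R}_0(s,t)=\X(s)-\X(t)$ used elsewhere (e.g.\ in Lemma~\ref{center_est_lem}), so you should rename it.
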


\begin{proof}
First, we observe that
\begin{equation}
\begin{split}
\int^{1/2}_{-1/2}&\frac{\bars^m}{\abs{\bm{R}}^n}\bm{g}(\bars)d\bars = \bm{I}_1+\bm{I}_2, \\
&\bm{I}_1=\int_{-1/2}^{1/2}\frac{\bars^m}{\abs{\bm{R}}^n}(\bm{g}(\bars)-\bm{g}(0)) \ts d\bars,\\
&\bm{I}_2=\int_{-1/2}^{1/2}\bars^m\bigg(\frac{1}{\abs{\bm{R}}^n}-\frac{1}{(\bars^2+\epsilon^2)^{n/2}}\bigg) \bm{g}(0) \ts d\bars,
\end{split}
\end{equation}
where we used the fact that $m$ is odd in the last equality. We first estimate $\bm{I}_1$. Note that
\[ \abs{\bm{g}(\bars)-\bm{g}(0)}\le \abs{\bars}\norm{\bm{g}'}_{C(\T)}. \]
We have
\begin{equation}\label{I1inRestint1}
\abs{\bm{I}_1}\le \int_{-1/2}^{1/2}\frac{\bars^{m+1}}{\abs{\bm{R}}^n}\norm{\bm{g}'}_{C(\T)}d\bars \le
\begin{cases}
c_\kappa \norm{\bm{g}'}_{C(\T)}\abs{\log \epsilon} &\text{ if } n=m+2,\\
c_\kappa \norm{\bm{g}'}_{C(\T)}\epsilon^{m+2-n} &\text{ if } n\ge m+3,
\end{cases}
\end{equation}
where we used Lemma \ref{Rintest0}. We turn to $\bm{I}_2$. Note that
\[ \frac{1}{\abs{\bm{R}}^n}-\frac{1}{(\sqrt{\bars^2+\epsilon^2})^n}=
\bigg(\frac{1}{\abs{\bm{R}}}-\frac{1}{\sqrt{\bars^2+\epsilon^2}}\bigg) \sum_{l=0}^{n-1}\frac{1}{\abs{\bm{R}}^{l}\big(\sqrt{\bars^2+\epsilon^2}\big)^{n-1-l}}. \]
Using Lemma \ref{absRests}, we have
\[ \abs{\frac{1}{\abs{\bm{R}}}-\frac{1}{\sqrt{\bars^2+\epsilon^2}}}
=\frac{\abs{\abs{\bm{R}}-\sqrt{\bars^2+\epsilon^2}}}{\abs{\bm{R}}\sqrt{\bars^2+\epsilon^2}}
\le \frac{c_\kappa\bars^2}{c_\kappa(\bars^2+\epsilon^2)}. \]
Then, using Lemma \ref{absRests} again, we have
\begin{equation}\label{Rnsigman}
 \abs{\frac{1}{\abs{\bm{R}}^n}-\frac{1}{(\sqrt{\bars^2+\epsilon^2})^n}} \le \frac{c_\kappa\bars^2}{(\bars^2+\epsilon^2)^{(n+1)/2}}.
 \end{equation}
 Thus,
\begin{equation}\label{I2inRestint1}
\begin{split}
\abs{\bm{I}_2}&\le c_\kappa\norm{\bm{g}}_{C(\T)}\int_{-1/2}^{1/2}\frac{\bars^{m+2}}{(\bars^2+\epsilon^2)^{(n+1)/2}}d\bars \\
&\le \begin{cases}
3 c_\kappa\norm{\bm{g}}_{C(\T)}\abs{\log \epsilon}&\text{ if } n=m+2,\\
\pi c_\kappa\norm{\bm{g}}_{C(\T)}\epsilon^{m+2-n} &\text{ if } n\ge m+3,
\end{cases}
\end{split}
\end{equation}
where we used Lemma \ref{defints} in the last inequality. Combining \eqref{I1inRestint1} and \eqref{I2inRestint1}, we obtain the inequality \eqref{Intest_ineq}.
\end{proof}

The final integral we estimate is the following.
\begin{lemma}\label{Rintest2}
Suppose $m\ge 0$ is an even integer, $n$ is an integer such that $n\ge m+3$ and let $\bm{g}\in C^1(\T)$. Then, for sufficiently small $\epsilon$, we have
\begin{equation}\label{epsdmn}
\begin{split}
\abs{\int_{-1/2}^{1/2} \frac{\bars^m}{\abs{\bm{R}}^n}\bm{g}(\bars)d\bars -\epsilon^{m+1-n}d_{mn} \bm{g}(0)} 
&\le c_\kappa\norm{\bm{g}}_{C^1(\T)}\epsilon^{m+2-n},\\
d_{mn}&=\int_{-\infty}^\infty \frac{\tau^m}{(\tau^2+1)^{n/2}}d\tau,
\end{split}
\end{equation}
where the constant $c_\kappa$ depends only on $n$, $c_\Gamma$ and $\kappa_{\max}$. For odd $n$, we have
\begin{equation}
d_{mn}=\sum_{k=0}^{m/2}(-1)^k\begin{pmatrix} m/2\\ 
k \end{pmatrix} d_{0,n-k},\;
d_{0n}=2\frac{(n-3)!!}{(n-2)!!}.
\end{equation}
For certain values of $m$ and $n$, this yields
\begin{equation}
d_{03}=2, \; d_{05}=\frac{4}{3}, \; d_{07}= \frac{16}{15}, \; d_{25}=\frac{2}{3}, \; d_{27}=\frac{4}{15}.
\end{equation}
\end{lemma}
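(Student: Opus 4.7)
The plan is to mirror the decomposition used in the proof of Lemma~\ref{Rintest1}, but with three terms instead of two because now we need to isolate a nonzero leading-order contribution rather than just bound the integral. Specifically, I would write
\begin{equation*}
\int_{-1/2}^{1/2} \frac{\bars^m}{\abs{\bm{R}}^n}\bm{g}(\bars)\,d\bars
= \bm{I}_1 + \bm{I}_2 + \bm{I}_3,
\end{equation*}
where $\bm{I}_1$ collects the contribution of $\bm{g}(\bars)-\bm{g}(0)$ with kernel $\bars^m/\abs{\bm{R}}^n$, $\bm{I}_2$ collects the difference $\bm{g}(0)\,\bars^m(\abs{\bm{R}}^{-n} - (\bars^2+\epsilon^2)^{-n/2})$, and $\bm{I}_3 = \bm{g}(0)\int_{-1/2}^{1/2}\bars^m(\bars^2+\epsilon^2)^{-n/2}\,d\bars$ is the explicit leading term that will produce the $d_{mn}$ constant.

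For $\bm{I}_1$ I would use $\abs{\bm{g}(\bars)-\bm{g}(0)}\le \abs{\bars}\norm{\bm{g}'}_{C(\T)}$ and apply Lemma~\ref{Rintest0} with the index $m$ replaced by $m+1$ (odd, $\ge 1$); since $n\ge m+3 = (m+1)+2$, the lemma yields $\abs{\bm{I}_1}\le c_\kappa\norm{\bm{g}'}_{C(\T)}\epsilon^{m+2-n}$. For $\bm{I}_2$ I would invoke the pointwise estimate \eqref{Rnsigman} to get $\abs{\bm{I}_2}\le c_\kappa\norm{\bm{g}}_{C(\T)}\int_{-1/2}^{1/2}\bars^{m+2}(\bars^2+\epsilon^2)^{-(n+1)/2}\,d\bars$ and then apply Lemma~\ref{defints} with exponents $m+2$ and $n+1$, which by the hypothesis $n\ge m+3$ falls into the second case and produces $c_\kappa\norm{\bm{g}}_{C(\T)}\epsilon^{m+2-n}$.

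The main term $\bm{I}_3$ is handled by the substitution $\bars = \epsilon\tau$, which gives $\bm{I}_3 = \bm{g}(0)\epsilon^{m+1-n}\int_{-1/(2\epsilon)}^{1/(2\epsilon)}\tau^m(\tau^2+1)^{-n/2}\,d\tau$. The difference between this truncated integral and $d_{mn}$ is the tail $\int_{\abs{\tau}>1/(2\epsilon)}\tau^m(\tau^2+1)^{-n/2}\,d\tau$, which since $n - m \ge 3$ is bounded by a constant times $\epsilon^{n-m-1}$. Multiplying by the prefactor $\epsilon^{m+1-n}$ yields a remainder of order $\epsilon^0 = 1$, which is absorbed into $c_\kappa\norm{\bm{g}}_{C(\T)}\epsilon^{m+2-n}$ since $m+2-n\le -1$. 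Summing the three contributions yields \eqref{epsdmn}.

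Finally, for the closed-form expression for $d_{mn}$ I would expand $\tau^m = ((\tau^2+1)-1)^{m/2}$ by the binomial theorem, which reduces $d_{mn}$ to an alternating sum of terms of the form $d_{0,n-2j}$; for odd $n$ these are classical and follow from the substitution $\tau=\tan\phi$, giving $d_{0,n} = 2\int_0^{\pi/2}\cos^{n-2}\phi\,d\phi = 2(n-3)!!/(n-2)!!$. The listed numerical values $d_{03},d_{05},d_{07},d_{25},d_{27}$ are then verified by direct substitution. No step presents a real obstacle; the only care needed is tracking the indices in the three applications of Lemmas~\ref{defints} and \ref{Rintest0} and checking that the tail estimate is consistent with the target exponent $\epsilon^{m+2-n}$ under the hypothesis $n\ge m+3$.
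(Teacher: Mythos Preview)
Your proposal is correct and follows essentially the same approach as the paper: the same three-term decomposition (difference $\bm{g}(\bars)-\bm{g}(0)$; kernel difference $\abs{\bm{R}}^{-n}-(\bars^2+\epsilon^2)^{-n/2}$; explicit leading integral plus tail), the same applications of Lemmas~\ref{Rintest0} and \ref{defints}, and the same binomial expansion of $\tau^m=((\tau^2+1)-1)^{m/2}$ for the closed form. The only cosmetic difference is that the paper packages the tail directly as its $\bm{I}_3$, whereas you isolate the truncated integral first and then bound the tail; the estimates are identical.
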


Note that Lemma \ref{Rintest2} immediately implies that, for $\bm{g}\in C^1(\Gamma_\epsilon)$, we have  
\begin{equation}\label{epsdmn_theta}
\begin{aligned}
&\abs{\int_0^{2\pi}\int_{-1/2}^{1/2} \frac{\bars^m}{\abs{\bm{R}}^n}\bm{g}(\bars,\theta) d\bars \ts d\theta -\epsilon^{m+1-n}d_{mn} \int_0^{2\pi}\bm{g}(0,\theta)\ts d\theta} \\
&\hspace{6cm} \le c_\kappa\max_{0\le\theta<2\pi}\norm{\bm{g}(\cdot,\theta)}_{C^1(\T)}\epsilon^{m+2-n}.
\end{aligned}
\end{equation}

\begin{proof}[Proof of Lemma \ref{Rintest2}:]
First, note that
\begin{equation}
\begin{split}
\int^{1/2}_{-1/2}& \frac{\bars^m \bm{g}(\bars)}{\abs{\bm{R}}^n}d\bars - \bm{g}(0)\int_{-\infty}^{\infty} \frac{\bars^m}{(\bars^2+\epsilon^2)^{n/2}}d\bars =\bm{I}_1+\bm{I}_2+\bm{I}_3, \\
\bm{I}_1 &=\int_{-1/2}^{1/2}\frac{\bars^m (\bm{g}(\bars)-\bm{g}(0))}{\abs{\bm{R}}^n}d\bars,\\
\bm{I}_2 &=\int_{-1/2}^{1/2}\bars^m\bigg(\frac{1}{\abs{\bm{R}}^n}-\frac{1}{(\bars^2+\epsilon^2)^{n/2}}\bigg)\bm{g}(0) \ts d\bars,\\
\bm{I}_3 &=2\bm{g}(0)\int_{1/2}^\infty \frac{\bars^m}{(\bars^2+\epsilon^2)^{n/2}}d\bars.
\end{split}
\end{equation}
We may estimate $\bm{I}_1$ and $\bm{I}_2$ in exactly the same way as in the proof of Lemma \ref{Rintest1}. We find that
\begin{align*}
\abs{\bm{I}_1}&\le c_\kappa \norm{\bm{g}'}_{C(\T)}\epsilon^{m+2-n}, \quad \abs{\bm{I}_2}\le c_\kappa\norm{\bm{g}}_{C(\T)}\epsilon^{m+2-n}.
\end{align*}
For $\bm{I}_3$, a simple estimation yields
\begin{align*}
\abs{\bm{I}_3}&\le 2\norm{\bm{g}}_{C(\T)}\int_{1/2}^\infty \frac{\bars^m d\bars}{(\bars^2+\epsilon^2)^{n/2}} \le 2\norm{\bm{g}}_{C(\T)}\int_{1/2}^\infty \bars^{m-n} d\bars = \frac{2^{n-m}}{n-m-1}\norm{\bm{g}}_{C(\T)}.
\end{align*}
Finally, we have
\[ \int_{-\infty}^{\infty} \frac{\bars^m}{(\bars^2+\epsilon^2)^{n/2}}d\bars = \epsilon^{m+1-n}\int_{-\infty}^{\infty} \frac{\tau^m}{(\tau^2+1)^{n/2}} d\tau \equiv \epsilon^{m+1-n}d_{nm}. \]
Combining all of the above, we obtain \eqref{epsdmn}. Note that, since $m$ is even,
\[d_{mn}=\int_{-\infty}^\infty\frac{(\tau^2+1-1)^{m/2}d\tau}{(\tau^2+1)^{n/2}} =\sum_{k=0}^{m/2}(-1)^k\begin{pmatrix} m/2\\ k\end{pmatrix} d_{0,n-k}. \]
For $n$ odd, we have
\begin{align*}
d_{0n}&=\int_{-\pi/2}^{\pi/2} \cos^{n-2}\varphi d\varphi=
\frac{n-3}{n-2}\int_{-\pi/2}^{\pi/2}\cos^{n-4}\varphi d\varphi\\
&=\cdots=\frac{(n-3)(n-5)\cdots 4\cdot 2}{(n-2)(n-4)\cdot 3}\int_{-\pi/2}^{\pi/2}\cos \varphi d\varphi
=2\frac{(n-3)!!}{(n-2)!!}.
\end{align*}
\end{proof}

Finally, we make note of the following lemma, which will be useful for estimating the centerline expression \eqref{SBT_asymp} to obtain the estimate \eqref{center_err_thm} of Theorem \ref{stokes_err_theorem}. 
Recalling the notation $\bm{R}_0(s,\bars) = \X(s) - \X(s+\bars)$, we show:
\begin{lemma}\label{center_est_lem}
Let $\bm{R}$ be as in \eqref{Reps} and suppose $n= 1$ or $n=3$. Then for $\bm{g}\in C^1(\T)$ and $\epsilon$ sufficiently small, we have 
\begin{equation}\label{cent_lem_eq}
\begin{aligned}
\bigg|\int_{-1/2}^{1/2} \frac{\bars^{n-1}}{\abs{\bm{R}}^n}\bm{g}(\bars)d\bars - &\int_{-1/2}^{1/2} \bigg(\frac{\bars^{n-1}}{\abs{\bm{R}_0}^n} \bm{g}(\bars)-\frac{\bm{g}(0)}{\abs{\bars} } \bigg)d\bars + \bm{g}(0)\log(\epsilon^2) + (n-1)\bm{g}(0) \bigg| \\
& \le \epsilon \abs{\log\epsilon} c_\kappa\norm{\bm{g}}_{C^1(\T)},
\end{aligned}
\end{equation}
where the constant $c_\kappa$ depends only on $n$, $c_\Gamma$, and $\kappa_{\max}$. 
\end{lemma}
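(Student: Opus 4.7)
The plan is to separate the dependence on $\bm{g}$ from the purely geometric dependence by writing $\bm{g}(\bars) = \bm{g}(0) + (\bm{g}(\bars) - \bm{g}(0))$. Using the identity $1/|\bars| = \bars^{n-1}/|\bars|^n$, valid for both odd values $n=1,3$, the quantity inside the absolute value in \eqref{cent_lem_eq} splits as $\bm{g}(0)\,C_0(\epsilon) + C_g(\epsilon)$, where
\begin{align*}
C_0(\epsilon) &:= \int_{-1/2}^{1/2}\frac{\bars^{n-1}}{|\bm{R}|^n}\,d\bars - \int_{-1/2}^{1/2}\bars^{n-1}\!\left(\frac{1}{|\bm{R}_0|^n} - \frac{1}{|\bars|^n}\right) d\bars + \log(\epsilon^2) + (n-1), \\
C_g(\epsilon) &:= \int_{-1/2}^{1/2} \bars^{n-1}(\bm{g}(\bars) - \bm{g}(0))\!\left(\frac{1}{|\bm{R}|^n} - \frac{1}{|\bm{R}_0|^n}\right) d\bars.
\end{align*}
Here $C_0$ depends only on the centerline geometry and on $\epsilon$. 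I would then bound $|C_g|$ and $|\bm{g}(0)\,C_0|$ independently by $c_\kappa\epsilon|\log\epsilon|\norm{\bm{g}}_{C^1(\T)}$.

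For $C_g$, the estimate exploits the smallness of $\bm{R} - \bm{R}_0 = \epsilon\be_\rho$. Using $|\bm{g}(\bars)-\bm{g}(0)| \le |\bars|\norm{\bm{g}'}_{C(\T)}$ and the elementary inequality $|a^{-n} - b^{-n}| \le n|a-b|/\min(a,b)^{n+1}$, combined with the triangle-inequality bound $||\bm{R}|-|\bm{R}_0|| \le \epsilon$ and the lower bounds $|\bm{R}| \ge c_\kappa\sqrt{\bars^2+\epsilon^2}$ (Lemma \ref{absRests}) and $|\bm{R}_0| \ge c_\Gamma|\bars|$ (from \eqref{non_intersecting}), I would split the integration domain into $\{|\bars|\le\epsilon\}$, where the integrand is bounded uniformly and contributes $O(\epsilon)$, and $\{|\bars|>\epsilon\}$, where the integrand is dominated by $c_\kappa\epsilon/|\bars|$ and integrates to $O(\epsilon|\log\epsilon|)$. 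This yields the desired bound on $C_g$.

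For $C_0$ I would use a fundamental-theorem-of-calculus argument in $\epsilon$. Differentiating under the integral and using $\partial\bm{R}/\partial\epsilon = \be_\rho$ together with $\bm{R}\cdot\be_\rho = \epsilon + \bars^2(\bm{Q}\cdot\be_\rho)$ gives
\[
C_0'(\epsilon) = -n\epsilon\int_{-1/2}^{1/2}\!\frac{\bars^{n-1}}{|\bm{R}|^{n+2}}\,d\bars - n\int_{-1/2}^{1/2}\!\frac{\bars^{n+1}(\bm{Q}\cdot\be_\rho)}{|\bm{R}|^{n+2}}\,d\bars + \frac{2}{\epsilon}.
\]
Applying Lemma \ref{Rintest2} with $m=n-1$ and exponent $n+2$ (the boundary case $n+2 = m+3$) reduces the first integral to $d_{n-1,n+2}\,\epsilon^{-2} + O(\epsilon^{-1})$. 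The crucial numerical coincidence $n\cdot d_{n-1,n+2} = 2$ — using $d_{03}=2$ for $n=1$ and $d_{25}=2/3$ for $n=3$ — makes the $-n d_{n-1,n+2}/\epsilon$ contribution cancel exactly against $2/\epsilon$, leaving $O(1)$. The second integral lies in the logarithmic regime of Lemma \ref{Rintest0} (power of $\bars$ is $n+1$, exponent $n+2 = (n+1)+1$) and is bounded by $c_\kappa|\log\epsilon|$. Hence $|C_0'(\epsilon)| \le c_\kappa|\log\epsilon|$.

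To close the argument, we need $\lim_{\epsilon\to 0^+} C_0(\epsilon) = 0$. Integrability of $|\log t|$ near zero combined with the derivative bound already guarantees the limit exists; to identify it as zero, I would introduce the intermediate kernel $\tilde{\bm{R}} := \sqrt{|\bm{R}_0|^2 + \epsilon^2}$. The estimate $||\bm{R}|^2 - \tilde{\bm{R}}^2| \le C\epsilon\bars^2$ yields $\int \bars^{n-1}(1/|\bm{R}|^n - 1/\tilde{\bm{R}}^n)\,d\bars = O(\epsilon|\log\epsilon|)$; using $||\bm{R}_0|^2-\bars^2| \le C\bars^4$ (which follows from $\be_t\cdot\bm{Q}(s,s)=0$), the integrand $\bars^{n-1}(1/\tilde{\bm{R}}^n - 1/(\bars^2+\epsilon^2)^{n/2})$ is uniformly dominated by $C|\bars|$, so dominated convergence identifies its integral with $\int\bars^{n-1}(1/|\bm{R}_0|^n - 1/|\bars|^n)\,d\bars$ in the limit $\epsilon\to 0^+$. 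Combined with the explicit straight-fiber evaluation $\int\bars^{n-1}/(\bars^2+\epsilon^2)^{n/2}\,d\bars = -\log(\epsilon^2) - (n-1) + O(\epsilon^2)$ (direct computation for $n=1,3$), the terms in $C_0$ cancel and give $C_0(\epsilon)\to 0$. Integrating the derivative bound then produces $|C_0(\epsilon)| \le c_\kappa\int_0^\epsilon|\log t|\,dt \le c_\kappa\epsilon|\log\epsilon|$. The principal delicacy is the numerical coincidence $n\cdot d_{n-1,n+2}=2$, which is exactly what makes the explicit $\log(\epsilon^2) + (n-1)$ correction in the statement of the lemma the right leading-order one.
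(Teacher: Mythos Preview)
Your proof is correct and takes a genuinely different route from the paper. The paper's argument is purely static and algebraic: it introduces the explicit regularizing kernel $\epsilon^2/\big(|\bars|\sqrt{\bars^2+\epsilon^2}(|\bars|+\sqrt{\bars^2+\epsilon^2})\big)$, which equals $1/|\bars|-1/\sqrt{\bars^2+\epsilon^2}$ and therefore integrates exactly to $-\log(\epsilon^2)+O(\epsilon^2)$; it then splits the error into three pieces $\bm{J}_1,\bm{J}_2,\bm{J}_3$ and bounds each directly using the algebraic identity $I_R=1/|\bm{R}|-1/|\bm{R}_0|=(-\epsilon^2-2\epsilon\bars^2\bm{Q}\cdot\be_\rho)/(|\bm{R}||\bm{R}_0|(|\bm{R}|+|\bm{R}_0|))$. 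Your approach is dynamic: you differentiate $C_0$ in $\epsilon$, observe that Lemma~\ref{Rintest2} produces exactly the cancellation $n\,d_{n-1,n+2}=2$ against $2/\epsilon$, leaving $|C_0'(\epsilon)|\le c_\kappa|\log\epsilon|$, and then integrate from $0$. This has the conceptual advantage of explaining \emph{why} the correction $\log(\epsilon^2)+(n-1)$ is the right one (it is the antiderivative of the leading $2/\epsilon$ and matches the straight-fiber value), whereas the paper's explicit regularizer must be guessed. The paper's method in turn gives a slightly sharper $O(\epsilon)$ bound on the piece you call $C_g$ (your splitting at $|\bars|=\epsilon$ loses a harmless $|\log\epsilon|$), and avoids the separate limit computation.

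One small correction: your claim $||\bm{R}_0|^2-\bars^2|\le C\bars^4$ does not follow from $\be_t\cdot\bm{Q}(s,s)=0$ alone under the paper's $C^2$ hypothesis on $\bm{X}$; you only get $||\bm{R}_0|^2-\bars^2|\le C|\bars|^3$ (the cross term is $2\bars^3\be_t\cdot\bm{Q}$, and $\bm{Q}$ is merely continuous in $\bars$). This weakens your dominating function from $C|\bars|$ to the constant $C$, but that is still integrable on $[-1/2,1/2]$, so dominated convergence goes through unchanged and the limit $C_0(0^+)=0$ is established exactly as you outline.
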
 

\begin{proof}
We begin by considering 
\begin{equation}\label{bmJ}
\bm{J} = \int_{-1/2}^{1/2} \bigg[\bigg(\frac{\bars^{n-1}}{\abs{\bm{R}}^n} - \frac{\bars^{n-1}}{\abs{\bm{R}_0}^n} \bigg) \bm{g}(\bars)+ \frac{\epsilon^2 \bm{g}(0)}{\abs{\bars}\sqrt{\bars^2+\epsilon^2} (\abs{\bars}+\sqrt{\bars^2+\epsilon^2})} + (n-1)\bm{g}(0)\bigg]d\bars.
\end{equation}

We may estimate $\bm{J}$ as 
\begin{align*}
\bm{J} &= \bm{J}_1 + \bm{J}_2+\bm{J}_3; \\
\bm{J}_1 &:= \int_{-1/2}^{1/2} \bigg(\frac{\bars^{n-1}}{\abs{\bm{R}}^n} - \frac{\bars^{n-1}}{\abs{\bm{R}_0}^n} \bigg) (\bm{g}(\bars)-\bm{g}(0))d\bars \\
\bm{J}_2 &:= \int_{-1/2}^{1/2} \bigg(\frac{1}{\abs{\bm{R}}} - \frac{1}{\abs{\bm{R}_0}} +\frac{\epsilon^2}{\abs{\bars}\sqrt{\bars^2+\epsilon^2} (\abs{\bars}+\sqrt{\bars^2+\epsilon^2})}  \bigg) \bm{g}(0) d\bars \\
\bm{J}_3 &:= \int_{-1/2}^{1/2} \bigg(\frac{\bars^{n-1}}{\abs{\bm{R}}^n} -\frac{1}{\abs{\bm{R}}} - \frac{\bars^{n-1}}{\abs{\bm{R}_0}^n} +\frac{1}{\abs{\bm{R}_0}} \bigg) \bm{g}(0) d\bars + (n-1) \bm{g}(0). 
\end{align*}

To estimate each $\bm{J}_i$, it will be convenient to define 
\begin{equation}\label{IR_def}
I_R := \frac{1}{\abs{\bm{R}}} - \frac{1}{\abs{\bm{R}_0}} = \frac{-\epsilon^2 - 2\epsilon\bars^2\bm{Q}\cdot\be_\rho}{\abs{\bm{R}}\abs{\bm{R}_0}(\abs{\bm{R}_0}+\abs{\bm{R}})},
\end{equation}
where we have used \eqref{CQ} and \eqref{Reps}. \\

Note that, using \eqref{IR_def} along with \eqref{Rlb} and \eqref{non_intersecting}, we have
\[ \abs{\frac{\bars^{n-1}}{\abs{\bm{R}}^n} - \frac{\bars^{n-1}}{\abs{\bm{R}_0}^n}} \le c_\kappa\abs{I_R}  \le c_\kappa\bigg( \frac{\epsilon^2}{\abs{\bars}(\bars^2+\epsilon^2)} + \frac{\epsilon \abs{\bars}}{\bars^2+\epsilon^2}\bigg). \]
Therefore, using that $\bm{g}\in C^1(\T)$, we can estimate $\bm{J}_1$ as 
\[ \abs{\bm{J}_1} \le c_\kappa \norm{\bm{g}'}_{C(\T)} \int_{-1/2}^{1/2} \frac{\epsilon^2+\epsilon\bars^2}{\bars^2+\epsilon^2} d\bars \le c_\kappa \epsilon \norm{\bm{g}'}_{C(\T)}.  \]

Furthermore, using the notation \eqref{IR_def}, the integrand of $\bm{J}_2$ satisfies
\begin{align*}
\abs{I_R +\frac{\epsilon^2}{\abs{\bars}\sqrt{\bars^2+\epsilon^2} (\abs{\bars}+\sqrt{\bars^2+\epsilon^2})} } &\le \abs{\frac{1}{\sqrt{\bars^2+\epsilon^2}} - \frac{1}{\abs{\bm{R}}}} \frac{\epsilon^2}{\abs{\bm{R}_0} (\abs{\bm{R}_0}+\abs{\bm{R}})}  \\
&\quad + \abs{\frac{1}{\abs{\bars}} - \frac{1}{\abs{\bm{R}_0}} } \frac{\epsilon^2}{\sqrt{\bars^2+\epsilon^2} (\abs{\bm{R}_0}+\abs{\bm{R}})}  \\ 
&\quad +\abs{\frac{1}{(\abs{\bars}+\sqrt{\bars^2+\epsilon^2})}- \frac{1}{(\abs{\bm{R}_0}+\abs{\bm{R}})}}\frac{\epsilon^2}{\abs{\bars}\sqrt{\bars^2+\epsilon^2}} \\
&\quad + \frac{c_\kappa\epsilon \bars^2}{\abs{\bm{R}}\abs{\bm{R}_0}(\abs{\bm{R}_0}+\abs{\bm{R}})} \\
&\le c_\kappa \frac{\epsilon^2+\epsilon\abs{\bars}}{\bars^2+\epsilon^2} ,
\end{align*}
where we have used \eqref{RQ} and \eqref{CQ} along with the triangle inequality to bound the difference expressions and \eqref{Rlb} and \eqref{non_intersecting} to bound each of the denominators. Then $\bm{J}_2$ satisfies
\begin{align*}
\abs{\bm{J}_2} \le c_\kappa \int_{-1/2}^{1/2} \frac{(\epsilon^2+\epsilon\abs{\bars})|\bm{g}(0)|}{\bars^2+\epsilon^2} d\bars \le c_\kappa \epsilon\abs{\log\epsilon} \norm{\bm{g}}_{C(\T)},
\end{align*}
by Lemma \ref{defints}. \\

If $n=1$, we are done. For $n=3$, we must also estimate $\bm{J}_3$. We have that $\bm{J}_3$ satisfies 
\begin{align*}
\abs{\bm{J}_3} &\le \norm{\bm{g}}_{C(\T)}\int_{-1/2}^{1/2} \bigg( \bigg|\frac{\bars^2+\epsilon^2-\abs{\bm{R}}^2}{\abs{\bm{R}}^3} - \frac{\bars^2-\abs{\bm{R}_0}^2}{\abs{\bm{R}_0}^3}\bigg| +\bigg|\frac{\epsilon^2}{\abs{\bm{R}}^3} - \frac{\epsilon^2}{\sqrt{\bars^2+\epsilon^2}^3} \bigg|  \bigg) d\bars \\
&\qquad + \norm{\bm{g}}_{C(\T)}\bigg|\int_{-1/2}^{1/2} \frac{\epsilon^2}{\sqrt{\bars^2+\epsilon^2}^3}d\bars - 2 \bigg| \\
&\le \norm{\bm{g}}_{C(\T)}\int_{-1/2}^{1/2} \bigg|\frac{2\bars^3\bm{Q}\cdot\be_t-\bars^4\abs{\bm{Q}}^2-2\epsilon\bars^2\bm{Q}\cdot\be_\rho}{\abs{\bm{R}}^3} - \frac{2\bars^3\bm{Q}\cdot\be_t-\bars^4\abs{\bm{Q}}^2}{\abs{\bm{R}_0}^3} \bigg| d\bars \\
&\qquad +c_\kappa \norm{\bm{g}}_{C(\T)}\int_{-1/2}^{1/2} \frac{\epsilon^2\bars^2}{(\bars^2+\epsilon^2)^2} d\bars  + \norm{\bm{g}}_{C(\T)}\bigg|\frac{2}{\sqrt{1+4\epsilon^2}} - 2 \bigg| \\
&\le c_\kappa\norm{\bm{g}}_{C(\T)}\int_{-1/2}^{1/2} (|\bars|^3+\bars^4)\bigg(\abs{I_R} \sum_{\ell=0}^2\frac{1}{\abs{\bm{R}_0}^\ell\abs{\bm{R}}^{2-\ell}}  \bigg) d\bars+c_\kappa \epsilon \abs{\log\epsilon} \norm{\bm{g}}_{C(\T)} \\
&\le c_\kappa\norm{\bm{g}}_{C(\T)}\int_{-1/2}^{1/2} \frac{\epsilon^2+\epsilon\bars^2 + \epsilon^2|\bars|+\epsilon|\bars|^3}{\abs{\bm{R}}^2} d\bars+c_\kappa \epsilon \abs{\log\epsilon} \norm{\bm{g}}_{C(\T)} \\ 
&\le c_\kappa \epsilon \abs{\log\epsilon} \norm{\bm{g}}_{C(\T)},
\end{align*}
using \eqref{Reps}, \eqref{CQ}, and \eqref{Rnsigman} in the second inequality, definition \ref{IR_def} along with Lemmas \ref{defints} and \ref{Rintest0} in the third inequality, and \eqref{non_intersecting} in the fourth inequality. \\

Finally, we show that the expression for $\bm{J}$ \eqref{bmJ} closely matches the expression on the left hand side of \eqref{cent_lem_eq}. We evaluate
\begin{align*}
\int_{-1/2}^{1/2}\bigg(\frac{\epsilon^2}{\abs{\bars}\sqrt{\bars^2+\epsilon^2} (\abs{\bars}+\sqrt{\bars^2+\epsilon^2})} -\frac{1}{\abs{\bars}} \bigg) d\bars &= -\int_{-1/2}^{1/2}\frac{\abs{\bars}\sqrt{\bars^2+\epsilon^2}+\bars^2}{\abs{\bars}\sqrt{\bars^2+\epsilon^2} (\abs{\bars}+\sqrt{\bars^2+\epsilon^2})} d\bars \\
&= -\int_{-1/2}^{1/2}\frac{1}{\sqrt{\bars^2+\epsilon^2} } d\bars \\
&= \log\bigg(\frac{\epsilon^2}{\frac{1}{2}+(\epsilon^2+\frac{1}{4})^{1/2}+\epsilon^2} \bigg).
\end{align*}

Using that 
\[ \bigg|\log\bigg(\frac{\epsilon^2}{\frac{1}{2}+(\epsilon^2+\frac{1}{4})^{1/2}+\epsilon^2}\bigg)  - \log(\epsilon^2)\bigg| \le c \epsilon^2, \]
we obtain Lemma \ref{center_est_lem}.
\end{proof}

\subsection{Slender body velocity residual}\label{SB_vel}
We will now use Lemmas  \ref{Rintest0}, \ref{Rintest1}, and \ref{Rintest2} to obtain an estimate on the non-conforming error -- the degree to which $\bu^{\SB}\big|_{\Gamma_\epsilon}(s,\theta)$ fails to satisfy the $\theta$-independence condition along the fiber surface $\Gamma_\epsilon$. We establish some estimates on $\bu^{\SB}$ and its derivatives along $\Gamma_{\epsilon}$. The derivative estimates will be needed in Section \ref{error_est_section} to obtain an actual error estimate between the slender body approximation $\bu^{\SB}$ and the true solution $\bu$. \\

We show the following proposition. 
\begin{proposition}\label{prop:uSBtheta}
Consider $\bu^{\SB}(\bx)$ for $\bx \in \Gamma_\epsilon$. For sufficiently small $\epsilon$, we have
\begin{equation}
\abs{\frac{1}{\epsilon}\frac{\p\bu^{\SB}}{\p \theta}}\le c_\kappa\norm{\bm{f}}_{C^1(\T)}\abs{\log \epsilon} 
\end{equation}
where the constant $c_\kappa$ depends only on $c_\Gamma$ and $\kappa_{\max}$.
\end{proposition}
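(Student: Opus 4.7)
The plan is to differentiate $\bu^{\SB}$ under the integral sign and exploit the Stokeslet--doublet cancellation encoded in the coefficient $\epsilon^2/2$ in \eqref{stokes_SB}. From \eqref{thetaderiv} and \eqref{Reps} one has $\partial_\theta\bm{R} = \epsilon\be_\theta$ and, because $\be_\theta$ is orthogonal to both $\be_t$ and $\be_\rho$, the key identity $\bm{R}\cdot\be_\theta = \bars^2\bm{Q}\cdot\be_\theta$. Expanding $\partial_\theta[\mathcal{S}(\bm{R})\bm{f}(t)]$ and $\frac{\epsilon^2}{2}\partial_\theta[\mathcal{D}(\bm{R})\bm{f}(t)]$, all but two of the resulting summands pick up an explicit factor of $\bars^2$ through $\bm{R}\cdot\partial_\theta\bm{R} = \epsilon\bars^2\bm{Q}\cdot\be_\theta$, and these are controlled directly by Lemma \ref{Rintest0} with $m=2$ to produce $O(\epsilon|\log\epsilon|\norm{\bm{f}}_{C(\T)})$ or better.

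The two remaining summands assemble into the single expression
\[
\mathcal{K} := \epsilon\big[(\be_\theta\cdot\bm{f}(t))\bm{R} + (\bm{R}\cdot\bm{f}(t))\be_\theta\big]\bigg(\frac{1}{|\bm{R}|^3} - \frac{3\epsilon^2}{2|\bm{R}|^5}\bigg),
\]
which naively has size $\epsilon/|\bm{R}|^2$ and would integrate only to $O(1)$. The next step is to decompose $\bm{R} = -\bars\be_t(s) + \epsilon\be_\rho(s,\theta) + \bars^2\bm{Q}(s,t)$, splitting $\mathcal{K}$ into three pieces. The $-\bars\be_t$ piece is odd in $\bars$: its Stokeslet part is handled by Lemma \ref{Rintest1} with $(m,n)=(1,3)$ giving $|\log\epsilon|$, and its doublet part by Lemma \ref{Rintest1} with $(m,n)=(1,5)$ giving $\epsilon^{-2}$, for a combined contribution of $O(\epsilon|\log\epsilon|\norm{\bm{f}}_{C^1(\T)})$. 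The $\bars^2\bm{Q}$ piece already carries the explicit $\bars^2$ factor and is handled by Lemma \ref{Rintest0}, contributing $O(\epsilon|\log\epsilon|\norm{\bm{f}}_{C(\T)})$.

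The delicate piece is the $\epsilon\be_\rho$ contribution, $\epsilon^2[(\be_\theta\cdot\bm{f}(t))\be_\rho + (\be_\rho\cdot\bm{f}(t))\be_\theta](|\bm{R}|^{-3} - \tfrac{3\epsilon^2}{2}|\bm{R}|^{-5})$. Writing $\bm{f}(t) = \bm{f}(s) + (\bm{f}(t)-\bm{f}(s))$, the difference gains an extra $|\bars|$ factor from $\bm{f}\in C^1(\T)$ and is controlled by Lemma \ref{Rintest0} with $m=1$ to give $O(\epsilon\norm{\bm{f}}_{C^1(\T)})$. For the constant $\bm{f}(s)$ part, Lemma \ref{Rintest2} applied to $\int d\bars/|\bm{R}|^3$ and $\int d\bars/|\bm{R}|^5$ extracts the leading terms $\epsilon^{-2}d_{03}$ and $\epsilon^{-4}d_{05}$ with respective remainders of order $\epsilon^{-1}$ and $\epsilon^{-3}$. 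After multiplying by the prefactors $\epsilon^2$ and $-\frac{3\epsilon^4}{2}$, the leading terms combine to $d_{03} - \frac{3}{2}d_{05} = 2 - 2 = 0$ --- precisely the algebraic identity that motivates the doublet coefficient $\epsilon^2/2$ --- leaving only an $O(\epsilon\norm{\bm{f}}_{C(\T)})$ residual.

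The main obstacle is the bookkeeping: the expansion of $\partial_\theta[\mathcal{S}+\tfrac{\epsilon^2}{2}\mathcal{D}]\bm{f}(t)$ generates several vector-valued summands that must be grouped and shown to fall into one of the three regimes above, and one must verify that the single $O(1)$-size contribution surviving direct estimation reduces to exactly $d_{03} - \frac{3}{2}d_{05}$. Once this accounting is in place, Lemmas \ref{Rintest0}, \ref{Rintest1}, and \ref{Rintest2} dispatch each piece and yield the stated bound after dividing by $\epsilon$.
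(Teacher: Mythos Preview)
Your proposal is correct and follows essentially the same approach as the paper: both identify the terms carrying the factor $\bm{R}\cdot\be_\theta=\bars^2\bm{Q}\cdot\be_\theta$ and dispatch them via Lemma~\ref{Rintest0}, then decompose the remaining expression $(\bm{R}\cdot\bm{f})\be_\theta+(\be_\theta\cdot\bm{f})\bm{R}$ using $\bm{R}=-\bars\be_t+\epsilon\be_\rho+\bars^2\bm{Q}$ and handle the three pieces with Lemmas~\ref{Rintest0}, \ref{Rintest1}, and \ref{Rintest2} respectively. The only organizational difference is that you package the Stokeslet and doublet contributions together into $\mathcal{K}$ and exhibit the cancellation as $d_{03}-\tfrac{3}{2}d_{05}=0$, whereas the paper treats them separately, extracts a leading term $\tfrac{2}{\epsilon}\bm{h}(s)$ from each (with opposite signs), and cancels at the end; these are the same cancellation viewed from two sides.
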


\begin{proof} 
Write $\bx=\X(s)+\epsilon \be_\rho$. Using \eqref{stokes_SB}, we have:
\begin{equation}\label{ISDtheta}
\begin{split}
\frac{8\pi}{\epsilon}\frac{\p\bu^{\SB}}{\p \theta} &=\bm{I}_{\mc{S}}+\bm{I}_{\mc{D}}; \\
\bm{I}_\mc{S}&=\frac{1}{\epsilon}\frac{\p}{\p\theta}\int_{-1/2}^{1/2}\mc{S}(\bm{R})\bm{f}(s+\bars) d\bars,\\
\bm{I}_\mc{D}&=\frac{1}{\epsilon}\frac{\p}{\p\theta}\int_{-1/2}^{1/2}\frac{\epsilon^2}{2}\mc{D}(\bm{R})\bm{f}(s+\bars) d\bars.
\end{split}
\end{equation}
We first consider $\bm{I}_{\mc{S}}$. Using \eqref{SD} and \eqref{thetaderiv}, we have
\begin{align*}
\bm{I}_{\mc{S}} &=\bm{I}_{\mc{S},1}+\bm{I}_{\mc{S},2};\\
\bm{I}_{\mc{S},1} &=-\int_{-1/2}^{1/2} \bigg(\frac{\bm{R}\cdot\be_\theta}{\abs{\bm{R}}^3}\bm{f}
+3\frac{(\bm{R}\cdot\be_\theta)(\bm{R}\cdot\bm{f})}{\abs{\bm{R}}^5}\bm{R} \bigg)d\bars,\\
\bm{I}_{\mc{S},2}&=\int_{-1/2}^{1/2} \bigg(\frac{(\bm{R}\cdot \bm{f})\be_\theta+(\be_\theta\cdot \bm{f})\bm{R}}{\abs{\bm{R}}^3}\bigg)d\bars.
\end{align*}
We estimate $\bm{I}_{\mc{S},1}$. First, note from \eqref{CQ} that
\[\abs{\bm{R}\cdot \be_\theta}=\bars^2\abs{\bm{Q}\cdot \be_\theta}\le \frac{\kappa_{\max}}{2}\bars^2. \]
Applying Lemma \ref{Rintest0}, we then have
\begin{equation}\label{IS1}
\abs{\bm{I}_{\mc{S},1}}\le \int_{-1/2}^{1/2}4\frac{\abs{\bm{R}\cdot\be_\theta}}{\abs{\bm{R}}^3}\norm{\bm{f}}_{C(\T)} d\bars \le c_\kappa\norm{\bm{f}}_{C(\T)}\abs{\log \epsilon}.
\end{equation}
Turning to $\bm{I}_{\mc{S},2}$, we note that $(\bm{R}\cdot \bm{f})\be_\theta+(\be_\theta\cdot \bm{f})\bm{R}=\epsilon \bm{g}_0+\bars\bm{g}_1+\bars^2\bm{g}_2$, where
\begin{equation}\label{g012def}
\begin{split}
\bm{g}_0(\bars;s) &=\be_\rho(s)(\be_\theta(s)\cdot \bm{f}(s+\bars))+ \be_\theta(s)(\be_\rho(s)\cdot \bm{f}(s+\bars)),\\
\bm{g}_1(\bars;s) &=\be_t(s)(\be_\theta(s)\cdot \bm{f}(s+\bars))+ \be_\theta(s)(\be_t(s)\cdot \bm{f}(s+\bars)),\\
\bm{g}_2 &=\bm{Q}(\be_\theta\cdot \bm{f})+\be_\theta(\bm{Q}\cdot \bm{f}),
\end{split}
\end{equation}
and we have written out the explicit dependence of $\bm{g}_0$ and $\bm{g}_1$ on $\bars$ and $s$. Applying Lemma \ref{Rintest0} and \eqref{CQ}, we have
\[\abs{\int_{-1/2}^{1/2} \frac{\bars^2 \bm{g}_2}{\abs{\bm{R}}^3}d\bars}
\le \norm{\bm{g}_2(\cdot;s)}_{C(\T)}\int_{-1/2}^{1/2} \frac{\bars^2}{\abs{\bm{R}}^3}d\bars
\le c_\kappa\norm{\bm{f}}_{C(\T)}\abs{\log \epsilon}. \]

Using Lemma \ref{Rintest1}, we have
\[ \abs{\int_{-1/2}^{1/2} \frac{\bars \bm{g}_1(\bars;s)}{\abs{\bm{R}}^3}d\bars}
\le c_\kappa\norm{\bm{g}_1(\cdot;s)}_{C^1(\T)}\abs{\log \epsilon}\le c_\kappa\norm{\bm{f}}_{C^1(\T)}\abs{\log \epsilon}. \]

Finally, using Lemma \ref{Rintest2} with $m=0$, $n=3$, we have 
\begin{equation}\label{hdef}
\begin{split}
\abs{\int_{-1/2}^{1/2} \frac{\epsilon \bm{g}_0(\bars;s)}{\abs{\bm{R}}^3}d\bars - \frac{2}{\epsilon}\bm{g}_0(0; s)}
&\le c_\kappa\norm{\bm{g}_0(\cdot;s)}_{C^1(\T)}\le c_\kappa\norm{\bm{f}}_{C^1(\T)};\\
\bm{g}_0(0;s)&=\be_\rho(s)(\be_\theta(s)\cdot \bm{f}(s))+\be_\theta(s)(\be_\rho(s)\cdot \bm{f}(s)) =: \bm{h}(s).
\end{split}
\end{equation}

Combining the above estimates, we obtain
\begin{equation}\label{IS2}
\abs{\bm{I}_{\mc{S},2}-\frac{2}{\epsilon}\bm{h}(s)}\le c_\kappa\norm{\bm{f}}_{C^1(\T)}\abs{\log\epsilon}.
\end{equation}

Finally, combining \eqref{IS1} and \eqref{IS2}, we have
\begin{equation}\label{ISh}
\abs{\bm{I}_{\mc{S}}-\frac{2}{\epsilon}\bm{h}(s)}\le 
\abs{\bm{I}_{\mc{S},1}}+\abs{\bm{I}_{\mc{S},2}-\frac{2}{\epsilon}\bm{h}(s)}\le
c_\kappa\norm{\bm{f}}_{C^1(\T)}\abs{\log\epsilon}.
\end{equation}

We next consider $\bm{I}_{\mc{D}}$ in \eqref{ISDtheta}. We write
\begin{equation}\label{IDdef}
\begin{split}
\bm{I}_{\mc{D}}&=\frac{3\epsilon^2}{2}\big(\bm{I}_{\mc{D},1}+\bm{I}_{\mc{D},2} \big),\\
\bm{I}_{\mc{D},1}&=\int_{-1/2}^{1/2} \bigg(-\frac{\bm{R}\cdot\be_\theta}{\abs{\bm{R}}^5}\bm{f}+5\frac{(\bm{R}\cdot\be_\theta)(\bm{R}\cdot\bm{f})}{\abs{\bm{R}}^7}\bm{R} \bigg)d\bars,\\
\bm{I}_{\mc{D},2} &= -\int_{-1/2}^{1/2} \bigg(\frac{(\bm{R}\cdot \bm{f})\be_\theta+(\be_\theta \cdot \bm{f})\bm{R}}{\abs{\bm{R}}^5}\bigg) d\bars.
\end{split}
\end{equation}
Following the same steps used to estimate $\bm{I}_{\mc{S},1}$ and $\bm{I}_{\mc{S},2}$ in \eqref{IS1} and \eqref{IS2}, we obtain
\begin{align*}
\abs{\bm{I}_{\mc{D},1}}&\le c_\kappa\norm{\bm{f}}_{C(\T)}\epsilon^{-2},\\
\abs{\bm{I}_{\mc{D},2}+\frac{4}{3\epsilon^3}\bm{h}(s)}&\le c_\kappa\norm{\bm{f}}_{C^1(\T)}\epsilon^{-2},
\end{align*}
where $\bm{h}(s)$ was given in \eqref{hdef}. In particular, in the second estimate, we used Lemma \ref{Rintest2} with $m=0$ and $n=5$. \\

Combining the above, we have
\begin{equation}\label{IDh}
\abs{\bm{I}_{\mc{D}}+\frac{2}{\epsilon}\bm{h}(s)}\le \frac{3\epsilon^2}{2}\bigg(\abs{\bm{I}_{\mc{D},1}}+\abs{\bm{I}_{\mc{D},2}+\frac{4}{3\epsilon^3}\bm{h}(s)}\bigg) \le c_\kappa\norm{\bm{f}}_{C^1(\T)}.
\end{equation}

We finally estimate \eqref{ISDtheta} as 
\begin{align*}
\abs{\frac{1}{\epsilon}\frac{\p\bu^{\SB}}{\p \theta}}&\le \frac{1}{8\pi}\bigg(\abs{\bm{I}_{\mc{S}} -\frac{2}{\epsilon}\bm{h}(s)}+\abs{\bm{I}_{\mc{D}}+\frac{2}{\epsilon}\bm{h}(s)} \bigg) \le c_\kappa\norm{\bm{f}}_{C^1(\T)}\abs{\log \epsilon},
\end{align*}
where we used \eqref{ISh} and \eqref{IDh} in the last inequality.
\end{proof}

We next show the following proposition.
\begin{proposition}\label{prop:uSBstheta}
Consider $\bu^{\SB}(\bx)$ for $\bx\in \Gamma_\epsilon$. The following estimate holds for sufficiently small $\epsilon$:
\begin{equation}
\abs{\frac{\p}{\p\theta}\bigg(\frac{\p\bu^{\SB}}{\p s} - \kappa_3\frac{\p\bu^{\SB}}{\p\theta} \bigg)}\le c_\kappa\norm{\bm{f}}_{C^1(\T)},
\end{equation}
where the constant $c_\kappa$ depends only on the constants $c_\Gamma$ and $\kappa_{\max}$.
\end{proposition}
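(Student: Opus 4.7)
The approach mirrors Proposition~\ref{prop:uSBtheta}, but exploits an integration by parts in the centerline parameter $t$ to avoid needing a Stokeslet--doublet cancellation at the next derivative order. Since $\kappa_3$ does not depend on $s$ by Lemma~\ref{lemmaorthonormal}, the operators $\p_\theta$ and $\mc{L} := \p_s - \kappa_3 \p_\theta$ commute, so it suffices to bound $\p_\theta \mc{L} \bu^{\SB}$. Using the identity $(\p_s - \kappa_3 \p_\theta)\bm{R} = (1 - \epsilon \widehat\kappa) \be_t(s)$ from \eqref{sderiv} (valid at fixed $t$ and $\rho=\epsilon$), together with $\p_t \bm{R} = -\be_t(t)$ and the algebraic splitting $\be_t(s) \cdot \nabla_{\bm{R}} K(\bm{R}) = -\p_t K(\bm{R}) + (\be_t(s) - \be_t(t)) \cdot \nabla_{\bm{R}} K$, integration by parts in $t$ on $\T$ gives
\[
8\pi \mc{L} \bu^{\SB} = (1 - \epsilon \widehat\kappa) \int_{\T} \bigl[ K(\bm{R}) \bm{f}'(t) + (\be_t(s) - \be_t(t)) \cdot \nabla_{\bm{R}} K(\bm{R}) \bm{f}(t) \bigr] dt,
\]
where $K := \mc{S} + \tfrac{\epsilon^2}{2} \mc{D}$.

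Differentiating in $\theta$ and using $\p_\theta \bm{R} = \epsilon \be_\theta$ and $\p_\theta (1-\epsilon\widehat\kappa) = -\epsilon \p_\theta \widehat\kappa$ (and that $\be_t(s) - \be_t(t)$, $\bm{f}(t)$, $\bm{f}'(t)$ are $\theta$-independent) yields $8\pi \p_\theta \mc{L} \bu^{\SB} = \bm{T}_1 + \bm{T}_2 + \bm{T}_3$ with
\begin{align*}
\bm{T}_1 &= -\epsilon(\p_\theta\widehat\kappa) \int_\T \bigl[ K(\bm{R})\bm{f}'(t) + (\be_t(s) - \be_t(t)) \cdot \nabla_{\bm{R}}K(\bm{R})\bm{f}(t)\bigr] dt, \\
\bm{T}_2 &= \epsilon(1 - \epsilon\widehat\kappa) \int_\T \be_\theta \cdot \nabla_{\bm{R}} K(\bm{R})\bm{f}'(t)\, dt, \\
\bm{T}_3 &= \epsilon(1 - \epsilon\widehat\kappa) \int_\T (\be_t(s) - \be_t(t)) \cdot \nabla^2_{\bm{R}}K(\bm{R})\be_\theta\, \bm{f}(t)\, dt.
\end{align*}

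Each term is controlled directly by Lemma~\ref{Rintest0}, using the kernel bounds $|K| \le c/|\bm{R}| + c\epsilon^2/|\bm{R}|^3$, $|\nabla_{\bm{R}} K| \le c/|\bm{R}|^2 + c\epsilon^2/|\bm{R}|^4$, and $|\nabla^2_{\bm{R}} K| \le c/|\bm{R}|^3 + c\epsilon^2/|\bm{R}|^5$, together with the geometric estimate $|\be_t(s) - \be_t(t)| \le \kappa_{\max}|\bars|$ obtained from $\X \in C^2(\T)$. A direct application of Lemma~\ref{Rintest0} to the integrals in each term yields $|\bm{T}_1| \le c_\kappa \epsilon|\log\epsilon|\norm{\bm{f}}_{C^1(\T)}$, $|\bm{T}_2| \le c_\kappa \norm{\bm{f}}_{C^1(\T)}$, and $|\bm{T}_3| \le c_\kappa \norm{\bm{f}}_{C(\T)}$. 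Summing these and using that $\epsilon|\log\epsilon|$ is bounded for $\epsilon$ small delivers the stated bound.

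The main subtlety to flag is that naively applying $\p_\theta$ to $\mc{L}\bu^{\SB}$ without first integrating by parts in $t$ produces a kernel $\be_t \cdot \nabla^2_{\bm{R}} K \be_\theta$ of size $1/|\bm{R}|^3 + \epsilon^2/|\bm{R}|^5$, whose integral against $\bm{f}(t)$ is $O(1/\epsilon^2)$; after multiplication by the outer $\epsilon$ this gives $O(1/\epsilon)$ unless one tracks a delicate Stokeslet--doublet cancellation of the same flavor as the $\pm 2\bm{h}(s)/\epsilon$ cancellation in Proposition~\ref{prop:uSBtheta}. Performing the $t$-integration by parts first either transfers a derivative onto $\bm{f}$ (producing $\bm{T}_2$) or supplies an extra $|\be_t(s) - \be_t(t)| \le \kappa_{\max}|\bars|$ weight (in $\bm{T}_3$), each of which reduces the singular integral by one power of $\epsilon$, so that the bound follows from Lemma~\ref{Rintest0} alone without invoking Lemmas~\ref{Rintest1} or~\ref{Rintest2}.
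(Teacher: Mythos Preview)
Your argument is correct and takes a genuinely different route from the paper's proof. The paper computes $\mc{L}\bu^{\SB}$ directly as a directional derivative $\be_t(s)\cdot\nabla_{\bm{R}}K$, then differentiates in $\theta$ and decomposes the resulting Stokeslet and doublet pieces into several terms ($\bm{I}_{\mc{S},1}$--$\bm{I}_{\mc{S},4}$ and $\bm{I}_{\mc{D},1}$--$\bm{I}_{\mc{D},4}$). Two of these terms in each group carry leading contributions $\pm 2\bm{h}(s)/\epsilon^2$ (Stokeslet) and $\pm 4\bm{h}(s)/(3\epsilon^4)$ (doublet) that must be extracted via Lemma~\ref{Rintest2} and then cancelled against each other; the remainders are controlled with Lemmas~\ref{Rintest1} and~\ref{Rintest2}. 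Your integration-by-parts in $t$ replaces the problematic $\be_t(s)\cdot\nabla_{\bm{R}}K$ by $K\bm{f}' + (\be_t(s)-\be_t(t))\cdot\nabla_{\bm{R}}K\,\bm{f}$, which trades the extra derivative for either a derivative on $\bm{f}$ or an $O(|\bars|)$ smallness factor. This lowers the effective singularity by one order throughout, so after the $\theta$-derivative the crude bound from Lemma~\ref{Rintest0} alone suffices, and no leading-order cancellation needs to be tracked. The paper's approach is more hands-on and makes the Stokeslet--doublet structure explicit (which is also reused in the force-residual calculations of Section~\ref{SBforce_res}); your approach is shorter and conceptually cleaner for this particular estimate, at the cost of hiding that structure. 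One cosmetic remark: the commutation of $\p_\theta$ and $\mc{L}$ is not actually used---you simply apply $\p_\theta$ to the expression for $\mc{L}\bu^{\SB}$, which is exactly the quantity in the statement.
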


\begin{proof}
First, note that 
\begin{equation}\label{uSBstheta}
\begin{split}
\frac{\p}{\p\theta}&\bigg(\frac{\p\bu^{\SB}}{\p s}-\kappa_3\frac{\p\bu^{\SB}}{\p\theta}\bigg) = \frac{1}{8\pi}\bigg((1-\epsilon\wh\kappa)\frac{\p \bm{I}^{\SB}}{\p \theta} - \epsilon\frac{\p \wh{\kappa}}{\p \theta}\bm{I}^{\SB} \bigg);\\
&\bm{I}^{\SB}=\frac{8\pi}{1-\epsilon\wh{\kappa}}\bigg(\frac{\p \bu^{\SB}}{\p s} -\kappa_3\frac{\p\bu^{\SB}}{\p \theta}\bigg) = \bm{I}_{\mc{S}}+\frac{3\epsilon^2}{2}\bm{I}_{\mc{D}},\\
&\bm{I}_{\mc{S}}=\int_{-1/2}^{1/2}\bigg(\frac{-\bm{R}\cdot\be_t}{\abs{\bm{R}}^3}\bm{f} + \frac{(\bm{R}\cdot \bm{f})\be_t +(\be_t\cdot \bm{f})\bm{R}}{\abs{\bm{R}}^3} - 3\frac{(\bm{R}\cdot\be_t)(\bm{R}\cdot\bm{f})}{\abs{\bm{R}}^5}\bm{R} \bigg)d\bars,\\
&\bm{I}_{\mc{D}}= \int_{-1/2}^{1/2}\bigg(\frac{-\bm{R}\cdot\be_t}{\abs{\bm{R}}^5}\bm{f} -\frac{(\bm{R}\cdot \bm{f})\be_t +(\be_t\cdot \bm{f})\bm{R}}{\abs{\bm{R}}^5} +5\frac{(\bm{R}\cdot\be_t)(\bm{R}\cdot\bm{f})}{\abs{\bm{R}}^7}\bm{R} \bigg)d\bars,
\end{split}
\end{equation}
where we used \eqref{stokes_SB} and \eqref{sderiv} to obtain the expression for $\bm{I}_{\mc S}$ and $\bm{I}_{\mc D}$.\\

Let us estimate $\bm{I}^{\SB}$. We have
\begin{align*}
\abs{\bm{I}_{\mc{S}}} &\le \int_{-1/2}^{1/2}\abs{\frac{-\bm{R}\cdot\be_t}{\abs{\bm{R}}^3}\bm{f} +\frac{(\bm{R}\cdot \bm{f})\be_t+(\be_t\cdot \bm{f})\bm{R}}{\abs{\bm{R}}^3} - 3\frac{(\bm{R}\cdot\be_t)(\bm{R}\cdot\bm{f})}{\abs{\bm{R}}^5}\bm{R}} d\bars \\
&\le \int_{-1/2}^{1/2} \frac{6\norm{\bm{f}}_{C(\T)}}{\abs{\bm{R}}^2}d\bars
\le c_\kappa \norm{\bm{f}}_{C(\T)}\epsilon^{-1},
\end{align*}
where we used Lemma \ref{Rintest0} in the last inequality. Likewise,
\begin{align*}
\abs{\bm{I}_{\mc{D}}} &\le \int_{-1/2}^{1/2}\abs{\frac{-\bm{R}\cdot\be_t}{\abs{\bm{R}}^5}\bm{f} -\frac{(\bm{R}\cdot \bm{f})\be_t +(\be_t\cdot \bm{f})\bm{R}}{\abs{\bm{R}}^5} +5\frac{(\bm{R}\cdot\be_t)(\bm{R}\cdot\bm{f})}{\abs{\bm{R}}^7}\bm{R}} d\bars \\
&\le \int_{-1/2}^{1/2} \frac{8\norm{\bm{f}}_{C(\T)}}{\abs{\bm{R}}^4}d\bars \le c_\kappa \norm{\bm{f}}_{C(\T)}\epsilon^{-3},
\end{align*}
where we again used Lemma \ref{Rintest0} in the last inequality. Using the above estimates, we have
\begin{equation}\label{CI}
\abs{\bm{I}^{\SB}} \le \abs{\bm{I}_{\mc S}}+\frac{3\epsilon^2}{2}\abs{\bm{I}_{\mc D}}
\le c_\kappa\norm{\bm{f}}_{C(\T)}\epsilon^{-1}.
\end{equation}

We now estimate $\p\bm{I}^{\SB}/\p \theta$. We have
\begin{equation}\label{ISstheta}
\begin{split}
\frac{\p \bm{I}_{\mc S}}{\p \theta} &= \epsilon\big(\bm{I}_{\mc{S},1}+\bm{I}_{\mc{S},2}+\bm{I}_{\mc{S},3}+\bm{I}_{\mc{S},4}\big);\\
\bm{I}_{\mc{S},1} &= 3\int_{-1/2}^{1/2}\bigg(\frac{(\bm{R}\cdot\be_\theta)}{\abs{\bm{R}}^5} ((\bm{R}\cdot\be_t)\bm{f}-(\bm{R}\cdot \bm{f})\be_t -(\be_t\cdot \bm{f})\bm{R}) \bigg) d\bars,\\
\bm{I}_{\mc{S},2} &= \int_{-1/2}^{1/2}\frac{(\be_\theta\cdot \bm{f})\be_t +(\be_t\cdot \bm{f})\be_\theta}{\abs{\bm{R}}^3}d\bars,\\
\bm{I}_{\mc{S},3} &= -3\int_{-1/2}^{1/2}\frac{(\bm{R}\cdot\be_t)}{\abs{\bm{R}}^5} \big((\bm{R}\cdot\bm{f})\bm{e}_\theta+(\bm{e}_\theta\cdot\bm{f})\bm{R} \big) d\bars,\\
\bm{I}_{\mc{S},4} &= 15\int_{1/2}^{1/2} \frac{(\bm{R}\cdot\be_t)(\bm{R}\cdot\bm{f})(\bm{R}\cdot\be_\theta)}{\abs{\bm{R}}^7}\bm{R}d\bars.
\end{split}
\end{equation}
We estimate each term in turn. Using \eqref{Reps} and Lemma \ref{Rintest0}, we have that $\bm{I}_{\mc{S},1}$ satisfies
\begin{equation}\label{IS1stheta}
\abs{\bm{I}_{\mc{S},1}} \le c_\kappa\int_{-1/2}^{1/2}\frac{\bars^2}{\abs{\bm{R}}^4}\norm{\bm{f}}_{C(\T)}d\bars
\le c_\kappa\norm{\bm{f}}_{C(\T)}\epsilon^{-1}.
\end{equation}


Next, to estimate $\bm{I}_{\mc{S},2}$, we define $\bm{g}_1$ as in \eqref{g012def}. Using Lemma \ref{Rintest2} with $m=0, n=3$, we have
\begin{equation}\label{hsdef}
\begin{split}
\abs{\bm{I}_{\mc{S},2}-\frac{2}{\epsilon^2}\bm{h}(s)} &\le c_\kappa\norm{\bm{g}_1(\cdot;s)}_{C^1(\T)}\epsilon^{-1}
\le c_\kappa\norm{\bm{f}}_{C^1(\T)}\epsilon^{-1}; \\
\bm{h}(s)&=\bm{g}_1(0;s) = (\be_\theta(s,\theta)\cdot \bm{f}(s))\be_t(s)+(\be_t(s)\cdot \bm{f}(s))\be_\theta(s,\theta).
\end{split}
\end{equation}

To estimate $\bm{I}_{\mc{S},3}$, let
\begin{align*}
\bm{I}_{\mc{S},3}&=\bm{I}_{\mc{S},31}+\bm{I}_{\mc{S},32}; \\
\bm{I}_{\mc{S},31} &= -3\int_{-1/2}^{1/2}\frac{\bars^2(\bm{Q}\cdot\be_t)}{\abs{\bm{R}}^5} \big((\bm{R}\cdot\bm{f})\be_\theta+(\be_\theta\cdot\bm{f})\bm{R} \big) d\bars,\\
\bm{I}_{\mc{S},32}&= -3\int_{-1/2}^{1/2}\frac{\bars}{\abs{\bm{R}}^5} \big((\bm{R}\cdot\bm{f})\be_\theta+(\be_\theta\cdot\bm{f})\bm{R} \big) d\bars.
\end{align*}
Using Lemma \ref{Rintest0}, $\bm{I}_{\mc{S},31}$ may be estimated as
\[ \abs{\bm{I}_{\mc{S},31}}\le c_\kappa\int_{-1/2}^{1/2}\frac{\bars^2}{\abs{\bm{R}}^4}\norm{\bm{f}}_{C(\T)}d\bars \le c_\kappa\norm{\bm{f}}_{C(\T)}\epsilon^{-1}. \]

To estimate $\bm{I}_{\mc{S},32}$, define $\bm{g}_0$, $\bm{g}_1$, and $\bm{g}_2$ as in \eqref{g012def}. We first have
\[\abs{\int_{-1/2}^{1/2} \frac{\bars^3 \bm{g}_2}{\abs{\bm{R}}^5}d\bars} \le c_\kappa\int_{-1/2}^{1/2} \frac{|\bars|^3\norm{\bm{f}}_{C(\T)}}{\abs{\bm{R}}^5}d\bars \le c_\kappa\norm{\bm{f}}_{C(\T)}\epsilon^{-1}, \]
where we used \eqref{Reps} and Lemma \ref{Rintest0}. Next, we have
\[ \abs{\int_{-1/2}^{1/2} \frac{\bars^2 \bm{g}_1}{\abs{\bm{R}}^5}d\bars - \frac{2}{3\epsilon^2}\bm{h}(s)} \le c_\kappa\norm{\bm{g}_1(\cdot;s)}_{C^1(\T)}\epsilon^{-1} \le c_\kappa\norm{\bm{f}}_{C^1(\T)}\epsilon^{-1}, \]
where we used Lemma \ref{Rintest2} with $m=2, n=5$ and $\bm{h}(s)$ as defined in \eqref{hsdef}. For $\bm{g}_2$, we have
\[\abs{\int_{-1/2}^{1/2} \frac{\epsilon \bars \bm{g}_0}{\abs{\bm{R}}^5}d\bars} \le c_\kappa\norm{\bm{g}_0(\cdot;s)}_{C^1(\T)}\epsilon^{-1}\le c_\kappa\norm{\bm{f}}_{C^1(\T)}\epsilon^{-1}, \]
where we used Lemma \ref{Rintest1}. Combining the above estimates, we have
\begin{equation}\label{IS3stheta}
\abs{\bm{I}_{\mc{S},3}+\frac{2}{\epsilon^2}\bm{h}(s)}\le c_\kappa\norm{\bm{f}}_{C^1(\T)}\epsilon^{-1}.
\end{equation}

Finally, we estimate $\bm{I}_{\mc{S},4}$ as 
\begin{equation}\label{IS4stheta}
\abs{\bm{I}_{\mc{S},4}}\le c_\kappa\int_{-1/2}^{1/2}\frac{\bars^2}{\abs{\bm{R}}^4}\norm{\bm{f}}_{C(\T)}d\bars \le c_\kappa\norm{\bm{f}}_{C(\T)}\epsilon^{-1}.
\end{equation}

Using the estimates \eqref{IS1stheta}, \eqref{hsdef}, \eqref{IS3stheta} and \eqref{IS4stheta} in \eqref{ISstheta}, 
we obtain 
\begin{equation}\label{ISthetaest}
\abs{\frac{\p \bm{I}_{\mc{S}}}{\p \theta}}\le c_\kappa\norm{\bm{f}}_{C^1(\T)}.
\end{equation}

We may estimate $\partial \bm{I}_{\mc{D}}/\partial \theta$ in exactly the same way. We have
\begin{equation}\label{IDstheta}
\begin{split}
\frac{\p \bm{I}_{\mc D}}{\p \theta} &= \epsilon \big(\bm{I}_{\mc{D},1}+\bm{I}_{\mc{D},2} +\bm{I}_{\mc{D},3}+\bm{I}_{\mc{D},4}\big); \\
\bm{I}_{\mc{D},1} &= 5\int_{-1/2}^{1/2}\bigg(\frac{(\bm{R}\cdot\bm{e}_\theta)}{\abs{\bm{R}}^7} \big((\bm{R}\cdot\be_t)\bm{f}+(\bm{R}\cdot \bm{f})\be_t +(\be_t\cdot \bm{f})\bm{R} \big) \bigg) d\bars, \\
\bm{I}_{\mc{D},2} &= -\int_{-1/2}^{1/2}\frac{(\be_\theta\cdot \bm{f})\be_t +(\be_t\cdot \bm{f})\be_\theta}{\abs{\bm{R}}^5}d\bars,\\
\bm{I}_{\mc{D},3} &= 5\int_{-1/2}^{1/2}\frac{(\bm{R}\cdot\be_t)}{\abs{\bm{R}}^7} \big((\bm{R}\cdot\bm{f})\be_\theta+(\be_\theta\cdot\bm{f})\bm{R} \big) d\bars,\\
\bm{I}_{\mc{D},4}&=-35\int_{1/2}^{1/2} \frac{(\bm{R}\cdot\be_t)(\bm{R}\cdot\bm{f})(\bm{R}\cdot\be_\theta)}{\abs{\bm{R}}^9}\bm{R} \ts d\bars.
\end{split}
\end{equation}

The estimation of $\bm{I}_{\mc{D},1}$ follows the same pattern as that for $\bm{I}_{\mc{S},1}$ obtained in \eqref{IS1stheta}:
\[\abs{\bm{I}_{\mc{D},1}}\le c_\kappa\norm{f}_{C(\T)}\epsilon^{-3}. \]

The estimation of $\bm{I}_{\mc{D},2}$ is similar to \eqref{hsdef}:
\[ \abs{\bm{I}_{\mc{D},2}+\frac{4}{3\epsilon^4}\bm{h}(s)}\le c_\kappa\norm{\bm{f}}_{C^1(\T)}\epsilon^{-3}, \]
where we used Lemma \ref{Rintest2} with $m=0, n=5$. We estimate $\bm{I}_{\mc{D},3}$ following the steps of estimate \eqref{IS3stheta}. We obtain
\[ \abs{\bm{I}_{\mc{D},3}-\frac{4}{3\epsilon^4}\bm{h}(s)}\le c_\kappa\norm{\bm{f}}_{C^1(\T)}\epsilon^{-3}, \]
where we used Lemma \ref{Rintest2} with $m=2, n=7$. Finally, the estimation of $\bm{I}_{\mc{D},4}$ is similar to \eqref{IS4stheta}, yielding
\[ \abs{\bm{I}_{\mc{D},4}}\le c_\kappa\norm{\bm{f}}_{C(\T)}\epsilon^{-3}. \]
Combining the above estimates, we obtain
\begin{equation}\label{IDthetaest}
\abs{\frac{\p \bm{I}_{\mc{D}}}{\p \theta}}\le c_\kappa\norm{\bm{f}}_{C^1(\T)}\epsilon^{-2}.
\end{equation}

Combining \eqref{ISthetaest} and \eqref{IDthetaest} and recalling the definition of $\bm{I}^{\SB}$ in \eqref{uSBstheta}, we have
\begin{equation}\label{CItheta}
\abs{\frac{\p \bm{I}^{\SB}}{\p \theta}}\le \abs{\frac{\p \bm{I}_{\mc S}}{\p \theta}} +\frac{3\epsilon^2}{2}\abs{\frac{\p \bm{I}_{\mc D}}{\p\theta}} \le c_\kappa \norm{\bm{f}}_{C^1(\T)}.
\end{equation}
We may finally use \eqref{CI} and \eqref{CItheta} together in \eqref{uSBstheta} to obtain
\begin{equation}\label{mixed_est}
\begin{split}
\abs{\frac{\p}{\p \theta}\bigg(\frac{\p \bu^{\SB}}{\p s}-\kappa_3\frac{\p \bu^{\SB}}{\p \theta} \bigg) }
&\le \frac{1}{8\pi}\bigg((1+\epsilon|\wh\kappa|)\abs{\frac{\p \bm{I}^{\SB}}{\p \theta}}+\epsilon\abs{\frac{\p \wh{\kappa}}{\p \theta}}\abs{\bm{I}^{\SB}} \bigg) \le c_\kappa \norm{\bm{f}}_{C^1(\T)},
\end{split}
\end{equation}
where, in the last inequality, we used that
\[ \epsilon\abs{\wh\kappa}\le 2\epsilon\kappa_{\max} \le \frac{1}{4}, \]
by \eqref{kappahat} and \eqref{slender_body}, and
\[\abs{\frac{\p \wh{\kappa}}{\p \theta}} = \abs{-\kappa_1\sin \theta+\kappa_2\cos\theta} \le 2\sqrt{\kappa_1^2+\kappa_2^2} = 2\kappa\le  2\kappa_{\max}, \]
by \eqref{kappahat} and \eqref{kappa12}. 
\end{proof}

With Propositions \ref{prop:uSBtheta} and \ref{prop:uSBstheta}, we are finally equipped to estimate the degree to which $\bu^{\SB}$ fails to satisfy the $\theta$-independence condition along $\Gamma_{\epsilon}$. We define the residual $\bu^{\rm r}(s,\theta)$ as 
\begin{equation}\label{ur}
\bu^{\rm r}(\theta,s) = \bu^{\rm SB}(\epsilon,\theta,s) - \frac{1}{2\pi}\int_0^{2\pi} \bu^{\SB}(\epsilon,\varphi,s) \ts d\varphi.
\end{equation}
Note that the function $\bu^{\rm r}$ measures the deviation of $\bu^{\SB}$ from a $\theta$-independent function. We show the following estimates for $\bu^{\rm r}$.
\begin{proposition}\label{ur_and_derivs}
Consider the residual $\bu^{\rm r}$ defined in \eqref{ur}. For sufficiently small $\epsilon$, we have 
\begin{align}
\label{urest}
\abs{\bu^{\rm r}}&\le c_\kappa \norm{\bm{f}}_{C^1(\T)}\epsilon\abs{\log\epsilon},\\
\label{urtheta}
\abs{\frac{1}{\epsilon}\frac{\p \bu^{\rm r}}{\p \theta}}&\le c_\kappa \norm{\bm{f}}_{C^1(\T)}\abs{\log\epsilon},\\
\label{urs}
\abs{\frac{\p \bu^{\rm r}}{\p s}}&\le c_\kappa \norm{\bm{f}}_{C^1(\T)},
\end{align}
where the constants $c_\kappa$ depend only on $c_\Gamma$ and $\kappa_{\max}$.
\end{proposition}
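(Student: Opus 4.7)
The starting observation is that, by construction, $\bu^{\rm r}$ has zero mean in $\theta$ on every cross section: $\frac{1}{2\pi}\int_0^{2\pi}\bu^{\rm r}(s,\theta)\,d\theta=0$. Consequently the $\theta$-average in the definition \eqref{ur} is independent of $\theta$, so $\p_\theta\bu^{\rm r}=\p_\theta\bu^{\SB}$ pointwise on $\Gamma_\epsilon$. Estimate \eqref{urtheta} therefore follows immediately from Proposition \ref{prop:uSBtheta}.

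For estimate \eqref{urest}, the idea is to integrate \eqref{urtheta} in $\theta$ using the Wirtinger (Poincar\'e) inequality on the circle: any function $g(\theta)$ with zero mean on $[0,2\pi]$ satisfies $\|g\|_\infty\le 2\pi\,\|g'\|_\infty$. Applied to $\bu^{\rm r}(s,\cdot)$ (which has zero $\theta$-mean) this gives
\[
|\bu^{\rm r}(s,\theta)|\le 2\pi\,\sup_\theta\left|\frac{\p \bu^{\rm r}}{\p\theta}\right| \le 2\pi\,\epsilon\cdot c_\kappa\|\bm{f}\|_{C^1(\T)}|\log\epsilon|,
\]
which is \eqref{urest}.

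The more delicate estimate is \eqref{urs}, because a direct bound on $\p_s\bu^{\SB}$ would blow up like $\epsilon^{-1}$ and because Proposition \ref{prop:uSBstheta} only controls the mixed object $\p_\theta(\p_s\bu^{\SB}-\kappa_3\p_\theta\bu^{\SB})$. The plan is to split
\[
\frac{\p\bu^{\rm r}}{\p s}(s,\theta)= \big(G(s,\theta)-\overline{G}(s)\big) + \kappa_3\,\frac{\p\bu^{\SB}}{\p\theta}(s,\theta),\qquad G:=\frac{\p\bu^{\SB}}{\p s}-\kappa_3\frac{\p\bu^{\SB}}{\p\theta},
\]
where $\overline{G}$ denotes the $\theta$-average. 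This is legitimate because $\p_\theta\bu^{\SB}$ has zero $\theta$-mean (by periodicity of $\bu^{\SB}$ in $\theta$), so subtracting the average of $F:=\p_s\bu^{\SB}$ coincides with subtracting the average of $G$ plus the full term $\kappa_3\p_\theta\bu^{\SB}$. The first summand is a zero-mean function of $\theta$ whose $\theta$-derivative is bounded by $c_\kappa\|\bm{f}\|_{C^1(\T)}$ by Proposition \ref{prop:uSBstheta}, so Wirtinger gives $|G-\overline{G}|\le 2\pi c_\kappa\|\bm{f}\|_{C^1(\T)}$. For the second summand, Lemma \ref{lemmaorthonormal} provides $|\kappa_3|\le\pi$, and Proposition \ref{prop:uSBtheta} yields $|\kappa_3\p_\theta\bu^{\SB}|\le \pi\epsilon\,c_\kappa\|\bm{f}\|_{C^1(\T)}|\log\epsilon|$, which is absorbed into $c_\kappa\|\bm{f}\|_{C^1(\T)}$ for $\epsilon$ sufficiently small. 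Adding the two bounds gives \eqref{urs}.

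The main conceptual obstacle is the one handled by the splitting in the last paragraph: because we cannot control $\p_s\bu^{\SB}$ directly on $\Gamma_\epsilon$, we must exploit that $\kappa_3\p_\theta\bu^{\SB}$ has zero $\theta$-mean to recast $F-\overline F$ in terms of the object $G$ for which Proposition \ref{prop:uSBstheta} provides the crucial $\epsilon$-independent control of $\p_\theta G$. Everything else is a one-line application of the circle Wirtinger inequality.
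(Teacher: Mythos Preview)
Your proof is correct and follows essentially the same approach as the paper. The paper carries out the argument componentwise, finding for each fixed $s$ an explicit point $\theta_0$ (respectively $\theta_1$) where $u^{\SB}_k$ (respectively $\p_s u^{\SB}_k$) attains its $\theta$-mean, and then integrating $\p_\theta$ from that point; you package this same mean-value-theorem step as the $L^\infty$ Wirtinger inequality on the circle. For \eqref{urs} the paper's splitting is the same as yours: it rewrites $\int_{\theta_1}^\theta \p_\theta(\p_s\bu^{\SB})\,d\varphi$ as $\int_{\theta_1}^\theta \p_\theta G\,d\varphi + \kappa_3\big(\p_\theta\bu^{\SB}(\theta)-\p_\theta\bu^{\SB}(\theta_1)\big)$, which is exactly your decomposition $F-\overline F=(G-\overline G)+\kappa_3\,\p_\theta\bu^{\SB}$ after using that $\p_\theta\bu^{\SB}$ has zero $\theta$-mean.
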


Note that the estimate \eqref{urest} provides a rigorous proof of the asymptotic calculations done by Johnson in \cite{johnson1980improved}. 

\begin{proof}
Let $\bu^{\rm r}=(u^{\rm r}_1,u^{\rm r}_2, u^{\rm r}_3)$ and likewise for $\bm{u}^{\rm SB}$. We work component-wise. For each fixed $s$, we can find $\theta_0$ satisfying
\[ u^{\SB}_k(\epsilon,\theta_0,s)=\frac{1}{2\pi}\int_0^{2\pi} u^{\SB}_k(\epsilon,\varphi,s) \ts d\varphi. \]
Thus we can write
\[ u^{\rm r}_k(\theta,s)=u^{\SB}_k(\epsilon,\theta,s) - u^{\SB}_k(\epsilon,\theta_0,s) = \int_{\theta_0}^\theta \frac{\p u^{\SB}_k}{\p \theta}(\epsilon,\varphi,s)\ts d\varphi. \]
Using Proposition \ref{prop:uSBtheta}, we have
\begin{equation}
\begin{split}
\abs{u^{\rm r}_k(\theta,s)}&\le \int_{\theta_0}^\theta \abs{\frac{\p u^{\SB}_k}{\p \theta}(\epsilon,\varphi,s)} \ts d\varphi 
\le c_\kappa\abs{\theta-\theta_0}\norm{\bm{f}}_{C^1(\T)}\epsilon \abs{\log\epsilon}\\
&\le c_\kappa\pi\norm{\bm{f}}_{C^1(\T)}\epsilon \abs{\log\epsilon},
\end{split}
\end{equation}
where, in the last equality, we used the fact that $\theta$ and $\theta_0$ are at most $\pi$ apart. This establishes \eqref{urest}. \\

The estimate \eqref{urtheta} is a direct consequence of Proposition \ref{prop:uSBtheta}. \\

We finally establish \eqref{urs}. For each fixed $s$, we find a $\theta_1$ satisfying
\[ \frac{\p u^{\SB}_k}{\p s}(\epsilon,\theta_1,s) = \frac{1}{2\pi}\int_{0}^{2\pi} \frac{\p u^{\SB}_k}{\p s}(\epsilon,\varphi,s)\ts d\varphi. \]
Then we can write
\begin{align*}
\frac{\p u^{\rm r}_k}{\p s}(\theta,s)&=\frac{\p u^{\SB}_k}{\p s}(\epsilon,\theta,s) - \frac{\p u^{\SB}_k}{\p s}(\epsilon,\theta_1,s)
=\int_{\theta_1}^{\theta} \frac{\p}{\p \theta}\bigg(\frac{\p u^{\SB}_k}{\p s}\bigg)(\epsilon,\varphi,s)\ts d\varphi\\
&=\int_{\theta_1}^{\theta} \frac{\p }{\p \theta}\bigg(\frac{\p u^{\SB}_k}{\p s}-\kappa_3\frac{\p u^{\SB}_k}{\p \theta} \bigg)(\epsilon,\varphi,s) \ts d\varphi\\
&\quad +\kappa_3\bigg(\frac{\p u^{\SB}_k}{\p \theta}(\epsilon,\theta,s) - \frac{\p u^{\SB}_k}{\p \theta}(\epsilon,\theta_1,s) \bigg)
\end{align*}
Thus, using Proposition \ref{prop:uSBstheta} and Proposition \ref{prop:uSBtheta}, we have
\begin{equation}
\abs{\frac{\p u^{\rm r}_k}{\p s}(\theta,s)} \le c_\kappa\abs{\theta-\theta_1}\norm{\bm{f}}_{C^1(\T)} + 2\abs{\kappa_3} c_\kappa\norm{\bm{f}}_{C^1(\T)}\epsilon\abs{\log\epsilon}.
\end{equation}
Noting that $\abs{\theta-\theta_1}\le \pi$ and $\abs{\kappa_3}\leq \pi$ by Lemma \ref{lemmaorthonormal}, we obtain the desired estimate.
\end{proof}

Finally, using Lemma \ref{center_est_lem}, we show the following residual estimate for the difference $\bu^{\SB}(s,\theta)-\bu^{\SB}_C(s)$ between the slender body approximation \eqref{stokes_SB} on the fiber surface and the asymptotic centerline expression \eqref{SBT_asymp}.
\begin{proposition}\label{centerline_prop}
Let $\bu^{\SB}(s,\theta)$ be \eqref{stokes_SB} evaluated on the slender body surface $\Gamma_\epsilon$, and let $\bu^{\SB}_C(s)$ be the centerline equation \eqref{SBT_asymp}. Then the difference $\bu^{\SB}(s,\theta)-\bu^{\SB}_C(s)$ satisfies
\begin{equation}\label{centerline_resid}
\abs{\bu^{\SB}(s,\theta)-\bu^{\SB}_C(s)} \le c_\kappa\epsilon\abs{\log\epsilon}\norm{\bm{f}}_{C^1(\T)},
\end{equation}
where $c_\kappa$ depends only on $c_\Gamma$ and $\kappa_{\max}$.
\end{proposition}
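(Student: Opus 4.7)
The plan is to decompose $8\pi\bu^{\SB}(s,\theta)$ into its Stokeslet and doublet contributions evaluated on $\Gamma_\epsilon$ and apply Lemma \ref{center_est_lem} component by component to reduce each integral to one over $\bm{R}_0$ (plus a local piece in $\bm{f}(s)$), then match the outcome against the Keller--Rubinow formula \eqref{SBT_asymp}. Write
\begin{equation*}
8\pi \bu^{\SB}(s,\theta) = \int_{-1/2}^{1/2}\frac{\bm{f}(s+\bars)}{\abs{\bm{R}}}d\bars + \int_{-1/2}^{1/2}\frac{(\bm{R}\cdot \bm{f}(s+\bars))\bm{R}}{\abs{\bm{R}}^3}d\bars + \frac{\epsilon^2}{2}\int_{-1/2}^{1/2}\mc{D}(\bm{R})\bm{f}(s+\bars)d\bars.
\end{equation*}
For the first term I would apply Lemma \ref{center_est_lem} with $n=1$ and $\bm{g}(\bars)=\bm{f}(s+\bars)$; this replaces $1/\abs{\bm{R}}$ by $1/\abs{\bm{R}_0}$, introduces the regularization $\bm{f}(s)/\abs{\bars}$, and produces the logarithmic term $-\bm{f}(s)\log(\epsilon^2)$, with residual bounded by $c_\kappa\epsilon\abs{\log\epsilon}\norm{\bm{f}}_{C^1(\T)}$.

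For the dyadic term I would expand $\bm{R}\bm{R}^{\rm T}$ using the decomposition $\bm{R}=-\bars\be_t + \epsilon\be_\rho + \bars^2\bm{Q}$ from \eqref{Reps}. The pieces split into three classes: (a) the $\bars^2\be_t\be_t^{\rm T}$ term, which under $1/\abs{\bm{R}}^3$ is handled by Lemma \ref{center_est_lem} with $n=3$, producing a $\bm{R}_0$-integral plus the local correction $-(\be_t\cdot\bm{f}(s))\be_t\log(\epsilon^2) - 2(\be_t\cdot\bm{f}(s))\be_t$; (b) the $\epsilon^2\be_\rho\be_\rho^{\rm T}$ contribution, evaluated via Lemma \ref{Rintest2} with $m=0, n=3$ to yield the $O(1)$ local term $2(\be_\rho\cdot\bm{f}(s))\be_\rho$; and (c) the odd-$\bars$ cross-terms $-\bars\epsilon(\be_t\be_\rho^{\rm T}+\be_\rho\be_t^{\rm T})$ and the $\bars^2\bm{Q}\otimes(\cdots)$ terms, which are controlled by Lemma \ref{Rintest1} (for the cross-terms) and by a direct bound using Lemma \ref{Rintest0} (for the $\bm{Q}$ pieces) and contribute only $O(\epsilon\abs{\log\epsilon})$ error. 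The doublet integral is treated analogously: the $\epsilon^2$ prefactor combined with Lemmas \ref{Rintest1} and \ref{Rintest2} (for $n=5,7$) yields $O(1)$ local terms that pair with the Stokeslet dyadic locals. A careful bookkeeping of these local contributions — $2\be_\rho(\be_\rho\cdot\bm{f}(s))$ from the Stokeslet and $-\frac{4}{3}\be_\rho(\be_\rho\cdot\bm{f}(s))$ combined with $\frac{4}{3}$-coefficients from the doublet, together with $\be_t\be_t^{\rm T}$ contributions — will reproduce exactly the factor $\bm{I}-3\be_t\be_t^{\rm T}$ acting on $\bm{f}(s)$.

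Finally, to convert $-\log(\epsilon^2)$ into $-2\log(\pi\epsilon/4)$ as appears in \eqref{SBT_asymp}, I will use the elementary identity
\begin{equation*}
\int_{-1/2}^{1/2}\left(\frac{1}{\abs{\sin(\pi\bars)/\pi|}}-\frac{1}{\abs{\bars}}\right)d\bars = 2\log(4/\pi),
\end{equation*}
which lets me trade the $\bm{f}(s)/\abs{\bars}$ regularization appearing in Lemma \ref{center_est_lem} for the periodic regularization $\bm{f}(s)/\abs{\sin(\pi\bars)/\pi|}$ in \eqref{SBT_asymp}, absorbing the difference into the $\log$-coefficient. The same identity applied to the $\be_t\be_t^{\rm T}$-local term of $\bm{J}_2$ accounts for the $(\bm{I}+\be_t\be_t^{\rm T})$ structure in the log coefficient. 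Combining all contributions, the sum $\bm{J}_1 + \bm{J}_2 + \bm{J}_3$ equals $8\pi\bu^{\SB}_C(s)$ up to a remainder bounded in absolute value by $c_\kappa\epsilon\abs{\log\epsilon}\norm{\bm{f}}_{C^1(\T)}$.

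The main obstacle will be the bookkeeping in the second paragraph: one must track every local coefficient produced by Lemmas \ref{center_est_lem} and \ref{Rintest2} across four classes of kernels (Stokeslet isotropic, Stokeslet dyadic, doublet isotropic, doublet dyadic), verify that all odd-$\bars$ and $\bm{Q}$-containing corrections truly reduce to $O(\epsilon\abs{\log\epsilon})$, and confirm that the $\be_\rho\be_\rho^{\rm T}$-terms from the Stokeslet and doublet combine cleanly with $\be_t\be_t^{\rm T}$-terms to give $\bm{I}-3\be_t\be_t^{\rm T}$. Once this algebraic cancellation is verified, the estimate \eqref{centerline_resid} follows directly.
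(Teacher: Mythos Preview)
Your proposal is correct and follows essentially the same approach as the paper: the same decomposition into Stokeslet-isotropic, Stokeslet-dyadic, and doublet pieces, the same application of Lemma~\ref{center_est_lem} (with $n=1$ and $n=3$) together with Lemmas~\ref{Rintest0}, \ref{Rintest1}, \ref{Rintest2} for the cross-terms and $\bm{Q}$-terms, and the same use of the $\log(4/\pi)$ identity to match the periodic regularization in \eqref{SBT_asymp}. One small clarification on your bookkeeping: the $\be_\rho\be_\rho^{\rm T}$ contributions do not combine with the $\be_t\be_t^{\rm T}$ terms to produce $\bm{I}-3\be_t\be_t^{\rm T}$; rather, the Stokeslet dyadic contributes $+2\be_\rho(\be_\rho\cdot\bm{f})$ and the doublet contributes $-2\be_\rho(\be_\rho\cdot\bm{f})$, so these cancel exactly, and the $\bm{I}-3\be_t\be_t^{\rm T}$ factor arises from the remaining doublet piece $(\bm{I}-\be_t\be_t^{\rm T})\bm{f}$ together with the $-2\be_t(\be_t\cdot\bm{f})$ term coming from the $n=3$ application of Lemma~\ref{center_est_lem}.
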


\begin{proof}
We begin by writing the Stokeslet term of $\bu^{SB}(s,\theta)$ as
\begin{equation}
\begin{aligned}
\int_{-1/2}^{1/2}&\mc{S}(\bm{R})\bm{f}(s+\bars)d\bars = \mc{S}_1 + \mc{S}_2; \\
\mc{S}_1 &:= \int_{-1/2}^{1/2} \frac{\bm{f}(s+\bars)}{\abs{\bm{R}}} d\bars, \quad \mc{S}_2 := \int_{-1/2}^{1/2} \frac{\bm{R}\bm{R}^{\rm T}}{\abs{\bm{R}}^3} \bm{f}(s+\bars) d\bars.
\end{aligned}
\end{equation}

Now, letting 
\begin{equation}\label{cent_int1}
\bm{J}_{\mc{S},1} = \int_{-1/2}^{1/2} \bigg(\frac{ \bm{f}(s+\bars)}{\abs{\bm{R}_0}}- \frac{\bm{f}(s)}{\abs{\bars}} \bigg) d\bars - \bm{f}(s)\log(\epsilon^2),
\end{equation}
a direct application of Lemma \ref{center_est_lem} yields
\begin{align*}
\abs{\mc{S}_1 - \bm{J}_{\mc{S},1}} &\le \epsilon \abs{\log\epsilon}c_\kappa \norm{\bm{f}}_{C^1(\T)}.
\end{align*}

Furthermore, letting 
\begin{equation}\label{cent_int2}
\bm{J}_{\mc{S},2} = \int_{-1/2}^{1/2} \bigg(\frac{ \bm{R}_0\bm{R}_0^{\rm T}}{\abs{\bm{R}_0}^3}\bm{f}(s+\bars) - \frac{\be_t(s)\be_t(s)^{\rm T}}{\abs{\bars}}\bm{f}(s) \bigg) d\bars - \big[\log(\epsilon^2) + 2\big]\be_t(\be_t\cdot\bm{f}(s))
\end{equation}
and using \eqref{Reps} and \eqref{CQ} along with Lemma \ref{center_est_lem}, we have
 \begin{align*}
\bigg|\mc{S}_2- \bm{J}_{\mc{S},2} - \int_{-1/2}^{1/2} \frac{\epsilon^2\be_\rho\be_{\rho}^{\rm T}}{\abs{\bm{R}}^3} \bm{f}(s+\bars)d\bars \bigg|  &\le c_\kappa\epsilon\abs{\log\epsilon}\norm{\bm{f}}_{C^1(\T)} + c_\kappa\int_{-1/2}^{1/2} \frac{\epsilon\bars^2+\epsilon \abs{\bars}}{\abs{\bm{R}}^3}\abs{\bm{f}}d\bars \\
&\qquad + c_\kappa\norm{\bm{f}}_{C(\T)}\int_{-1/2}^{1/2} \big(|\bars|^3 +\bars^4 \big)\bigg|\frac{1}{\abs{\bm{R}}^3} - \frac{1}{\abs{\bm{R}_0}^3} \bigg| d\bars  \\
&\le c_\kappa\epsilon\abs{\log\epsilon}\norm{\bm{f}}_{C^1(\T)} \\
&\qquad + c_\kappa\norm{\bm{f}}_{C(\T)}\int_{-1/2}^{1/2}\frac{\epsilon^2+\epsilon\bars^2+ \epsilon^2\abs{\bars}+\epsilon|\bars|^3}{\bars^2+\epsilon^2} d\bars \\
&\le c_\kappa\epsilon\abs{\log\epsilon}\norm{\bm{f}}_{C^1(\T)},
\end{align*}
where we have used Lemmas \ref{Rintest0} and \ref{Rintest1} in the second inequality, and \eqref{IR_def}, \eqref{Rlb}, and \eqref{non_intersecting} in the third inequality. By Lemma \ref{Rintest2}, we then have
\begin{align*}
\big|\mc{S}_2- \bm{J}_{\mc{S},2} - 2\be_\rho(\be_{\rho}\cdot\bm{f}(s)) \big|  &\le c_\kappa\epsilon\abs{\log\epsilon}\norm{\bm{f}}_{C^1(\T)}.
\end{align*}

Together, the Stokeslet terms satisfy 
\begin{equation}\label{stokeslet_terms}
\abs{\int_{-1/2}^{1/2}\mc{S}(\bm{R})\bm{f}(s+\bars)d\bars - \bm{J}_{\mc{S},1}-\bm{J}_{\mc{S},2}  - 2\be_\rho(\be_{\rho}\cdot\bm{f}(s))} \le c_\kappa\epsilon\abs{\log\epsilon}\norm{\bm{f}}_{C^1(\T)}.
\end{equation}


Similarly, we may write the doublet term of \eqref{stokes_SB} as
\begin{equation}
\begin{aligned}
\int_{-1/2}^{1/2}&\mc{D}(\bm{R})\bm{f}(s+\bars)d\bars = \mc{D}_1 + \mc{D}_2; \\
\mc{D}_1&:= \int_{-1/2}^{1/2} \frac{\bm{f}(s+\bars)}{\abs{\bm{R}}^3} \ts d\bars, \quad \mc{D}_2 := -3\int_{-1/2}^{1/2} \frac{\bm{R}\bm{R}^{\rm T}}{\abs{\bm{R}}^5} \bm{f}(s+\bars) \ts d\bars.
\end{aligned}
\end{equation}

Using Lemma \ref{Rintest2}, we have
\begin{align*}
\abs{\mc{D}_1- \epsilon^{-2}2 \bm{f}(s)} \le c_\kappa \epsilon^{-1}\norm{\bm{f}}_{C^1(\T)}.
\end{align*}

Furthermore, using \eqref{Reps} along with Lemma \ref{Rintest0}, the second term $\D_2$ satisfies 
\begin{align*}
\abs{\mc{D}_2+ \epsilon^{-2}(2\be_t\be_t^{\rm T}+4\be_\rho\be_\rho^{\rm T})\bm{f}(s)} &\le \abs{3\int_{-1/2}^{1/2} \frac{\bars^2\be_t\be_t^{\rm T}}{\abs{\bm{R}}^5} \bm{f}(s+\bars)d\bars + \epsilon^{-2}2\be_t(\be_t\cdot\bm{f}(s))} \\
&\quad +\abs{3\epsilon^2\int_{-1/2}^{1/2} \frac{\be_\rho\be_\rho^{\rm T}}{\abs{\bm{R}}^5} \bm{f}(s+\bars)d\bars + \epsilon^{-2}4\be_\rho(\be_\rho\cdot\bm{f}(s))} + c_\kappa \epsilon^{-1}\norm{\bm{f}}_{C(\T)} \\
&\le  c_\kappa \epsilon^{-1}\norm{\bm{f}}_{C^1(\T)}, 
\end{align*}
where we have used Lemma \ref{Rintest2} in the second inequality. Letting
\begin{equation}\label{cent_int3}
\bm{J}_{\mc{D},1} = ({\bf I}-\be_t\be_t^{\rm T})\bm{f}(s),
\end{equation}
the doublet terms together yield
\begin{equation}\label{doublet_terms}
\abs{\frac{\epsilon^2}{2}\int_{-1/2}^{1/2}\mc{D}(\bm{R})\bm{f}(s+\bars)d\bars -\bm{J}_{\mc{D},1}  + 2\be_\rho(\be_\rho\cdot\bm{f}(s))} \le  c_\kappa \epsilon\norm{\bm{f}}_{C^1(\T)}. 
\end{equation}

Combining \eqref{stokeslet_terms} and \eqref{doublet_terms}, we obtain the following estimate for $\bu^{\SB}$ along $\Gamma_\epsilon$:
\begin{equation}\label{uSB_center0}
\abs{\bu^{\SB}(s,\theta) - \bm{J}_{\mc{S},1}- \bm{J}_{\mc{S},2}- \bm{J}_{\mc{D},1}} \le c_\kappa\epsilon\abs{\log\epsilon}\norm{\bm{f}}_{C^1(\T)}.
\end{equation}

Now, recalling the periodic expression \eqref{SBT_asymp} for $\bu^{\SB}_C(s)$ as well as the identity
\[ \int_{-1/2}^{1/2}\bigg(\frac{1}{\abs{\sin(\pi\bars)/\pi}}-\frac{1}{\abs{\bars}} \bigg)d\bars = 2\log(4/\pi),\]
we notice that
\begin{align*}
\bu^{\SB}_C(s)& - \bm{J}_{\mc{S},1}- \bm{J}_{\mc{S},2}- \bm{J}_{\mc{D},1} \\
&= -({\bf I}+\be_t\be_t^{\rm T})\bm{f}(s)\int_{-1/2}^{1/2}\bigg(\frac{1}{\abs{\sin(\pi\bars)/\pi}} -\frac{1}{\abs{\bars}} \bigg)d\bars + 2\log(4/\pi) ({\bf I}+\be_t\be_t^{\rm T})\bm{f}(s) =0,
\end{align*}
and therefore \eqref{uSB_center0} implies Proposition \ref{centerline_prop}.

\end{proof}


\subsection{Slender body force residual}\label{SBforce_res}
It remains to calculate the slender body approximation to the total force at each cross section $s\in \T$, given by
\begin{equation}\label{fSB_expr}
{\bm f}^{\SB}(s)= \int_0^{2\pi}\bigg(-p^{\SB}{\bf I}+2\E(\bu^{\SB}) \bigg) {\bm n} \ts \mc{J}_\epsilon(s,\theta)\ts d\theta.
\end{equation}

The estimation of the slender body force expression \eqref{fSB_expr} will proceed similarly to the calculations for the velocity residual in the previous section, relying on Lemmas \ref{Rintest0} - \ref{Rintest2} to bound the resulting integral terms. Because of the structure of \eqref{fSB_expr}, we will also be able to use a stronger bound (Lemma \ref{theta_int}) relying on $\theta$ integration to remove the $\log\epsilon$ dependence in the force residual estimate.  \\

From \eqref{fSB_expr}, calculating the slender body force requires two main components: the force due to the slender body pressure \eqref{SB_pressure} and the force due to the surface strain rate $\E(\bu^{\SB})\bm{n}\big|_{\Gamma_\epsilon}$. Recalling that ${\bm n}=-\be_\rho$, we can express the surface strain rate with respect to the moving frame basis $\be_t(s)$, $\be_\rho(s,\theta)$, $\be_\theta(s,\theta)$ as
\begin{equation}\label{SB_strain}
\begin{aligned}
2\E(\bu){\bm n} &= -\frac{\p\bu}{\p\rho} -\left(\frac{\p \bu}{\p \rho}\cdot\be_{\rho}\right)\be_{\rho} - \frac{1}{\epsilon}\left(\frac{\p \bu}{\p \theta}\cdot\be_{\rho} \right)\be_{\theta}- \frac{1}{1-\epsilon\wh\kappa}\left(\left(\frac{\p \bu}{\p s}-\kappa_3\frac{\p \bu}{\p\theta}\right)\cdot \be_{\rho}\right) \be_{t}.
\end{aligned}
\end{equation}

\begin{remark} 
Before we estimate $\bm{f}^{\SB}$, we consider the (purely heuristic) slender body approximation about an infinitely long fiber with a straight centerline and constant total force $\bm{f}^c$ over each cross section. In this case, although the slender body velocity approximation diverges logarithmically at infinity, the velocity does exactly satisfy the $\theta$-independence condition on the the slender body surface due to the doublet correction with coefficient $\frac{\epsilon^2}{2}$. This is essentially the scenario for which slender body theory is designed to work.  \\
 
Indeed, in the straight centerline/constant force scenario, the slender body force expression \eqref{fSB_expr} also exactly recovers the prescribed force $\bm{f}^c$. When $\kappa\equiv0$, we have $\bm{R}= (s-t)\be_t+\epsilon\be_{\rho}(\theta)$, where the basis vectors no longer depend on the cross section $s$. We can then directly integrate the slender body approximation \eqref{SBT2} in $t$ to obtain: 
\begin{equation}\label{straight_center}
-\frac{\p \bu^{\SB}_{\text{str}}}{\p\rho} = \frac{1}{\epsilon2\pi} \bigg[\bm{f}^c - \be_{\rho}(\be_{\rho}\cdot\bm{f}^c)\bigg], \quad \bigg(\frac{\p \bu^{\SB}_{\text{str}}}{\p\rho}\cdot\be_{\rho}\bigg)\be_{\rho}  = 0, \quad
\frac{1}{\epsilon}\frac{\p \bu^{\SB}_{\text{str}}}{\p \theta} =0, \quad
\frac{\p \bu^{\SB}_{\text{str}}}{\p s} = 0.
\end{equation}
Additionally, the slender body pressure contribution to the total force is given by
\begin{equation}\label{pressure_exact}
\begin{aligned}
p^{\SB}_{\text{str}}(s,\theta) &= \frac{1}{ 2\pi \epsilon} \be_{\rho}\cdot \bm{f}^c.
\end{aligned}
\end{equation}

Thus the slender body approximation to the constant force $\bm{f}^c$ prescribed along an infinite straight cylinder is given by
\begin{equation}
\begin{aligned}
{\bm f}^{\SB}_{\text{str}} &= \int_0^{2\pi} \bigg[-p^{\SB}_{\text{str}}{\bm n} + 2\E(\bu^{\SB}_{\text{str}}){\bm n} \bigg] \epsilon \ts d\theta \\
&= \int_0^{2\pi} \bigg[\frac{1}{2\pi} (\be_{\rho}\cdot\bm{f}^c)\be_{\rho} + \frac{1}{2\pi}\big(\bm{f}^c - \be_{\rho}(\be_{\rho}\cdot\bm{f}^c)\big) \bigg] \ts d\theta \\
&= \int_0^{2\pi} \frac{1}{2\pi}\bm{f}^c \ts d\theta = \bm{f}^c,
\end{aligned}
\end{equation}
so we exactly recover the force $\bm{f}^c$ at each cross section along the fiber. \\

Again, the straight centerline/constant force calculations are purely heuristic, but serve to show that the error in the slender body approximation to the total force, as well as the $\theta$-dependence in the slender body surface velocity, will arise due to the curvature of the fiber centerline, the finite fiber length, and variations in the prescribed force along the centerline. \\
\end{remark}


Given a curved centerline and non-constant prescribed force ${\bm f}(s)$, we compute the slender body approximation to the force, ${\bm f}^{\SB}(s)$ using essentially the same perturbative argument as in the velocity estimation, where we relied on the straight centerline integrand to derive integral bounds for the curved centerline. \\

Although Lemmas \ref{Rintest0}, \ref{Rintest1}, and \ref{Rintest2} are actually enough to obtain an $O(\epsilon\abs{\log\epsilon})$ bound on the residual $\bm{f}^{\SB}-\bm{f}$, it turns out that we can use the $\theta$-integration in the slender body force expression \eqref{fSB_expr} to obtain a slightly stronger $O(\epsilon)$ bound. In particular, for $m=n+2$, we can improve upon the $\abs{\log\epsilon}$ bound guaranteed by Lemma \ref{Rintest1} by relying on cancellation upon integration in $\theta$, rather than symmetry cancellation due to $m$ being odd. For $m=n+1$, we gain an additional $\epsilon$ factor over the Lemma \ref{Rintest0} bound.

\begin{lemma}\label{theta_int}
Let $\bm{R}$ be as in \eqref{Reps}. Suppose $m$ is a non-negative integer and $n= m+1$ or $m+2$. Furthermore, assume $g\in C(\T)$. For $k\in \Z$, $k\neq 0$, $\theta_0\in \R$ and $\epsilon>0$ sufficiently small, we have
\begin{equation}\label{theta_int_eq}
\abs{\int_0^{2\pi}\int_{-1/2}^{1/2}\frac{\bars^m g(\bars)}{\abs{\bm{R}}^n}\cos(k(\theta+\theta_0)) \ts d\bars d\theta}
\le \begin{cases}
c_\kappa \epsilon\abs{\log\epsilon}\norm{g}_{C(\T)}, & n=m+1, \\
c_\kappa \norm{g}_{C(\T)}, & n=m+2,
\end{cases} 
\end{equation}
where the constants $c_\kappa$ depend only on $c_\Gamma, \kappa_{\max}$, and $n$. 
\end{lemma}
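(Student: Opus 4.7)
The plan is to exploit the $\theta$-orthogonality of $\cos(k(\theta+\theta_0))$ against any function independent of $\theta$, and to identify a natural $\theta$-independent ``proxy'' for $|\bm{R}|^{-n}$ whose remainder carries an extra factor of $\epsilon$. Set $\bm{R}_0 = \X(s) - \X(s+\bars)= -\bars\be_t + \bars^2\bm{Q}$ and define the $\theta$-independent quantity $\sigma(\bars,s)^2 := |\bm{R}_0|^2 + \epsilon^2$. Since $\be_t\cdot \be_\rho = 0$, expanding $|\bm{R}|^2 = |\bm{R}_0 + \epsilon\be_\rho|^2$ gives the key identity
\begin{equation*}
|\bm{R}|^2 = \sigma^2 + 2\epsilon\bars^2\,\bm{Q}\cdot\be_\rho,
\end{equation*}
so the $\theta$-dependent part of $|\bm{R}|^2$ is $O(\epsilon \bars^2)$, with an explicit $\epsilon$ in front. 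The crucial observation is that $\int_0^{2\pi}\cos(k(\theta+\theta_0))\,d\theta = 0$ for $k\neq 0$, so the $\sigma^{-n}$ contribution drops out of the $\theta$ integral; what remains is controlled purely by the difference $|\bm{R}|^{-n} - \sigma^{-n}$.

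The next step is to bound this difference. Using \eqref{Rlb} from Lemma \ref{absRests} and $\sigma^2 \ge |\bm{R}_0|^2 + \epsilon^2 \ge c_\kappa(\bars^2 + \epsilon^2)$ (from \eqref{non_intersecting}), both $|\bm{R}|$ and $\sigma$ are bounded below by $c_\kappa\sqrt{\bars^2+\epsilon^2}$. Factoring $\sigma^n - |\bm{R}|^n = (\sigma-|\bm{R}|)\sum_{j=0}^{n-1}\sigma^j|\bm{R}|^{n-1-j}$ and $\sigma - |\bm{R}| = (\sigma^2-|\bm{R}|^2)/(\sigma+|\bm{R}|)$ gives
\begin{equation*}
\left| \frac{1}{|\bm{R}|^n} - \frac{1}{\sigma^n}\right| \;\le\; \frac{c_\kappa\,\epsilon\,\bars^2}{(\bars^2+\epsilon^2)^{(n+2)/2}}.
\end{equation*}
Integrating this bound against $|\cos(k(\theta+\theta_0))|\le 1$ over $\theta\in[0,2\pi]$ and multiplying by $|\bars|^m|g(\bars)|$ yields
\begin{equation*}
\left|\int_0^{2\pi}\int_{-1/2}^{1/2} \frac{\bars^m g(\bars)}{|\bm{R}|^n}\cos(k(\theta+\theta_0))\,d\bars\,d\theta \right| \;\le\; c_\kappa\,\epsilon\,\|g\|_{C(\T)} \int_{-1/2}^{1/2}\frac{|\bars|^{m+2}}{(\bars^2+\epsilon^2)^{(n+2)/2}}\,d\bars.
\end{equation*}

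Finally, the remaining one-dimensional integral is handled by Lemma \ref{defints}. For $n = m+1$, the exponents satisfy $(n+2)/2 = (m+3)/2$, so the relevant case is $n'=m'+1$ with $m' = m+2$, yielding a factor of $3|\log\epsilon|$ and thus the bound $c_\kappa\epsilon|\log\epsilon|\|g\|_{C(\T)}$. For $n = m+2$, one has $n' = m+4 \geq m'+2$, yielding $\pi\epsilon^{-1}$, which combines with the prefactor $\epsilon$ to give the clean $c_\kappa\|g\|_{C(\T)}$ bound. The only subtle point is in choosing the correct proxy $\sigma$: using the more naive $\sqrt{\bars^2+\epsilon^2}$ in place of $\sigma$ would force the perturbation to include a $\bars^3$-term with no $\epsilon$, losing the key $\epsilon$ factor. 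Choosing the $\bm{R}_0$-based $\sigma$ absorbs the $\bars^3$-term into the $\theta$-independent proxy and leaves only the single perturbation $2\epsilon\bars^2\bm{Q}\cdot\be_\rho$, which is precisely what the statement needs.
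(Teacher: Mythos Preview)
Your proof is correct and follows essentially the same approach as the paper: subtract the $\theta$-independent proxy $(|\bm{R}_0|^2+\epsilon^2)^{-n/2}$, use $|\bm{R}|^2-(|\bm{R}_0|^2+\epsilon^2)=2\epsilon\bars^2\bm{Q}\cdot\be_\rho$ together with the lower bounds from Lemma~\ref{absRests} and \eqref{non_intersecting} to obtain the extra $\epsilon$, and finish with Lemma~\ref{defints}. Your closing remark about why the proxy must be $\sigma=(|\bm{R}_0|^2+\epsilon^2)^{1/2}$ rather than $\sqrt{\bars^2+\epsilon^2}$ is a nice clarification not made explicit in the paper.
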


\begin{remark}\label{theta_int_rmk}
Note that by plugging in the correct values of $k$ and $\theta_0$, Lemma \ref{theta_int} also covers integrands of the form $\bars^m g(\bars)/\abs{\bm{R}}^n$ integrated against $\sin\theta$ or agains odd triples $\sin^j\theta\cos^k\theta$, $k+j=3$, $k,j\ge0$, via the trigonometric identities  
\begin{align*}
\cos^3\theta &= \frac{1}{4}(3 \cos\theta + \cos(3\theta)), \quad \sin\theta\cos^2\theta = \frac{1}{4}(\sin\theta + \sin(3 \theta)), \\
\sin^3\theta &= \frac{1}{4}(3 \sin\theta - \sin(3 \theta)), \quad \sin^2\theta\cos\theta = \frac{1}{4}(\cos\theta - \cos(3\theta)).
\end{align*}

Note in particular that Lemma \ref{theta_int} applies to integrands of the form $\frac{\bars^m}{\abs{\bm{R}}^n}\be_\rho(\bm{A}(\bars)\cdot\be_\rho)(\bm{B}(\bars)\cdot\be_\rho)$ and $\frac{\bars^m}{\abs{\bm{R}}^n}\be_\theta(\bm{A}\cdot\be_\rho)(\bm{B}\cdot\be_\theta)$, where $\bm{A}=(a_1,a_2,a_3)^{\rm T}$ and $\bm{B}=(b_1,b_2,b_3)^{\rm T}$ are vector-valued functions that do not depend on $\theta$. We can expand these quantities as
\begin{align*}
\be_\rho(\bm{A}\cdot\be_\rho)(\bm{B}\cdot\be_\rho) &= \big(a_2b_2\cos^3\theta + (a_2b_3+b_2a_3)\cos^2\theta\sin\theta+b_3a_3\sin^2\theta\cos\theta \big)\be_{n_1}(s) \\
&\qquad+\big(a_3b_3\sin^3\theta + (a_2b_3+b_2a_3)\sin^2\theta\cos\theta+b_2a_2\cos^2\theta\sin\theta \big)\be_{n_2}(s), \\
\be_\theta(\bm{A}\cdot\be_\rho)(\bm{B}\cdot\be_\theta) &= \big(a_3b_2\sin^3\theta + (a_2b_2-b_3a_3)\sin^2\theta\cos\theta-b_3a_2\cos^2\theta\sin\theta \big)\be_{n_1}(s) \\
&\qquad+\big(a_2b_3\cos^3\theta + (a_3b_3-b_2a_2)\sin\theta\cos^2\theta-b_2a_3\cos\theta\sin^2\theta \big)\be_{n_2}(s),
\end{align*}
and, using the above trigonometric identities, apply Lemma \ref{theta_int} to each term.
\end{remark}


\begin{proof}[Proof of Lemma \ref{theta_int}]
First note that, for $\bm{R}_0(s,\bars)=\X(s) - \X(s+\bars)$, we may write
\begin{equation}
\begin{aligned}
I &=\int_0^{2\pi}\int_{-1/2}^{1/2}\frac{\bars^m g(\bars)}{\abs{\bm{R}}^n}\cos(k(\theta+\theta_0))d\bars \ts d\theta \\
&= \int_0^{2\pi}\int_{-1/2}^{1/2}\bigg(\frac{1}{\abs{\bm{R}}^n}-\frac{1}{(\abs{\bm{R}_0}^2+\epsilon^2)^{n/2}} \bigg)\bars^m g(\bars)\cos(k(\theta+\theta_0))d\bars \ts d\theta,
\end{aligned}
\end{equation}
where we have used that the second term integrates to zero in $\theta$. Then
\begin{align*}
\abs{I} &\le \int_0^{2\pi}\int_{-1/2}^{1/2}\frac{\norm{g}_{C(\T)}\abs{\bars}^m\abs{\abs{\bm{R}}^2-(\abs{\bm{R}_0}^2+\epsilon^2)}}{\abs{\bm{R}}(\abs{\bm{R}_0}^2+\epsilon^2)^{1/2}(\abs{\bm{R}}+(\abs{\bm{R}_0}^2+\epsilon^2)^{1/2})} \sum_{j=0}^{n-1}\frac{1}{\abs{\bm{R}}^j(\abs{\bm{R}_0}^2+\epsilon^2)^{(n-1-j)/2}} \ts d\bars d\theta.
\end{align*}

Now, by \eqref{CQ} and \eqref{Reps}, we have
\[ \abs{\abs{\bm{R}}^2-\abs{\bm{R}_0}^2-\epsilon^2}=2\epsilon\bars^2\abs{\be_\rho\cdot\bm{Q}}\le \epsilon \kappa_{\max}\bars^2,\]
while by Lemma \ref{absRests} and \eqref{non_intersecting} we have
\[ \abs{\bm{R}}\ge c_\kappa\sqrt{\bars^2+\epsilon^2}, \quad \abs{\bm{R}_0}\ge c_\Gamma\abs{\bars}.\]

Thus
\begin{align*}
\abs{I}&\le c_\kappa \epsilon \norm{g}_{C(\T)}\int_0^{2\pi}\int_{-1/2}^{1/2}\frac{\abs{\bars}^{m+2}}{(\bars^2+\epsilon^2)^{(n+2)/2}} \ts d\bars d\theta \le \begin{cases}
 c_\kappa \epsilon\abs{\log\epsilon}\norm{g}_{C(\T)}, & n=m+1 \\
 c_\kappa \norm{g}_{C(\T)}, & n=m+2,
 \end{cases}
\end{align*}
by Lemma \ref{defints}.

\end{proof}


We now proceed to estimate the slender body force \eqref{fSB_expr} for a fiber satisfying the geometric constraints of Section \ref{geometric_constraints} given a true force ${\bm f}(s)$ in $C^1(\T)$. Since the stress tensor $\bm{\sigma}^{\SB} = -p^{\SB}{\bf I}+ 2\E(\bu^{\SB})$ with $\E(\bu^{\SB})$ given by \eqref{SB_strain} essentially consists of five distinct terms, each of which in turn consists of derivatives of the slender body expression \eqref{SBT2}, it will be convenient to estimate each of the components of $\bm{f}^{\SB}$ separately. We label the five components of the $\bm{f}^{\SB}$ expression as follows. 
\begin{equation}\label{force_components}
\begin{aligned}
\bm{f}^{\SB} &= \bm{f}^{\SB}_p + \bm{f}^{\SB}_1 + \bm{f}^{\SB}_2 + \bm{f}^{\SB}_3 + \bm{f}^{\SB}_4; \\
\bm{f}^{\SB}_p &:= \int_0^{2\pi} -p^{\SB}\bm{n} \ts \mc{J}_\epsilon \ts d\theta \\ 
\bm{f}^{\SB}_1 &:= -\int_0^{2\pi}  \frac{\p\bu^{\SB}}{\p\rho} \ts \mc{J}_\epsilon d\theta \\ 
\bm{f}^{\SB}_2 &:=  -\int_0^{2\pi} \left(\frac{\p \bu^{\SB}}{\p \rho}\cdot\be_{\rho}\right)\be_{\rho} \ts \mc{J}_\epsilon d\theta  \\ 
\bm{f}^{\SB}_3 &:= -\int_0^{2\pi} \frac{1}{\epsilon}\left(\frac{\p \bu^{\SB}}{\p \theta}\cdot\be_{\rho} \right)\be_{\theta} \ts \mc{J}_\epsilon d\theta \\ 
\bm{f}^{\SB}_4 &:= -\int_0^{2\pi} \frac{1}{1-\epsilon\wh\kappa}\left(\left(\frac{\p \bu^{\SB}}{\p s}-\kappa_3\frac{\p \bu^{\SB}}{\p\theta}\right)\cdot \be_{\rho}\right) \be_{t}\ts \mc{J}_\epsilon d\theta
\end{aligned}
\end{equation}


We begin by estimating $\bm{f}^{\SB}_p$, the contribution of the slender body pressure $p^{\SB}$ to the total force. We show the following proposition:
\begin{proposition}\label{fSBp_est}
Let the slender body $\Sigma_\epsilon$ be as in Section \ref{geometric_constraints}. Given $\bm{f}\in C^1(\T)$, let $\bm{f}^{\SB}_p(s)$ be the pressure component of the slender body force, defined in \eqref{force_components}. Then $\bm{f}^{\SB}_p$ satisfies
 \begin{equation}
 \abs{\bm{f}^{\SB}_p(s)- \frac{1}{2} \big((\bm{f}(s)\cdot\be_{n_1}(s))\be_{n_1}(s) + (\bm{f}(s)\cdot\be_{n_2}(s))\be_{n_2}(s) \big)} \le \epsilon c_\kappa \norm{\bm{f}}_{C^1(\T)},
 \end{equation}
 where the constant $c_\kappa$ depends only on $c_{\Gamma}$ and $\kappa_{\max}$. 
\end{proposition}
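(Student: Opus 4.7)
The plan is to substitute the formula \eqref{SB_pressure} for $p^{\SB}$ directly into the definition of $\bm{f}^{\SB}_p$ in \eqref{force_components} and, using $\bm{n}=-\be_\rho$, write
\[
\bm{f}^{\SB}_p(s)=\frac{1}{4\pi}\int_0^{2\pi}\!\!\int_{-1/2}^{1/2}\frac{\bm{R}\cdot\bm{f}(s+\bars)}{\abs{\bm{R}}^3}\,\be_\rho\,\mc{J}_\epsilon(s,\theta)\,d\bars\,d\theta.
\]
Then I would expand $\bm{R}=-\bars\be_t(s)+\epsilon\be_\rho(s,\theta)+\bars^2\bm{Q}$ via \eqref{Reps} and $\mc{J}_\epsilon=\epsilon-\epsilon^2\wh{\kappa}$ via \eqref{Jeps_def}, so that $\bm{R}\cdot\bm{f}(s+\bars)\,\mc{J}_\epsilon$ splits into six scalar pieces. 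My strategy is to isolate the one main term that produces the claimed limit and bound each of the other five by $c_\kappa\epsilon\norm{\bm f}_{C^1(\T)}$.

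The main contribution comes from $\epsilon(\be_\rho\cdot\bm{f}(s+\bars))\cdot\epsilon=\epsilon^2(\be_\rho\cdot\bm{f}(s+\bars))$. Viewing $\bm{g}(\bars):=\be_\rho(s,\theta)\cdot\bm{f}(s+\bars)$ as a $C^1(\T)$ function of $\bars$ (with $C^1$ norm controlled by $\norm{\bm f}_{C^1}$ uniformly in $\theta$), Lemma \ref{Rintest2} with $m=0$, $n=3$ and $d_{03}=2$ gives
\[
\epsilon^2\!\!\int_{-1/2}^{1/2}\!\!\frac{\be_\rho\cdot\bm{f}(s+\bars)}{\abs{\bm{R}}^3}d\bars=2(\be_\rho\cdot\bm{f}(s))+O(\epsilon\norm{\bm f}_{C^1}).
\]
Integrating $\frac{1}{4\pi}\int_0^{2\pi}\!\!\cdots\be_\rho\,d\theta$ and using the elementary identity $\int_0^{2\pi}(\be_\rho\cdot\bm{a})\be_\rho\,d\theta=\pi\big((\bm{a}\cdot\be_{n_1})\be_{n_1}+(\bm{a}\cdot\be_{n_2})\be_{n_2}\big)$ yields exactly $\tfrac12\big((\bm{f}\cdot\be_{n_1})\be_{n_1}+(\bm{f}\cdot\be_{n_2})\be_{n_2}\big)$ plus an $O(\epsilon)$ error.

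The step I expect to be the main obstacle is the term $-\epsilon\bars(\be_t\cdot\bm{f}(s+\bars))\,\be_\rho/\abs{\bm R}^3$. A naive application of Lemma \ref{Rintest1} with $m=1,n=3$ would only yield $O(\epsilon\abs{\log\epsilon})$, which is too weak for the $O(\epsilon)$ target. The fix is to exploit that the $\theta$-independent scalar is multiplied by $\be_\rho=\cos\theta\,\be_{n_1}+\sin\theta\,\be_{n_2}$, and to apply Lemma \ref{theta_int} in the regime $n=m+2$, which delivers the required $O(1)$ bound on the double integral and hence $O(\epsilon)$ after the $\mc{J}_\epsilon$ prefactor. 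This is the one place where $\theta$-cancellation is essential rather than optional.

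The remaining four pieces are handled routinely. The term $\bars^2(\bm{Q}\cdot\bm{f}(s+\bars))\,\epsilon\,\be_\rho/\abs{\bm{R}}^3$ is controlled by Lemma \ref{theta_int} with $m=2,n=3$ (since $\bm{Q}$ is $\theta$-independent by \eqref{CQ}), giving $O(\epsilon^2\abs{\log\epsilon})$. The three pieces coming from the $-\epsilon^2\wh{\kappa}$ part of $\mc{J}_\epsilon$ carry an extra $\epsilon^2$, so even a direct estimate using Lemma \ref{Rintest0} (for $m=0,n=3$) or Lemma \ref{Rintest1} (for $m=1,n=3$) together with $\abs{\wh{\kappa}}\le\kappa_{\max}$ yields $O(\epsilon)$ or better without needing any $\theta$-cancellation. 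Combining the main term with these bounds produces the stated estimate, with $c_\kappa$ depending only on $c_\Gamma$ and $\kappa_{\max}$ through the constants supplied by Lemmas \ref{Rintest0}--\ref{theta_int}.
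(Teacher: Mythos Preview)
Your proposal is correct and follows essentially the same approach as the paper. The paper groups the six scalar pieces you describe into three blocks $\bm{F}_1,\bm{F}_2,\bm{F}_3$ (main term, the $-\bars\be_t\cdot\bm f$ and $\bars^2\bm{Q}\cdot\bm f$ terms together, and all $\wh\kappa$ contributions together), but the key estimates are identical: Lemma~\ref{Rintest2} with $m=0,n=3$ for the leading piece, Lemma~\ref{theta_int} for the $\bars$-term where the $\theta$-cancellation is essential, and Lemma~\ref{Rintest0} for the $\wh\kappa$ remainder.
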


\begin{proof}
 As in the velocity residual computation, we will view $\bm{R}=\bm{R}_0+\epsilon\be_\rho(s,\theta)$ as a function of $\theta$, $s$, and $\bars=-(s-t)$, rather than as a function of $\theta$, $s$, and $t$. Then, using the expression \eqref{SB_pressure} for the pressure, along with \eqref{Jeps_def} and \eqref{Reps}, we calculate
 \begin{equation}\label{fSBp}
\begin{aligned}
\bm{f}^{\SB}_p(s) &= \frac{1}{4\pi}\big(\bm{F}_1 +\bm{F}_2 +\bm{F}_3\big); \\
\bm{F}_1 &= \int_0^{2\pi}\int_{-1/2}^{1/2} \frac{\epsilon\be_{\rho}\cdot \bm{f}(s+\bars)}{\abs{\bm{R}}^3} \be_\rho \ts \epsilon \ts d\bars d\theta \\
\bm{F}_2 &= \int_0^{2\pi}\int_{-1/2}^{1/2} \frac{-\bars\be_t\cdot \bm{f}(s+\bars) + \bars^2\bm{Q}\cdot\bm{f}(s+\bars)}{\abs{\bm{R}}^3} \be_\rho \ts \epsilon\ts d\bars d\theta \\
\bm{F}_3 &= -\int_0^{2\pi}\int_{-1/2}^{1/2} \frac{\bm{R}\cdot \bm{f}(s+\bars)}{\abs{\bm{R}}^3} \be_\rho \ts \epsilon^2\wh\kappa \ts d\bars d\theta \\
\end{aligned}
\end{equation}

First note that, using Lemma \ref{Rintest0}, and recalling that $\abs{\wh\kappa}\le 2\kappa_{\max}$, we have that $\bm{F}_3$ satisfies 
\begin{align*}
\abs{\bm{F}_3} &\le 2\pi \norm{\bm{f}}_{C^(\T)} \int_{-1/2}^{1/2} \frac{1}{\abs{\bm{R}}^2} \epsilon^2\abs{\wh\kappa} \ts d\bars \le \epsilon c_\kappa \norm{\bm{f}}_{C(\T)}.
\end{align*}

Next we estimate $\bm{F}_2$. Recalling that $\be_{\rho}(s,\theta)= \cos\theta\be_{n_1}(s)+\sin\theta \be_{n_2}(s)$ while $\bm{f}(s+\bars)$, $\be_t(s)$, and $\bm{Q}(s,\bars)$ are all independent of $\theta$, we can use Lemma \ref{theta_int} to show 
\begin{align*}
\abs{\bm{F}_2} &\le \epsilon c_\kappa \norm{\bm{f}}_{C(\T)}.
\end{align*}

Finally, using Lemma \ref{Rintest2} with $m=0$ and $n=3$, we have that $\bm{F}_1$ satisfies
\begin{equation}\label{hf_def}
\begin{aligned}
\abs{\bm{F}_1 - 2\bm{h}_f(s)} &\le \epsilon c_\kappa\norm{\bm{f}}_{C^1(\T)}; \\
\bm{h}_f(s) &:= \int_0^{2\pi}\be_{\rho}(s,\theta)(\be_{\rho}(s,\theta)\cdot\bm{f}(s)) \ts d\theta \\
&= \pi \big((\bm{f}(s)\cdot\be_{n_1}(s))\be_{n_1}(s) + (\bm{f}(s)\cdot\be_{n_2}(s))\be_{n_2}(s) \big).
\end{aligned}
\end{equation}
 
 Combining these estimates, we obtain 
 \begin{equation}\label{fSBp_est0}
 \abs{\bm{f}^{\SB}_p(s)- \frac{1}{2\pi}\bm{h}_f(s)} \le \frac{1}{4\pi}\big(\abs{\bm{F}_1- 2\bm{h}_f(s)} +\abs{\bm{F}_2} +\abs{\bm{F}_3}\big) \le \epsilon c_\kappa\norm{\bm{f}}_{C^1(\T)}.
 \end{equation}
Recalling the definition of $\bm{h}_f(s)$ \eqref{hf_def}, we obtain Proposition \ref{fSBp_est}.
 \end{proof}

We now proceed to estimate $\bm{f}^{\SB}_1(s)$, the next term in the expression \eqref{force_components} for $\bm{f}^{\SB}$. In particular, we show the following:
\begin{proposition}\label{fSB1_est}
Let $\bm{f}^{\SB}_1(s)$ be as defined in \eqref{force_components}. Then $\bm{f}^{\SB}_1$ satisfies 
\begin{equation}
\abs{\bm{f}^{\SB}_1 - \frac{1}{2}\bigg(\bm{f}(s) + (\bm{f}\cdot\be_t(s))\be_t(s)\bigg) } \le \epsilon c_\kappa\norm{\bm{f}}_{C^1(\T)}
\end{equation}
where the constant $c_\kappa$ depends only on $c_{\Gamma}$ and $\kappa_{\max}$.
\end{proposition}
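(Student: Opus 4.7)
The plan is to mirror the proof of Proposition~\ref{fSBp_est}: differentiate the slender body kernels explicitly in $\rho$, substitute the expansion $\bm{R}=-\bars\be_t(s)+\epsilon\be_\rho(s,\theta)+\bars^2\bm{Q}(s,\bars)$ from \eqref{Reps}, and split the resulting sum of integrands into leading-order pieces handled by Lemma~\ref{Rintest2} and error pieces handled by Lemmas~\ref{Rintest0}, \ref{Rintest1}, and \ref{theta_int}. Explicitly,
\[
\frac{\p}{\p\rho}\mc{S}(\bm{R})\bm{f} = -\frac{(\bm{R}\cdot\be_\rho)\bm{f}}{\abs{\bm{R}}^3} + \frac{\be_\rho(\bm{R}\cdot\bm{f})+\bm{R}(\be_\rho\cdot\bm{f})}{\abs{\bm{R}}^3} - \frac{3(\bm{R}\cdot\be_\rho)(\bm{R}\cdot\bm{f})\bm{R}}{\abs{\bm{R}}^5},
\]
and analogously for $\tfrac{\epsilon^2}{2}\tfrac{\p}{\p\rho}\mc{D}(\bm{R})\bm{f}$. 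Multiplying by $\mc{J}_\epsilon=\epsilon(1-\epsilon\wh\kappa)$ and the overall prefactor $-\tfrac{1}{8\pi}$ produces a finite sum of integrals of the generic form $\int_0^{2\pi}\int_{-1/2}^{1/2}\bars^m(\text{trig polynomial in }\theta)/\abs{\bm{R}}^n\,d\bars\, d\theta$.

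After the substitution, each integrand falls into one of three classes. Leading terms, whose $\theta$-integrals reduce to pure powers $\epsilon^{2\ell}\bars^{2k}$, are evaluated asymptotically with Lemma~\ref{Rintest2} using the tabulated constants $d_{03}=2$, $d_{05}=4/3$, $d_{07}=16/15$, $d_{25}=2/3$, and $d_{27}=4/15$. Corrections arising from the $\bars^2\bm{Q}$ term in \eqref{Reps} and the $-\epsilon\wh\kappa$ correction in $\mc{J}_\epsilon$ carry additional $\epsilon$ powers and are bounded routinely by Lemmas~\ref{Rintest0} and \ref{Rintest1}. The subtlest class contains terms with an odd $\bars$-factor multiplying a $\theta$-dependent vector such as $\be_\rho$ or $\be_\theta$; a direct application of Lemma~\ref{Rintest1} to these terms yields only an $\abs{\log\epsilon}$, which combined with the $\epsilon$ from $\mc{J}_\epsilon$ would produce $O(\epsilon\abs{\log\epsilon})$ and violate the claimed bound. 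For these I would instead invoke Lemma~\ref{theta_int}, whose strengthened $\theta$-cancellation yields the required $O(\epsilon)$.

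The leading-order pieces are then assembled using the elementary identity $\int_0^{2\pi}\be_\rho(\be_\rho\cdot\bm{a})\,d\theta = \pi\bigl[\bm{a}-(\bm{a}\cdot\be_t)\be_t\bigr]$. A direct bookkeeping shows that the four Stokeslet-derived leading contributions combine to $\tfrac{1}{2}\bigl(\bm{f}(s)+(\bm{f}(s)\cdot\be_t(s))\be_t(s)\bigr)$, matching the claimed principal part. The main obstacle is verifying that the doublet-derived leading contributions, each individually of size $O(1)$, cancel exactly upon $\theta$-integration: their $\bars$-integrated sum equals $-2\bm{f}+2(\be_t\cdot\bm{f})\be_t+4(\be_\rho\cdot\bm{f})\be_\rho$, which upon integration in $\theta$ identically vanishes. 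This cancellation is precisely the algebraic identity that forces the coefficient $\epsilon^2/2$ on Johnson's doublet correction in classical slender body theory, and once confirmed the remaining terms are each $O(\epsilon\norm{\bm{f}}_{C^1(\T)})$, yielding the proposition.
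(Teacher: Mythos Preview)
Your proposal is correct and follows essentially the same approach as the paper: differentiate the Stokeslet and doublet kernels in $\rho$, substitute the expansion \eqref{Reps}, and sort the resulting integrals into leading pieces handled by Lemma~\ref{Rintest2}, curvature/Jacobian corrections handled by Lemmas~\ref{Rintest0}--\ref{Rintest1}, and the odd-$\bars$, $\theta$-oscillatory pieces handled by Lemma~\ref{theta_int}. The paper organizes the bookkeeping slightly differently---it groups terms as $\bm{F}_{\mc{S},11},\dots,\bm{F}_{\mc{S},14}$ and $\bm{F}_{\mc{D},11},\dots,\bm{F}_{\mc{D},14}$ and tracks the doublet cancellation via the auxiliary quantity $\bm{h}_f(s)=\pi\big[(\bm{f}\cdot\be_{n_1})\be_{n_1}+(\bm{f}\cdot\be_{n_2})\be_{n_2}\big]$ rather than your $\theta$-pointwise expression $-2\bm{f}+2(\be_t\cdot\bm{f})\be_t+4(\be_\rho\cdot\bm{f})\be_\rho$---but the two organizations are equivalent and the key cancellations you identify are exactly the ones the paper exploits.
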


\begin{proof}
Using the expression \eqref{force_components} for $\bm{f}^{\SB}_1(s)$ and recalling the slender body approximation \eqref{SBT2}, we consider $\bm{f}^{\SB}_1(s)$ as the sum of a Stokeslet and a doublet term. Again considering $\bm{R}$ as a function of $\theta$, $s$, and $\bars$, we can write
\begin{equation}\label{force_1}
\begin{aligned}
\bm{f}^{\SB}_1 &= \frac{1}{8\pi}\bigg(\bm{F}_{\mc{S},1} +\frac{\epsilon^2}{2}\bm{F}_{\mc{D},1}\bigg); \\
\bm{F}_{\mc{S},1} &:= -\int_0^{2\pi}\int_{-1/2}^{1/2} \frac{\p}{\p\rho}\mc{S}(\bm{R})\bm{f}(s+\bars) \ts d\bars \ts \epsilon(1-\epsilon\wh\kappa)d\theta \\
\bm{F}_{\mc{D},1}&:=  -\int_0^{2\pi}\int_{-1/2}^{1/2} \frac{\p}{\p\rho}\mc{D}(\bm{R})\bm{f}(s+\bars) \ts d\bars \ts \epsilon(1-\epsilon\wh\kappa) d\theta.
\end{aligned}
\end{equation}

We begin by estimating $\bm{F}_{\mc{S},1}$. Recalling the notation $\bm{R}_0(s,\bars):= \X(s)-\X(s+\bars)$, we have 
\begin{equation}\label{FS1}
\begin{aligned}
\bm{F}_{\mc{S},1} &= \bm{F}_{\mc{S},11} + \bm{F}_{\mc{S},12} + \bm{F}_{\mc{S},13}+ \bm{F}_{\mc{S},14}; \\
\bm{F}_{\mc{S},11}&= \int_0^{2\pi}\int_{-1/2}^{1/2} \bigg[\frac{\epsilon{\bm f}}{|\bm{R}|^3} +\frac{3\epsilon\bm{R}(\bm{R}\cdot\bm{f})}{|\bm{R}|^5} \bigg]  d\bars \ts \epsilon \ts d\theta\\
\bm{F}_{\mc{S},12}&= \int_0^{2\pi}\int_{-1/2}^{1/2} \bigg[\frac{\bm{R}_0\cdot\be_{\rho}}{|\bm{R}|^3}{\bm f} +\frac{3\bm{R}(\bm{R}\cdot\bm{f})(\bm{R}_0\cdot\be_{\rho})}{|\bm{R}|^5} \bigg]  d\bars \ts \epsilon \ts d\theta\\
\bm{F}_{\mc{S},13}&= -\int_0^{2\pi}\int_{-1/2}^{1/2} \frac{\be_{\rho}(\bm{R}\cdot\bm{f})+\bm{R}(\be_{\rho}\cdot\bm{f}) }{|\bm{R}|^3} d\bars \ts \epsilon \ts d\theta\\
\bm{F}_{\mc{S},14}&= -\int_0^{2\pi}\int_{-1/2}^{1/2} \bigg[\frac{\bm{R}_0\cdot\be_{\rho}+\epsilon}{|\bm{R}|^3}{\bm f} - \frac{\be_{\rho}(\bm{R}\cdot\bm{f})+\bm{R}(\be_{\rho}\cdot\bm{f})}{|\bm{R}|^3} \\
&\hspace{5cm} +\frac{3\bm{R}(\bm{R}\cdot\bm{f})(\bm{R}_0\cdot\be_{\rho}+\epsilon)}{|\bm{R}|^5}\bigg] \ts d\bars \ts \epsilon^2\wh\kappa \ts d\theta.
\end{aligned}
\end{equation}

We estimate each of these terms in turn, relying on Lemmas \ref{Rintest0}, \ref{Rintest1}, \ref{Rintest2}, and \ref{theta_int} accordingly, as we did in the proof of Proposition \ref{fSBp_est}. \\

Using Lemma \ref{Rintest0}, we have
\begin{equation}\label{FS14_est}
\abs{\bm{F}_{\mc{S},14}} \le \epsilon c_\kappa\norm{\bm{f}}_{C(\T)},
\end{equation}
while by Lemmas \ref{theta_int} and \ref{Rintest2} we can show
\begin{equation}\label{FS13_est}
 \abs{\bm{F}_{\mc{S},13} + 4\bm{h}_f(s)} \le \epsilon c_\kappa \norm{\bm{f}}_{C^1(\T)},
 \end{equation}
 where $\bm{h}_f(s)$ is as in \eqref{hf_def}. Similarly, using \eqref{Rlb} along with Lemmas \ref{Rintest0} and \ref{theta_int}, we have
 \begin{equation}\label{FS12_est}
  \abs{\bm{F}_{\mc{S},12}} \le \epsilon c_\kappa \norm{\bm{f}}_{C(\T)}.
 \end{equation}
Finally, by Lemmas \ref{Rintest0}, \ref{Rintest1}, and \ref{Rintest2}, we obtain
 \begin{equation}\label{FS11_est}
 \begin{aligned}
\abs{\bm{F}_{\mc{S},11}- 2\bm{h}_a(s)-4\bm{h}_f(s)} &\le \epsilon c_\kappa\norm{\bm{f}}_{C^1(\T)}; \\
\bm{h}_a(s) &:= 2\pi \big(\bm{f}(s) + \be_t(s)(\be_t(s)\cdot\bm{f}(s))\big),
\end{aligned}
\end{equation}
where $\bm{h}_f(s)$ is again as in \eqref{hf_def}. \\

Combining the estimates \eqref{FS14_est}, \eqref{FS13_est}, \eqref{FS12_est}, and \eqref{FS11_est}, we obtain
\begin{equation}\label{FS1_est}
\abs{\bm{F}_{\mc{S},1}- 2\bm{h}_a(s)} \le \epsilon c_\kappa \norm{\bm{f}}_{C^1(\T)}.
\end{equation}

Now we estimate $\bm{F}_{\mc{D},1}$. Following the same outline as in the $\bm{F}_{\mc{S},1}$ estimate, we write 
\begin{equation}\label{FD1}
\begin{aligned}
 \bm{F}_{\mc{D},1} &= 3(\bm{F}_{\mc{D},11} + \bm{F}_{\mc{D},12} + \bm{F}_{\mc{D},13}+ \bm{F}_{\mc{D},14}); \\
\bm{F}_{\mc{D},11}&= \int_0^{2\pi}\int_{-1/2}^{1/2} \bigg[\frac{\epsilon{\bm f}}{|\bm{R}|^5} -\frac{5\epsilon\bm{R}(\bm{R}\cdot\bm{f})}{|\bm{R}|^7} \bigg] d\bars \ts \epsilon \ts d\theta\\
\bm{F}_{\mc{D},12}&= \int_0^{2\pi}\int_{-1/2}^{1/2} \bigg[\frac{\bm{R}_0\cdot\be_{\rho}}{|\bm{R}|^5}{\bm f} -\frac{5\bm{R}(\bm{R}\cdot\bm{f})(\bm{R}_0\cdot\be_{\rho})}{|\bm{R}|^7} \bigg] d\bars \ts \epsilon \ts d\theta\\
\bm{F}_{\mc{D},13}&= \int_0^{2\pi}\int_{-1/2}^{1/2} \frac{\be_{\rho}(\bm{R}\cdot\bm{f})+\bm{R}(\be_{\rho}\cdot\bm{f}) }{|\bm{R}|^5}  d\bars \ts \epsilon \ts d\theta\\
\bm{F}_{\mc{D},14}&= -\int_0^{2\pi}\int_{-1/2}^{1/2} \bigg[\frac{\bm{R}_0\cdot\be_{\rho}+\epsilon}{|\bm{R}|^5}{\bm f} + \frac{\be_{\rho}(\bm{R}\cdot\bm{f})+\bm{R}(\be_{\rho}\cdot\bm{f})}{|\bm{R}|^5} \\
&\hspace{5cm} -\frac{5\bm{R}(\bm{R}\cdot\bm{f})(\bm{R}_0\cdot\be_{\rho}+\epsilon)}{|\bm{R}|^7}\bigg] \ts d\bars \ts \epsilon^2\wh\kappa \ts d\theta.
\end{aligned}
\end{equation}

Now, using Lemma \ref{Rintest0}, we can show
\begin{equation}\label{FD14_est}
\abs{\bm{F}_{\mc{D},14}} \le \epsilon^{-1} c_\kappa\norm{\bm{f}}_{C(\T)}. 
\end{equation}

Furthermore, by Lemmas \ref{Rintest0}, \ref{Rintest1}, and \ref{Rintest2}, we have
\begin{equation}\label{FD13_est}
\abs{\bm{F}_{\mc{D},13} - \epsilon^{-2}\frac{8}{3} \bm{h}_f(s)} \le \epsilon^{-1}c_\kappa\norm{\bm{f}}_{C^1(\T)}.
\end{equation}
where $\bm{h}_f(s)$ is as in \eqref{hf_def}. Then, via \ref{Rintest0}, we can show
\begin{equation}\label{FD12_est}
\abs{\bm{F}_{\mc{D},12}} \le \epsilon^{-1}c_\kappa\norm{\bm{f}}_{C(\T)},
\end{equation}
while Lemmas \ref{Rintest0}, \ref{Rintest1}, and \ref{Rintest2} yield
\begin{equation}\label{FD11_est}
\abs{\bm{F}_{\mc{D},11}+\epsilon^{-2} \frac{8}{3}\bm{h}_f(s)} \le \epsilon^{-1}c_\kappa\norm{\bm{f}}_{C^1(\T)}.
\end{equation}

Combining estimates \eqref{FD14_est}, \eqref{FD13_est}, \eqref{FD12_est}, and \eqref{FD11_est}, we obtain
\begin{equation}\label{FD1_est}
\begin{aligned}
\abs{\bm{F}_{\mc{D},1}} &\le \abs{3\bm{F}_{\mc{D},11}+\epsilon^{-2} 8\bm{h}_f(s)} + \abs{3\bm{F}_{\mc{D},12}}+\abs{3\bm{F}_{\mc{D},13}-\epsilon^{-2} 8\bm{h}_f(s)} + \abs{3\bm{F}_{\mc{D},14}} \\
&\le \epsilon^{-1} c_\kappa\norm{\bm{f}}_{C^1(\T)}.
\end{aligned}
\end{equation}

Finally, using the estimates \eqref{FS1_est} and \eqref{FD1_est}, as well as the expression \eqref{force_1} for $\bm{f}_1^{\SB}$, we obtain
\begin{equation}\label{f1sb_est0}
\abs{\bm{f}^{\SB}_1 - \frac{1}{4\pi}\bm{h}_a(s)} \le \frac{1}{8\pi}\bigg(\abs{\bm{F}_{\mc{S},1}- 2\bm{h}_a(s)} +\frac{\epsilon^2}{2}\abs{\bm{F}_{\mc{D},1}}\bigg) \le \epsilon c_\kappa \norm{\bm{f}}_{C^1(\T)},
\end{equation}
from which, using the form of $\bm{h}_a(s)$ in \eqref{FS11_est}, we obtain Proposition \ref{fSB1_est}.
\end{proof}


Next we show the following bound for the component $\bm{f}^{\SB}_2(s)$ of the slender body force, given by \eqref{force_components}.
\begin{proposition}\label{fSB2_est}
Let the slender body $\Sigma_\epsilon$ be as in Section \ref{geometric_constraints}. Given $\bm{f}\in C^1(\T)$, let $\bm{f}^{\SB}_2(s)$ be defined as in \eqref{force_components}. Then $\bm{f}^{\SB}_2$ satisfies 
\begin{equation}
\abs{\bm{f}^{\SB}_2 } \le \epsilon c_\kappa\norm{\bm{f}}_{C^1(\T)},
\end{equation}
where the constant $c_\kappa$ depends only on $c_{\Gamma}$ and $\kappa_{\max}$.
\end{proposition}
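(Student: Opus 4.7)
The plan is to substitute \eqref{stokes_SB} into the definition \eqref{force_components} of $\bm{f}_2^{\SB}$, exploit an algebraic simplification that occurs once one contracts with $\be_\rho$, and then apply Lemmas \ref{Rintest0}--\ref{theta_int} in the same pattern as Propositions \ref{fSBp_est} and \ref{fSB1_est}. A direct computation using $\p \bm{R}/\p\rho = \be_\rho$ gives the simplifications
\begin{align*}
\left(\frac{\p \mc{S}(\bm{R})}{\p\rho}\bm{f}\right)\cdot\be_\rho &= \frac{\bm{R}\cdot\bm{f}}{|\bm{R}|^3} - 3\frac{(\bm{R}\cdot\be_\rho)^2(\bm{R}\cdot\bm{f})}{|\bm{R}|^5}, \\
\left(\frac{\p \mc{D}(\bm{R})}{\p\rho}\bm{f}\right)\cdot\be_\rho &= -6\frac{(\bm{R}\cdot\be_\rho)(\be_\rho\cdot\bm{f})}{|\bm{R}|^5} - 3\frac{\bm{R}\cdot\bm{f}}{|\bm{R}|^5} + 15\frac{(\bm{R}\cdot\be_\rho)^2(\bm{R}\cdot\bm{f})}{|\bm{R}|^7},
\end{align*}
where in each line a pair of the pieces involving $\be_\rho\cdot\bm{f}$ either cancels or combines thanks to $\be_\rho\cdot\be_\rho = 1$. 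Since $\mc{J}_\epsilon(s,\theta) = \epsilon(1-\epsilon\wh\kappa)$, I factor one power of $\epsilon$ out front and reduce the problem to estimating the double integral of the sum of the two quantities above (with the doublet part weighted by $\epsilon^2/2$), multiplied by $\be_\rho(1-\epsilon\wh\kappa)$.

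Next, I substitute the expansions $\bm{R}\cdot\be_\rho = \epsilon + \bars^2(\bm{Q}\cdot\be_\rho)$ and $\bm{R}\cdot\bm{f}(s+\bars) = -\bars(\be_t\cdot\bm{f}) + \epsilon(\be_\rho\cdot\bm{f}) + \bars^2(\bm{Q}\cdot\bm{f})$ from \eqref{Reps}, and organize all terms by parity in $\bars$, power of $\epsilon$, and presence of $\bm{Q}$. The dictionary is standard: odd-in-$\bars$ numerators are bounded via Lemma \ref{Rintest1}; curvature contributions carrying an extra $\bars^2$ from $\bm{Q}$ are bounded via Lemma \ref{Rintest0}; and the dominant even-in-$\bars$ contributions are evaluated to leading order by Lemma \ref{Rintest2}, using the explicit constants $d_{03}=2$, $d_{05}=4/3$, $d_{07}=16/15$, $d_{25}=2/3$, $d_{27}=4/15$. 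The crucial arithmetic: the Stokeslet leading-order $\bars$-integral gives $-2\epsilon^{-1}(\be_\rho\cdot\bm{f}(s))$, while the $(\epsilon^2/2)$-weighted doublet leading-order $\bars$-integral gives $+2\epsilon^{-1}(\be_\rho\cdot\bm{f}(s))$, so the $O(\epsilon^{-1})$ pieces cancel exactly. This cancellation is not an accident but is dictated by the straight-cylinder identity $(\p\bu^{\SB}_{\text{str}}/\p\rho)\cdot\be_\rho = 0$ recorded in \eqref{straight_center}. The cross-terms that mix a $\theta$-dependent factor like $\be_\rho$, $(\bm{R}\cdot\be_\rho)$, or $(\be_\rho\cdot\bm{f})$ with $\theta$-independent pieces such as $\be_t\cdot\bm{f}$, $\bm{Q}\cdot\bm{f}$, $\bm{Q}\cdot\be_t$ are handled by Lemma \ref{theta_int}, invoking Remark \ref{theta_int_rmk} for the cubic trigonometric products arising from $(\bm{R}\cdot\be_\rho)^2(\bm{R}\cdot\bm{f})\be_\rho$ and $(\bm{R}\cdot\be_\rho)(\be_\rho\cdot\bm{f})\be_\rho$.

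Once the $O(\epsilon^{-1})$ leading pieces cancel, what survives in the double integral is bounded by $c_\kappa\|\bm{f}\|_{C^1(\T)}$, and the overall $\epsilon$ previously factored from $\mc{J}_\epsilon$ then produces the stated bound $\abs{\bm{f}^{\SB}_2} \le \epsilon c_\kappa \|\bm{f}\|_{C^1(\T)}$. The main obstacle is purely one of bookkeeping: unlike Propositions \ref{fSBp_est} and \ref{fSB1_est}, where a nontrivial leading coefficient remains as the principal contribution, here every $O(\epsilon^{-1})$ constant in the combined Stokeslet and doublet expansion must be accounted for and shown to cancel, since any leftover piece at that order would violate the $O(\epsilon)$ bound. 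The structural reason the cancellation must happen is the exact identity $(\p\bu^{\SB}_{\text{str}}/\p\rho)\cdot\be_\rho=0$ from the straight-cylinder/constant-force heuristic in the preceding remark.
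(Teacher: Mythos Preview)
Your proposal is correct and follows essentially the same approach as the paper: split into Stokeslet and doublet contributions, expand using \eqref{Reps}, bound the subleading pieces via Lemmas \ref{Rintest0}--\ref{Rintest2} and \ref{theta_int}, and verify that the $O(\epsilon^{-1})$ leading terms from the Stokeslet and doublet exactly cancel. The one organizational difference is that you first exploit the contraction identity $\be_\rho\cdot\be_\rho=1$ to simplify $(\p_\rho\mc{S}\bm{f})\cdot\be_\rho$ and $(\p_\rho\mc{D}\bm{f})\cdot\be_\rho$ before expanding, whereas the paper keeps all four terms from $\p_\rho(\bm{R}\bm{R}^{\rm T}/|\bm{R}|^n)\bm{f}\cdot\be_\rho$ and instead splits each of $\bm{F}_{\mc{S},2}$, $\bm{F}_{\mc{D},2}$ into four pieces indexed by the decomposition $\bm{R}\cdot\be_\rho=\epsilon+\bm{R}_0\cdot\be_\rho$ and the $\epsilon\wh\kappa$ correction in $\mc{J}_\epsilon$; your simplification buys a shorter bookkeeping list but the underlying mechanism (and the cancellation, which the paper records as $2\bm{h}_f(s)$ from the Stokeslet against $-2\bm{h}_f(s)$ from the weighted doublet) is identical.
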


\begin{proof}
Using the $\bm{f}^{\SB}_1$ computation as a guide, we again use \eqref{SBT2} to consider $\bm{f}^{\SB}_2$ as the sum of a Stokeslet and doublet term: 
\begin{equation}\label{force_2}
\begin{aligned}
\bm{f}^{\SB}_2 &= \frac{1}{8\pi} \bigg(\bm{F}_{\mc{S},2} + \frac{\epsilon^2}{2}\bm{F}_{\mc{D},2} \bigg); \\
\bm{F}_{\mc{S},2} &:= -\int_0^{2\pi}\int_{-1/2}^{1/2} \be_\rho \bigg(\frac{\p}{\p\rho}\mc{S}(\bm{R})\bm{f}(s+\bars)\bigg)\cdot\be_{\rho}  \ts d\bars \ts \epsilon(1-\epsilon\wh\kappa)d\theta \\
\bm{F}_{\mc{D},2}&:=  -\int_0^{2\pi}\int_{-1/2}^{1/2} \be_\rho \bigg(\frac{\p}{\p\rho}\mc{D}(\bm{R})\bm{f}(s+\bars)\bigg)\cdot\be_{\rho}  \ts d\bars \ts \epsilon(1-\epsilon\wh\kappa)d\theta  .
\end{aligned}
\end{equation}

As we did for $\bm{f}^{\SB}_1$, we begin by estimating the Stokeslet term $\bm{F}_{\mc{S},2}$. We write $\bm{F}_{\mc{S},2}$ as 
\begin{equation}\label{FS2}
\begin{aligned}
\bm{F}_{\mc{S},2} &= \bm{F}_{\mc{S},21} + \bm{F}_{\mc{S},22} + \bm{F}_{\mc{S},23}+ \bm{F}_{\mc{S},24}; \\
\bm{F}_{\mc{S},21} &:= \int_0^{2\pi}\int_{-1/2}^{1/2} \bigg[\frac{\epsilon}{|\bm{R}|^3}({\bm f}\cdot\be_\rho)\be_\rho +\frac{3\epsilon\be_\rho(\bm{R}\cdot\be_{\rho})(\bm{R}\cdot\bm{f})}{|\bm{R}|^5} \bigg] d\bars \ts \epsilon \ts d\theta \\
\bm{F}_{\mc{S},22} &:= \int_0^{2\pi}\int_{-1/2}^{1/2} \bigg[\frac{\bm{R}_0\cdot\be_{\rho}}{|\bm{R}|^3}({\bm f}\cdot\be_\rho)\be_\rho +\frac{3\be_\rho(\bm{R}\cdot\be_{\rho})(\bm{R}\cdot\bm{f})(\bm{R}_0\cdot\be_{\rho})}{|\bm{R}|^5}\bigg] d\bars \ts \epsilon \ts d\theta \\
\bm{F}_{\mc{S},23} &:= -\int_0^{2\pi}\int_{-1/2}^{1/2} \frac{\be_{\rho}(\bm{R}\cdot\bm{f})+\be_\rho(\bm{R}\cdot\be_{\rho})(\be_{\rho}\cdot\bm{f})}{|\bm{R}|^3} d\bars \ts \epsilon \ts d\theta \\
\bm{F}_{\mc{S},24} &:= -\int_0^{2\pi}\int_{-1/2}^{1/2} \bigg[\frac{\bm{R}_0\cdot\be_{\rho}+\epsilon}{|\bm{R}|^3}({\bm f}\cdot\be_\rho)\be_\rho - \frac{\be_{\rho}(\bm{R}\cdot\bm{f})+\be_\rho(\bm{R}\cdot\be_{\rho})(\be_{\rho}\cdot\bm{f})}{|\bm{R}|^3} \\
&\hspace{4cm}+\frac{3\be_\rho(\bm{R}\cdot\be_{\rho})(\bm{R}\cdot\bm{f})(\bm{R}_0\cdot\be_{\rho}+\epsilon)}{|\bm{R}|^5}\bigg] d\bars \ts\epsilon^2\wh\kappa \ts d\theta. 
\end{aligned}
\end{equation}

We again rely on Lemmas \ref{Rintest0}, \ref{Rintest1}, \ref{Rintest2}, and \ref{theta_int} to estimate each of the above components of $\bm{F}_{\mc{S},2}$ in the same way as in the proofs of Propositions \ref{fSBp_est} and \ref{fSB1_est}. By Lemma \ref{Rintest0}, we have
\begin{equation}\label{FS24_est}
\abs{\bm{F}_{\mc{S},24}} \le \epsilon c_\kappa \norm{\bm{f}}_{C(\T)}.
\end{equation}
Additionally, by Lemmas \ref{theta_int} and \ref{Rintest2}, we have
\begin{equation}\label{FS23_est}
 \abs{\bm{F}_{\mc{S},23} + 4\bm{h}_f(s)} \le \epsilon c_\kappa\norm{\bm{f}}_{C^1(\T)}
 \end{equation}
for $\bm{h}_f(s)$ as in \eqref{hf_def}.
From equation \eqref{Rlb} and Lemmas \ref{Rintest0} and \ref{theta_int}, we also obtain
 \begin{equation}\label{FS22_est}
 \abs{\bm{F}_{\mc{S},22}}\le \epsilon c_\kappa\norm{\bm{f}}_{C(\T)}.
 \end{equation}
Finally, using Lemmas \ref{Rintest0}, \ref{Rintest1}, and \ref{Rintest2}, we can show
\begin{equation}\label{FS21_est}
  \abs{\bm{F}_{\mc{S},21}- 6 \bm{h}_f(s)} \le \epsilon c_\kappa \norm{\bm{f}}_{C^1(\T)}.
 \end{equation}

Combining estimates \eqref{FS24_est}, \eqref{FS23_est}, \eqref{FS22_est}, and \eqref{FS21_est}, we obtain the bound
\begin{equation}\label{FS2_est}
\abs{\bm{F}_{\mc{S},2}- 2 \bm{h}_f(s)} \le \epsilon c_\kappa \norm{\bm{f}}_{C^1(\T)}.
\end{equation}


Now we estimate the doublet term of the expression \eqref{force_2} for $\bm{f}^{\SB}_2$. We have that $\bm{F}_{\mc{D},2}$ can be expressed as
\begin{equation}\label{FD2}
\begin{aligned}
\bm{F}_{\mc{D},2} &= 3(\bm{F}_{\mc{D},21} + \bm{F}_{\mc{D},22} + \bm{F}_{\mc{D},23}+ \bm{F}_{\mc{D},24}); \\
\bm{F}_{\mc{D},21} &:= \int_0^{2\pi}\int_{-1/2}^{1/2} \bigg[\frac{\epsilon({\bm f}\cdot\be_\rho)\be_\rho}{|\bm{R}|^5} - \frac{5\epsilon\be_\rho(\bm{R}\cdot\be_{\rho})(\bm{R}\cdot\bm{f})}{|\bm{R}|^7} \bigg] d\bars \ts \epsilon \ts d\theta \\
\bm{F}_{\mc{D},22} &:= \int_0^{2\pi}\int_{-1/2}^{1/2} \bigg[\frac{\bm{R}_0\cdot\be_{\rho}}{|\bm{R}|^5}({\bm f}\cdot\be_\rho)\be_\rho -\frac{5\be_\rho(\bm{R}\cdot\be_{\rho})(\bm{R}\cdot\bm{f})(\bm{R}_0\cdot\be_{\rho})}{|\bm{R}|^7}\bigg] d\bars \ts \epsilon \ts d\theta \\
\bm{F}_{\mc{D},23} &:= \int_0^{2\pi}\int_{-1/2}^{1/2} \frac{\be_{\rho}(\bm{R}\cdot\bm{f})+\be_\rho(\bm{R}\cdot\be_{\rho})(\be_{\rho}\cdot\bm{f})}{|\bm{R}|^5} d\bars \ts \epsilon \ts d\theta \\
\bm{F}_{\mc{D},24} &:= -\int_0^{2\pi}\int_{-1/2}^{1/2} \bigg[\frac{\bm{R}_0\cdot\be_{\rho}+\epsilon}{|\bm{R}|^5}({\bm f}\cdot\be_\rho)\be_\rho + \frac{\be_{\rho}(\bm{R}\cdot\bm{f})+\be_\rho(\bm{R}\cdot\be_{\rho})(\be_{\rho}\cdot\bm{f})}{|\bm{R}|^5} \\
&\hspace{4cm} -\frac{5\be_\rho(\bm{R}\cdot\be_{\rho})(\bm{R}\cdot\bm{f})(\bm{R}_0\cdot\be_{\rho}+\epsilon)}{|\bm{R}|^7}\bigg] d\bars \ts\epsilon^2\wh\kappa \ts d\theta 
\end{aligned}
\end{equation}

We estimate each of these terms following the same procedure as in the Stokeslet term estimate. In particular, using Lemma \ref{Rintest0}, we can show
\begin{equation}\label{FD24_est}
\abs{\bm{F}_{\mc{D},24}} \le \epsilon^{-1}c_\kappa \norm{\bm{f}}_{C(\T)}.
\end{equation}
Furthermore, by Lemmas \ref{Rintest0}, \ref{Rintest1}, and \ref{Rintest2}, we obtain
\begin{equation}\label{FD23_est}
\abs{\bm{F}_{\mc{D},23}- \epsilon^{-2}\frac{8}{3}\bm{h}_f(s)} \le \epsilon^{-1}c_\kappa \norm{\bm{f}}_{C^1(\T)}
\end{equation}
for $\bm{h}_f(s)$ as in \eqref{hf_def}. Next, by Lemma \ref{Rintest0}, we have
\begin{equation}\label{FD22_est}
\abs{\bm{F}_{\mc{D},22}} \le \epsilon^{-1}c_\kappa\norm{\bm{f}}_{C(\T)}.
\end{equation}
Finally, by Lemmas \ref{Rintest0}, \ref{Rintest1}, and \ref{Rintest2}, we can show
\begin{equation}\label{FD21_est}
\abs{\bm{F}_{\mc{D},21} +\epsilon^{-2}4\bm{h}_f(s)} \le \epsilon^{-1}c_\kappa \norm{\bm{f}}_{C^1(\T)}.
\end{equation}

Combining the estimates \eqref{FD24_est}, \eqref{FD23_est}, \eqref{FD22_est}, and \eqref{FD21_est}, we have that $\bm{F}_{\mc{D},2}$ satisfies
\begin{equation}\label{FD2_est}
\abs{\bm{F}_{\mc{D},2} +\epsilon^{-2}4\bm{h}_f(s)} \le \epsilon^{-1} c_\kappa \norm{\bm{f}}_{C^1(\T)}.
\end{equation}

Then, using the expression \eqref{force_2} for $\bm{f}^{\SB}_2$, along with the estimates \eqref{FS2_est} and \eqref{FD2_est}, we obtain the bound
\begin{equation}\label{f2sb_est0}
\abs{\bm{f}^{\SB}_2} \le \frac{1}{8\pi}\bigg( \abs{\bm{F}_{\mc{S},2} - 2\bm{h}_f(s)} + \frac{\epsilon^2}{2}\abs{\bm{F}_{\mc{D},2} +\epsilon^{-2}4\bm{h}_f(s)}\bigg) \le \epsilon c_\kappa \norm{\bm{f}}_{C^1(\T)}.
\end{equation}
\end{proof}


A similar bound to Proposition \ref{fSB2_est} also holds for the next force component $\bm{f}^{\SB}_3(s)$. 
\begin{proposition}\label{fSB3_est}
Let the slender body $\Sigma_\epsilon$ be as in Section \ref{geometric_constraints}. Given $\bm{f}\in C^1(\T)$, let $\bm{f}^{\SB}_3(s)$ be defined as in \eqref{force_components}. Then $\bm{f}^{\SB}_3$ satisfies the bound
\begin{equation}
\abs{\bm{f}^{\SB}_3 } \le \epsilon c_\kappa\norm{\bm{f}}_{C^1(\T)}
\end{equation}
where the constant $c_\kappa$ depends only on $c_{\Gamma}$ and $\kappa_{\max}$.
\end{proposition}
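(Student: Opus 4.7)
The plan is to follow the same template used in the proofs of Propositions \ref{fSB1_est} and \ref{fSB2_est}: decompose $\bm{f}^{\SB}_3$ into a Stokeslet and doublet contribution, explicitly compute the $\theta$-derivative of each kernel, and then apply Lemmas \ref{Rintest0}, \ref{Rintest1}, \ref{Rintest2}, and \ref{theta_int} term by term. Writing $\bm{f}^{\SB}_3 = \frac{1}{8\pi}\bigl(\bm{F}_{\mc{S},3} + \tfrac{\epsilon^2}{2}\bm{F}_{\mc{D},3}\bigr)$, I would first use \eqref{thetaderiv} (so that $\tfrac{1}{\epsilon}\p_\theta \bm{R} = \be_\theta$ on $\Gamma_\epsilon$) and the orthogonality $\be_\rho\cdot\be_\theta=0$ to reduce
\[
\tfrac{1}{\epsilon}\p_\theta\mc{S}(\bm{R})\bm{f}\cdot\be_\rho = -\frac{(\bm{R}\cdot\be_\theta)(\bm{f}\cdot\be_\rho)}{|\bm{R}|^3} + \frac{(\bm{R}\cdot\be_\rho)(\be_\theta\cdot\bm{f})}{|\bm{R}|^3} - \frac{3(\bm{R}\cdot\be_\theta)(\bm{R}\cdot\be_\rho)(\bm{R}\cdot\bm{f})}{|\bm{R}|^5},
\]
and the analogous expression for $\mc{D}$. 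The key simplification exploited here is that, by \eqref{Reps}, $\bm{R}\cdot\be_\theta=\bars^2(\bm{Q}\cdot\be_\theta)$ already carries an extra $\bars^2$, while $\bm{R}\cdot\be_\rho=\epsilon+\bars^2(\bm{Q}\cdot\be_\rho)$.

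Next, I would split $\bm{F}_{\mc{S},3}$ and $\bm{F}_{\mc{D},3}$ into four sub-pieces as in \eqref{FS2} and \eqref{FD2}: the pure $\epsilon$ piece (coming from $\bm{R}\cdot\be_\rho=\epsilon+\cdots$), the $\bars^2\bm{Q}\cdot\be_\rho$ piece, the terms proportional to $\bm{R}\cdot\be_\theta$ (which inherit the extra $\bars^2$ factor), and the curvature correction $\epsilon\wh\kappa$ in $\mc{J}_\epsilon$. For each sub-piece I apply the appropriate integral estimate: Lemma \ref{Rintest0} controls the curvature corrections and the $\bm{R}\cdot\be_\theta$ contributions (thanks to the $\bars^2$ gain); Lemma \ref{theta_int} handles the cross terms involving $\bars^2(\bm{Q}\cdot\be_\rho)(\be_\theta\cdot\bm{f})$ where the $\theta$-integration against $\be_\theta$ supplies an extra $\epsilon$; and Lemma \ref{Rintest2} (with $m=0$, $n=3$ for $\bm{F}_{\mc{S},3}$ and $m=0$, $n=5$ for $\bm{F}_{\mc{D},3}$) extracts the leading constants from the pure $\epsilon$ pieces.

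The decisive step is the cancellation of the two leading constants. Applying \eqref{epsdmn_theta} to the leading Stokeslet piece gives
\[
\bm{F}_{\mc{S},3} = -2\int_0^{2\pi}(\be_\theta\cdot\bm{f}(s))\be_\theta\,d\theta + O(\epsilon) = -2\pi\bigl(\bm{f}(s)-\be_t(\be_t\cdot\bm{f}(s))\bigr)+O(\epsilon\norm{\bm{f}}_{C^1(\T)}),
\]
while the leading doublet piece, after multiplication by $\epsilon^2/2$ and using $d_{05}=4/3$, produces exactly the opposite constant $+2\pi(\bm{f}-\be_t(\be_t\cdot\bm{f}))$. This cancellation is the raison d'\^etre of the doublet correction with coefficient $\epsilon^2/2$ (and is precisely what made the heuristic straight/constant calculation in \eqref{straight_center}--\eqref{pressure_exact} work).

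The main obstacle is purely bookkeeping: there are roughly eight sub-integrals to track, and the cancellation of the two $O(1)$ leading constants must be identified cleanly before the residual estimates are collected. No genuinely new ingredient is needed beyond what was developed for Propositions \ref{fSBp_est}--\ref{fSB2_est}; in particular, the fact that $\bm{R}\cdot\be_\theta$ vanishes to order $\bars^2$ on $\Gamma_\epsilon$ makes several of the would-be dangerous terms automatically harmless. Once the bookkeeping is done, summing all sub-estimates yields $|\bm{f}^{\SB}_3|\le\epsilon c_\kappa\norm{\bm{f}}_{C^1(\T)}$, as claimed.
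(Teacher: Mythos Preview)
Your proposal is correct and follows essentially the same approach as the paper: decompose into Stokeslet and doublet parts, exploit $\bm{R}\cdot\be_\theta=\bars^2(\bm{Q}\cdot\be_\theta)$, apply Lemmas \ref{Rintest0}--\ref{theta_int} term by term, and identify the cancellation $-2\bm{h}_b(s)+2\bm{h}_b(s)=0$ between the leading Stokeslet and doublet constants (your $2\pi(\bm{f}-\be_t(\be_t\cdot\bm{f}))$ is exactly the paper's $2\bm{h}_b(s)$). The only cosmetic difference is that the paper uses a three-piece decomposition of $\bm{F}_{\mc{S},3}$ and $\bm{F}_{\mc{D},3}$ (keeping $\bm{R}\cdot\be_\rho$ intact in $\bm{F}_{\mc{S},32}$) rather than your proposed four-piece split modeled on \eqref{FS2}; either organization works.
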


\begin{proof}
Following the same steps as in the calculations of $\bm{f}^{\SB}_1$ and $\bm{f}^{\SB}_2$, we use \eqref{SBT2} in the expression \eqref{force_components} for $\bm{f}^{\SB}_3$ to consider $\bm{f}^{\SB}_3$ as the sum 
\begin{equation}\label{force_3}
\begin{aligned}
\bm{f}^{\SB}_3 &=\frac{1}{8\pi}\bigg(\bm{F}_{\mc{S},3} +\frac{\epsilon^2}{2}\bm{F}_{\mc{D},3}\bigg); \\
\bm{F}_{\mc{S},3} &:= -\int_0^{2\pi}\int_{-1/2}^{1/2} \be_\theta \bigg(\frac{\p}{\p\theta}\mc{S}(\bm{R})\bm{f}(s+\bars)\bigg)\cdot\be_\rho \ts d\bars \ts (1-\epsilon\wh\kappa)d\theta \\
\bm{F}_{\mc{D},3}&:=  -\int_0^{2\pi}\int_{-1/2}^{1/2} \be_\theta \bigg(\frac{\p}{\p\theta}\mc{D}(\bm{R})\bm{f}(s+\bars)\bigg)\cdot\be_\rho \ts d\bars \ts (1-\epsilon\wh\kappa) d\theta.
\end{aligned}
\end{equation}

As before, we begin by estimating $\bm{F}_{\mc{S},3}$. We write
\begin{equation}\label{FS3}
\begin{aligned}
\bm{F}_{\mc{S},3} &= \bm{F}_{\mc{S},31} + \bm{F}_{\mc{S},32} + \bm{F}_{\mc{S},33} ; \\
\bm{F}_{\mc{S},31} &= \int_0^{2\pi}\int_{-1/2}^{1/2} \bigg[\frac{\bm{R}_0\cdot\be_{\theta}}{|\bm{R}|^3}({\bm f}\cdot\be_{\rho})\be_\theta +\frac{3(\bm{R}\cdot\be_\rho)(\bm{R}\cdot\bm{f})(\bm{R}_0\cdot\be_{\theta})\be_\theta}{|\bm{R}|^5} \bigg] d\bars \ts \epsilon \ts d\theta \\
\bm{F}_{\mc{S},32} &= -\int_0^{2\pi}\int_{-1/2}^{1/2} \frac{\be_\theta(\bm{R}\cdot\be_\rho)(\be_{\theta}\cdot\bm{f})}{|\bm{R}|^3} d\bars \ts \epsilon \ts d\theta \\
\bm{F}_{\mc{S},33} &= -\int_0^{2\pi}\int_{-1/2}^{1/2} \bigg[\frac{\bm{R}_0\cdot\be_{\theta}}{|\bm{R}|^3}({\bm f}\cdot\be_{\rho})\be_\theta - \frac{\be_\theta(\bm{R}\cdot\be_\rho)(\be_{\theta}\cdot\bm{f})}{|\bm{R}|^3} \\
&\hspace{5cm} +\frac{3(\bm{R}\cdot\be_\rho)(\bm{R}\cdot\bm{f})(\bm{R}_0\cdot\be_{\theta})\be_\theta}{|\bm{R}|^5} \bigg] d\bars \ts \epsilon^2\wh\kappa \ts d\theta.
\end{aligned}
\end{equation}

As in the previous estimates of $\bm{F}_{\mc{S},1}$ and $\bm{F}_{\mc{S},2}$, we first use Lemma \ref{Rintest0} to show
\begin{equation}\label{FS33_est}
\abs{\bm{F}_{\mc{S},33}}  \le \epsilon c_\kappa\norm{\bm{f}}_{C(\T)}.
\end{equation}
Next, by Lemmas \ref{theta_int} and \ref{Rintest2}, we have
\begin{equation}\label{FS32_est}
\begin{aligned}
\abs{\bm{F}_{\mc{S},32} + 2\bm{h}_b(s)} &\le \epsilon c_\kappa\norm{\bm{f}}_{C^1(\T)}; \\
\bm{h}_b(s) &:=  \pi \big((\bm{f}(s)\cdot\be_{n_1}(s))\be_{n_1}(s) + (\bm{f}(s)\cdot\be_{n_2}(s))\be_{n_2}(s) \big).
\end{aligned}
\end{equation}
Finally, using \eqref{Rlb} along with Lemmas \ref{Rintest0} and \ref{theta_int}, we have
\begin{equation}\label{FS31_est}
\abs{\bm{F}_{\mc{S},31}} \le \epsilon c_\kappa \norm{\bm{f}}_{C(\T)}.
\end{equation}

Combining the estimates \eqref{FS33_est}, \eqref{FS32_est}, and \eqref{FS31_est}, we have that $\bm{F}_{\mc{S},3}$ satisfies
\begin{equation}\label{FS3_est}
\abs{\bm{F}_{\mc{S},3}+2\bm{h}_b(s)} \le \epsilon c_\kappa \norm{\bm{f}}_{C^1(\T)},
\end{equation}
where $\bm{h}_b(s)$ is as in \eqref{FS32_est}. \\

Now we estimate the doublet term $\bm{F}_{\mc{D},3}$ in the expression \eqref{force_3} for $\bm{f}^{\SB}_3$. As we did for the Stokeslet term, we decompose $\bm{F}_{\mc{D},3}$ as
\begin{equation}\label{FD3}
\begin{aligned}
\bm{F}_{\mc{D},3} &= 3(\bm{F}_{\mc{D},31} + \bm{F}_{\mc{D},32} + \bm{F}_{\mc{D},33}) ; \\
\bm{F}_{\mc{D},31} &= \int_0^{2\pi}\int_{-1/2}^{1/2} \bigg[\frac{\bm{R}_0\cdot\be_{\theta}}{|\bm{R}|^5}({\bm f}\cdot\be_{\rho})\be_\theta -\frac{5(\bm{R}\cdot\be_\rho)(\bm{R}\cdot\bm{f})(\bm{R}_0\cdot\be_{\theta})\be_\theta}{|\bm{R}|^7} \bigg] d\bars \ts \epsilon \ts d\theta \\
\bm{F}_{\mc{D},32} &= \int_0^{2\pi}\int_{-1/2}^{1/2} \frac{\be_\theta(\bm{R}\cdot\be_\rho)(\be_{\theta}\cdot\bm{f})}{|\bm{R}|^5} d\bars \ts \epsilon \ts d\theta \\
\bm{F}_{\mc{D},33} &= -\int_0^{2\pi}\int_{-1/2}^{1/2} \bigg[\frac{\bm{R}_0\cdot\be_{\theta}}{|\bm{R}|^5}({\bm f}\cdot\be_{\rho})\be_\theta + \frac{\be_\theta(\bm{R}\cdot\be_\rho)(\be_{\theta}\cdot\bm{f})}{|\bm{R}|^5} \\
&\hspace{5cm} -\frac{5(\bm{R}\cdot\be_\rho)(\bm{R}\cdot\bm{f})(\bm{R}_0\cdot\be_{\theta})\be_\theta}{|\bm{R}|^7} \bigg] d\bars \ts \epsilon^2\wh\kappa \ts d\theta.
\end{aligned}
\end{equation}

We estimate each of these integrals using the same procedure as each of the previous force term estimates. By Lemma \ref{Rintest0}, we have
\begin{equation}\label{FD33_est}
\abs{\bm{F}_{\mc{D},33}} \le \epsilon^{-1}c_\kappa\norm{\bm{f}}_{C(\T)},
\end{equation}
while Lemmas \ref{Rintest0} and \ref{Rintest2} give
\begin{equation}\label{FD32_est}
\abs{\bm{F}_{\mc{D},32} - \epsilon^{-2}\frac{4}{3}\bm{h}_b(s)} \le \epsilon^{-1}c_\kappa \norm{\bm{f}}_{C^1(\T)}
\end{equation}
for $\bm{h}_b(s)$ as in \eqref{FS32_est}. Finally, by Lemma \ref{Rintest0}, we have
\begin{equation}\label{FD31_est}
\abs{\bm{F}_{\mc{D},31}} \le \epsilon^{-1} c_\kappa \norm{\bm{f}}_{C(\T)}.
\end{equation}

Altogether, the estimates \eqref{FD33_est}, \eqref{FD32_est}, and \eqref{FD31_est} yield
\begin{equation}\label{FD3_est}
\abs{\bm{F}_{\mc{D},3} - \epsilon^{-2}4\bm{h}_b(s)} \le \epsilon^{-1}c_\kappa \norm{\bm{f}}_{C^1(\T)},
\end{equation}
where $\bm{h}_b(s)$ is as in \eqref{FS32_est}. \\

Using the estimates \eqref{FS3_est} and \eqref{FD3_est}, together with the expression \eqref{force_3} for $\bm{f}^{\SB}_3$, we obtain the bound
\begin{equation}\label{f3sb_est0}
\abs{\bm{f}^{\SB}_3} \le \frac{1}{8\pi}\bigg( \abs{\bm{F}_{\mc{S},3} + 2\bm{h}_b(s)} + \frac{\epsilon^2}{2}\abs{\bm{F}_{\mc{D},3} -\epsilon^{-2}4\bm{h}_b(s)}\bigg) \le \epsilon c_\kappa \norm{\bm{f}}_{C^1(\T)}.
\end{equation}

\end{proof}

It remains to estimate the final term $\bm{f}^{\SB}_4(s)$ of the slender body force expression \eqref{force_components}. We show that $\bm{f}^{\SB}_4(s)$ satisfies the following proposition.

\begin{proposition}\label{fSB4_est}
Let the slender body $\Sigma_\epsilon$ be as in Section \ref{geometric_constraints}. Given $\bm{f}\in C^1(\T)$, let $\bm{f}^{\SB}_4(s)$ be as defined in \eqref{force_components}. We have that $\bm{f}^{\SB}_4$ satisfies the estimate
\begin{equation}
\abs{\bm{f}^{\SB}_4 } \le \epsilon c_\kappa \norm{\bm{f}}_{C^1(\T)},
\end{equation}
where the constant $c_\kappa$ depends only on $c_{\Gamma}$ and $\kappa_{\max}$.
\end{proposition}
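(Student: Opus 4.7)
The plan is to mirror the strategies used for Propositions \ref{fSB1_est}, \ref{fSB2_est}, and \ref{fSB3_est}. First I would exploit the cancellation $\mc{J}_\epsilon/(1-\epsilon\wh\kappa)=\epsilon$, which follows from \eqref{Jeps_def}, to rewrite
\begin{equation*}
\bm{f}^{\SB}_4(s) \;=\; -\epsilon\ts\be_t(s)\int_0^{2\pi}\bigg(\bigg(\frac{\p \bu^{\SB}}{\p s}-\kappa_3\frac{\p \bu^{\SB}}{\p\theta}\bigg)\cdot\be_\rho\bigg)\ts d\theta,
\end{equation*}
and then substitute the explicit expressions for $\bm{I}_{\mc{S}}$ and $\bm{I}_{\mc{D}}$ derived in \eqref{uSBstheta}. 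This yields a clean Stokeslet/doublet split
\begin{equation*}
\bm{f}^{\SB}_4 \;=\; \frac{\be_t(s)}{8\pi}\bigg(\bm{F}_{\mc{S},4}+\frac{\epsilon^2}{2}\bm{F}_{\mc{D},4}\bigg),
\end{equation*}
in which both $\bm{F}_{\mc{S},4}$ and $\bm{F}_{\mc{D},4}$ are scalar double integrals over $\T\times[0,2\pi)$ whose integrands are paired against $\be_\rho$.

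Next I would expand each integrand using $\bm{R}=-\bars\be_t+\epsilon\be_\rho+\bars^2\bm{Q}$ (cf.\ \eqref{Reps}), decomposing the scalar products $\bm{R}\cdot\be_t$, $\bm{R}\cdot\be_\rho$, and $\bm{R}\cdot\bm{f}$ into their pure-$\bars$ pieces, pure-$\epsilon\be_\rho$ pieces, and $\bars^2\bm{Q}$ remainders. The identity $\be_t\cdot\be_\rho=0$ immediately eliminates one of the three kernels in $\bm{I}_{\mc{S}}\cdot\be_\rho$ and $\bm{I}_{\mc{D}}\cdot\be_\rho$, and the remaining pieces assemble into sums that parallel those treated for $\bm{F}_{\mc{S},3}$ and $\bm{F}_{\mc{D},3}$. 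Each summand is then bounded with the integration lemmas of Section \ref{residual_calc}: Lemma \ref{Rintest0} for generic $|\bars|^m/|\bm{R}|^n$ terms, Lemma \ref{Rintest1} for the odd-in-$\bars$ pieces, Lemma \ref{Rintest2} for the leading-order contributions that require explicit evaluation via the constants $d_{mn}$, and Lemma \ref{theta_int} together with Remark \ref{theta_int_rmk} for the pieces linear or cubic in the components of $\be_\rho$ whose $\theta$-averages gain an extra factor of $\epsilon$.

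The principal obstacle, as in the previous three propositions, is to verify that every potential leading contribution cancels completely, so that only $O(\epsilon\norm{\bm{f}}_{C^1(\T)})$ residual remains. Since the statement of Proposition \ref{fSB4_est} carries no subtractive leading term on the left-hand side, the $O(1)$ pieces of $\bm{F}_{\mc{S},4}$ and the $O(\epsilon^{-2})$ pieces of $\bm{F}_{\mc{D},4}$ must vanish outright rather than being matched against an explicit counterterm. The expected mechanism is the combination of $\be_t\cdot\be_\rho=0$ with the angular identities $\int_0^{2\pi}\be_\rho(s,\theta)\ts d\theta=0$ and its cubic analogues: the would-be leading Stokeslet contribution takes the schematic form $(\be_t\cdot\bm{f})\int_0^{2\pi}(\bm{R}\cdot\be_\rho)/|\bm{R}|^3\ts d\theta$, which Lemma \ref{theta_int} controls by $\epsilon$, while the doublet terms pair in such a way that their $\epsilon^{-2}$ singularities cancel against each other after the $\theta$-average. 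The bookkeeping is lengthy but strictly parallel to the argument for $\bm{f}^{\SB}_3$ carried out in \eqref{FS33_est}--\eqref{FD31_est}, so I would write it in the same decomposed form and invoke the same four lemmas term by term.
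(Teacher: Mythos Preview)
Your brute-force decomposition would work, but the paper takes a much shorter route that you have overlooked. Since $\be_\rho=-\partial_\theta\be_\theta$, one integration by parts in $\theta$ converts
\[
\bm{f}^{\SB}_4 \;=\; -\epsilon\,\be_t\int_0^{2\pi}\bigg(\Big(\frac{\p\bu^{\SB}}{\p s}-\kappa_3\frac{\p\bu^{\SB}}{\p\theta}\Big)\cdot\be_\rho\bigg)d\theta
\;=\; -\epsilon\,\be_t\int_0^{2\pi}\frac{\p}{\p\theta}\bigg(\frac{\p\bu^{\SB}}{\p s}-\kappa_3\frac{\p\bu^{\SB}}{\p\theta}\bigg)\cdot\be_\theta\,d\theta,
\]
and then Proposition~\ref{prop:uSBstheta} (already proved) bounds the integrand pointwise by $c_\kappa\norm{\bm{f}}_{C^1(\T)}$, finishing the proof in one line. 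This avoids re-running the integral lemmas entirely.

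Your plan mirrors Propositions~\ref{fSB1_est}--\ref{fSB3_est} and is viable in principle, but one detail in your cancellation heuristic is off. The leading Stokeslet piece you identify, namely $(\be_t\cdot\bm{f})(\bm{R}\cdot\be_\rho)/|\bm{R}|^3$, is \emph{not} controlled by Lemma~\ref{theta_int}: since $\bm{R}\cdot\be_\rho=\epsilon+\bars^2\bm{Q}\cdot\be_\rho$, its dominant part $\epsilon(\be_t\cdot\bm{f})/|\bm{R}|^3$ is $\theta$-independent and contributes a genuine $2\epsilon^{-1}(\be_t\cdot\bm{f}(s))$ via Lemma~\ref{Rintest2}. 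The actual mechanism is that this cancels against the $-3(\bm{R}\cdot\be_t)(\bm{R}\cdot\bm{f})(\bm{R}\cdot\be_\rho)/|\bm{R}|^5$ term, whose leading piece $-3\epsilon\bars^2(\be_t\cdot\bm{f})/|\bm{R}|^5$ yields $-2\epsilon^{-1}(\be_t\cdot\bm{f}(s))$ through the identity $3d_{25}=d_{03}=2$. An analogous cancellation (now via $5d_{27}=d_{05}$) occurs inside the doublet. So the bookkeeping is doable but more delicate than ``strictly parallel to $\bm{f}^{\SB}_3$,'' and the paper's integration-by-parts trick sidesteps all of it.
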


\begin{proof}
This estimate follows quickly from Proposition \ref{prop:uSBstheta}. Noting that $\be_\rho(s,\theta) = \cos\theta\be_{n_1}(s)+\sin\theta\be_{n_2}(s) = -\p/\p\theta \be_\theta(s,\theta)$, we have that, using the expression for $\bm{f}^{\SB}_4(s)$ in \eqref{force_components} and integrating by parts in $\theta$, $\bm{f}^{\SB}_4(s)$ can be written 
\begin{equation}\label{force_4}
\begin{aligned}
\bm{f}^{\SB}_4 & =-\frac{1}{8\pi} \int_0^{2\pi}\int_{-1/2}^{1/2} \be_t(s) \bigg(\frac{\p\bu^{\SB}}{\p s} - \kappa_3 \frac{\p \bu^{\SB}}{\p\theta}\bigg)\cdot\be_\rho(s,\theta) \ts d\bars \ts \epsilon \ts d\theta \\
&= -\frac{1}{8\pi} \int_0^{2\pi}\int_{-1/2}^{1/2} \be_t(s) \frac{\p}{\p\theta}\bigg(\frac{\p\bu^{\SB}}{\p s} - \kappa_3 \frac{\p \bu^{\SB}}{\p\theta}\bigg)\cdot\be_\theta(s,\theta) \ts d\bars \ts \epsilon \ts d\theta.
\end{aligned}
\end{equation} 
Then, by Proposition \ref{prop:uSBstheta}, we have
\begin{align*}
\abs{\bm{f}^{\SB}_4} &\le \frac{\epsilon}{8\pi}\int_0^{2\pi}\int_{-1/2}^{1/2} \bigg|\frac{\p}{\p\theta}\bigg(\frac{\p\bu^{\SB}}{\p s} - \kappa_3 \frac{\p \bu^{\SB}}{\p\theta}\bigg)\bigg| \ts d\bars d\theta \le \epsilon c_\kappa \norm{\bm{f}}_{C^1(\T)}.
\end{align*}

\end{proof}

Finally, we sum the estimates for the five force components defined in \eqref{force_components}, resulting the in the following estimate for the total slender body force $\bm{f}^{\SB}(s)$. 
\begin{proposition}\label{fSB_est}
Let the slender body $\Sigma_\epsilon$ be as in Section \ref{geometric_constraints}. Given $\bm{f}\in C^1(\T)$, let $\bm{f}^{\SB}(s)$ be the corresponding slender body approximation, given by \eqref{fSB_expr}. Then $\bm{f}^{\SB}$ satisfies 
\begin{equation}
\abs{\bm{f}^{\SB}(s) - \bm{f}(s)} \le \epsilon c_\kappa \norm{\bm{f}}_{C^1(\T)},
\end{equation}
where the constant $c_\kappa$ depends only on $c_{\Gamma}$ and $\kappa_{\max}$.
\end{proposition}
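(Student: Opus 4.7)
The plan is to obtain Proposition \ref{fSB_est} simply by combining the five bounds already established in Propositions \ref{fSBp_est}, \ref{fSB1_est}, \ref{fSB2_est}, \ref{fSB3_est}, and \ref{fSB4_est} through the decomposition \eqref{force_components}, and then verifying algebraically that the leading-order terms telescope to $\bm{f}(s)$. All the analytical work (residual estimates, integral lemmas, etc.) has already been done in the preceding subsection, so this final step is almost entirely bookkeeping.

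First I would recall the decomposition $\bm{f}^{\SB} = \bm{f}^{\SB}_p + \bm{f}^{\SB}_1 + \bm{f}^{\SB}_2 + \bm{f}^{\SB}_3 + \bm{f}^{\SB}_4$ from \eqref{force_components}. By Propositions \ref{fSB2_est}, \ref{fSB3_est}, and \ref{fSB4_est}, the last three summands are each bounded by $\epsilon c_\kappa \|\bm{f}\|_{C^1(\T)}$. Thus the only nontrivial contributions come from $\bm{f}^{\SB}_p$ and $\bm{f}^{\SB}_1$. Proposition \ref{fSBp_est} shows that $\bm{f}^{\SB}_p$ is within $\epsilon c_\kappa \|\bm{f}\|_{C^1(\T)}$ of $\tfrac{1}{2}\bigl((\bm{f}\cdot\be_{n_1})\be_{n_1}+(\bm{f}\cdot\be_{n_2})\be_{n_2}\bigr)$, while Proposition \ref{fSB1_est} shows $\bm{f}^{\SB}_1$ is within the same error of $\tfrac{1}{2}\bigl(\bm{f}+(\bm{f}\cdot\be_t)\be_t\bigr)$.

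Next I would use the fact that $\{\be_t(s),\be_{n_1}(s),\be_{n_2}(s)\}$ is an orthonormal basis of $\R^3$ for each $s\in\T$, so that
\begin{equation*}
\bm{f}(s) = (\bm{f}\cdot\be_t)\be_t + (\bm{f}\cdot\be_{n_1})\be_{n_1} + (\bm{f}\cdot\be_{n_2})\be_{n_2}.
\end{equation*}
Consequently
\begin{equation*}
\tfrac{1}{2}\bigl((\bm{f}\cdot\be_{n_1})\be_{n_1}+(\bm{f}\cdot\be_{n_2})\be_{n_2}\bigr) + \tfrac{1}{2}\bigl(\bm{f}+(\bm{f}\cdot\be_t)\be_t\bigr) = \tfrac{1}{2}\bigl(\bm{f} - (\bm{f}\cdot\be_t)\be_t\bigr) + \tfrac{1}{2}\bigl(\bm{f}+(\bm{f}\cdot\be_t)\be_t\bigr) = \bm{f}(s).
\end{equation*}
That is, the two pointwise leading terms from Propositions \ref{fSBp_est} and \ref{fSB1_est} sum exactly to $\bm{f}(s)$, with the tangential components canceling.

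Finally I would conclude by applying the triangle inequality to the five component estimates and the algebraic identity above:
\begin{equation*}
\abs{\bm{f}^{\SB}(s) - \bm{f}(s)} \le \sum_{i\in\{p,1,2,3,4\}} \abs{\bm{f}^{\SB}_i - (\text{leading term})} \le 5\epsilon c_\kappa \|\bm{f}\|_{C^1(\T)},
\end{equation*}
absorbing the constant $5$ into $c_\kappa$. There is no real obstacle here; the only subtlety worth double-checking is the orthonormal-frame identity, which guarantees that the residual $\bm{f}^{\SB}-\bm{f}$ contains no $O(1)$ contribution and that the improved $O(\epsilon)$ (rather than $O(\epsilon|\log\epsilon|)$) bound from Lemma \ref{theta_int}, carried through Propositions \ref{fSBp_est}--\ref{fSB4_est}, survives the summation intact.
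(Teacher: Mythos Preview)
Your proposal is correct and follows essentially the same approach as the paper: both use the decomposition \eqref{force_components}, invoke Propositions \ref{fSBp_est}--\ref{fSB4_est} for the five pieces, and verify via the orthonormal-frame identity that the leading-order contributions from $\bm{f}^{\SB}_p$ and $\bm{f}^{\SB}_1$ sum exactly to $\bm{f}(s)$. The paper writes the identity as $\bm{f}=\tfrac12\bm{f}+\tfrac12(f_t\be_t+f_{n_1}\be_{n_1}+f_{n_2}\be_{n_2})$ rather than your equivalent formulation, but this is purely cosmetic.
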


\begin{proof}
First, we introduce some notation. Let $f_t(s) :=(\bm{f}(s)\cdot\be_t(s))$, $f_{n_1}(s) :=(\bm{f}(s)\cdot\be_{n_1}(s))$, and $f_{n_2}(s) :=(\bm{f}(s)\cdot\be_{n_2}(s))$. Using the expression \eqref{force_components} for $\bm{f}^{\SB}$, together with Propositions \ref{fSBp_est}, \ref{fSB1_est}, \ref{fSB2_est}, \ref{fSB3_est}, and \ref{fSB4_est}, we have
\begin{align*}
\abs{\bm{f}^{\SB}(s) - \bm{f}(s)} &= \abs{\bm{f}^{\SB}(s) - \frac{1}{2}\bm{f}(s) - \frac{1}{2}\big(f_t(s)\be_t(s) + f_{n_1}(s)\be_{n_1}(s) + f_{n_2}(s)\be_{n_2}(s)\big)} \\
&\le \abs{\bm{f}^{\SB}_p(s)- \frac{1}{2} \big(f_{n_1}(s)\be_{n_1}(s) + f_{n_2}(s)\be_{n_2}(s) \big)} \\
&\quad  +\abs{\bm{f}^{\SB}_1(s) - \frac{1}{2}\bigg(\bm{f}(s) + f_t(s)\be_t(s)\bigg) } +\abs{\bm{f}^{\SB}_2(s)} +\abs{\bm{f}^{\SB}_3(s)}+ \abs{\bm{f}^{\SB}_4(s)} \\
&\le \epsilon c_\kappa \norm{\bm{f}}_{C^1(\T)}.
\end{align*}

\end{proof}


\section{Error estimate}\label{error_est_section}
Using the residual calculations for the surface velocity $\bu^{\SB}\big|_{\Gamma_{\epsilon}}$ and the total surface force ${\bm f}^{\SB}$, we proceed to prove the error estimate \eqref{err_stokes_thm} in Theorem \ref{stokes_err_theorem}. \\

Let $\bu^{\text{e}}=\bu^{\SB}-\bu$, $p^{\text{e}}=p^{\SB}-p$, and $\bm{\sigma}_e= -p^{\text{e}}{\bf I}+2\E(\bu^{\text{e}})=\bm{\sigma}^{\SB}-\bm{\sigma}$, where $\bu$, $p$, and $\bm{\sigma}=-p{\bf I}+\nabla \bu +(\nabla \bu)^{\rm T}$ correspond to the true solution to \eqref{exterior_stokes}. Then the difference $\bu^{\text{e}}$ satisfies, in the weak sense,
\begin{equation}\label{err_PDE_stokes}
\begin{aligned}
-\Delta \bu^{\text{e}} + \nabla p^{\text{e}} &= 0 \\
\dive\ts \bu^{\text{e}} &= 0 \qquad \text{ in }\Omega_{\epsilon} \\
\int_0^{2\pi} (\bm{\sigma}_e {\bm n}) \ts \mathcal{J}_{\epsilon}(s,\theta)\ts d\theta &= {\bm f}^{\text{e}}(s) \quad \text{on } \Gamma_{\epsilon} \\
\bu^{\text{e}}|_{\Gamma_{\epsilon}} &= \bar \bu^{\text{e}}(s)+\bu^{\rm r}(s,\theta) \\
\bu^{\text{e}} \to 0 &\text{ as }|\bx| \to \infty
\end{aligned}
\end{equation}

where the boundary value $\bar \bu^{\text{e}}(s)=\big(\bu^{\SB}-\bu^{\rm r}\big)\big|_{\Gamma_\epsilon}(s)-\bu\big|_{\Gamma_\epsilon}(s)$ is unknown (since $\bu(s)$ is unknown) but independent of $\theta$. Note that ${\bm f}^{\text{e}}(s)={\bm f}^{\SB}-{\bm f}$ and $\bu^{\rm r}(s,\theta)= \bu^{\rm SB}(\epsilon,\theta,s) - \frac{1}{2\pi}\int_0^{2\pi} \bu^{\SB}(\epsilon,\varphi,s) \ts d\varphi$ \eqref{ur} are both completely known functions along $\Gamma_{\epsilon}$. \\


More precisely, for arbitrary $\bw\in D^{1,2}(\Omega_{\epsilon})$, the error $\bu^{\text{e}}$ satisfies
\begin{equation}\label{variational_err_eqn}
\int_{\Omega_{\epsilon}} \bigg(2 \ts \E(\bu^{\text{e}}):\E(\bw) - p^{\text{e}} \ts\dive\ts\bw\bigg) \ts d\bx = \int_{\Gamma_{\epsilon}} (\bm{\sigma}_e {\bm n})\cdot \bw \ts dS.
\end{equation}
Now, unless $\bw\in \A_{\epsilon} = \big\{ \bw\in D^{1,2}(\Omega_{\epsilon}) \ts: \ts \bw|_{\Gamma_{\epsilon}}=\bw(s) \big\}$, i.e. $\bw$ additionally satisfies the $\theta$-independence condition on the slender body surface $\Gamma_{\epsilon}$, we cannot make use of the known expression ${\bm f}^{\text{e}}(s)$ for the error in the total force. Note in particular that the function $\bu^{\text{e}}$ itself does not belong to the set $\A_{\epsilon}$. \\

However, since $(\bu^{\text{e}},p^{\text{e}})$ satisfies \eqref{variational_err_eqn}, we can exactly follow the proof of the pressure estimate \eqref{press_est} to show that the pressure error $p^{\text{e}}$ satisfies 
\begin{equation}\label{press_err_est}
\|p^{\text{e}}\|_{L^2(\Omega_{\epsilon})} \le c_P\|\E(\bu^{\text{e}})\|_{L^2(\Omega_{\epsilon})}
\end{equation}
where $c_P$ is independent of the slender body radius $\epsilon$.\\

To derive a $D^{1,2}(\Omega_{\epsilon})$ bound for $\bu^{\text{e}}$, we use \eqref{variational_err_eqn} with a very specific choice of $\bw$. In particular, we take 
\begin{equation}
\bw= \widetilde\bu^{\text{e}}:= \bu^{\text{e}}- \widetilde\bv,
\end{equation}
where $\widetilde\bv\in D^{1,2}(\Omega_{\epsilon})$ with $\widetilde\bv\big|_{\Gamma_{\epsilon}} = \bu^{\rm r}(s,\theta)$. We explicitly construct such a $\widetilde\bv$ in Section \ref{v_construct} that we can bound in terms of ${\bm f}(s)$, the true prescribed force. \\

We then have that $\widetilde\bu^{\text{e}}\big|_{\Gamma_{\epsilon}}=\bar\bu^{\text{e}}(s)$, where $\bar \bu^{\text{e}}(s)$ is unknown but independent of $\theta$, so $\widetilde\bu^{\text{e}}\in \A_{\epsilon}$. Thus, using $\widetilde\bu^{\text{e}}$ in place of $\bw$ in \eqref{variational_err_eqn}, we obtain
\begin{equation}\label{variational_err_ee}
\int_{\Omega_{\epsilon}} \bigg(2 \ts \E(\bu^{\text{e}}):\E(\widetilde\bu^{\text{e}}) - p^{\text{e}} \ts\dive\ts\widetilde\bu^{\text{e}} \bigg)\ts d\bx = \int_{\T} {\bm f}^{\text{e}}(s)\cdot \bar\bu^{\text{e}}(s) \ts ds.
\end{equation}

From \eqref{variational_err_ee} we will derive a $D^{1,2}(\Omega_{\epsilon})$ estimate for $\bu^{\text{e}}$ in terms of the prescribed force ${\bm f}(s)$. 

\subsection{Construction of {\boldmath $\widetilde v$}}\label{v_construct}
In order to use \eqref{variational_err_ee} to obtain an estimate for $\bu^{\text{e}}$ in terms of ${\bm f}(s)$, we must construct the function $\widetilde\bv\in D^{1,2}(\Omega_{\epsilon})$ with $\widetilde\bv\big|_{\Gamma_{\epsilon}} = \bu^{\rm r}(s,\theta)$. We first define 
\[ \bu^{\SB}_{\text{ext}}(\rho,\theta,s) = \begin{cases}
\bu^{\rm r}(\theta,s) & \qquad \text{if } \rho < 4\epsilon, \\
0 & \qquad \text{otherwise}.
\end{cases} \]

Since $\bu^{\rm r}$ is at least $C^1$, $\nabla \bu^{\SB}_{\text{ext}}$ is continuous within $\rho < 4\epsilon$. \\

Let $\phi(\rho)$ be a smooth cutoff function equal to 1 for $\rho < 2$ and equal to 0 for $\rho > 4$ with smooth decay between. We require this decay to satisfy 
\begin{equation}\label{phi_estimate}
\abs{\frac{\p\phi}{\p\rho}}\le c_\phi
\end{equation}
for some constant $c_{\phi}>0$. Let $\phi_\epsilon(\rho) := \phi(\rho/\epsilon)$. \\

We define
\begin{equation}\label{corr_stokes}
\widetilde\bv(\rho,\theta,s) = \phi_\epsilon(\rho) \bu_{\text{ext}}^{\SB}(\rho,\theta,s). 
\end{equation}
Note that $\widetilde\bv(\rho,\theta,s)$ is supported in the region 
\begin{equation}\label{v_support}
\mathcal{O}_{\epsilon} := \big\{ s\be_t(s)+ \rho\be_{\rho}(s,\theta)+ \theta\be_{\theta}(s,\theta) \ts : \ts s\in\T, \ts \epsilon\le \rho\le 4\epsilon, \ts 0\le\theta<2\pi \big\}
\end{equation}
with $|\mathcal{O}_{\epsilon}| = c_{\mc{O}}^2\epsilon^2$. \\

Now, obtaining a $D^{1,2}(\Omega_{\epsilon})$ estimate for $\bu^{\text{e}}$ from \eqref{variational_err_ee} will require an $L^2(\Omega_{\epsilon})$ bound for $\nabla \widetilde\bv$, so we consider
\begin{equation}\label{grad_eSB_stokes}
\nabla \widetilde\bv = \phi_\epsilon \nabla \bu_{\text{ext}}^{\SB}+ (\nabla\phi_\epsilon) (\bu_{\text{ext}}^{\SB})^{\rm T}.
\end{equation}

We have 
\begin{align*}
\phi \nabla \bu_{\text{ext}}^{\SB} &= \frac{\phi_\epsilon}{\epsilon} \frac{\p \bu^{\rm r}}{\p\theta} \be_{\theta}^{\rm T} + \frac{\phi_\epsilon}{1-\rho\wh\kappa}\bigg(\frac{\p\bu^{\rm r}}{\p s}-\kappa_3 \frac{\p\bu^{\rm r}}{\p\theta}\bigg) \be_t^{\rm T} 
\end{align*}
and
\begin{align*}
 (\nabla\phi_\epsilon) (\bu_{\text{ext}}^{\SB})^{\rm T} &= \frac{\p \phi_\epsilon}{\p \rho} \be_{\rho} (\bu^{\rm r})^{\rm T}.
\end{align*}

Finally, notice that
\begin{equation}\label{denom_bound}
\begin{aligned}
|1-\rho\wh\kappa| & \ge 1- \rho |\kappa|(\cos\theta+\sin\theta) &\ge 1- \frac{1}{2 \kappa_{\max}}|\kappa|\sqrt{2} \ge 1- \frac{\sqrt{2}}{2} \ge \frac{1}{4}.
\end{aligned}
\end{equation}

Then, using Proposition \ref{ur_and_derivs} along with \eqref{phi_estimate}, \eqref{denom_bound}, and Lemma \ref{lemmaorthonormal}, we have 
\begin{equation}\label{grad_vSB_est}
\begin{aligned}
\|\nabla \widetilde\bv\|_{L^2(\Omega_{\epsilon})} &\le \|\nabla \widetilde\bv\|_{C(\mathcal{O}_{\epsilon})} \sqrt{|\mathcal{O}_{\epsilon}|} \\
&\le \epsilon c_\kappa\bigg(\bigg\|\frac{1}{\epsilon} \frac{\p \bu^{\rm r}}{\p\theta}\bigg\|_{C(\Gamma_{\epsilon})} + \bigg\|\frac{1}{1-\rho\wh\kappa}\bigg(\frac{\p\bu^{\rm r}}{\p s}-\kappa_3 \frac{\p\bu^{\rm r}}{\p\theta}\bigg)\bigg\|_{C(\Gamma_{\epsilon})}+ \frac{1}{\epsilon}\|\bu^{\rm r}\|_{C(\Gamma_{\epsilon})}\bigg) \\
&\le  c_\kappa \epsilon\abs{\log\epsilon}\|{\bm f}\|_{C^1(\T)}.
\end{aligned}
\end{equation}

\subsection{Estimating the error}
We now use \eqref{variational_err_ee} to obtain a $D^{1,2}(\Omega_{\epsilon})$ bound for the error $\bu^{\text{e}}$.
Recalling that $\widetilde\bu^{\text{e}}=\bu^{\text{e}}-\widetilde\bv$ and thus $\dive\ts \widetilde\bu^{\text{e}}=-\dive\ts \widetilde\bv$, we rewrite \eqref{variational_err_ee} as
\begin{equation}\label{variational_err_2}
\begin{aligned}
\int_{\Omega_{\epsilon}} 2 \ts |\E(\bu^{\text{e}})|^2 \ts d\bx &= \int_{\Omega_{\epsilon}} \bigg(2 \ts \E(\bu^{\text{e}}):\E(\widetilde\bv) - p^{\text{e}} \ts\dive\ts\widetilde\bv \bigg)\ts d\bx + \int_{\T} {\bm f}^{\text{e}}(s) \cdot\bar\bu^{\text{e}}(s) \ts ds \\
&\le \bigg|\int_{\Omega_{\epsilon}} 2 \ts \E(\bu^{\text{e}}):\E(\widetilde\bv)\ts d\bx\bigg| + \bigg|\int_{\Omega_{\epsilon}} p^{\text{e}} \ts\dive\ts\widetilde\bv \ts d\bx \bigg| + \bigg|\int_{\T} {\bm f}^{\text{e}}(s) \cdot\bar\bu^{\text{e}}(s) \ts ds\bigg|.
\end{aligned}
\end{equation}

Using Cauchy Schwarz, the first term on the right hand side of \eqref{variational_err_2} satisfies
\begin{align*}
 \bigg|\int_{\Omega_{\epsilon}} 2 \ts \E(\bu^{\text{e}}):\E(\widetilde\bv) \ts d\bx \bigg| &\le 2 \|\E(\bu^{\text{e}})\|_{L^2(\Omega_{\epsilon})}\|\E(\widetilde\bv)\|_{L^2(\Omega_{\epsilon})} \le \eta \|\E(\bu^{\text{e}}) \|_{L^2(\Omega_{\epsilon})}^2 + \frac{1}{\eta}\|\E(\widetilde\bv)\|_{L^2(\Omega_{\epsilon})}^2 \\
&\le \eta \|\E(\bu^{\text{e}}) \|_{L^2(\Omega_{\epsilon})}^2 + \frac{1}{\eta}\|\nabla \widetilde\bv\|_{L^2(\Omega_{\epsilon})}^2 
\end{align*}
for any $\eta\in \R_+$. \\

By \eqref{press_err_est} and Cauchy Schwarz, the second term on the right hand side of \eqref{variational_err_2} satisfies
\begin{align*}
\bigg| \int_{\Omega_{\epsilon}} p^{\text{e}} \ts\dive\ts\widetilde\bv \ts d\bx \bigg| &\le \|p^{\text{e}}\|_{L^2(\Omega_{\epsilon})}\|\nabla \widetilde\bv\|_{L^2(\Omega_{\epsilon})} \le c_P\|\E(\bu^{\text{e}})\|_{L^2(\Omega_{\epsilon})}\|\nabla \widetilde\bv\|_{L^2(\Omega_{\epsilon})} \\
 &\le \eta \|\E(\bu^{\text{e}}) \|_{L^2(\Omega_{\epsilon})}^2 + \frac{c_P^2}{4\eta}\|\nabla \widetilde\bv\|_{L^2(\Omega_{\epsilon})}^2.
\end{align*}

Finally, the third term on the right hand side of \eqref{variational_err_2} can be estimated using the trace inequality \eqref{Trace_ineq} on the admissible set $\A_{\epsilon}$, the Korn inequality \eqref{korn_ineq}, and Cauchy Schwarz. We have 
\begin{align*}
\bigg| \int_{\T} {\bm f}^{\text{e}}(s)\cdot\bar\bu^{\text{e}}(s) \ts ds \bigg| &\le \|{\bm f}^{\text{e}}\|_{L^2(\T)}\| \bar\bu^{\text{e}}\|_{L^2(\T)} \le c_T\|\nabla \widetilde\bu^{\text{e}}\|_{L^2(\Omega_{\epsilon})}\|{\bm f}^{\text{e}}\|_{L^2(\T)}\\
&\le c_Tc_K\|\E(\widetilde\bu^{\text{e}})\|_{L^2(\Omega_{\epsilon})}\|{\bm f}^{\text{e}}\|_{L^2(\T)} \le \eta\|\E(\widetilde\bu^{\text{e}})\|_{L^2(\Omega_{\epsilon})}^2 + \frac{c_T^2c_K^2}{4\eta}\|{\bm f}^{\text{e}}\|_{L^2(\T)}^2\\
&\le \eta\|\E(\bu^{\text{e}})\|_{L^2(\Omega_{\epsilon})}^2 + \eta\|\nabla \widetilde\bv\|_{L^2(\Omega_{\epsilon})}^2 + \frac{c_T^2c_K^2}{4\eta}\|{\bm f}^{\text{e}}\|_{L^2(\T)}^2,
\end{align*}
again for any $\eta\in \R_+$.  \\

Taking $\eta=\frac{1}{3}$, we obtain the following estimate from \eqref{variational_err_2}:
\begin{equation}\label{weak_err_3}
\|\mathcal{E}(\bu^{\text{e}})\|_{L^2(\Omega_{\epsilon})}^2 \le \frac{3c_T^2c_K^2}{4}\|{\bm f}^{\text{e}}\|_{L^2(\T)}^2 + \bigg(6+ \frac{3c_P^2}{4}\bigg) \|\nabla \widetilde\bv\|_{L^2(\Omega_{\epsilon})}^2.
\end{equation}

Then using the Korn inequality \eqref{korn_ineq}, we have
\begin{equation}\label{weak_err_4}
\|\nabla\bu^{\text{e}} \|_{L^2(\Omega_{\epsilon})}^2 \le \frac{3c_T^2c_K^4}{4}\|{\bm f}^{\text{e}}\|_{L^2(\T)}^2 + c_K^2\bigg(6+ \frac{3c_P^2}{4}\bigg) \|\nabla \widetilde\bv\|_{L^2(\Omega_{\epsilon})}^2.
\end{equation}

Recall that the Korn constant $c_K$ \eqref{korn_ineq} and the pressure constant $c_P$ \eqref{press_est} are both independent of $\epsilon$, while the trace constant $c_T$ \eqref{Trace_ineq} satisfies $c_T=c_{\kappa}|\log\epsilon|^{1/2}$. Also, from \eqref{grad_vSB_est} and Proposition \ref{fSB_est}, we have
\begin{align*}
\|\nabla\widetilde\bv\|_{L^2(\Omega_{\epsilon})} &\le \epsilon|\log\epsilon| c_\kappa  \|{\bm f}\|_{C^1(\T)} \\ 
\|{\bm f}^{\text{e}}\|_{L^2(\T)} &\le \epsilon c_\kappa  \|{\bm f}\|_{C^1(\T)}.
\end{align*}

Therefore we have 
\begin{equation}\label{err_stokes}
\begin{aligned}
\|\bu^{\text{e}}\|_{D^{1,2}(\Omega_{\epsilon})} &\le \epsilon(|\log\epsilon|^{1/2}+|\log\epsilon|)c_{\kappa} \|{\bm f}\|_{C^1(\T)} \\ 
&\le \epsilon |\log\epsilon| c_{\kappa}  \|{\bm f}\|_{C^1(\T)}
\end{aligned}
\end{equation}
where the constant $c_{\kappa}$ depends only on the shape of the fiber centerline through $\kappa_{\max}$ and $c_\Gamma$. Since the pressure error $p^{\text{e}}$ satisfies \eqref{press_err_est}, we also obtain
\begin{equation}\label{err_stokes}
\|\bu^{\text{e}}\|_{D^{1,2}(\Omega_{\epsilon})}+ \|p^{\text{e}}\|_{L^2(\Omega_{\epsilon})} \le \epsilon|\log\epsilon| c_{\kappa} \|{\bm f}\|_{C^1(\T)},
\end{equation}
where again, by Lemma \ref{divv_p_lem}, $c_{\kappa}$ depends only on $\kappa_{\max}$ and $c_\Gamma$.\\

Furthermore, using the $D^{1,2}(\Omega_{\epsilon})$ bound on the error $\bu^{\text{e}}=\bu^{\SB}-\bu$ throughout the fluid domain $\Omega_{\epsilon}$. We first write
\begin{align*}
\| {\rm Tr} (\bu^{\text{e}})\|_{L^2(\Gamma_{\epsilon})} \le \|\bar\bu^{\text{e}}(s)\|_{L^2(\Gamma_{\epsilon})}+ \|\bu^{\rm r}\|_{L^2(\Gamma_{\epsilon})}.
\end{align*}

Then, using the estimate \eqref{urest} for $\bu^{\rm r}$, we have 
\begin{align*}
\|\bu^{\rm r}\|_{L^2(\Gamma_{\epsilon})} &= \bigg(\int_{\T}\int_0^{2\pi} |\bu^{\rm r}(s,\theta)|^2 \ts \mathcal{J}_{\epsilon}(s,\theta) \ts d\theta ds\bigg)^{1/2}\\
& \le \sqrt{2}\abs{\Gamma_\epsilon}^{1/2} \|\bu^{\rm r}\|_{C(\Gamma_{\epsilon})} \le \epsilon|\log\epsilon|\ts c_\kappa \abs{\Gamma_\epsilon}^{1/2} \|{\bm f}\|_{C^1(\T)},
\end{align*}
where $|\Gamma_{\epsilon}|$ denotes the fiber surface area. \\ 

Moreover, using the trace inequality \eqref{Trace_ineq} and \eqref{err_stokes}, we have 
\begin{align*}
\|\bar\bu^{\text{e}}(s)\|_{L^2(\Gamma_{\epsilon})} &\le \abs{\Gamma_\epsilon}^{1/2}\|\bar\bu^{\text{e}}\|_{L^2(\T)} \le \abs{\Gamma_\epsilon}^{1/2} \ts c_T\|\nabla \widetilde\bu^{\text{e}}\|_{L^2(\Omega_{\epsilon})} \\
&\le \abs{\Gamma_\epsilon}^{1/2} \ts c_T \big(\|\nabla \bu^{\text{e}}\|_{L^2(\Omega_{\epsilon})} + \|\nabla \widetilde\bv\|_{L^2(\Omega_{\epsilon})} \big)  \\
&= c_{\kappa} \abs{\Gamma_\epsilon}^{1/2}|\log\epsilon|^{1/2}\big(\|\bu^{\text{e}}\|_{D^{1,2}(\Omega_{\epsilon})} + \|\nabla \widetilde\bv\|_{L^2(\Omega_{\epsilon})} \big) \\
&\le \epsilon|\log\epsilon|^{3/2} \ts \abs{\Gamma_\epsilon}^{1/2}c_{\kappa} \ts \|{\bm f}\|_{C^1(\T)},
\end{align*}
where the constant $c_\kappa$ depends only on $\kappa_{\max}$ and $c_{\Gamma}$. \\

In total, scaling by $|\Gamma_{\epsilon}|^{-1/2}$, we obtain
\begin{equation}\label{trace_err_bound}
\frac{1}{|\Gamma_{\epsilon}|^{1/2}}\| {\rm Tr} (\bu^{\text{e}})\|_{L^2(\Gamma_{\epsilon})} \le \epsilon |\log\epsilon|^{3/2} \ts c_{\kappa} \ts \|{\bm f}\|_{C^1(\T)}.
\end{equation}

Using \eqref{trace_err_bound}, we may finally estimate the error in the slender body centerline velocity approximation \eqref{SBT_asymp}, allowing us to obtain the estimate \eqref{center_err_thm} in Theorem \ref{stokes_err_theorem}. We first note that, by Proposition \ref{centerline_prop}, the difference between the surface velocity approximation $\bu^{\SB}(s,\theta)$ and the centerline velocity approximation $\bu^{\SB}_C(s)$ satisfies
\begin{align*}
\bigg(\int_{\T}\int_0^{2\pi}\abs{\bu^{\SB}(s,\theta)-\bu^{\SB}_C(s)}^2 \mc{J}_\epsilon(s,\theta) d\theta ds \bigg)^{1/2} &\le c_\kappa\epsilon\abs{\log\epsilon}\norm{\bm{f}}_{C^1(\T)} \bigg(\int_{\T}\int_0^{2\pi} \mc{J}_\epsilon(s,\theta) d\theta ds \bigg)^{1/2} \\
&= c_\kappa\epsilon\abs{\log\epsilon} \norm{\bm{f}}_{C^1(\T)} \abs{\Gamma_\epsilon}^{1/2}.
\end{align*}

Using the above estimate along with \eqref{trace_err_bound}, we then have that the difference between the true fiber velocity ${\rm Tr}(\bu)(s)$ and the centerline approximation $\bu^{\SB}_C(s)$ satisfies
\begin{align*}
\norm{{\rm Tr}(\bu) - \bu^{\SB}_C}_{L^2(\T)} &= \frac{1}{\abs{\Gamma_\epsilon}^{1/2}}\norm{{\rm Tr}(\bu) - \bu^{\SB}_C}_{L^2(\Gamma_\epsilon)} \\
& \le \frac{1}{\abs{\Gamma_\epsilon}^{1/2}} \bigg(\norm{{\rm Tr}(\bu) - {\rm Tr}(\bu^{\SB})}_{L^2(\Gamma_\epsilon)} + \norm{{\rm Tr}(\bu^{\SB}) - \bu^{\SB}_C}_{L^2(\Gamma_\epsilon)} \bigg)  \\
&\le (\epsilon\abs{\log\epsilon}^{3/2}+\epsilon\abs{\log\epsilon}) c_\kappa \norm{\bm{f}}_{C^1(\T)}.
\end{align*}

\appendix
\section{Appendix}\label{appendix}
\subsection{Proof of Lemma \ref{lemmaorthonormal}}\label{moving_frame_pf}
Here we show the bound \eqref{kappa3} on the moving frame coefficient $\kappa_3$.  
\begin{proof}
Let $\be_t(s)$, $\widetilde\be_1(s)$, and $\widetilde\be_2(s)$ define a $C^1$ orthonormal frame satisfying
%

\begin{equation}\label{kappat12wh}
\frac{d}{d s}\begin{pmatrix} \be_t \\ \widetilde\be_1 \\ \widetilde\be_2 \end{pmatrix}
=\begin{pmatrix}
0 & \widetilde{\kappa}_1 & \widetilde{\kappa}_2 \\
-\widetilde{\kappa}_1 & 0 & \widetilde{\kappa}_3 \\
-\widetilde{\kappa}_2 & -\widetilde{\kappa}_3 & 0
\end{pmatrix}
\begin{pmatrix} \be_t \\ \widetilde\be_1 \\ \widetilde\be_2 \end{pmatrix}.
\end{equation}
Take
\[ \overline{\widetilde{\kappa}_3}=\int_0^1 \widetilde{\kappa}_3(s) \ts ds \]
and let $k$ be the closest integer to $\overline{\widetilde{\kappa}_3}/{2\pi}$. Define 
\begin{equation}\label{kappa3_def}
\kappa_3=\overline{\widetilde{\kappa}_3}-2\pi k; \qquad \varphi(s)=\int_0^s (\widetilde{\kappa}_3(\tau)-\kappa_3) \ts d\tau.
\end{equation}
Note that, by construction,
\[ |\kappa_3| \le \pi. \]
Define
\[\begin{pmatrix}
\be_{n_1}(s) \\ \be_{n_2}(s)
\end{pmatrix}
=\begin{pmatrix}
\cos\varphi(s) & -\sin\varphi(s)\\ \sin\varphi(s) &\cos\varphi(s)
\end{pmatrix}
\begin{pmatrix}
\widetilde{\be}_1(s) \\ \widetilde{\be}_2(s)
\end{pmatrix}. \]
Since $\varphi(1)=2\pi k$, $\be_{n_1}(s)$ and $\be_{n_2}(s)$ are both in $C^1(\T)$. It is also clear that $\be_t(s), \be_{n_1}(s)$ and $\be_{n_2}(s)$ define an orthonormal basis. A straightforward calculation shows that $\be_t(s), \be_{n_1}(s)$ and $\be_{n_2}(s)$ satisfy 
\eqref{moving_ODE} with $\kappa_3$ as in \eqref{kappa3_def} and
\[ \begin{pmatrix}
\kappa_1(s) \\ \kappa_2(s)
\end{pmatrix}
=\begin{pmatrix}
\cos\varphi(s) & -\sin\varphi(s)\\ \sin\varphi(s) &\cos\varphi(s)
\end{pmatrix}
\begin{pmatrix}
\widetilde{\kappa}_1(s) \\ \widetilde{\kappa}_2(s)
\end{pmatrix}. \]

\end{proof}

\subsection{Proof of $\epsilon$-dependence in well-posedness constants}\label{constants}
In this appendix, we prove the $\epsilon$-dependence claims for each of the inequalities stated in Section \ref{constants0}.

\subsubsection{Trace inequality}\label{trace_sec}
We begin by proving the trace inequality for $\A_\epsilon$ functions stated in Lemma \ref{Trace_inequality}. We show that the trace constant $c_T$ is proportional to $\abs{\log\epsilon}^{1/2}$. 

\begin{proof}[Proof of Lemma \ref{Trace_inequality}]
Since the fiber centerline is $C^2$ and the fiber does not self-intersect \eqref{non_intersecting}, we can cover the slender body by finitely many open neighborhoods $W_j$ where 
\[ W_j = \{ \X(s)+\rho\be_\rho(s,\theta) \ts : \ts 0\le \theta < 2\pi, \ts 0 \le \rho < r_{\max}/2, \ts a_j < s < b_j\}, \quad j=1,\dots,N <\infty.\]
Here $a_j$ and $b_j$ are chosen such that over each $W_j$, the fiber centerline can be considered as the graph of a $C^2$ function. Note that this choice of $a_j$ and $b_j$ depends only on the shape of the fiber centerline -- in particular, $\kappa_{\max}$ and $c_{\Gamma}$ -- and not on the fiber radius. \\

Then, using a partition of unity $\{\phi_j\}_{j=1}^N$ subordinate to the cover $\{W_j\}$, there exist $\epsilon$-independent $C^2$ diffeomorphisms $\psi_j$, $j=1,\dots,N$ taking the curvature $\kappa$ of the fiber centerline to zero on the set $W_j$ while leaving the radius $\epsilon$ intact. \\ 

\begin{figure}[!h]
\centering
\includegraphics[scale=0.5]{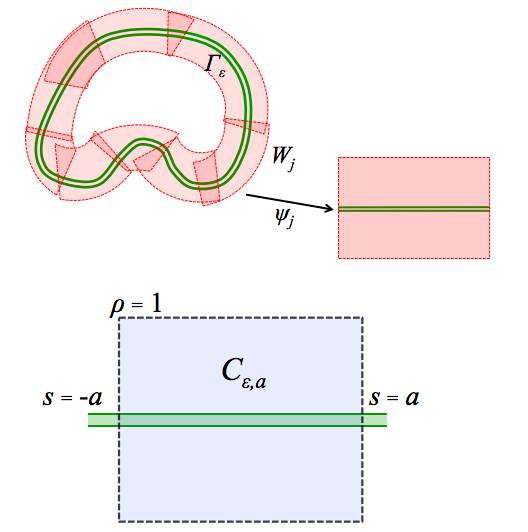}\\
\caption{ The slender body centerline can be straightened via $\epsilon$-independent diffeomorphisms $\psi_j$; thus it suffices to consider functions $\bu$ around a straight cylinder supported within the truncated cylindrical shell $C_{\epsilon,a}$.}
\label{fig:diffeo_SB}
\end{figure}

Let $D_{\rho}\subset \R^2$ denote the open disk of radius $\rho$ in $\R^2$ centered at the origin. Define the straight cylindrical surface $\Gamma_{\epsilon,a}:= \p D_{\epsilon}\times [-a,a]$ and the cylindrical shell $C_{\epsilon,a}:= (D_1\backslash \overline{D_{\epsilon}})\times [-a,a]$ for some $a<\infty$, parameterized in cylindrical coordinates $(\rho,\theta,s)$. We define the function space 
\[\A_S:= \big\{ \bv\in D^{1,2}(C_{\epsilon,a}) \ts : \ts \bv|_{\Gamma_{\epsilon,a}} = \bv(s); \ts \bv|_{\p C_{\epsilon,a}\backslash\Gamma_{\epsilon,a}} = 0 \big\}. \]

Then $\psi_j^*(\phi_j\bu)\circ \psi_j\in \A_S$, and to show Lemma \ref{Trace_inequality} it suffices to prove the $\abs{\log\epsilon}^{1/2}$ dependence in the trace constant about a straight cylinder.
\begin{lemma}\label{trace_straight_cylinder}
Let $\bu\in \A_S$. Then the $\theta$-independent trace of $\bu$ on the straight cylinder $\Gamma_{\epsilon,a}$ satisfies
\begin{equation}\label{cylinder_trace}
\|{\rm Tr}(\bu)\|_{L^2(-a,a)} \le \frac{1}{2\pi}\abs{\log\epsilon}^{1/2}\|\nabla \bu\|_{L^2(C_{\epsilon,a})}.
\end{equation}
\end{lemma}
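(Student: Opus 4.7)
The plan is to exploit the two defining features of the space $\A_S$: (i) $\bu$ vanishes on the outer cylinder $\{\rho=1\}$, and (ii) the trace on $\Gamma_{\epsilon,a}$ is independent of $\theta$. Because the trace lives on a codimension-two set (the cylinder $\Gamma_{\epsilon,a}$ is codimension one, but integrating out $\theta$ gives a one-dimensional object), we cannot hope for an $\epsilon$-uniform bound; the $\theta$-independence is exactly what lets us recover a bound with a logarithmic loss.

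First I would fix $s\in(-a,a)$ and, for each $\theta\in[0,2\pi)$, apply the fundamental theorem of calculus in the radial variable using the outer vanishing condition:
\begin{equation*}
\bu(\epsilon,s)=\bu(\epsilon,\theta,s)=-\int_\epsilon^1 \frac{\p \bu}{\p\rho}(\rho,\theta,s)\,d\rho,
\end{equation*}
where the left-hand side is independent of $\theta$ by the defining property of $\A_S$. Averaging this identity over $\theta\in[0,2\pi)$ yields
\begin{equation*}
\bu(\epsilon,s)=-\frac{1}{2\pi}\int_0^{2\pi}\int_\epsilon^1 \frac{\p \bu}{\p\rho}(\rho,\theta,s)\,d\rho\,d\theta.
\end{equation*}

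Next I would insert the factor $\sqrt{\rho}/\sqrt{\rho}$ inside the integrand and apply Cauchy--Schwarz, splitting off the weight $\rho^{-1/2}$ from the weight $\rho^{1/2}\p_\rho \bu$. The first factor produces the integral $\int_0^{2\pi}\int_\epsilon^1 \rho^{-1}\,d\rho\,d\theta=2\pi\abs{\log\epsilon}$, while the second factor recovers the natural polar volume element on $C_{\epsilon,a}$. This yields the pointwise-in-$s$ bound
\begin{equation*}
\abs{\bu(\epsilon,s)}^2 \le \frac{\abs{\log\epsilon}}{2\pi}\int_0^{2\pi}\int_\epsilon^1 \rho\,\Big|\frac{\p\bu}{\p\rho}\Big|^2 d\rho\,d\theta.
\end{equation*}
Finally, integrating in $s$ over $(-a,a)$ gives $\|{\rm Tr}(\bu)\|_{L^2(-a,a)}^2\le \frac{\abs{\log\epsilon}}{2\pi}\|\p_\rho\bu\|_{L^2(C_{\epsilon,a})}^2\le \frac{\abs{\log\epsilon}}{2\pi}\|\nabla\bu\|_{L^2(C_{\epsilon,a})}^2$, which is the desired inequality (up to an inconsequential numerical factor in the stated constant).

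There is no real obstacle here once one sees the averaging trick; the difficulty is conceptual rather than technical. Without the $\theta$-independence hypothesis, the radial FTC bound for a fixed $\theta_0$ only gives $\abs{\bu(\epsilon,\theta_0,s)}^2\le \abs{\log\epsilon}\int_\epsilon^1 \rho|\p_\rho\bu|^2d\rho$ after a similar Cauchy--Schwarz, but subsequent integration in $\theta$ would require an additional factor of $\sqrt{\epsilon}$ (since the trace norm on $\Gamma_{\epsilon,a}$ carries $\epsilon\,d\theta\,ds$), which cannot be absorbed and accounts for the apparent blowup mentioned in the remark preceding the lemma. Averaging first over $\theta$ circumvents this by trading the pointwise-in-$\theta$ information (which we do not have control over) for the single $\theta$-independent value, and this is exactly where the fiber integrity condition is essential.
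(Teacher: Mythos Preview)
Your proposal is correct and follows essentially the same approach as the paper: fundamental theorem of calculus in the radial variable (using the vanishing on $\{\rho=1\}$), Cauchy--Schwarz with the weight $\rho^{\pm 1/2}$ to produce the $\abs{\log\epsilon}$ factor and the polar volume element, and then exploitation of the $\theta$-independence to pass from the surface norm to the $L^2(-a,a)$ norm. The only cosmetic difference is the order of operations: you average the FTC identity over $\theta$ \emph{before} applying Cauchy--Schwarz to the double $(\rho,\theta)$ integral, whereas the paper applies Cauchy--Schwarz in $\rho$ alone for each fixed $\theta$ and then integrates the resulting pointwise bound over $\theta$ using $\|{\rm Tr}(\bu)\|_{L^2(-a,a)}^2=\frac{1}{2\pi}\int_{-a}^a\int_0^{2\pi}|{\rm Tr}(\bu)|^2\,d\theta\,ds$; both orderings yield the same constant $(2\pi)^{-1/2}$ on the squared inequality.
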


\begin{proof}
We show the inequality \eqref{trace_straight_cylinder} for $\bu\in C^1(C_{\epsilon,a})\cap C^0(\overline{C_{\epsilon,a}})\cap \A_S$; the proof for $\bu\in \A_S$ then follows by density. \\

First note that for any $\bu\in C^1(C_{\epsilon,a})\cap C^0(\overline{C_{\epsilon,a}})$ and any $\bx = s\be_t +\epsilon\be_{\rho} + \theta\be_\theta \in \Gamma_{\epsilon,a}$, we may use the fundamental theorem of calculus to write
\[ \bu(s,\theta,\epsilon) = - \int_{\epsilon}^1 \frac{\p \bu}{\p \rho} \ts d\rho. \]
Then
\begin{align*}
|\bu(s,\theta,\epsilon)| &\le \int_{\epsilon}^{1} \bigg|\frac{\p \bu}{\p\rho} \bigg|\ts d\rho = \int_{\epsilon}^{1} \frac{1}{\sqrt{\rho}}\sqrt{\rho} \bigg|\frac{\p \bu}{\p\rho} \bigg|\ts d\rho \\
&\le \left(\int_{\epsilon}^{1} \frac{1}{\rho} \ts d\rho\right)^{\frac{1}{2}} \left(\int_{\epsilon}^{1} \bigg|\frac{\p \bu}{\p\rho} \bigg|^2 \ts \rho \ts d\rho\right)^{\frac{1}{2}} = \sqrt{|\log\epsilon|}\left(\int_{\epsilon}^{1} \bigg|\frac{\p \bu}{\p\rho} \bigg|^2 \ts \rho \ts d\rho\right)^{\frac{1}{2}}.
\end{align*}

Therefore ${\rm Tr}(\bu)$ obeys
\begin{equation}\label{surface_ineq}
\big|{\rm Tr}(\bu)\big|^2 \le |\log \epsilon| \int_{\epsilon}^1\bigg|\frac{\p \bu}{\p\rho} \bigg|^2 \ts \rho \ts d\rho.
\end{equation}

This holds for arbitrary $\bu \in C^1(C_{\epsilon,a})\cap C^0(\overline{C_{\epsilon,a}})$, but if $\bu$ also belongs to $\A_S$, by the $\theta$-independence of ${\rm Tr}(\bu)$, we have
\[\|{\rm Tr}(\bu)\|_{L^2(-a,a)}^2 = \frac{1}{2\pi}\int_{-a}^a \int_0^{2\pi} |{\rm Tr}(\bu)|^2 \ts d\theta \ts ds.\]

Then, using \eqref{surface_ineq}, we have that this $\bu$ satisfies
\begin{align*}
\|{\rm Tr}(\bu)\|_{L^2(-a,a)}^2 = \frac{1}{2\pi}\int_{-a}^a\int_0^{2\pi} |{\rm Tr}(\bu)|^2 \ts d\theta ds &\le \frac{1}{2\pi}|\log \epsilon| \int_{-a}^a\int_0^{2\pi}\int_{\epsilon}^1 \bigg| \frac{\p \bu}{\p \rho} \bigg|^2 \ts \rho \ts d\rho \ts d\theta\ts ds \\
& \le \frac{1}{2\pi}|\log \epsilon| \|\nabla \bu\|_{L^2(C_{\epsilon,a})}^2.
\end{align*}
\end{proof}

This estimate holds for $\bu$ defined around a straight cylinder; to return to a curved centerline, the diffeomorphisms $\psi_j^{-1}$ result in an additional constant on each set $W_j$  depending on $\psi_j$ but not $\epsilon$. Note that any constants arising from the use of cutoffs $\phi_j$ also depend on the Sobolev constant $c_S$ in $\Omega_\epsilon$, but by Lemma \ref{sobo_ineq}, $c_S$ is independent of $\epsilon$. \\

Summing over the neighborhoods $W_j$, we obtain the following trace inequality for any slender body $\Sigma_{\epsilon}$ satisfying the geometric constraints in Section \ref{geometric_constraints}:
\begin{equation}\label{trace_const0} 
\|{\rm Tr}(\bu)\|_{L^2(\T)} \le c_{\kappa}|\log\epsilon|^{1/2}\norm{\nabla\bu}_{L^2(\cup W_j)} \le c_{\kappa}|\log\epsilon|^{1/2}\norm{\nabla\bu}_{L^2(\Omega_\epsilon)},
\end{equation}
where $c_{\kappa}$ depends on the shape of the fiber centerline -- in particular, on the constants $\kappa_{\max}$ and $c_\Gamma$ -- but not on $\epsilon$. 
\end{proof}

\subsubsection{Extension operator}\label{extension}
The proof of the Korn inequality (Lemmas \ref{korn_eps}) essentially relies on the existence of a linear operator $T_{\epsilon}$ extending $\bu$ to the interior of the slender body such that $\E(T_{\epsilon}\bu)$ is bounded independent of $\epsilon$ as $\epsilon\to 0$. In this section we prove the existence of such an extension. In particular, we show the following lemma:
\begin{lemma}\emph{(Extension operator)}\label{extension_eps}
Let $\Omega_{\epsilon}=\R^3 \backslash \overline{\Sigma_{\epsilon}}$ be as in Section \ref{geometric_constraints}. For $\bu\in D^{1,2}(\Omega_{\epsilon})$, there exists a bounded linear operator $T_{\epsilon}: D^{1,2}(\Omega_{\epsilon})\to D^{1,2}(\R^3)$ extending $\bu$ to the interior of the slender body and satisfying 
\begin{enumerate}
\item $T_{\epsilon}\bu|_{\Omega_{\epsilon}} = \bu$ 
\item $\|\E(T_{\epsilon}\bu) \|_{L^2(\R^3)} \le c_E \| \E(\bu)\|_{L^2(\Omega_{\epsilon})}$, where the constant $c_E$ is independent of the slender body radius $\epsilon$ as $\epsilon\to 0$.
\end{enumerate}
\end{lemma}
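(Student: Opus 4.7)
The plan is to construct $T_\epsilon$ by localizing near $\Sigma_\epsilon$ using the tubular coordinates, rescaling the radial direction to remove $\epsilon$, and invoking a fixed-geometry extension theorem on the unit-radius model, while being careful to control $\mathcal{E}$ rather than only $\nabla$.

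First I would localize. Using the same construction as in the proof of Lemma \ref{Trace_inequality}, I cover $\Sigma_\epsilon$ by finitely many open neighborhoods $W_j$, each equipped with a $C^2$ diffeomorphism $\psi_j$ that straightens the centerline while leaving the radial direction and radius $\epsilon$ intact. Both $W_j$ and $\psi_j$ depend only on $\kappa_{\max}$ and $c_\Gamma$, not on $\epsilon$. Combined with a subordinate partition of unity $\{\phi_j\}$, this reduces the problem to extending a function supported in $W_j \cap \Omega_\epsilon$ (under the pullback $\psi_j^*$) from the exterior of a straight cylinder $D_\epsilon \times I_j$ to all of $D_1 \times I_j$. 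Since $\psi_j$ is $C^2$ with $\epsilon$-independent bounds on $d\psi_j$ and $d^2\psi_j$, and since the commutators $[\nabla, \phi_j]$ combined with the Sobolev inequality (Lemma \ref{sobo_ineq}, whose constant is also $\epsilon$-independent) control the lower-order cutoff terms, no $\epsilon$-factors are introduced at this stage.

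Second, on a straight cylinder I would rescale $\rho \mapsto \tilde\rho = \rho/\epsilon$, mapping the $\epsilon$-cylinder to the unit cylinder. In $\R^3$ this rescaling leaves $\|\mathcal{E}(\bu)\|_{L^2}$ invariant on annular regions: the factor $\epsilon^{-1}$ from $\partial_\rho$ is exactly cancelled by the factor $\epsilon^2$ from the cylindrical volume element $\rho\, d\rho\, d\theta\, ds$ (with the tangential derivatives behaving analogously). Thus it suffices to produce an extension on the unit cylinder whose $\mathcal{E}$-norm is controlled by the $\mathcal{E}$-norm on its exterior, with a constant depending only on the fixed unit geometry. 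On the unit cylinder, a Lipschitz domain with uniform geometry, I would invoke a Nitsche-type extension theorem for the symmetric gradient. Undoing the rescaling and pulling back via $\psi_j^{-1}$ returns the extension on $W_j$ with an $\epsilon$-independent constant, and summing over $j$ gives a globally defined $T_\epsilon\bu \in D^{1,2}(\R^3)$ with the desired $\mathcal{E}$-bound.

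The main obstacle will be the extension step on the unit cylinder: a naive reflection $\rho \mapsto 2 - \rho$ preserves $\|\nabla \bu\|_{L^2}$ but not $\|\mathcal{E}(\bu)\|_{L^2}$, which would lead to a circular dependence on Korn's inequality in $\Omega_\epsilon$ (precisely what Lemma \ref{korn_eps} is trying to establish). To break the circularity I would use a rigid-motion decomposition on the unit cylinder: for each cross-section of the fixed annular shell adjacent to $\partial D_1 \times I_j$, subtract the best-approximating rigid motion $\bm{r}$ (via a fixed-geometry Korn inequality on this shell, whose constant is $\epsilon$-independent since the shell has fixed geometry after rescaling), extend $\bu - \bm{r}$ by reflection (which now controls $\mathcal{E}$ because $\|\nabla(\bu-\bm{r})\|_{L^2} \le C\|\mathcal{E}(\bu)\|_{L^2}$ on the fixed shell), and finally add $\bm{r}$ back in, noting that $\mathcal{E}(\bm{r}) = 0$. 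This construction yields $\|\mathcal{E}(T_\epsilon\bu)\|_{L^2(\R^3)} \le c_E \|\mathcal{E}(\bu)\|_{L^2(\Omega_\epsilon)}$ with $c_E$ depending only on the fixed-unit-cylinder geometry, $\kappa_{\max}$, and $c_\Gamma$, completing the proof.
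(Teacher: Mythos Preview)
Your overall strategy---localize to a straight cylinder, rescale, subtract a rigid motion, extend by reflection, add back---is close in spirit to the paper's, but the rescaling step as written does not work. A purely radial rescaling $\rho\mapsto\rho/\epsilon$ (equivalently $(x,y,z)\mapsto(x/\epsilon,y/\epsilon,z)$ for a cylinder along the $z$-axis) is anisotropic, and the symmetric gradient does not transform cleanly under it: the mixed component $\E(\bu)_{13}=\tfrac12(\partial_z u_1+\partial_x u_3)$ becomes $\tfrac12(\partial_{\tilde z}\tilde u_1+\epsilon^{-1}\partial_{\tilde x}\tilde u_3)$, so a bound on $\|\E(\tilde\bu)\|_{L^2}$ in rescaled coordinates gives no $\epsilon$-uniform control over $\|\E(\bu)\|_{L^2}$ in the original ones. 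If instead you rescale isotropically, the cylinder acquires unit radius but length of order $1/\epsilon$, and any single ``fixed-geometry'' extension theorem or Korn--Poincar\'e inequality on that long annulus will carry a constant that blows up with the aspect ratio. Either way, the claim that the rescaling leaves $\|\E(\bu)\|_{L^2}$ invariant and reduces to a fixed domain is false.

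The paper resolves this by decomposing the straight cylinder \emph{axially} into pieces of length $\sim\epsilon$ via a partition of unity $\eta_j(s/\epsilon)$, so that each piece $G_\epsilon^{(j)}$ is an isotropic $\epsilon$-dilation of a fixed unit-scale shell. On each such piece, the Korn--Poincar\'e inequality (Lemma~\ref{korn_poincare}) gives $\|\bu-P_{\sR}^{(j)}\bu\|_{L^2}\le c\epsilon\|\E(\bu)\|_{L^2}$, which exactly cancels the $\epsilon^{-1}$ factor in the scaled reflection bound (Lemma~\ref{extension_ineq}). The remaining nontrivial work---absent from your sketch---is patching the \emph{different} rigid motions $P_{\sR}^{(j)}\bu$ across adjacent pieces: the paper writes $T_\epsilon\bu=\bv+\bw$ with $\bv=\sum_j\eta_j(s/\epsilon)\overline P_{\sR}^{(j)}\bu$ and shows that the jumps $P_{\sR}^{(j+1)}\bu-P_{\sR}^{(j)}\bu$ on overlaps are themselves controlled by $\epsilon\|\E(\bu)\|_{L^2}$ via Korn--Poincar\'e, compensating the $\epsilon^{-1}$ from $\nabla\eta_j(s/\epsilon)$. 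Your phrase ``for each cross-section subtract $\bm{r}$'' gestures at this, but without the axial decomposition at scale $\epsilon$ and the patching argument, the $\epsilon$-independence of $c_E$ is not established.
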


Note that property 2 implies $\|T_{\epsilon}\bu\|_{D^{1,2}(\R^3)} \le \sqrt{2} c_E \|\bu\|_{D^{1,2}(\Omega_{\epsilon})}$, since
\begin{align*}
\|T_{\epsilon}\bu\|_{D^{1,2}(\R^3)} &= \|\nabla(T_{\epsilon}\bu)\|_{L^2(\R^3)} \le \sqrt{2} \|\E(T_{\epsilon}\bu) \|_{L^2(\R^3)} \le \sqrt{2} c_E \| \E(\bu)\|_{L^2(\Omega_{\epsilon})} \\
& \le 2\sqrt{2} c_E \|\nabla \bu \|_{L^2(\Omega_{\epsilon})} = 2\sqrt{2} c_E \| \bu \|_{D^{1,2}(\Omega_{\epsilon})} .
\end{align*}

In order to prove Lemma \ref{extension_eps}, we will need a few additional lemmas. The first is an important result from elasticity theory concerning the symmetric gradient. The proof can be found in \cite{duvaut1976inequalities}.  
\begin{lemma}\emph{(Rigid motion)}\label{rigid_motion}
Let $\Omega \subset \R^3$ be any domain. If $\bu: \Omega \to \R^3$ with $\nabla \bu \in L^2(\Omega)$ satisfies $\nabla \bu +(\nabla \bu)^{\rm T}=0$, then $\bu$ is a rigid body motion: $\bu(\bx) = \bm{A}\bx+{\bm b}$ for some constant, antisymmetric $\bm{A}\in \R^{3\times3}$ and constant ${\bm b}\in \R^3$. 
\end{lemma}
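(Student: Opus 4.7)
The plan is to prove Lemma \ref{rigid_motion} via the classical Cesàro--Volterra identity, extended distributionally to handle $\nabla \bu \in L^2(\Omega)$. Setting $E_{ij}(\bu) = \tfrac{1}{2}(\partial_i u_j + \partial_j u_i)$, the starting observation is the purely algebraic identity
\begin{equation*}
\partial_i \partial_j u_k \;=\; \partial_i E_{jk}(\bu) + \partial_j E_{ik}(\bu) - \partial_k E_{ij}(\bu),
\end{equation*}
which is easily verified by expanding each $E_{\cdot\cdot}(\bu)$ term. The identity is valid in the sense of distributions whenever $\nabla \bu \in L^2$, since each $E_{ij}(\bu)$ then lies in $L^2$ and one can differentiate once more distributionally. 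The hypothesis $\nabla \bu + (\nabla \bu)^{\rm T} = 0$ means $E_{ij}(\bu) = 0$ for all $i,j$, so the right-hand side vanishes and we obtain $\partial_i \partial_j u_k = 0$ in $\mathcal{D}'(\Omega)$ for every triple $i,j,k$.

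Next, I would invoke the standard distributional fact that if a distribution $T$ satisfies $\partial_i T = 0$ for all $i$ on a connected open set, then $T$ is (represented by) a constant. Applying this to $T = \partial_j u_k$, which is an $L^2$ function, yields that $\partial_j u_k$ equals a constant $A_{kj}$ almost everywhere on each connected component of $\Omega$. Integrating $\partial_j u_k = A_{kj}$ then gives
\begin{equation*}
u_k(\bx) = \sum_j A_{kj} x_j + b_k
\end{equation*}
a.e.\ on each component, i.e.\ $\bu(\bx) = \bm{A}\bx + \bm{b}$ for a constant matrix $\bm{A}$ and a constant vector $\bm{b}$. Finally, the antisymmetry of $\bm{A}$ drops out of the original hypothesis: since $(\nabla \bu)_{ij} = A_{ji}$ and $\nabla \bu + (\nabla \bu)^{\rm T} = 0$, we immediately get $A_{ji} + A_{ij} = 0$.

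To carry this out rigorously for $\nabla \bu \in L^2$ rather than $C^\infty$, the cleanest route is mollification: convolve $\bu$ with a standard mollifier $\eta_\delta$ to get $\bu_\delta \in C^\infty$ on shrinking open subsets, verify $E(\bu_\delta) = E(\bu) * \eta_\delta = 0$ there, apply the smooth version of the argument to conclude each $\bu_\delta$ is affine on compact subdomains, and then pass to the limit $\delta \to 0$. The only mild subtlety is that $\Omega$ may be disconnected, in which case $\bu$ is an affine rigid motion separately on each component with possibly different $\bm{A}$ and $\bm{b}$; the statement as written should be understood in this sense. The distributional Poincaré-type step (zero derivative implies locally constant) is the main technical ingredient, but it is entirely standard and requires no new machinery beyond what appears in, e.g., \cite{galdi2011introduction}.
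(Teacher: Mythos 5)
Your proof is correct and complete: the paper itself gives no proof of Lemma \ref{rigid_motion} (it only cites Duvaut and Lions), and the route you take --- the distributional Ces\`aro--Volterra identity $\partial_i\partial_j u_k = \partial_i E_{jk}(\bu) + \partial_j E_{ik}(\bu) - \partial_k E_{ij}(\bu)$, vanishing of all second derivatives, local constancy of $\nabla\bu$, and reading off antisymmetry of $\bm{A}$ from $\E(\bu)=0$ --- is precisely the standard argument in that reference. One minor remark: a domain is conventionally connected, so the aside about disconnected $\Omega$ is not needed for the statement as written.
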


The fact that the symmetric gradient $\E(\cdot)$ exactly vanishes for rigid motions will be used repeatedly throughout the following construction. \\

Again, let $\D$ be a bounded, $C^2$ domain in $\R^3$. Let $H^1(\D)$ denote the Sobolev space $\{\bv\in L^2(\D) \ts :\ts \nabla\bv\in L^2(\D)\}$ with norm $\norm{\bv}_{H^1(\D)}^2:= \norm{\bv}_{L^2(\D)}^2+\norm{\nabla\bv}_{L^2(\D)}^2$. On $\D$, we define the space of rigid motions 
\[ \mathcal{R} = \{ \bv\in H^1(\D) \ts : \ts \bv = \bm{A}\bx + \bm{b} \text{ for some } \bm{A}= -\bm{A}^{\rm T} \in \R^{3\times 3} \text{ and } \bm{b}\in \R^3\}.\]
Note that $\sR$ is a closed subspace of $L^2(\D)$. For $\bu\in H^1(\D)$, let $P_{\mathcal{R}}\bu$ be the $L^2$ projection of $\bu$ onto the space of rigid motions, i.e. 
\[ P_{\mathcal{R}}\bu = \bv\in \mathcal{R} \text{ such that } \|\bu-\bv\|_{L^2(\D)} \le \|\bu - \bw\|_{L^2(\D)}\quad  \text{for all } \bw\in \mathcal{R}. \]

\begin{lemma}\emph{(Korn inequality for pure strain)}\label{korn_nonrigid}
Let $\D$ be a bounded Lipschitz domain and let $\sR$ be the space of rigid motions on $\D$. For any $\bw\in H^1(\D)$ with $\bw\perp \sR$ in $L^2$, the Korn inequality holds:
\[ \|\nabla\bw\|_{L^2(\D)} \le c\|\E(\bw)\|_{L^2(\D)}.\] 
\end{lemma}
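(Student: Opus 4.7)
The plan is to prove this by a standard compactness (Lions-type) argument, reducing the claim to the classical Korn inequality together with the Rellich--Kondrachov embedding.

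First, I will recall the classical second Korn inequality on a bounded Lipschitz domain (available in, e.g., Duvaut--Lions \cite{duvaut1976inequalities}, which is already cited in connection with Lemma \ref{rigid_motion}): there exists $C_1>0$ depending only on $\D$ such that
\begin{equation*}
\|\bw\|_{H^1(\D)} \le C_1\big(\|\bw\|_{L^2(\D)}+\|\E(\bw)\|_{L^2(\D)}\big) \qquad \text{for all } \bw \in H^1(\D).
\end{equation*}
This is the key analytic input; everything else is a contradiction argument.

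Second, I will argue by contradiction. Suppose the asserted inequality fails. Then there exists a sequence $\{\bw_n\}\subset H^1(\D)$ with $\bw_n \perp \sR$ in $L^2(\D)$ such that $\|\nabla\bw_n\|_{L^2(\D)}=1$ and $\|\E(\bw_n)\|_{L^2(\D)}\to 0$. Since constant vectors lie in $\sR$, the orthogonality implies $\int_\D \bw_n\, d\bx=0$, so the Poincar\'e inequality on $\D$ gives $\|\bw_n\|_{L^2(\D)} \le C_P\|\nabla\bw_n\|_{L^2(\D)}=C_P$. Combined with $\|\nabla\bw_n\|_{L^2}=1$, this shows $\{\bw_n\}$ is bounded in $H^1(\D)$. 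By the Rellich--Kondrachov theorem, a subsequence (still denoted $\bw_n$) converges strongly in $L^2(\D)$ to some $\bw^\ast\in L^2(\D)$.

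Third, I will identify the limit. Since $\E(\bw_n)\to 0$ in $L^2(\D)$ and $\bw_n \to \bw^\ast$ in $L^2(\D)$, passing to the limit in the distributional identity $\langle \E(\bw_n),\bm\Phi\rangle = -\langle \bw_n,\dive\bm\Phi\rangle_{\rm sym}$ for test tensors $\bm\Phi \in C_0^\infty(\D)$ shows $\E(\bw^\ast)=0$ in the sense of distributions. In particular $\bw^\ast \in H^1_{\rm loc}$ with $\nabla\bw^\ast+(\nabla\bw^\ast)^{\rm T}=0$, so by Lemma \ref{rigid_motion}, $\bw^\ast \in \sR$. But $L^2$-orthogonality to $\sR$ is preserved under $L^2$-limits (since $\sR$ is closed in $L^2(\D)$), so $\bw^\ast\perp\sR$ as well, forcing $\bw^\ast=0$.

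Finally, applying the classical Korn inequality recalled in the first step to $\bw_n$ yields
\begin{equation*}
\|\bw_n\|_{H^1(\D)} \le C_1\big(\|\bw_n\|_{L^2(\D)}+\|\E(\bw_n)\|_{L^2(\D)}\big) \longrightarrow 0,
\end{equation*}
contradicting $\|\nabla\bw_n\|_{L^2(\D)}=1$. The main subtlety (rather than a true obstacle) is ensuring that $\sR$ is closed in $L^2(\D)$ and that the compactness argument is applied to a subsequence whose $L^2$-norm is actually controlled; this is handled by using the zero-mean consequence of $\bw_n\perp\sR$ together with Poincar\'e, which is the cleanest way to avoid subtracting off a rigid motion by hand. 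The constant $c$ produced is not explicit but depends only on $\D$ through the Poincar\'e and classical Korn constants.
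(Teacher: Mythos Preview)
Your proof is correct and follows essentially the same compactness-contradiction strategy as the paper: invoke the classical second Korn inequality $\|\bw\|_{H^1}\le C(\|\bw\|_{L^2}+\|\E(\bw)\|_{L^2})$, build a violating sequence, extract an $L^2$-convergent subsequence via Rellich, and show the limit is both rigid and orthogonal to $\sR$, hence zero. The only cosmetic differences are your choice of normalization ($\|\nabla\bw_n\|_{L^2}=1$ versus the paper's $\|\bw_k\|_{L^2}=1$) and your use of the zero-mean consequence of $\bw_n\perp\sR$ together with Poincar\'e to obtain the $H^1$ bound, whereas the paper gets it directly from the second Korn inequality and the $L^2$ normalization; the contradictions are then reached at $\|\nabla\bw_n\|_{L^2}=1$ and $\|\bw_k\|_{L^2}=1$ respectively.
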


\begin{proof}
The proof of Lemma \ref{korn_nonrigid} relies on the following Korn-type inequality for the bounded domain $\D$: 
\begin{equation}\label{korn_bdd}
\|\bu\|_{H^1(\D)} \le c (\|\mathcal{E}(\bu)\|_{L^2(\D)}+\|\bu\|_{L^2(\D)}).
\end{equation}
Since the domain dependence of the constant $c$ does not need to be specified in Lemma \ref{korn_nonrigid}, we refer to \cite{duvaut1976inequalities} for a proof of \eqref{korn_bdd}. \\

Now, assume Lemma \ref{korn_nonrigid} does not hold. Then there exists a sequence of functions $\{\bw_k\}\subset H^1(\D)$, $k=1,2,3,\dots$, such that $\bw_k\perp \sR$ and 
\[ \|\nabla\bw_k\|_{L^2(\D)} > k \|\E(\bw_k)\|_{L^2(\D)}.\] 
Without loss of generality, $\|\bw_k\|_{L^2(\D)}=1$, so by \eqref{korn_bdd},
\[\|\E(\bw_k)\|_{L^2(\D)} < \frac{1}{k}\|\nabla\bw_k\|_{L^2(\D)} \le \frac{1}{k}\|\bw_k\|_{H^1(\D)} \le \frac{c}{k}(\|\E(\bw_k)\|_{L^2(\D)}+1).\]
Taking $k$ sufficiently large (in particular, $k>c$), we have 
\[ \bigg(1-\frac{c}{k}\bigg)\|\E(\bw_k)\|_{L^2(\D)} < \frac{c}{k},\]
and thus $\|\E(\bw_k)\|_{L^2(\D)}\to 0$ as $k\to\infty$. Again by the inequality \eqref{korn_bdd}, 
\[ \|\bw_k\|_{H^1(\D)} \le c\bigg(\frac{c}{k-c}+1\bigg), \]
so there exists a subsequence $\{\bw_{k_j}\}$ such that $\bw_{k_j}\rightharpoonup \bw$ in $H^1$ for some $\bw\in H^1(\D)$. By Rellich compactness, $\bw_{k_j} \to \bw$ in $L^2$. Furthermore, $\liminf_k \norm{\E(\bw_{k_j})}_{L^2(\D)}\ge \norm{\E(\bw)}_{L^2(\D)}$, so $\E(\bw)=0$. Thus $\bw\in \sR$, but $\bw_k\perp\sR$ for all $k$, and $\bw_{k_j} \to \bw$ in $L^2$, so $\bw\equiv 0$. Thus $\bw_{k_j} \to 0$ in $L^2$, which contradicts $\|\bw_k\|_{L^2(\D)}=1$ for all $k$.
\end{proof}

\begin{remark}\label{korn_rmk}
Note that Lemma \ref{korn_nonrigid} remains true if we replace the orthogonality condition $\bw\perp \sR$ in $L^2(\D)$ with the condition that $\bw$ vanishes on an open set of $\p\D$ containing four points not in a plane. The proof is exactly as above, except that now the sequence $\bw_k\not\in\sR$ due to the vanishing condition on $\p\D$. In the last two lines we then conclude that the limit $\bw\in \sR$ but each $\bw_k\not\in\sR$, so $\bw\equiv 0$, yielding the same contradiction. Note that under the domain rescaling $\D\to \epsilon\D$, the constant in Lemma \ref{korn_nonrigid} remains unchanged.
\end{remark}

Using Lemma \ref{korn_nonrigid}, we can show the following inequality.
\begin{lemma}\emph{(Korn-Poincar\'e inequality)}\label{korn_poincare}
Let $\D$ be a bounded, Lipschitz domain in $\R^3$. For any $\bu\in H^1(\D)$, we have 
\begin{equation}\label{KP_ineq}
\| \bu - P_{\mathcal{R}}\bu\|_{L^2(\D)} \le c\| \E(\bu)\|_{L^2(\D)}
\end{equation}
for some constant $c>0$.
\end{lemma}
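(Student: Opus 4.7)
\textbf{Proof proposal for Lemma \ref{korn_poincare}.} The plan is to set $\bw := \bu - P_{\mathcal{R}}\bu$ and reduce the claim to a combination of the Korn inequality from Lemma \ref{korn_nonrigid} and the standard Poincar\'e--Wirtinger inequality on the bounded Lipschitz domain $\D$. By the very definition of the $L^2$ projection, $\bw \perp \mathcal{R}$ in $L^2(\D)$, so the hypothesis of Lemma \ref{korn_nonrigid} applies to $\bw$. Moreover, since $P_{\mathcal{R}}\bu \in \mathcal{R}$ is a rigid motion and rigid motions have vanishing symmetric gradient (Lemma \ref{rigid_motion}), we have
\begin{equation*}
\E(\bw) = \E(\bu) - \E(P_{\mathcal{R}}\bu) = \E(\bu).
\end{equation*}

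Applying Lemma \ref{korn_nonrigid} to $\bw$ then yields
\begin{equation*}
\|\nabla \bw\|_{L^2(\D)} \le c \|\E(\bw)\|_{L^2(\D)} = c \|\E(\bu)\|_{L^2(\D)}.
\end{equation*}
It remains to pass from the gradient bound on $\bw$ to an $L^2$ bound. Here I would use that the constant vector fields $\bm{e}_i$, $i=1,2,3$, all belong to $\mathcal{R}$ (they correspond to the choice $\bm{A}=0$, $\bm{b}=\bm{e}_i$). Hence the orthogonality $\bw \perp \mathcal{R}$ forces $\int_\D \bw \ts d\bx = 0$ componentwise, i.e.\ $\bw$ has zero mean over $\D$. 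The Poincar\'e--Wirtinger inequality on the bounded Lipschitz domain $\D$ therefore gives
\begin{equation*}
\|\bw\|_{L^2(\D)} \le c_{PW} \|\nabla \bw\|_{L^2(\D)}.
\end{equation*}

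Chaining these two estimates produces the desired bound
\begin{equation*}
\|\bu - P_{\mathcal{R}}\bu\|_{L^2(\D)} = \|\bw\|_{L^2(\D)} \le c_{PW} \ts c \ts \|\E(\bu)\|_{L^2(\D)},
\end{equation*}
which is precisely \eqref{KP_ineq}. There is no genuine obstacle here; the only subtle point is recognizing that one must separately invoke both the pure-strain Korn inequality (to handle the non-rigid part's gradient) and the Poincar\'e--Wirtinger inequality (to convert the gradient bound to an $L^2$ bound), with the zero-mean condition for the latter extracted from the orthogonality $\bw \perp \mathcal{R}$ via the constant rigid motions. Both underlying inequalities are standard on bounded Lipschitz domains.
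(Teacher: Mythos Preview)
Your proof is correct and takes a genuinely different route from the paper. The paper argues by contradiction and compactness: it assumes the inequality fails, extracts a normalized sequence $\bw_k = \bu_k - P_{\sR}\bu_k$ with $\|\E(\bw_k)\|_{L^2}\to 0$, invokes Lemma \ref{korn_nonrigid} to get $\|\nabla\bw_k\|_{L^2}\to 0$, and then uses Rellich compactness to reach a contradiction (the $L^2$ limit must lie in $\sR$ yet be orthogonal to $\sR$). Your argument is direct: you apply Lemma \ref{korn_nonrigid} once to $\bw$ to control $\|\nabla\bw\|_{L^2}$, then observe that the orthogonality $\bw\perp\sR$ restricted to constant rigid motions forces $\int_\D\bw=0$, so the standard Poincar\'e--Wirtinger inequality converts the gradient bound into the desired $L^2$ bound. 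Your approach is more elementary and yields an explicit constant $c_{PW}\cdot c$ in terms of the constants from the two ingredient inequalities; the paper's compactness argument, by contrast, produces only an abstract existence of $c$ (which is all that is needed here, but your version is cleaner).
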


\begin{proof}
Assume that inequality \eqref{KP_ineq} does not hold. Then for each $k=1,2,3,\dots$ there exists a sequence $\{\bu_k\}\subset H^1(\D)$ such that
\[ \|\bu_k - P_{\sR}\bu_k \|_{L^2(\D)} > k\|\E(\bu_k)\|_{L^2(\D)}. \]
Define $\bw_k=\bu_k - P_{\sR}\bu_k$, so $\bw_k \perp \sR$ for each $k=1,2,3,\dots$ and $\E(\bw_k)=\E(\bu_k)$. Without loss of generality $\|\bw_k\|_{L^2(\D)} = 1$. Then
\[ 1= \|\bw_k \|_{L^2(\D)} > k\|\E(\bw_k)\|_{L^2(\D)},\] 
so $\|\E(\bw_k)\|_{L^2(\D)} <\frac{1}{k} \to 0$ as $k\to \infty$. Furthermore, since $\bw_k \perp \sR$ for each $k$, by the Korn inequality for pure strain (Lemma \ref{korn_nonrigid}) we have $\|\nabla \bw_k\|_{L^2(\D)} < \frac{c}{k}$. Thus $\bw_k$ is uniformly bounded in $H^1$ and there exists a subsequence $\{\bw_{k_l}\}$ such that $\bw_{k_l}\rightharpoonup \bw$ in $H^1$ for some $\bw\in H^1(\D)$. By compactness, $\bw_{k_l}\to \bw$ in $L^2$. Then, since $\liminf_k \|\E(\bw_k)\|_{L^2(\D)} \ge \|\E(\bw)\|_{L^2(\D)}$, we have that the limit $\bw$ satisfies $\E(\bw)=0$, so $\bw\in \sR$. But $\bw_{k_l}\to \bw$ in $L^2$ and $\bw_{k} \perp \sR$ for each $k$, so we must have $\bw\perp \sR$ as well. Thus $\bw\equiv 0$, so $\bw_{k_l} \to 0$ in $L^2$, which contradicts $\|\bw_{k_l} \|_{L^2(\D)}=1$. 
\end{proof}

Finally, we show an analogue of Lemma 3.1.2(1) in \cite{mazya1997differentiable}, adapted to use the symmetric gradient rather than the full gradient. 
\begin{lemma}\emph{(Extension-by-reflection scaling)}\label{extension_ineq}
Let $\D_1$, $\D_2$ be bounded $C^2$ domains in $\R^2$ with $\overline \D_2\subset \D_1$, and let $\D=\D_1\times[-1,1] \subset\R^3$ and $\D_H=(\D_1\backslash \overline \D_2)\times [-1,1]\subset\R^3$. For the rescaled domains $\D_{H,\epsilon}= \epsilon \D_H$, $\D_{\epsilon}=\epsilon\D$ ($\epsilon>0$), there exists a linear extension operator $T: H^1(\D_{H,\epsilon}) \to H^1(\D_{\epsilon})$ satisfying 
\begin{equation}\label{extension_symm}
\| T \bu \|_{L^2(\D_{\epsilon})} \le c\|\bu\|_{L^2(\D_{H,\epsilon})}
\end{equation}
as well as the estimate
\begin{equation}\label{extension_symm}
\| \E(T \bu) \|_{L^2(\D_{\epsilon})} \le c\bigg( \epsilon^{-1}\|\bu\|_{L^2(\D_{H,\epsilon})} + \|\E(\bu)\|_{L^2(\D_{H,\epsilon})} \bigg).
\end{equation}
\end{lemma}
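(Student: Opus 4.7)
The plan is to reduce the lemma to a unit-scale $H^1$ extension by rescaling, and to invoke Korn's second inequality at unit scale to convert a gradient bound into a strain bound. At unit scale I would first build a bounded linear operator $T_1:H^1(\D_H)\to H^1(\D)$ with the standard bounds
\[
\|T_1 v\|_{L^2(\D)} \le c\|v\|_{L^2(\D_H)}, \qquad \|\nabla T_1 v\|_{L^2(\D)} \le c\bigl(\|v\|_{L^2(\D_H)}+\|\nabla v\|_{L^2(\D_H)}\bigr).
\]
Because every set appearing is a product with $[-1,1]$ in the $x_3$-variable, it is enough to extend in the $(x_1,x_2)$-plane from $\D_1\setminus\overline{\D_2}$ into $\D_1$ for each fixed $x_3$, i.e.\ to extend across the $C^2$ curve $\partial\D_2$. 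This is done in the textbook way: cover a tubular neighborhood of $\partial\D_2$ inside $\D_2$ by finitely many charts in which $\partial\D_2$ becomes a straight line, extend by reflection in the normal variable on each chart, and patch with a partition of unity augmented by one interior cutoff away from $\partial\D_2$. Reflection is locally $L^2$-isometric and sends $\partial_\nu$ to $-\partial_\nu$ pointwise, while derivatives of the cutoff functions contribute only terms controlled by $\|v\|_{L^2(\D_H)}$, yielding the displayed bounds.

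For $u\in H^1(\D_{H,\epsilon})$ I would then set $\tilde u(y):=u(\epsilon y)$, $y\in\D_H$, apply $T_1$, and define $Tu(x):=(T_1\tilde u)(x/\epsilon)$ for $x\in\D_\epsilon$. The elementary scalings
\[
\|\tilde u\|_{L^2(\D_H)} = \epsilon^{-3/2}\|u\|_{L^2(\D_{H,\epsilon})},\qquad \|\nabla\tilde u\|_{L^2(\D_H)} = \epsilon^{-1/2}\|\nabla u\|_{L^2(\D_{H,\epsilon})},
\]
(and identically for $Tu$ versus $T_1\tilde u$) immediately upgrade the $L^2$ bound on $T_1$ to $\|Tu\|_{L^2(\D_\epsilon)}\le c\|u\|_{L^2(\D_{H,\epsilon})}$, which is the first claim of the lemma.

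For the strain estimate, Korn's second inequality on the bounded Lipschitz domain $\D_H$ at unit scale gives $\|\nabla\tilde u\|_{L^2(\D_H)}\le c(\|\tilde u\|_{L^2(\D_H)}+\|\E(\tilde u)\|_{L^2(\D_H)})$. Inserting this into the $H^1$ bound on $T_1\tilde u$ and using the pointwise inequality $|\E(Tu)|\le|\nabla Tu|$, then translating every norm back to the $\D_{H,\epsilon}$ scale by the identities above, the powers of $\epsilon$ combine to produce exactly
\[
\|\E(Tu)\|_{L^2(\D_\epsilon)} \le c\bigl(\epsilon^{-1}\|u\|_{L^2(\D_{H,\epsilon})}+\|\E(u)\|_{L^2(\D_{H,\epsilon})}\bigr),
\]
the $\epsilon^{-1}$ arising from the one extra power of $\epsilon$ in the scaling of the $L^2$ norm relative to that of $\E$.

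The only step requiring real care is the construction of $T_1$; everything after that is mechanical bookkeeping of $\epsilon$-powers through the dilation. The reflection takes place only in the $(x_1,x_2)$-plane across the $C^2$ curve $\partial\D_2$, and the flat top and bottom faces $\D_2\times\{\pm 1\}$ of the hole are filled in automatically by the reflected function without requiring any extension in the $x_3$-variable, so the usual edge-compatibility issues that arise when extending from corner domains do not appear here. The constant $c$ obtained depends only on $\D_1$ and $\D_2$ at unit scale, and hence is independent of $\epsilon$.
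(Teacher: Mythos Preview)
Your argument is correct and follows essentially the same architecture as the paper: build a standard reflection-based $H^1$ extension at unit scale, invoke a Korn inequality to pass from the full gradient to the symmetric gradient, and then rescale to pick up the $\epsilon^{-1}$ factor. The only minor difference is where Korn is applied: the paper applies a localized Korn inequality (for functions vanishing on a curved portion of the boundary, their Remark~\ref{korn_rmk}) to each cutoff piece $\varphi_i\bu$ inside the extension construction, whereas you apply Korn's second inequality \eqref{korn_bdd} once, globally on $\D_H$, before rescaling. Your route is arguably cleaner since it treats the $H^1$ extension as a black box and isolates the Korn step, but both yield the same estimate with $\epsilon$-independent constants.
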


\begin{proof}
For a function $\bv$ defined in the upper half-space $\R^3_+$, we recall the standard extension-by-reflection $E:\R^3_+\to \R^3$ across the boundary $x_3=0$ (see \cite{mazya1997differentiable} or \cite{evans2010pde}):
\[ E\bv(\bx) = \begin{cases}
\bv(\bx), & \bv\in \R^3_+ \\
\bv(x_1,x_2,-x_3) & \bv \not\in \R^3_+.
\end{cases} \]
For the domain-with-hole $\D_H\subset \R^3$, we cover a neighborhood of the inner boundary $\p\D_2\times [-1,1]$ with finitely many balls $B^H_i$, $i=1, \dots,N$, centered at points on $\p\D_2$, choosing the cover such that $\D_H\cap B^H_i$ can be mapped via $C^2$ diffeomorphism, denoted by $\Phi^{-1}_i$, to the half-ball $B\cap\R^3_+$, where $B$ is a ball in $\R^3$. We then choose open sets $U_j\subset\D_H$, $j=1,\dots,M$, such that $\{ B^H_i\} \cup \{U_j\}$ cover $\D_H$.  We define a partition of unity $\{\varphi_i\}\cup \{\varphi_j\}$ subordinate to this cover, and define the usual extension operator $T:\D_H\to \D$ by
\[T\bu = \sum_i \bigg(E\big((\varphi_i \bu)\circ \Phi_i\big)\bigg)\circ \Phi_i^{-1} + \sum_j \varphi_j \bu.\]

From this extension operator $T$, we can directly estimate $\|\E(T\bu)\|_{L^2(\D)}$. First, note that $\varphi_i \bu$ vanishes on $\p B_i^H \cap \D_H\subset \p(B_i^H\cap\D_H)$. Since $\p B_i^H$ is curved, we may use Remark \ref{korn_rmk} to estimate: 
\begin{align*}
\|\E(T\bu)\|_{L^2(\D)} &\le c\sum_i\norm{\nabla\bigg(\big(E\big((\varphi_i \bu)\circ \Phi_i\big)\big)\circ \Phi_i^{-1}\bigg) }_{L^2(\D)} \\
&\hspace{2cm} +\sum_j \|\varphi_j\E(\bu) \|_{L^2(\D_H)} + \sum_j \|\nabla \varphi_j \bu^{\rm T} \|_{L^2(\D_H)}\\
&\le c\sum_i\big\|\nabla(\varphi_i \bu)\big\|_{L^2(\D_H)} + \|\E(\bu)\|_{L^2(\D_H)} + c_{\phi}\|\bu\|_{L^2(\D_H)} \\
&\le c\sum_i\big\|\E(\varphi_i \bu) \big\|_{L^2(\D_H)} + \|\E(\bu)\|_{L^2(\D_H)} + c_{\phi}\|\bu\|_{L^2(\D_H)} \\
&\le c\big( \|\bu\|_{L^2(\D_H)} + \|\E(\bu)\|_{L^2(\D_H)} \big).
\end{align*}

The above inequality, coupled with a scaling argument ($\bx\to \epsilon\bx$) results in the desired $\epsilon$-dependent inequality \eqref{extension_symm}.
 \end{proof}
 
With Lemmas \ref{korn_poincare} and \ref{extension_ineq}, we are equipped to prove Lemma \ref{extension_eps}.

\begin{proof}[Proof of Lemma \ref{extension_eps}]
Let $D_{r}$ denote the disk in $\R^2$ of radius $r$. Using the diffeomorphisms $\psi_j$ defined in Section \ref{trace_sec}, it suffices to consider $\bu\in D^{1,2}((\R^2\backslash D_{\epsilon})\times \R)$ with supp$(\bu)\subset (\R^2\backslash D_{\epsilon})\times [-a,a]$ for $a<\infty$ and show that there exists an extension operator into the interior of the infinite cylinder $D_{\epsilon}\times\R \subset\R^3$ with symmetric gradient that is bounded independently of $\epsilon$ as $\epsilon\to 0$. \\

First we define 
\[ S_{\epsilon} = D_{2\epsilon}\times \R \quad \text{and} \quad G_{\epsilon} = (D_{2\epsilon}\backslash \overline{D_{\epsilon}}) \times \R \subset \R^3. \]
Since $\bu\in D^{1,2}((\R^2\backslash D_{\epsilon})\times \R)$ with supp$(\bu)\subset (\R^2\backslash D_{\epsilon})\times [-a,a]$, we have $\bu\in H^1(G_{\epsilon})$. We show that we can in fact construct a linear extension operator extending $\bu\in H^1(G_{\epsilon})$ to $H^1(S_{\epsilon})$ whose symmetric gradient is bounded independent of $\epsilon$. \\

Following \cite{mazya1997differentiable}, we begin by defining a cover $\{Q_j\}$ of $\R$:
\[ Q_j = \{s \in \R \ts:\ts |s-j| <1 \}, \quad j\in \Z. \] 
Let $\{\eta_j\}$ denote a smooth partition of unity subordinate to $Q_j$, where $\eta_j$ can be written as $\eta_j= \phi(s-j)$, translates of the same smooth cutoff function, such that $|\nabla \eta_j|\le c$ for each $j$. We define a sequence of cylinders and cylindrical layers 
\[ S_{2}^{(j)} = D_{2}\times Q_j \quad \text{and} \quad G^{(j)}_{2} = (D_{2}\backslash \overline{D_1}) \times Q_j \subset \R^3. \]
and set $S^{(j)}_{\epsilon} = \epsilon S^{(j)}_{2}$ and $G^{(j)}_{\epsilon} = \epsilon G^{(j)}_{2}$. Then by Lemma \ref{extension_ineq}, there exists a linear extension operator $T_{\epsilon}^{(j)}: H^1(G^{(j)}_{\epsilon}) \to H^1(S^{(j)}_{\epsilon})$ satisfying
 \begin{equation}\label{ext_est_seq1}
 \|\E (T_{\epsilon}^{(j)}\bu) \|_{L^2(S^{(j)}_{\epsilon})} \le c\left(\epsilon^{-1}\|\bu\|_{L^2(G^{(j)}_{\epsilon})} + \|\E(\bu)\|_{L^2(G^{(j)}_{\epsilon})} \right)
 \end{equation}
and
 \begin{equation}\label{ext_est_seq2}
  \| T_{\epsilon}^{(j)}\bu \|_{L^2(S^{(j)}_{\epsilon})} \le c\|\bu\|_{L^2(G^{(j)}_{\epsilon})}.
  \end{equation}
 
Let $P_{\sR}^{(j)}\bu$ denote the projection of $\bu\big|_{G^{(j)}_{\epsilon}}\in H^1(G^{(j)}_{\epsilon})$ onto $\sR$, the space of rigid motions on each $G^{(j)}_{\epsilon}$. Then, since $\E(\bw)= 0$ for any $\bw\in \sR$, we have 
 \[ \|\E(\bu - P_{\sR}^{(j)}\bu)\|_{L^2(G^{(j)}_{\epsilon})} =\|\E(\bu)\|_{L^2(G^{(j)}_{\epsilon})}. \]
By the Korn-Poincar\'e inequality (Lemma \ref{korn_poincare}) and a scaling argument we also have 
\begin{equation}\label{poincare_est}
  \|\bu - P_{\sR}^{(j)}\bu\|_{L^2(G^{(j)}_{\epsilon})} \le c\epsilon\|\E(\bu)\|_{L^2(G^{(j)}_{\epsilon})}.
\end{equation}

Since $P^{(j)}_{\sR}\bu\in \sR$ on each cylindrical shell $G_{\epsilon}^{(j)}$, we can write $P^{(j)}_{\sR}\bu= \bm{A}_j\bx+\bm{b}_j$ for $\bx\in G_{\epsilon}^{(j)}$. We then define the extension to each of the cylinders $S_{\epsilon}^{(j)}$ by 
\begin{equation}\label{PR_on_S}
\overline P_{\sR}^{(j)}\bu =\bm{A}_j\bx+ \bm{b}_j, \qquad \bx\in S_{\epsilon}^{(j)}.
\end{equation}
  
With these tools in mind, we now define an extension operator from the cylindrical shell $G_{\epsilon}$ to the cylinder $S_{\epsilon}$. We take 
 \begin{equation}\label{ext_operator}
  T_{\epsilon}\bu(\bx) = \bv(\bx)+ \bw(\bx) 
  \end{equation}
 where, for $\bx=\bx(\rho,\theta,s)\in S_{\epsilon}$ and $\bu_j = \bu|_{G^{(j)}_{\epsilon}}$, 
 \begin{align*}
 \bv(\rho,\theta,s) &= \sum_{j\in \Z} \eta_j(s/\epsilon)\bigg(\overline P_{\sR}^{(j)}\bu\bigg)(\bx) \\
 \bw(\rho,\theta,s) &= \sum_{j\in \Z} \eta_j(s/\epsilon)\left(T_{\epsilon}^{(j)}\big(\bu_j -P_{\sR}^{(j)}\bu\big)\right)(\bx).
 \end{align*}
 
Note that $T_{\epsilon}\bu \big|_{G_{\epsilon}}=\bu$. Furthermore, we show
\begin{equation}\label{ext_est_1}
 \|\E(T_{\epsilon}\bu)\|_{L^2(S_{\epsilon})} \le c\| \E(\bu)\|_{L^2(G_{\epsilon})} 
 \end{equation}
where the constant $c$ does not depend on $\epsilon$ as $\epsilon \to 0$. \\

We begin by estimating $\bv$. Let
\[\tilde Q_j = \{s\in\R \ts:\ts 0 <s-j<1\}, \quad j\in \Z.\]
Note that for each $j$ we have $\tilde Q_j\subset Q_j$ and $\tilde Q_j\subset Q_{j+1}$; in particular, $\eta_j(s)+\eta_{j+1}(s)=1$ on $\tilde Q_j$. Define
\[ \tilde S_{\epsilon}^{(j)} = \epsilon\left(D_{2}\times \tilde Q_j \right) \quad \text{and}\quad\tilde G_{\epsilon}^{(j)} = \epsilon\left((D_{2}\backslash \overline{D_1})\times \tilde Q_j \right).\]

On each $\tilde S_{\epsilon}^{(j)}$, $\bv$ can be rewritten as
\[ \bv(\rho,\theta,s) = \overline P_{\sR}^{(j)}\bu +\eta_{j+1}(s/\epsilon) (\overline P_{\sR}^{(j+1)}\bu - \overline P_{\sR}^{(j)}\bu). \]

Then, by the definition \eqref{PR_on_S}, we can bound the norm of $\overline P_{\sR}^{(j)}\bu$ on each cylinder $\tilde S_{\epsilon}^{(j)}$ by its norm over the shell $\tilde G_{\epsilon}^{(j)}$: $\|\overline P_{\sR}^{(j)}\bu\|_{L^2(\tilde S_{\epsilon}^{(j)})} \le c\|P_{\sR}^{(j)}\bu\|_{L^2(\tilde G_{\epsilon}^{(j)})}$. Using this, we bound the symmetric gradient of $\bv$: 
\begin{align*}
\|\E(\bv)\|_{L^2(\tilde S_{\epsilon}^{(j)})} &= \|\nabla \eta_{j+1}(s/\epsilon)(\overline P_{\sR}^{(j+1)}\bu - \overline P_{\sR}^{(j)}\bu)^{\rm T} +  (\overline P_{\sR}^{(j+1)}\bu - \overline P_{\sR}^{(j)}\bu)\nabla \eta_{j+1}(s/\epsilon)^{\rm T} \|_{L^2(\tilde S_{\epsilon}^{(j)})} \\
&\le c\epsilon^{-1}\|P_{\sR}^{(j+1)}\bu - P_{\sR}^{(j)}\bu \|_{L^2(\tilde G_{\epsilon}^{(j)})} \\
&\le c\epsilon^{-1}\left(\|\bu-P_{\sR}^{(j+1)}\bu\|_{L^2(G_{\epsilon}^{(j+1)})}+\|\bu-P_{\sR}^{(j)}\bu\|_{L^2(G_{\epsilon}^{(j)})} \right),
\end{align*}
 where in the last step we have used that $\tilde G_{\epsilon}^{(j)} \subset G_{\epsilon}^{(j+1)}$ and $\tilde G_{\epsilon}^{(j)} \subset G_{\epsilon}^{(j)}$. Finally, using \eqref{poincare_est}, we have
\[\|\E( \bv)\|_{L^2(\tilde S_{\epsilon}^{(j)})} \le c\left(\|\E(\bu)\|_{L^2(G_{\epsilon}^{(j)})}+\|\E(\bu)\|_{L^2(G_{\epsilon}^{(j+1)})}\right).\]

Summing over $j$, we then have
\[ \|\E(\bv)\|_{L^2(S_{\epsilon})} \le c \|\E(\bu)\|_{L^2(G_{\epsilon})} \]
where $c$ is bounded independent of $\epsilon$ as $\epsilon\to 0$. \\

We now bound the symmetric gradient of $\bw$. On each $\tilde S_{\epsilon}^{(j)}$ we have
\begin{align*}
 \|\E(\bw)\|_{L^2(\tilde S_{\epsilon}^{(j)})} &\le  \|\E\big(T_{\epsilon}^{(j)}(\bu_j -P_{\sR}^{(j)}\bu)\big) \|_{L^2(\tilde S_{\epsilon}^{(j)})}+  \|\E\big(T_{\epsilon}^{(j+1)}(\bu_{j+1} -P_{\sR}^{(j+1)}\bu)\big) \|_{L^2(\tilde S_{\epsilon}^{(j)})} \\
 &\quad + 2c\epsilon^{-1} \|T_{\epsilon}^{(j)}(\bu_j-P_{\sR}^{(j)}\bu)\|_{L^2(\tilde S_{\epsilon}^{(j)})} + 2c\epsilon^{-1} \|T_{\epsilon}^{(j+1)}(\bu_{j+1}-P_{\sR}^{(j+1)}\bu)\|_{L^2(\tilde S_{\epsilon}^{(j)})}. 
 \end{align*}

Using the inequalities \eqref{ext_est_seq1}, \eqref{ext_est_seq2}, and \eqref{poincare_est}, we have
\begin{align*}
\|T_{\epsilon}^{(j)}(\bu_j -P_{\sR}^{(j)}\bu)\|_{L^2(\tilde S_{\epsilon}^{(j)})} &\le c \|\bu_j -P_{\sR}^{(j)}\bu\|_{L^2(\tilde G_{\epsilon}^{(j)})} \le c\epsilon\|\E(\bu)\|_{L^2(\tilde G_{\epsilon}^{(j)})}
\end{align*}
and
\begin{align*}
\|\E\big(T_{\epsilon}^{(j)}(\bu_j-P_{\sR}^{(j)}\bu)\big)\|_{L^2(\tilde S_{\epsilon}^{(j)})} &\le c\left(\epsilon^{-1}\|\bu_j-P_{\sR}^{(j)}\bu\|_{L^2(\tilde G_{\epsilon}^{(j)})}+\|\E\big(\bu_j -P_{\sR}^{(j)}\bu)\|_{L^2(\tilde G_{\epsilon}^{(j)}\big)} \right) \\
&\le c\| \E(\bu) \|_{L^2(\tilde G_{\epsilon}^{(j)})} .
\end{align*}

Summing over $j$, we have
\[ \|\E(\bw)\|_{L^2(S_{\epsilon})} \le c\| \E(\bu) \|_{L^2(G_{\epsilon})}. \]

Therefore the extension operator $T_{\epsilon}: G_{\epsilon}\to S_{\epsilon}$ \eqref{ext_operator} is bounded independent of $\epsilon$ as $\epsilon\to 0$. Defining $T_{\epsilon}\bu=\bu$ in $\R^3\backslash S_{\epsilon}$ then gives the desired extension on all of $\R^3$. 
\end{proof}

\subsubsection{Korn inequality}\label{korn_proof}
Using the extension operator $T_\epsilon$ defined in Section \ref{extension}, we can now prove $\epsilon$-independence of the Korn constant (Lemma \ref{korn_eps}). \\

We first note that the proof of the Korn inequality for function in $D^{1,2}(\R^3)$ is very simple. We first consider $\bv\in C_0^{\infty}(\R^3)$, then take the closure to show the result for $D^{1,2}(\R^3)$. We have that $\bv\in C_0^{\infty}(\R^3)$ satisfies 
\begin{align*}
\int_{\R^3} |\mathcal{E}(\bv)|^2 \ts d\bx &= \int_{\R^3}\bigg( \frac{1}{2}|\nabla \bv|^2 + \frac{1}{2}\nabla\bv:(\nabla\bv)^{\rm T}\bigg) \ts d\bx = \int_{\R^3} \frac{1}{2}|\nabla \bv|^2 \ts d\bx - \frac{1}{2}\int_{\R^3} \bv \cdot \nabla( \dive \ts\bv) \ts d\bx \\
&= \int_{\R^3} \frac{1}{2}|\nabla \bv|^2 \ts d\bx + \frac{1}{2}\int_{\R^3} |\dive \ts \bv|^2 \ts d\bx \ge \int_{\R^3} \frac{1}{2}|\nabla \bv|^2 \ts d\bx,
\end{align*}
where we have used integration by parts twice, as well as the fact that $\bv$ vanishes at $\infty$. \\

Now, using the extension operator $T_{\epsilon}$ established in Lemma \ref{extension_eps} to extend $\bu\in D^{1,2}(\Omega_{\epsilon})$ to all of $\R^3$, the proof of the Korn inequality (Lemma \ref{korn_eps}) is immediate.

\begin{proof}[Proof of Lemma \ref{korn_eps}]
Let $\bu \in D^{1,2}(\Omega_{\epsilon})$ and let $T_\epsilon\bu$ be the extension of $\bu$ to $\R^3$ defined in Lemma \ref{extension_eps}. Using properties of the extension operator $T_{\epsilon}$ and the Korn inequality on $\R^3$, we then have
\begin{align*}
\|\nabla \bu\|_{L^2(\Omega_{\epsilon})} &\le \|\nabla (T_{\epsilon}\bu)\|_{L^2(\R^3)} \le \sqrt{2}\|T_{\epsilon}\mathcal{E}(\bu)\|_{L^2(\R^3)} \le \sqrt{2}c_E \| \mathcal{E}(\bu) \|_{L^2(\Omega_{\epsilon})}.
\end{align*}
 Taking $c_K=\sqrt{2}c_E$, we obtain \eqref{korn_ineq}. 
\end{proof}


\subsubsection{Sobolev inequality}\label{Sob_ineq}
Using the extension operator defined in Section \ref{extension}, we also immediately obtain the $\epsilon$-independence of the Sobolev inequality stated in Lemma \ref{sobo_ineq}.
\begin{proof}[Proof of Lemma \ref{sobo_ineq}]
We have
\begin{align*}
\| \bu\|_{L^6(\Omega_{\epsilon})} &\le \| T_{\epsilon}\bu\|_{L^6(\R^3)} \le  c_R\| \nabla (T_{\epsilon}\bu)\|_{L^2(\R^3)} \\
&\le  c_R c_{E}\| \nabla \bu\|_{L^2(\Omega_{\epsilon})}, \quad \text{by Lemma \ref{extension_eps},}
\end{align*}
where $c_R$ is the constant in the Sobolev inequality on $\R^3$. Taking $c_S=c_Rc_E$, we obtain the desired result. 
\end{proof}


\subsubsection{Pressure estimate}\label{pressure_const}
Finally, we prove the $\epsilon$-independence claim for the problem $\dive\ts\bv=p$ stated in Lemma \ref{divv_p_lem}. The proof closely follows \cite{galdi2011introduction}, Chapter III.3, with additional attention paid to the domain dependence.

\begin{proof}[Proof of Lemma \ref{divv_p_lem}]
We begin by taking a sequence $\{p_m\}\subset C_0^{\infty}(\Omega_{\epsilon})$ approximating $p$ in $L^2(\Omega_{\epsilon})$. For each $m\in \N$, let $\psi_m$ be the solution to the Poisson problem $\Delta\psi_m = \overline{p_m}$ in $\R^3$, where $\overline{p_m}$ denotes the extension by zero of $p_m$ to the interior of $\Sigma_{\epsilon}$; i.e. to all of $\R^3$. Then by standard solution theory for the Poisson problem (\cite{galdi2011introduction}, Chapter II.11), we have the estimate
\begin{equation}\label{poisson_est}
 \|\nabla^2\psi_m\|_{L^2(\Omega_{\epsilon})} \le \|\nabla^2\psi_m\|_{L^2(\R^3)} \le c_q\|\overline{p_m}\|_{L^2(\R^3)} = c_q\|p_m\|_{L^2(\Omega_{\epsilon})} 
\end{equation}
where $\nabla^2$ denotes the matrix of second partial derivatives and the constant $c_q$ is independent of $\epsilon$. \\

We define
\[ \bv_m :=\nabla \psi_m+\bw_m \]
where $\bw_m\in D^{1,2}(\Omega_{\epsilon})$ is supported only within the neighborhood $\mathcal{O}$ \eqref{region_O} of $\Gamma_{\epsilon}$, and serves to correct for $\nabla \psi_m\neq 0$ on $\Gamma_{\epsilon}$. To this end, $\bw_m$ can be considered as a function in $H^1(\mathcal{O})$ satisfying
\begin{equation}\label{w_equation}
\begin{aligned}
\dive\ts\bw_m &= 0 \quad \text{in }\mathcal{O} \\
\bw_m &= - \nabla \psi_m \quad \text{on }\Gamma_{\epsilon} \\
\bw_m &=0 \quad \text{on } \p \mathcal{O} \backslash\Gamma_{\epsilon}, 
\end{aligned}
\end{equation}
which is then extended by zero to all of $\Omega_{\epsilon}$. For each $m\in \N$, such a function $\bw_m$ exists since $\Delta \psi_m=0$ within $\Sigma_{\epsilon}$ and therefore
\[ \int_{\Gamma_{\epsilon}} \nabla \psi_m\cdot{\bm n}=0. \]
A solution to \eqref{w_equation} can be constructed by considering the function ${\bm \Psi}_m = - \phi\nabla \psi_m$ where $\phi\in C^{\infty}(\Omega_{\epsilon})$ is a cutoff function satisfying 
\[ \phi(\rho)=\begin{cases}
1, & \rho \le r_{\max}/2 \\
0 & \rho > r_{\max}.
\end{cases} \] 
Then by \cite{galdi2011introduction}, Theorem III.3.1, there exists a solution $\bw_m-{\bm \Psi}_m\in H^1_0(\mathcal{O})$ satisfying
\begin{equation}\label{new_w_equation}
\begin{aligned}
\dive(\bw_m-{\bm \Psi}_m) &= -\dive \ts {\bm \Psi}_m \quad \text{in }\mathcal{O}; \\
\|\nabla(\bw_m-{\bm \Psi}_m)\|_{L^2(\mathcal{O})} &\le c_B\|\dive \ts {\bm \Psi}_m\|_{L^2(\mathcal{O})}.
\end{aligned}
\end{equation}
Since the slender body surface $\Gamma_{\epsilon}$ satisfies the geometric constraints in Section \ref{geometric_constraints}, the region $\mathcal{O}$ satisfies an interior sphere condition with uniform radius $r_{\max}/2$. Then $\mathcal{O}$ can be considered as the infinite union of balls of radius $r_{\max}/2$. Following the construction in the proof of Lemma 2, Chapter 1.1.9 of \cite{maz2013sobolev}, there exist a finite number of domains $\mathcal{O}_k$, star-shaped with respect to balls of radius $r_{\max}/4$, such that
\[\mathcal{O} = \bigcup_{k=1}^N \mathcal{O}_k. \]
Here $N$ depends only on $\kappa_{\max}$ and $c_\Gamma$. Then the domain dependence of the constant $c_B$ in \eqref{new_w_equation} has an explicit formula (\cite{galdi2011introduction}, equation III.3.27): 
\[ c_B \le c_0 \bigg(\frac{\delta(\mathcal{O})}{r_{\max}} \bigg)^3\bigg(1+ \frac{\delta(\mathcal{O})}{r_{\max}} \bigg) \]
where $\delta(\mathcal{O})$ is the diameter of the region $\mathcal{O}$ and $c_0$ depends on the diameter of the domains $\mathcal{O}_k$, each of which are bounded independent of $\epsilon$ as $\epsilon\to 0$. \\

Then, from \eqref{new_w_equation}, we have
\begin{equation}\label{w_est1}
\begin{aligned}
\|\nabla\bw_m\|_{L^2(\Omega_{\epsilon})} &\le c_B\|\dive\ts {\bm \Psi}_m\|_{L^2(\Omega_{\epsilon})} + \|\nabla{\bm \Psi}_m\|_{L^2(\Omega_{\epsilon})} \\
&= c_B\|\dive(\phi\nabla \psi_m) \|_{L^2(\Omega_{\epsilon})} + \|\nabla(\phi\nabla \psi_m)\|_{L^2(\Omega_{\epsilon})} .
\end{aligned}
\end{equation}

Therefore, using \eqref{poisson_est} and \eqref{w_est1}, we have
\begin{align*}
\|\nabla \bw_m\|_{L^2(\Omega_{\epsilon})} &\le (c_B+1)(c_q\|p_m\|_{L^2(\Omega_{\epsilon})}+ c_{\phi}\|\nabla \psi_m\|_{L^2(\mathcal{O})}), 
\end{align*}
where $c_{\phi}$ depends on $\nabla\phi$ but is independent of $\epsilon$. We then use the Sobolev inequality on $\R^3$ to obtain 
\begin{align*}
 \|\nabla \psi_m\|_{L^2(\mathcal{O})} &\le  |\mathcal{O}|^{1/3} \|\nabla\psi_m\|_{L^6(\mathcal{O})} \le |\mathcal{O}|^{1/3} \|\nabla\psi_m\|_{L^6(\Omega_{\epsilon})} \\
 &\le |\mathcal{O}|^{1/3} c_S\|\nabla^2\psi_m\|_{L^2(\Omega_{\epsilon})}  \le |\mathcal{O}|^{1/3} c_S c_q \|p_m\|_{L^2(\Omega_{\epsilon})}, \quad \text{using }\eqref{poisson_est}.
 \end{align*}
Now, $|\mathcal{O}|\le c_{\kappa}r_{\max}^2$ is bounded independent of $\epsilon$, and by Lemma \ref{sobo_ineq} the Sobolev constant $c_S$ is independent of $\epsilon$. Thus
\[ \|\nabla \bw_m\|_{L^2(\Omega_{\epsilon})} \le c_W\|p_m\|_{L^2(\Omega_{\epsilon})} \]
for $c_W$ independent of $\epsilon$, and 
\[ \|\nabla \bv_m\|_{L^2(\Omega_{\epsilon})} \le \|\nabla^2\psi_m\|_{L^2(\Omega_{\epsilon})}+ \|\nabla \bw_m\|_{L^2(\Omega_{\epsilon})} \le (c_q+c_W)\|p_m\|_{L^2(\Omega_{\epsilon})}. \]

 Passing to the limit we obtain the desired solution to the $\dive \ts\bv=p$ problem of Lemma \eqref{divv_p_lem}, as the constant $c_P=c_q+c_W$ is independent of $\epsilon$.
 \end{proof}

\bibliography{SBT_bib.bib}{}
\bibliographystyle{abbrv}


\end{document}